\documentclass{amsart}
\usepackage{amsmath,amssymb}
\usepackage[all]{xy}
\usepackage{xcolor}
\usepackage{comment}

\excludecomment{confidential}

\newtheorem{theorem}{Theorem}[section]
\newtheorem{lemma}[theorem]{Lemma}
\newtheorem{corollary}[theorem]{Corollary}

\newtheorem{proposition}[theorem]{Proposition}
\newtheorem{claim}[theorem]{Claim}
\newtheorem{Sclaim}[theorem]{Subclaim}
\newtheorem{SSclaim}[theorem]{Subsubclaim}
\newtheorem{conjecture}[theorem]{Conjecture}

\theoremstyle{definition}
\newtheorem{example}[theorem]{Example}

\newtheorem{remark}[theorem]{Remark}
\newtheorem{definition}[theorem]{Definition}
\newtheorem{assumption}[theorem]{Assumption}
\newtheorem{notation}[theorem]{Notation}

%Some definitions (Omar)
\def \deg {\operatorname{deg}}

\def \supp {\operatorname{supp}}
\def \DD {\mathcal D}
\def \mm {\mathfrak m}

\def \dd {\partial}
\def \md {\mathfrak d}
\def \mn {\mathfrak n}

\def \bDD {\underline{\mathcal D}}
\def \gD {{\mathcal D}^\Gamma}
\def \gDD {\underline{\mathcal D}^\Gamma}
\def \be {\underline{e}}

\def \dcl{\operatorname{dcl}}
\def \acl{\operatorname{acl}}

\def \id{\operatorname{id}}

\def \spec{\operatorname{Spec}}
\def \Null{\operatorname{Null}}

\def \NN {\mathbb N}
\def \ine {\triangleleft}
\def \ineq {\trianglelefteq}
\def \s {\sigma}

%The following is for the nonforking symbol

\def\Ind#1#2{#1\setbox0=\hbox{$#1x$}\kern\wd0\hbox to 0pt{\hss$#1\mid$\hss}
\lower.9\ht0\hbox to 0pt{\hss$#1\smile$\hss}\kern\wd0}
\def\ind{\mathop{\mathpalette\Ind{}}}
\def\Notind#1#2{#1\setbox0=\hbox{$#1x$}\kern\wd0\hbox to 0pt{\mathchardef
\nn=12854\hss$#1\nn$\kern1.4\wd0\hss}\hbox to
0pt{\hss$#1\mid$\hss}\lower.9\ht0 \hbox to
0pt{\hss$#1\smile$\hss}\kern\wd0}

\title[]{Fields with Lie-commuting and iterative operators}

\author{Jan Dobrowolski}
\address{Jan Dobrowolski, Department of Mathematics, Xiamen University Malaysia, Jalan Sunsuria, Bandar Sunsuria, 43900 Sepang, Selangor Darul Ehsan, Malaysia and \newline
Instytut Matematyczny
Uniwersystetu Wroc\l{}awskiego, plac Grunwaldzki 2, 50-384 Wroc\l{}aw, Poland}
\email{jan.dobrowolski@math.uni.wroc.pl}

\author{Omar Le\'on S\'anchez}
\address{Omar Le\'on S\'anchez, Department of Mathematics, University of Manchester, Oxford Road, Manchester, United Kingdom M13 9PL}
\email{omar.sanchez@manchester.ac.uk}

\date{\today}
\thanks{{\em Acknowledgements}: Both authors were partially supported by EPSRC grant EP/V03619X/1}
\subjclass[2010]{16W99, 12H05, 03C60, 03C45}
\keywords{fields with operators, commuting and iterative derivations, kernels and realisations, model theory}

\begin{document}

\maketitle

\begin{abstract}
We introduce a general framework for studying fields equipped with operators, given as co-ordinate functions of homomorphisms into a local algebra $\DD$, satisfying various compatibility conditions that we denote by $\Gamma$ and call such structures $\gD$-fields. These include  Lie-commutativity of derivations and $\mathfrak g$-iterativity of (truncated) Hasse-Schmidt derivations. Our main result is about the existence of principal realisations of $\gD$-kernels. As an application, we prove companionability of the theory of $\gD$-fields and denote the companion by $\gD$-CF. In characteristic zero, we prove that $\gD$-CF is a stable theory that satisfies the CBP and Zilber's dichotomy for finite-dimensional types. We also prove that there is a uniform companion for model-complete theories of large $\gD$-fields, which leads to the notion of $\gD$-large fields and we further use this to show that PAC substructures of $\gD$-DCF are elementary.
\end{abstract}

\tableofcontents

\section{Introduction}\label{sec1}

The study of additive operators (such as derivations, higher derivations, automorphisms and, more generally, endomorphisms) on fields has been a central theme of research in algebra ever since the foundational work of Ritt on differential algebra and difference algebra in the 1930's \cite{Ritt32,Ritt34,Ritt35,Ritt33,Ritt39}, which was motived by studying differential equations and difference equations with coefficients in function fields.
Ritt’s work on differential algebra was vigorously continued by numerous mathematicians—such as Raudenbush \cite{Raudenbush34}, Levi \cite{Levi42}, and most prominently Kolchin \cite{Kolchin42,Kolchin}—while major developments in difference algebra began in the 1950's with Cohn’s work \cite{Cohn51, Cohn52, Cohn53}. In addition, the notion of a higher derivation was introduced in the work of Hasse and Schmidt \cite{Hasse37}, with the aim of establishing  a positive-characteristic analogue of Taylor series of a smooth function.

\smallskip

In \cite{Buium97}, Buium introduced the notion of a jet operator, generalising both the differential operators and the difference operators. In \cite{MooScan2011,MooScan2014}, Moosa and Scanlon made a further generalisation, introducing the concept of a system of additive operators on a field, in which instead of the Leibniz rule (assumed in the case of a derivation) or the multiplicativity condition (assumed in the case of an endomorphism) one considers product rules determined by an arbitrary finite-dimensional algebra over a fixed base field $k$ (see Section~\ref{sec2} for a precise definition). For example, the algebras $k[x]/(x)^n$ with $n\geq 2$ yield the rules defining higher derivations. While most of~\cite{MooScan2014} focuses on the case of characteristic zero, the positive characteristic case was treated  more extensively  in \cite{BHKK2019} by Beyarslan, Hoffmann, Kamensky, and Kowalski.

%In the last two papers mentioned above, the authors focussed on model-theoretic properties of the corresponding theories of fields with operators, in particular on the question of existence of a \emph{model companion}, whose presence allows to treat all considered structures (of a particular kind) as substructures of a fixed universal domain having some desirable properties. 

\smallskip

The operators considered in \cite{BHKK2019} and \cite{MooScan2011} were not assumed to satisfy any compatibility with each other, but such requirements occur very naturally in many algebraic settings -- for example, in a differential field equipped with more than one derivation it is usually assumed that the derivations commute with each other (e.g. in \cite{Kolchin}), or at least to Lie-commute.
In the present paper, we introduce a fairly general framework for studying operators in the sense of Moosa-Scanlon~\cite{MooScan2011} satisfying additional compatibility conditions that we call $\Gamma$ (see Sections \ref{sec3} and~\ref{sec4}). We establish both algebraic and model-theoretic results about them: in the algebraic part, we study the notion of a \emph{$\gD$-kernel} for a local algebra $\DD$, which is a natural generalisation of the notion of a differential kernel~\cite{Lando1970}, and in the model-theoretic part, we are concerned with existence and the properties of model companions of the corresponding first-order theories. We discuss both aspects in more detail below.

\smallskip

The notion of a differential kernel was first studied by Lando \cite{Lando1970} and Cohn~\cite{Cohn}, and was utilised to make progress on the Jacobi bound and the Ritt problem, the former conjectures an upper bound for the order of zero-dimensional irreducible components of a differential variety, while the latter problem asks when a given irreducible differential variety is in the general solution of an algebraically irreducible differential polynomial (see e.g. \cite[p.190]{Kolchin}). While the above studies deal with the case of a field equipped with a single derivation, the concept of a differential kernel was later generalised  to the setting of a field equipped with several commuting derivations by Pierce \cite{Pierce2014}, who then used it to characterise when a system of differential equations over a field $K$ equipped with commuting derivations $\dd_1,\dots,\dd_n$ is consistent; that is, when it has a solution in some  differential field extension of $K$. Briefly, a differential kernel of length $r$ over the differential field $K$ corresponds to a tower of finitely generated field extensions $K=L_{-1}\subseteq L_0\subseteq L_1 \dots \subseteq L_r$ with each $L_i$ (where $i<r$) equipped with derivations $\dd_{i,1},\dots,\dd_{i,n}:L_i\to L_{i+1}$  such that $\dd_{i+1,j}$ extends $\dd_{i,j}$ and $\dd_{r-1,j} \dd_{r-2,k}=\dd_{r-1,k}\dd_{r-2,j}$ on $L_{r-2}$ for all $1\leq j,k\leq n$. We say that such a kernel has a \emph{regular realisation} if there is a differential field extension $K\leq L$ containing $L_r$ such that the differential structure on $L$ agrees with $\dd_{r-1,1},\dots, \dd_{r-1,n}$.

\smallskip

In \cite{Pierce2014}, Pierce proved a kernel-prolongation theorem, which states that for every $r$ and every differential kernel $L$ of length $r$, if $L$ has a generic prolongation of length $2r$ (that is, a differential kernel of length $2r$ extending $L$ in which the only algebraic relations %between the generators
 are the ones obtained by differentiating the algebraic relations holding  in $L$) then $L$ has a generic prolongation of arbitrary length, and hence it admits a \emph{principal realisation}; that is, a generic regular realisation. As a consequence, one can show that a system of partial differential equations  
 $$f_1(x_1,\dots,x_m)=\dots=f_\ell(x_1,\dots,x_m)=0$$ 
 is consistent if and only if differentiating it $C$-many times gives a consistent system of polynomial equations in the algebraic variables $\dd_1^{i_1}\dots \dd_n^{i_n}x_j$, where $C$ is a constant depending only on the complexity of the system of equations (see 
 \cite[Theorem 4.10]{Pierce2014} or \cite[Theorem 11]{GLS2018}).
 
 \smallskip
 
In this paper we extend the above results to a general context of fields equipped with operators, in the sense of Moosa-Scanlon, which  satisfy a \emph{Jacobi-associative} commutativity rule -- we define those in Sections \ref{sec3} and~\ref{sec4}; usual commutativity of the operators is  an example of such a rule, among many others. 

\smallskip

In the case of fields equipped with two commuting automorphisms, 
%which corresponds to the algebra  $k\times k\times k$, 
it is known that the above kernel realisation theorem does not hold - it would imply existence of a model companion, which is known to fail for this class by a result of Hrushovski (see e.g. \cite{Sjoergen} for a proof). We thus restrict ourselves to the case of operator systems coming from local algebras, excluding then nontrivial endomorphisms. We will consider two types of commutativity rules; namely, Jacobi rules (generalising Lie-commutativity of derivations) and associative rules (generalising iterativity of Hasse-Schmidt derivations), hence we will work with two sets of operators coming from local algebras $\DD_1$ and $\DD_2$ over the base field $k$. The operators associated to $\DD_1$ will be assumed to satisfy a Lie-commutativity rule (where the associated coefficients obey a Jacobi condition), the ones associated to $\DD_2$ will be assumed to satisfy an iterativity rule (where the associated coefficients obey an associativity condition), and the operators associated to $\DD_1$ will be assumed to commute with those associated to $\DD_2$. 
Altogether, we will call such a set of conditions a \emph{Jacobi-associative commutativity rule}, and we will usually fix such a rule and call it $\Gamma$. 
We will also write $\bDD=(\DD_1,\DD_2)$ and call a field equipped with such a structure a $\gDD$-field (see Section~\ref{sec4}).

\smallskip

Examples of classes falling into our framework of fields with local operator-systems satisfying a Jacobi-associative commutativity rule include: 
\begin{itemize} 
	\item fields with Lie-commuting derivations (studied in \cite{Yaffe2001}),
	\item fields with truncated iterative Hasse-Schmidt derivations (studied in \cite{Kow2005}), 
	\item fields with $\mathfrak g$-derivations for a finite group scheme $\mathfrak g$ (studied in \cite{HK}),
	\item fields with commuting operators associated to an arbitrary local algebra (recently studied in \cite{Burton})
	%over a field of characteristic $0$, and
	%\item fields with commuting operators associated to an arbitrary  local algebra $\DD$ over a field of characteristic $p$ in which the maximal ideal coincides with the kernel of the Frobenius morphism.
	\end{itemize}
	
We will see in Section~\ref{sec5} that the concept of a differential kernel has a natural generalisation to the notion of a $\bDD^\Gamma$-kernel for systems of $\bDD$-operators  satisfying a Jacobi-associative commutativity rule $\Gamma$. Our main algebraic result, Theorem \ref{thebigone},  states that if a $\bDD^\Gamma$-kernel of length $r$ has a generic prolongation of length $2r$, then it has a principal realisation, hence generalising the aforementioned result of Pierce. %This then allows to deduce the existence of constants $C^n_{r,m}$ ({\bf shall we add such a statement?})

\smallskip

From the model-theoretic side, the development of the model theory of fields with operators was initiated by Robinson's work on differentially closed fields \cite{Robinson}, and has rapidly accelerated in recent decades,  %(more specifically, model theory of fields with derivations and of fields with an automorphism) 
finding several remarkable applications in various branches of mathematics, such as diophantine geometry \cite{11, 13}, algebraic
dynamics \cite{8, 12}, Galois theory \cite{22, 27}, and representation theory of
algebras \cite{2, 3, 16}. 
In those applications, one of the fundamental features of the underlying first-order theory is its \emph{companionability} (i.e., the existence of a model companion). %, which allows to treat all considered structures as subsets of a universal domain; 
For instance, companionability of the theory of fields with an automorphism  was proved by Macintyre in \cite{ACFA}, companionability of the theory of fields equipped with a single derivation was proved by Robinson in  \cite{Robinson}, and companionability of the theory of fields (in arbitrary characteristic) equipped with several commuting derivations was proved by Pierce in \cite{Pierce2014}. For a comprehensive survey on the model theory of fields with operators we refer the reader to \cite{Chatoperators}.
%, and for the theory of fields with arbitrary $n$-many derivations. 
In Section \ref{sec6}, using our results on $\bDD^\Gamma$-kernels from Section~\ref{sec5}, we prove that the theory of $\gDD$-fields 
%with $\Gamma$-commuting $\bDD$-operators 
has a model companion (denoted $\bDD^\Gamma$-CF) in characteristic zero, and that in characteristic $p>0$ it has a model companion if either $\dim_k(\DD_1)=1$ or
 the maximal ideal of $\DD_u$ is equal to the kernel of the Frobenius homomorphism $\operatorname{Fr}:\DD_u\to \DD_u$ for $u=1,2$ (we note that this latter condition appears in \cite{BHKK2019} and is necessary for the results there). % and also when $\dim_k(\DD_1)=1$.
 
 \smallskip
 
 Furthermore, in characteristic zero, we prove the theory $\bDD^\Gamma$-CF has a number of desirable properties such as completeness, quantifier-elimination, $|k|$-stability (where $k$ is the base field), elimination of imaginaries, the Canonical Base Property, and (the expected form of) Zilber's Dichotomy for finite-dimensional types.

\smallskip

In Section \ref{sec7}, we refine our companionability result by proving that, in characteristic zero, for an arbitrary local system $\bDD$ and a Jacobi-associative commutativity rule $\Gamma$, there is a theory UC$_{\gDD}$ axiomatising those large $\gDD$-fields that are existentially closed in every extension in which they are existentially closed as a field. We then observe that this generalises Tressl's uniform companion result from \cite{Tressl2005}.
We also prove that, for a natural notion of $\gDD$-largeness (generalising differential largeness \cite{LSTr2023,LSTr2024}), the PAC-substructures in $\gDD$-CF are precisely those $\gDD$-fields that are PAC (as fields) and $\gDD$-large.

%We now briefly explain the concept of a model companion. Let $T$ be a theory axiomatising some class of fields equipped with extra algebraic structure (such as a derivation) which is closed under taking unions of increasing chains. Then a theory $T^c$ is called the \emph{model companion} of $T$, if it axiomatises the class of \emph{existentially closed} models of $T$, where a model $M$ of $T$ is called existentially closed if it contains a solution to every system of equations that can be solved in any extension of $M$. As fundamental examples, the model companion of the theory of fields is the theory of algebraically closed fields, and the model companion of the theory of formally real fields is the theory of real closed fields. 

%In this paper we introduce a framework for studying the model theory of fields with operators satisfying abstract Leibniz rules induced by a local algebra and a certain compatibility condition between them. Examples of classes falling into this framework include fields with Lie-commuting derivations (studied in \cite{Yaffe2001}), fields with iterative Hasse-Schmidt derivations (studied in \cite{Zi}), fields with finite group scheme actions (studied in \cite{HK}) and fields with arbitrary commuting operators coming from a local algebra. Our main result is Corollary \ref{cor_comp} where we show the existence of model companion in this setting. The main step towards that result is Theorem \ref{thebigone} about extending certain partially defined operators. (write more about kernels ??)

\smallskip

Let us mention that while our current setup does not include the case of automorphisms, based on results for differential-difference fields \cite{LeonSan,InoLS}, we expect that the theory $\gDD$-CFA does exist. Namely, that the theory of $\gDD$-fields equipped with an automorphism (commuting with the operators) has a model companion. We leave this for future work.

%{\bf Maybe we mention at the end of the introduction that in the current setup we do not include automorphisms, but that we will explore this, i.e. the existence $\gDD$-CFA, in a future paper??}

\smallskip

\noindent {\bf Conventions.} Throughout $k$ is a field. We assume rings are commutative and unital, and algebras are associative (unless stated otherwise). Also, for us $\NN=\{1,2,\dots\}$ while $\NN_0=\NN\cup\{0\}$.

\

\section{Preliminaries and notation}\label{sec2}

Let $\DD$ be a local finite dimensional $k$-algebra (recall that $k$ is a field). We let $m$ be such that $\dim_k\DD=m+1$. Recall that local means that $\DD$ has a unique maximal ideal $\mm$. As $\DD$ is finite dimensional, $\mm$ is nilpotent. We let $d$ be the smallest nonnegative integer such that $\mm^{d+1}=0$. Assume that the residue field $\DD/\mm$ is $k$ and denote the residue map by $\pi:\DD\to k$. Let $d_i=\dim_k(\mm^i/\mm^{i+1})$ for $i=0,\dots,d$. Note that then $d_0=1$. Let $D_{-1}=0$ and $D_j=\sum_{i=0}^j d_i$ for $0\leq j\leq d$.

\medskip

With $\DD$ as above, one can find a $k$-linear basis of $\DD$ of the form 
$$(\epsilon_0=1,\epsilon_1,\dots,\epsilon_m)$$
such that
$(\epsilon_{D_{i-1}},\dots,\epsilon_{D_i-1})$
yields a basis for $\mm^i/\mm^{i+1}$ for $i=0,\dots,d$. We call any such basis a \emph{ranked basis} for $\DD$. In this case, for any $1\leq p\leq m$, we define $\sigma(p)$ to be the unique positive integer such that 
$$\epsilon_p\in \mm^{\sigma(p)}\setminus \mm^{\sigma(p)+1}.$$
Note that with this notation we have $\epsilon_p\cdot \epsilon_q\in \mm^{\sigma(p)+\sigma(q)}$ for $1\leq p,q\leq m$.

\medskip

\begin{definition}\label{def_op}
By a local operator-system we mean a pair $(\DD,\bar\epsilon)$ where $\DD$ is a local finite dimensional $k$-algebra $\DD$ with residue field $k$ and $\bar\epsilon=(1,\epsilon_1,\dots,\epsilon_m)$ is a ranked basis for $\DD$. Note that for such a $\DD$, with structure map $\iota:k\to \DD$, there is a unique  $k$-algebra homomorphism $\pi:\DD\to k$ such that $\pi\circ \iota=\operatorname{Id}_k$; namely, $\pi$ is the residue map. 
\end{definition}

We now fix a local operator-system $(\DD,\bar\epsilon)$ with residue map $\pi:\DD\to k$. For each $k$-algebra $R$, we denote by $\DD(R)$ the base change of $\DD$ from $k$ to $R$. Namely, $\DD(R)=\DD\otimes_k R$. We will in fact think of $\DD$ as a functor on the category of $k$-algebras where a $k$-algebra homomorphism $\phi:R\to S$ is canonically lifted to $\DD(\phi):\DD(R)\to \DD(S)$ (i.e. $\DD(\phi)=\operatorname{Id}_\DD\otimes \, \phi$).

\begin{definition}
Let $R\xrightarrow{\phi} S$ be a homomorphism of $k$-algebras. A $k$-algebra homomorphism $e:R\to \DD(S)$ is said to be a $\DD$-operator from $R$ to $S$ with respect to $\phi$ if $\pi^{S}\circ e=\phi$. Here $\pi^S$ is the base change of $\pi$ from $k$ to $S$. Such a $\DD$-structure is commonly denoted by $(R\xrightarrow{\phi} S,e)$. When $R$ is a subring of $S$ and the inclusion $R\hookrightarrow S$ is a $k$-algebra map, a $\DD$-operator from $R$ to $S$ with respect to the inclusion is simply called a $\DD$-operator and we denote it by $e:R\to \DD(S)$. If in addition $R=S$, we say that $R$ is a $\DD$-ring and we denote it by $(R,e)$. 
\end{definition}

A $k$-algebra $R$ can always be equipped with the trivial $\DD$-ring structure. Namely, with the lifted map $\iota^R:R\to \DD(R)$. Note that $\iota^R$ is simply the canonical embedding $r\mapsto 1\otimes r$ from $R\hookrightarrow \DD\otimes R$. We denote this trivial structure by $(R,\iota)$.

\medskip

Let $(R,e)$ be a $\DD$-ring and $e':S\to \DD(T)$ a $\DD$-operator (in particular $S$ is a subring of $T$ and the inclusion $S\hookrightarrow T$ is a $k$-algebra map). A $k$-algebra homomorphism $\phi:R\to S$ is said to be a $\DD$-homomorphism if
$$\DD(\phi)\,\circ\, e= e'\,\circ\, \phi.$$
In the case when $S$ is an $R$-algebra and the structure map $\iota:R\to S$ is a $\DD$-homomorphism we say that $(S\hookrightarrow T,e')$ is an $(R,e)$-algebra or a $\DD$-algebra over $(R,e)$.

%\medskip

\begin{remark}\label{operatornotation}
Any $\DD$-operator $e:R\to \DD(S)$ can be written in the terms of the (fixed) ranked basis as
$$e(x)=1\otimes x+\epsilon_1\otimes \dd_1(x)+\cdots+\epsilon_m\otimes \dd_m(x)$$
where $\dd_i:R\to S$, for $i=1,\dots,m$, are additive operators which satisfy a certain form of Leibniz rule; namely, a multiplication rule of the form
$$\dd_i(xy)=\dd_i(x)\, y +x\, \dd_i(y)+\sum_{p,q=1}^m\alpha_{i}^{pq}\dd_p(x)\dd_q(y)$$
where $\alpha_i^{pq}\in k$ is the coefficient of $\epsilon_i$ in the product $\epsilon_p\cdot \epsilon_q$. In addition, as our basis is ranked, we have that $\alpha_i^{pq}=0$ whenever $\sigma(p)+\sigma(q)>\sigma(i)$; thus, 
%if we set
%$$\gamma(i)=\{(p,q): \; 1\leq p,q\leq m \text{ and } \s(p)+\s(q)\leq\s(i)\}$$
%for each $1\leq i\leq m$, 
the Leibniz rule has the (simplified) form
\begin{equation}\label{Leibrule}
\dd_i(xy)=\dd_i(x)\, y +x\, \dd_i(y)+\sum_{\sigma(p)+\sigma(q)\leq \sigma(i)}\alpha_{i}^{pq}\dd_p(x)\dd_q(y)
\end{equation}
In particular note that if $j\neq i$, with $\sigma(j)\geq \sigma(i)$, then $\dd_j$ does \emph{not} appear in the product rule of $\dd_i$.
\end{remark}

\begin{example}\label{firstexamples}
These are the basic examples.
\begin{enumerate}
\item (several derivations) Let $m\in \NN$ and 
$$\DD=k[\epsilon_1,\dots,\epsilon_m]/(\epsilon_1,\dots,\epsilon_m)^2$$
equipped with %$\pi:\DD\to k$ such that $\pi(\epsilon_i)=0$, for $i=1,\dots,m$, and with 
ranked basis $(1,\epsilon_1,\dots,\epsilon_m)$. Then, $\DD$-rings correspond to differential rings with $m$-many derivations (not necessarily commuting).\\
%***
\item (truncated Hasse-Schmidt derivations) Let $m, n\in \NN$ and 
$$\DD=k[\epsilon_1,\dots,\epsilon_m]/(\epsilon_1^{n+1}, \dots, \epsilon_m^{n+1}, (\epsilon_i\cdot\epsilon_j)_{i<j})$$
equipped with %$\pi:\DD\to k$ such that $\pi(\epsilon)=0$ and 
ranked basis $(1,\epsilon_1,\dots,\epsilon_m,\dots,\epsilon_1^{n},\dots,\epsilon_m^{n})$. Then, $\DD$-rings correspond to rings equipped with $m$-many $n$-truncated Hasse-Schmidt derivations (not necessarily iterative nor commuting). We recall that an $n$-truncated H-S derivation is a tuple $(\dd_i)_{i=1}^n$ of additive operators satisfying
$$\dd_i(xy)=\dd_i(x)\, y+ x\, \dd_i(y) + \sum_{p+q=i}\dd_p(x)\dd_q(y).$$
\end{enumerate}
\end{example}

\

From \cite[Lemma 2.7]{BHKK2019}, we know that if $L/K$ is a separable field extension and $e:K\to\DD(L)$  is a $\DD$-operator, then there exists an extension to $e':L\to \DD(L)$; moreover, if $L/K$ is separably algebraic, then the extension is unique. Later on we will need a more detailed description of how these extensions of $\DD$-structures can be constructed; this is given in the following lemma (which is more or less well known).
%Later on we will make use of the following (more or less well-known) result on extending $\DD$-structures on fields. 
Recall that, given a $\DD$-ring $(R,e)$, the ring of $\DD$-constants is 
$$C_R:=\{r\in R:\, e(r)=\iota^R(r)\}=\{r\in R:\, \partial_{i}(r)=0 \text{ for all }1\leq i\leq m\}.$$
Also, we denote by $\operatorname{Fr}$ the Frobenius endomorphism on $\DD$; namely, $\operatorname{Fr}(x)=x^p$ when $char(k)=p>0$.

\begin{lemma}\label{extendstructure}
Let $K\leq L$ a field extension and $e:K\to \DD(L)$ a $\DD$-operator. Assume $a\in L$ and $b\in \DD(L)$ with $\pi(b)=a$. Then, there is a $\DD$-operator $e':K(a)\to \DD(L)$ extending $e$ and mapping $a\mapsto b$ if and only if $f^e(b)=0$ for all $f\in K[x]$ vanishing at $a$ (here $f^e$ is the polynomial over $\DD(L)$ obtained by applying $e$ to the coefficients of $f$). As result, we have
\begin{enumerate}
\item[(i)] if $a$ is separably algebraic over $K$, then there is a unique $\DD$-operator $e':K(a)\to \DD(L)$ extending $e$. 
\item[(ii)] if $a$ is transcendental over $K$, then for any choice of $b\in \DD(L)$ with $\pi(b)=a$ there is a unique $\DD$-operator $e':K(a)\to \DD(L)$ extending $e$ and mapping $a\mapsto b$.
\item[(iii)] if $a$ is inseparably algebraic over $K$, $\mm=\ker(\operatorname{Fr})$, and there exists an extension $\hat e:K(a)\to \DD(L)$ of $e$, then for any choice of $b\in \DD(L)$ with $\pi(b)=a$ there is a unique $\DD$-operator $e':K(a)\to \DD(L)$ extending $e$ and mapping $a\mapsto b$. Furthermore, the existence of the extension $\hat e:K(a)\to \DD(L)$ is equivalent to the minimal polynomial of $a$ having coefficients in $C_K$.
\end{enumerate}
\end{lemma}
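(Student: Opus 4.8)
The plan is to reduce everything to the single criterion ``$f^e(b) = 0$ for all $f \in K[x]$ vanishing at $a$,'' and then analyse the three cases by looking at the ideal of such $f$. First I would prove the main equivalence. For the forward direction, if $e'$ exists with $e'(a) = b$, then applying $e'$ to the relation $f(a) = 0$ and using that $e'$ is a $k$-algebra homomorphism extending $e$ on coefficients gives $f^e(b) = e'(f(a)) = e'(0) = 0$. For the converse, consider the $k$-algebra map $K[x] \to \DD(L)$ sending $x \mapsto b$ and acting as $e$ on $K$ (this exists by the universal property of the polynomial ring, since $\DD(L)$ is a $K$-algebra via $e$); the hypothesis says the kernel contains $\mathrm{Ann}_{K[x]}(a) = (g)$ (or $(0)$ if $a$ is transcendental), so the map factors through $K[x]/(g) \cong K(a)$ — using here that $K(a) = K[a]$ when $a$ is algebraic. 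One must check that the resulting map $e' : K(a) \to \DD(L)$ is still a $\DD$-operator, i.e. $\pi^L \circ e' = \mathrm{id}$ on $K(a)$; this follows because $\pi^L \circ e' $ is a $k$-algebra endomorphism of $K(a)$ (inside $L$) fixing $K$ and sending $a$ to $\pi(b) = a$, hence is the identity. When $a$ is transcendental the kernel is $(0)$ and the condition $f^e(b)=0$ is vacuous, so $e'$ exists for every lift $b$; this gives (ii), with uniqueness because $e'$ is determined on $K[a]$ by its values on $K$ and on $a$.

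For (i), when $a$ is separably algebraic with minimal polynomial $g$, I would show $f^e(b) = 0$ can be \emph{solved} for a suitable $b$ lifting $a$, and that the solution is unique. Write $b = 1 \otimes a + \eta$ with $\eta \in \mm(L) := \mm \otimes_k L$, nilpotent since $\mm$ is nilpotent. Then $g^e(b) = g^e(1\otimes a) + (g^e)'(1 \otimes a)\,\eta + (\text{higher order in } \eta)$, and since $g^e(1 \otimes a) \equiv g(a) = 0 \pmod{\mm(L)}$ while $(g^e)'(1\otimes a) \equiv g'(a) \pmod{\mm(L)}$ is a unit in $\DD(L)$ (as $g'(a) \neq 0$ by separability and $\DD(L)$ is local-by-$L$), a standard Hensel/successive-approximation argument modulo the powers of $\mm(L)$ produces a unique $\eta$, hence a unique $b$, with $g^e(b) = 0$; the main equivalence then yields the unique $e'$. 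I expect this Hensel-type step to be the main technical point, though it is routine.

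For (iii), the inseparable case, the extra hypothesis $\mm = \ker(\operatorname{Fr})$ forces $\eta^p = 0$ for $\eta \in \mm(L)$ in characteristic $p$, which makes the map $x \mapsto x^p$ on $\DD(L)$ factor through $\pi^L$; concretely, for the minimal polynomial $g(x) = x^{p^s} - c$ of $a$ (after reducing to the case of a purely inseparable simple extension, possibly iterating), one computes $g^e(b) = b^{p^s} - e(c) = (1 \otimes a)^{p^s} - e(c) = 1 \otimes a^{p^s} - e(c)$, which is \emph{independent of the lift} $b$ and equals $1 \otimes c - e(c)$; this vanishes precisely when $e(c) = 1 \otimes c$, i.e. $c \in C_K$. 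Thus if any extension $\hat e$ exists the criterion holds for every lift $b$, giving both the ``furthermore'' clause and existence/uniqueness of $e'$ for each chosen $b$. The one subtlety to handle carefully is the general inseparable minimal polynomial: I would either write it as $g(x) = h(x^{p^s})$ with $h$ separable and combine the computation above with part (i) applied to $a^{p^s}$, or induct on $[K(a):K]$ by factoring the extension through $K(a^p)$.
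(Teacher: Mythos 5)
Your overall route is the same as the paper's: the paper states the $f^e(b)=0$ criterion without proof (your argument for it is fine, modulo the small point that in the transcendental case you must still pass from $K[a]$ to $K(a)$ by observing that any element of $\DD(L)$ with nonzero residue is a unit, since $\mm\otimes_k L$ is nilpotent), it disposes of (i) by citing Hensel's lemma for the Artinian ring $\DD(K(a))$ (your explicit successive-approximation argument is a correct unwinding of this), and (ii) and the ``independence of the lift'' part of (iii) are as you say.

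There is, however, a genuine gap in the \emph{Furthermore} clause of (iii) for a general inseparable minimal polynomial. Your concrete computation only treats $f(x)=x^{p^s}-c$, but an inseparable minimal polynomial need not have this form (e.g.\ $x^{2p}+tx^p+s$), and neither of your two proposed reductions closes the case. Writing $f=h(x^{p^s})$ with $h$ separable and invoking part (i) for $a^{p^s}$ does not help: part (i) produces an extension to $K(a^{p^s})$ sending $a^{p^s}$ to the \emph{Hensel lift} of $a^{p^s}$, which need not equal $1\otimes a^{p^s}$, whereas the condition you must verify is $h^e(1\otimes a^{p^s})=0$; and the tower induction through $K(a^p)$ requires $a^p\in C_{K(a^p)}$, which is not visibly equivalent to the coefficients of the minimal polynomial of $a$ over $K$ lying in $C_K$. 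The paper instead argues directly: writing $f(x)=g(x^p)$ with $g(x)=x^{\deg g}+\sum_{j<\deg g}c_jx^j$, the $\epsilon_i$-coefficient of $g^e(1\otimes a^p)$ is $\sum_{j<\deg g}\partial_i(c_j)a^{pj}$ (the leading term contributes nothing since $\partial_i(1)=0$); if some $\partial_i(c_j)\neq 0$ this exhibits a nonzero polynomial over $K$ of degree $<\deg f$ vanishing at $a$, contradicting minimality of $f$. This minimality argument is the missing ingredient; once you add it, your sketch is complete.
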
 
\begin{proof}
(i) As $\DD(K(a))$ is Artinian, Hensel's lemma applies when $a$ is separably algebraic. 

\smallskip

(ii) When $a$ is transcendental over $K$, there are no nontrivial $f\in K[x]$ vanishing at $a$, and so the conditions to extend $e$ so that $a\mapsto b$ are trivially satisfied.

\smallskip

(iii) Let $p=char(k)$ and $f$ be the minimal polynomial of $a$ over $K$. Since $a$ is inseparable, there is $g\in K[x]$ such that $f(x)=g(x^{p})$. The extension $\hat e$ exists iff $f^e(b)=0$ iff $g^e(b^{p})=0$. The assumption $\mm=\ker(\operatorname{Fr})$ yields that $b^{p}=a^{p}$. 
Now, if $g^e(a^{p})=0$ but $g\notin C_K[x]$, then writing
$g(x)=x^{\deg g}+\sum_{j<\deg g}c_jx^j$
we have that  $\partial_i(c_j)\neq 0$ for some $1\leq i\leq m$ and some $j$, so, using $\dd_i(1)=0$, we get 
$\sum_{j<\deg g}\partial_i(c_j)a^{pj}=0$ which contradicts that $f$ is the minimal polynomial of $a$ over $K$. 
Thus $g^e(a^{p})=0$ iff the coefficients of $g$ are in $C_K$. In conclusion, $\hat e$ exists iff $f\in C_K[x]$, as claimed. Now, to complete the proof, simply note that $f$ having coefficients in $C_K$ implies that $f^e(x)=f(x)$ and thus for any choice of $b\in \DD(L)$ with $\pi(b)=a$ we get
$$f^e(b)=f(b)=g(b^{p})=g(a^{p})=f(a)=0$$
where the third equality uses $\mm=\ker(\operatorname{Fr})$. This guarantees the existence of the desired extension $e'$.
\end{proof}

We will need a more explicit description of the shape of each $\dd_i(a)$ when $a$ is separably algebraic. This makes use of our choice of ranked basis $\bar\epsilon=(1,\epsilon_1,\dots,\epsilon_m)$. Recall that $\alpha_i^{pq}\in k$ denotes the coefficient of $\epsilon_i$ in the product $\epsilon_p\cdot \epsilon_q$. For each $1\leq i\leq m$, we let 
$$\supp^1(i)=\{q: \text{there exists $p$ with $\alpha_i^{pq}\neq 0$}\}$$
and for $n\geq 1$
$$\supp^{n+1}(i)=\bigcup_{q\in \supp^n(i)}\; \supp^1(q)$$
Note that $\supp^1(i)=\emptyset$ when $\sigma(i)=1$. More generally, since $q\in \supp^1(i)$ implies $\sigma(q)<\sigma(i)$, we have that $\supp^{\sigma(i)}(i)=\emptyset$. We define the support of $i$ as
$$\supp(i):=\supp^1(i)\cup\cdots\cup \supp^{\sigma(i)}(i).$$
We use this notion of support in the following two lemmas.

\begin{lemma}\label{explicit}
Let $K\leq L$ be a field extension and $f\in K[x_1,\dots,x_{n}]$. Then, for each $1\leq i\leq m$, there exists a polynomial 
$$h_i\in K((x_{p})_{p\leq n},(y_{p,q})_{p\leq n,q\in\supp(i)},(z_{p})_{p\leq n-1})$$ such that for every
 $\bar a=(a_1,\dots,a_n)\in L^n$ and every $\DD$-operator $e:K(\bar a)\to \DD(L)$ with $e(K)\subseteq \DD(K)$,  if $f(a_1,\dots, a_{n})=0$ then
$$\frac{\partial f}{\partial x_n}(\bar a) \cdot \dd_i(a_{n})=h_i((a_{p})_{p\leq n},(\dd_q(a_p))_{p\leq n,q\in\supp(i)},(\dd_i(a_{p}))_{p\leq n-1}).$$
\end{lemma}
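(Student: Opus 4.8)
The plan is to apply the $\DD$-operator $e$ to the relation $f(\bar a)=0$ and extract the coefficient of $\epsilon_i$ in the resulting identity in $\DD(L)$, expanded with respect to the ranked basis $\bar\epsilon$. Write $f=\sum_\nu c_\nu x^\nu$ with $c_\nu\in K$ and, following Remark~\ref{operatornotation}, $e(r)=\sum_{j=0}^m\epsilon_j\otimes\dd_j(r)$ (with $\dd_0=\id$); the hypothesis $e(K)\subseteq\DD(K)$ gives $\dd_j(c_\nu)\in K$. Since $e$ is a $k$-algebra homomorphism,
\[
0=e\big(f(\bar a)\big)=\sum_\nu\Big(\sum_{j_0}\epsilon_{j_0}\otimes\dd_{j_0}(c_\nu)\Big)\prod_{p=1}^{n}\prod_{t=1}^{\nu_p}\Big(\sum_{j}\epsilon_j\otimes\dd_j(a_p)\Big),
\]
and expanding the right-hand side yields a sum of terms $\big(\epsilon_{j_0}\prod\epsilon_{j_{p,t}}\big)\otimes\big(\dd_{j_0}(c_\nu)\prod\dd_{j_{p,t}}(a_p)\big)$. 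I would organise these terms by how many of the chosen indices are $\geq 1$ — call such factors \emph{activated} — and then read off the $\epsilon_i$-component of each side.

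First I would isolate the contributions containing $\dd_i(a_n)$. These arise precisely by activating a single factor, one of the $\nu_n$ copies of $e(a_n)$, with index $i$, while every remaining factor (and the coefficient factor) contributes its constant term; since $\epsilon_i\cdot\epsilon_0=\epsilon_i$, summing over $\nu$ and over the $\nu_n$ positions gives exactly $\frac{\partial f}{\partial x_n}(\bar a)\cdot\dd_i(a_n)$ in the $\epsilon_i$-component. To see that $\dd_i(a_n)$ appears nowhere else in that component I would use rankedness: if $\epsilon_i$ is activated together with any other $\epsilon_j$, $j\geq 1$, then their product lies in $\mm^{\sigma(i)+\sigma(j)}\subseteq\mm^{\sigma(i)+1}$, which has zero $\epsilon_i$-coordinate, so every such term drops out when passing to the $\epsilon_i$-component.

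It remains to collect the other contributions to the $\epsilon_i$-component. The terms with exactly one activated factor of index $i$ — that factor being the coefficient factor or some $e(a_p)$ with $p<n$ — sum, after running over $\nu$, to $f^{\dd_i}(\bar a)+\sum_{p<n}\frac{\partial f}{\partial x_p}(\bar a)\,\dd_i(a_p)$, where $f^{\dd_i}$ denotes $f$ with each coefficient replaced by its $\dd_i$-image; this is already a polynomial over $K$ in the $a_p$ ($p\le n$) and the $\dd_i(a_p)$ ($p\le n-1$). For the terms with at least two activated factors, the $\epsilon$-part is a product $\epsilon_{j_1}\cdots\epsilon_{j_\ell}$ with $\ell\ge 2$ and all $j_t\ge1$, while the $L$-part is a monomial in the $a_p$, the $c_\nu$, the $\dd_{j_t}(a_p)$ and the $\dd_{j_t}(c_\nu)$ of the activated factors. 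The combinatorial heart of the argument — which I would prove by induction on $\ell$, splitting off $\epsilon_{j_1}$ and using commutativity of $\DD$ together with the recursion $\supp^{n+1}(i)=\bigcup_{q\in\supp^n(i)}\supp^1(q)$ — is that the $\epsilon_i$-coefficient of $\epsilon_{j_1}\cdots\epsilon_{j_\ell}$ vanishes unless every $j_t$ lies in $\supp(i)$. Granting this, and using that $i\notin\supp(i)$ because every index in $\supp(i)$ has $\sigma$-value strictly below $\sigma(i)$, such terms involve only the $a_p$, the $c_\nu$ and $\dd_q(c_\nu)\in K$, and the $\dd_q(a_p)$ with $q\in\supp(i)$, $p\le n$. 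Assembling the three blocks and solving for $\frac{\partial f}{\partial x_n}(\bar a)\,\dd_i(a_n)$ then produces the desired polynomial $h_i$ over $K$ in the variables $(x_p)_{p\le n}$, $(y_{p,q})_{p\le n,\,q\in\supp(i)}$, $(z_p)_{p\le n-1}$; its coefficients are assembled from the $c_\nu$, the structure constants $\alpha_i^{pq}$, and their $\dd_j$-images in $K$, so $h_i$ depends only on $f$ and on the operators of $K$, not on $\bar a$ or on the particular extension $e$. I expect the bookkeeping in this last step to be the main obstacle: tracking exactly which $\dd_q$ can survive in the $\epsilon_i$-coefficient of a general product of basis vectors, and confirming that $\dd_i(a_n)$ does not resurface there, is precisely what the $\supp(i)$ formalism and the relation $\epsilon_i\notin\mm^{\sigma(i)+1}$ are built to control.
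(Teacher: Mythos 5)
Your proof is correct and follows essentially the same route as the paper: apply $e$ to $f(\bar a)=0$, expand in the ranked basis, extract the coefficient of $\epsilon_i$, and use $\epsilon_p\cdot\epsilon_q\in\mm^{\sigma(p)+\sigma(q)}$ together with the recursive definition of $\supp(i)$ to see that only $\dd_q$ with $q\in\supp(i)$ (and the single $\dd_i$-activations) survive. The paper merely packages the same bookkeeping via a first-order Taylor expansion in $x_n$ with remainder divisible by $\bigl(\sum_j\epsilon_j\dd_j(a_n)\bigr)^2$, whereas you classify terms by activated factors and prove the $\supp$-closure claim by induction — a presentational difference only.
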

\begin{proof}
Recall that $f^e$ denotes the polynomial (over $\DD(K)$) obtained by applying $e$ to the coefficients of $f$. We then have
$$0=e(0)=e(f(\bar a))=f^{e}(e(a_1),\dots,e(a_n)).$$
On the other hand, using an order-one Taylor expansion at $a_n$, we may write
\begin{align*}
f^e(x_1,\dots,x_{n-1},e(a_n)) & =f^e(x_1,\dots,x_{n-1},a_n) \\
&\quad   +\frac{\partial f^e}{\partial x_n}(x_1,\dots,x_{n-1},a_n)\cdot(\epsilon_1\dd_1(a_n)+\cdots+\epsilon_m\dd_m(a_n)) \\
& \quad +R(x_1,\dots,x_{n-1},a_n)
\end{align*}
where $(\epsilon_1\dd_1(a_n)+\cdots+\epsilon_m\dd_m(a_n))^2$ is a factor of $R$. Putting the above equalities together, we obtain
\begin{align*}
\frac{\partial f^e}{\partial x_n}(e(a_1),\dots,e(a_{n-1}),a_n)\cdot(\epsilon_1\dd_1(a_n)+\cdots+\epsilon_m\dd_m(a_n)) &= -f^e(e(a_1),\dots,e(a_{n-1}),a_n) \\
& \quad - R(e(a_1),\dots,e(a_{n-1}),a_n).
\end{align*}
When computing the coefficient of $\epsilon_i$, in the left-hand-side we find $\frac{\partial f}{\partial x_n}(\bar a) \cdot \dd_i(a_{n})$; while, using the fact that $\epsilon_p\cdot \epsilon_q\in \mm^{\sigma(p)+\sigma(q)}$, we see that the rest of the terms (in this coefficient) form a polynomial $h_i$ of the desired form.
\end{proof}

%What does the next one tells us?

\begin{lemma}\label{partial_com}
Let $K\leq F\leq L$ be field extensions and let $b\in L$ be separably algebraic over $F$. Suppose $e$ and $f: F(b)\to \DD(L)$ are $\DD$-structures with corresponding operators $(\dd_i)_{i=1}^m$ and $(\dd'_i)_{i=1}^m$ such that $e|_K=f|_K$. Let $1\leq i\leq m$ and suppose that $\dd_q=\dd'_q$ for all $q\in \supp(i)$. If  $\dd_{i}|_F=\dd'_{i}|_F$, then $\dd_{i}(b)=\dd'_{i}(b)$.
\end{lemma}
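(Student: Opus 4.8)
The plan is to reduce the statement to Lemma~\ref{explicit}, applied to the generic monic polynomial of the appropriate degree. Let $n$ be the degree of $b$ over $F$ and let $g(x)=x^n+\sum_{j=0}^{n-1}c_j x^j\in F[x]$ be the minimal polynomial of $b$ over $F$; since $b$ is separably algebraic over $F$, the element $g'(b)$ is nonzero in $L$. Consider
$$P(z_0,\dots,z_{n-1},x)=x^n+\sum_{j=0}^{n-1}z_j x^j,$$
whose coefficients lie in the prime field, hence in $k$; regard it as a polynomial in the $n+1$ variables $z_0,\dots,z_{n-1},x$, with $x$ playing the role of the last variable. Set $\bar a=(c_0,\dots,c_{n-1},b)\in L^{n+1}$, so that $P(\bar a)=g(b)=0$ and $\frac{\partial P}{\partial x}(\bar a)=g'(b)$.

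I would then observe that, restricted to the subfield $k(c_0,\dots,c_{n-1},b)\subseteq F(b)$, both $e$ and $f$ are $\DD$-operators into $\DD(L)$, each of which sends $k$ into $\DD(k)$ (being the structure map of $\DD(L)$ as a $k$-algebra). Hence Lemma~\ref{explicit}, with base field $k$, applies to $P$ and $\bar a$ for $e$ and for $f$ alike, and produces a single polynomial $h_i$ over $k$ — depending only on $P$ and on $i$ — in variables corresponding to the entries of $\bar a$, to $\dd_q$ applied to those entries for $q\in\supp(i)$, and to $\dd_i$ applied to $c_0,\dots,c_{n-1}$ only (not to $b$), such that
$$g'(b)\cdot\dd_i(b)=h_i\big((c_j)_j,\,b,\,(\dd_q(c_j))_{j,\,q\in\supp(i)},\,(\dd_q(b))_{q\in\supp(i)},\,(\dd_i(c_j))_j\big),$$
together with the same equation for $f$, obtained by replacing every $\dd$ by the corresponding $\dd'$.

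Finally, I would compare the two right-hand sides. For each $q\in\supp(i)$, the hypothesis gives $\dd_q=\dd'_q$ on all of $F(b)$, so $\dd_q(c_j)=\dd'_q(c_j)$ (as $c_j\in F$) and $\dd_q(b)=\dd'_q(b)$; and since $\dd_i|_F=\dd'_i|_F$, also $\dd_i(c_j)=\dd'_i(c_j)$ for every $j<n$. Thus the tuples fed into $h_i$ agree, so $g'(b)\cdot\dd_i(b)=g'(b)\cdot\dd'_i(b)$, and dividing by the nonzero element $g'(b)$ of the field $L$ gives $\dd_i(b)=\dd'_i(b)$, as required.

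The only delicate point is the bookkeeping in the appeal to Lemma~\ref{explicit}: one must use precisely the facts that on its right-hand side $\dd_i$ occurs only applied to the coefficients $c_j$ (never to $b$), and that every other operator there is some $\dd_q$ with $q\in\supp(i)$ — this is exactly what makes the hypotheses ``$\dd_q=\dd'_q$ for $q\in\supp(i)$'' and ``$\dd_i=\dd'_i$ on $F$'' together sufficient. (Incidentally, the hypothesis $e|_K=f|_K$ does not seem to be needed for this argument.) Separability of $b$ over $F$ is used only to guarantee that $g'(b)$ is invertible in $L$.
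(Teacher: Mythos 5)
Your proof is correct and follows essentially the same route as the paper: both reduce to Lemma~\ref{explicit} applied to a polynomial vanishing at $b$ with nonzero partial derivative, and then compare the two resulting expressions for $\dd_i(b)$ and $\dd'_i(b)$. The only (harmless) difference is that you take the generic monic polynomial over $k$ with the minimal-polynomial coefficients as extra variables, whereas the paper takes a polynomial over $K$ — which is why the paper does use the hypothesis $e|_K=f|_K$ (the polynomial $h$ of Lemma~\ref{explicit} depends on the operator's restriction to the coefficient field), while in your version the operators agree on $k$ automatically and that hypothesis indeed becomes superfluous, as you correctly observe.
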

\begin{proof}
	As $b$ is separably algebraic over $F$, there are $a_1,\dots, a_{n-1}\in F$ and a polynomial $f$ over $K$ with $f(a_1,\dots,a_{n-1}, b)=0$ and $\frac{\dd f}{\dd x_n}(a_1,\dots,a_{n-1},b)\neq 0$.

By Lemma \ref{explicit} there is a polynomial $h$ over $K$ such that, writing $a_n:=b$ and $\bar a:=(a_1,\dots,a_n)$, we have 
$$\frac{\dd f}{\dd x_n}(\bar a) \cdot \dd_{i}(b)=h((a_{p})_{p\leq n},(\dd_q(a_p))_{p\leq n,q\in \supp(i)},(\dd_{i}(a_{p}))_{p\leq n-1})$$
and 
$$\frac{\dd f}{\dd x_n}(\bar a)\cdot \dd'_{i}(b)=h((a_{p})_{p\leq n},(\dd'_q(a_p))_{p\leq n,q\in \supp(i)},(\dd'_{i}(a_{p}))_{p\leq n-1}).$$
As, by the assumption, 
$$((\dd_q(a_p))_{p\leq n,q\in \supp(i)},(\dd_{i}(a_{p}))_{p\leq n-1})=((\dd'_q(a_p))_{p\leq n,q\in \supp(i)},(\dd'_{i_0}(a_{p})_{p\leq n-1})),$$ 
we conclude that $\dd_{i}(b)=\dd'_{i}(b)$.
\end{proof}

\iffalse
By Remark \ref{operatornotation} and Lemma \ref{explicit} one also easily obtains:
\begin{corollary}\label{comm_sep_cl}
	Suppose $K$ is a field generated by a subset $A\subseteq K$ and  $b\in L\supseteq K$ is separably algebraic over $K$. Suppose $e: K(a)\to \DD_1(L)$ and  $f: K(a)\to \DD_1(L)$ are homomorphisms with corresponding $\DD$-operators $(D_i)_{i\leq m}$ and $(D'_i)_{i\leq m'}$, $i_0\leq m$ and $i'_0\leq m'$. If
	$D_i(a)D'_{i'}(a)=D'_{i'}(a)D_i(a)$ for all $a\in A$, $i\in \supp(i_0)$, and $i'\in \supp(i'_0)$, then $D_i(b)D'_{i'}(b)=D'_{i'}(b)D_{i}$.
\end{corollary}
\fi

\

%\medskip

In order to introduce our notion of commutativity in the next section, we will need to consider pairs of local operator-systems. Namely, let 
$$\bDD=\{(\DD_1,\bar\epsilon_1),(\DD_2,\bar\epsilon_2)\}$$
where $(\DD_u,\bar\epsilon_u)$ is a local operator-system for $u \in \{1,2\}$. In this case, given a homomorphism of $k$-algebras $R\xrightarrow{\phi} S$, by a $\bDD$-operator from $R$ to $S$ with respect to $\phi$ we mean a pair $\be=(e_1, e_2)$ where each $e_u:R\to \DD_u(S)$ is a $\DD_u$-operator with respect to $\phi$. We denote this by $(R\xrightarrow{\phi} S,\be)$. As before, when $R$ is a subring of $S$ and the inclusion $R\hookrightarrow S$ is a $k$-algebra map, a $\bDD$-operator from $R$ to $S$ with respect to the inclusion is simply called a $\bDD$-operator and we denote it by $\be:R\to \bDD(S)$. If in addition $R=S$, we say that $R$ is a $\bDD$-ring and we denote it by $(R,\be)$. The notions of $\bDD$-homomorphism and $\bDD$-algebra are the obvious ones.

The notation above will be adjusted to the case of pairs of operator-systems by simply adding an index. For instance, $\dim_k \DD_u=m_u+1$ and $\mm_u$ denotes the maximal ideal of $\DD_u$. Similarly, we will denote the operators associated to the ranked bases by $\dd_{u,i}$ where $u \in \{1,2\}$ and $1\leq i\leq m_u$. Then, as in Remark~\ref{operatornotation}, all these operators are additive and satisfy
$$\dd_{u,i}(xy)=\dd_{u,i}(x)\, y +x\, \dd_{u,i}(y)+\sum_{p,q=1}^{m_u}\alpha_{u,i}^{pq}\dd_{u,p}(x)\dd_{u,q}(y)$$
where $\alpha_{u,i}^{pq}\in k$ is the coefficient of $\epsilon_{u,i}$ in the product $\epsilon_{u,p}\cdot \epsilon_{u,q}$ in $\DD_u$. Recall that, due to the choice of ranked basis, $\alpha^{p,q}_{u,i}=0$ whenever $\sigma_u(p)+\sigma_u(q)>\sigma_u(i)$.

\

\section{A notion of commutativity}\label{sec3}

In this section we introduce a notion of commutativity. Let 
$$\bDD=\{(\DD_1,\bar\epsilon_1), (\DD_2,\bar\epsilon_2)\}$$ 
be two local operator-systems (over the field $k$).

\begin{assumption}
We fix a $\bDD$-field $(F,\be=(e_1,e_2))$. For the remainder of this section we assume that all rings under consideration are $F$-algebras and all $\bDD$-rings are $(F,\be)$-algebras. 
\end{assumption}

The reason for this assumption is that it will allow us to recover the case of Lie-commuting derivations as treated by Yaffe in \cite{Yaffe2001}.

\subsection{Commutativity and examples}

We fix (for the remainder of this section) a $k$-algebra homomorphism $r:\DD_2\to \DD_1\otimes_k\DD_2(F)$. 
%Suppose $(R,\be=(e_1,e_2))$ is a $\DD$-ring (and an $(F,\be)$-algebra). 
Let $R\leq S$ be an extension of $F$-algebras and $\be=(e_1,e_2):R\to \bDD(S)$ a $\bDD$-operator.
We note that there are two ways of lifting $r$ to an $R$-algebra homomorphism:
$$r^\iota:\DD_2(R)\to \DD_1(\DD_2(S))$$
and 
$$r^{e_1}:\DD_2(R)\to \DD_1(\DD_2(S))$$
where the lift $r^\iota$ is with respect to the standard $R$-algebra structure on $\DD_2(R)$ and on $\DD_1(\DD_2(S))$, while the lift $r^{e_1}$ is with respect to the standard $R$-algebra structure on $\DD_2(R)$ but with respect to the $e_1$-structure on $\DD_1(\DD_2(S))$; namely, the latter structure is
$$R\xrightarrow[]{e_1} \DD_1(S)  \xrightarrow[]{\DD_1(\iota)}\DD_1(\DD_2(S))$$
where $\iota:S\to\DD_2(S)$ is the standard $S$-algebra structure on $\DD_2(S)=\DD_2\otimes_k S$.
Whenever the lift is with respect to one of these we denote it by $r^*$ (i.e., $*\in\{\iota,e_1\}$). 

%\begin{notation}
%We will denote by $\chi:\DD_1\to\DD_1\otimes_k\DD_2(F)$ the canonical embedding $a\mapsto a\otimes 1$.
%\end{notation}

\bigskip

The following is our notion of commutativity with respect to $r^*$.

\begin{definition}\label{def_comm}
Let $A\leq R\leq S$ be an extension of rings and $\be:R\to \bDD(S)$ a $\bDD$-operator such that $\be(A)\subseteq\bDD(R)$ (i.e. $e_i(A)\subseteq \DD_i(R)$ for $i=1,2$). Also, let $*\in \{\iota,e_1\}$. We say that $(e_1,e_2)$ commute on $(A,R,S)$ with respect to $r^*$ if the following diagram commutes

\begin{equation}\label{commdiagram}
\xymatrix{
 A \ar[rr]^{e_1} \ar[d]^{e_2} && \DD_1(R) \ar[d]^{\DD_1(e_2)} \\
 \DD_2(R) \ar[rr]^{r^*}   && \DD_1(\DD_2(S))
}
\end{equation}
 In case $\DD_1=\DD_2$ and $e_1=e_2$ we simply say that $e_1$ commutes on $(A,R,S)$ (meaning $(e_1,e_1)$ does), and if in addition $A=R=S$ we say $e_1$ commutes on $A$ (with respect to $r^*$).
\end{definition}

\begin{remark}\label{observe}
Let $A\leq R\leq S$ be as in Definition~\ref{def_comm} and assume that $(e_1,e_2)$ commute on $(A,R,S)$ with respect to $r^*$. The following are immediate from the fact that in diagram~\eqref{commdiagram} all maps are $k$-algebra homomorphisms:
\begin{enumerate}
\item Suppose $B\subseteq R$ is such that $\be(B)\subseteq \bDD(R)$. If diagram~\eqref{commdiagram} commutes on $B$, then $(e_1,e_2)$ commute on $(A[B],R,S)$ w.r.t. $r^*$. Here $A[B]$ denotes the ring generated by $B$ over $A$.
\item Suppose $R$ is a field and $\be(\operatorname{Frac}A)\subseteq \bDD(R)$, then $(e_1,e_2)$ commutes on $(\operatorname{Frac}A,R,S)$ w.r.t. $r^*$. 
%Here $A_\Sigma$ denotes the localisation of $A$ by $\Sigma$ (by \cite[Lemma 2.7]{BHKK2019} there is a unique $\bDD$-structure on $A_\Sigma$ extending that on $A$). 
\item Suppose $A$ and $R$ are fields. From (1) and (2) it follows that if $B\subseteq R$ is such that $\be(B)\subseteq \bDD(R)$ and diagram~\eqref{commdiagram} commutes on $B$, then $(e_1,e_2)$ commute on $(A(B),R,S)$ w.r.t. $r^*$. Here $A(B)$ denotes the field generated by $B$ over $A$.
\end{enumerate}
\end{remark}

We now observe that when $r$ is the canonical embedding $\DD_2\hookrightarrow\DD_1\otimes_k\DD_2(F)$ (i.e. $a\mapsto 1\otimes a$) and is lifted via $e_1$, then we recover trivial commutativity of the operators from $\DD_1$ with the ones from $\DD_2$ (i.e. the condition $\dd_{1,i}\dd_{2,j}(a)=\dd_{2,j}\dd_{1,i}$ for all $i$ and $j$).

\begin{lemma}
Let $(R,\be)$ be a $\bDD$-ring and $r$ be the canonical embedding $\DD_2\hookrightarrow\DD_1\otimes_k\DD_2(F)$. Then, $(e_1,e_2)$ commute on $R$ with respect to $r^{e_1}$ if and only if for all $1\leq i\leq m_1$ and $1\leq j\leq m_2$ we have
$$\dd_{1,i}\dd_{2,j}(a)=\dd_{2,j}\dd_{1,i}(a) \quad \text{ for all } a\in R.$$
\end{lemma}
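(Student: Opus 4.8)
The plan is to unwind Definition~\ref{def_comm} in the special case $A=R=S$ by evaluating the two composites in diagram~\eqref{commdiagram} on an arbitrary $a\in R$, expanding everything in terms of the ranked bases, and comparing coefficients in $\DD_1(\DD_2(R))=\DD_1\otimes_k\DD_2\otimes_k R$.

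First I would compute the ``top-then-right'' composite $\DD_1(e_2)\circ e_1$. Writing $e_1(a)=\sum_{i=0}^{m_1}\epsilon_{1,i}\otimes\dd_{1,i}(a)$ (with the convention $\epsilon_{u,0}=1$, $\dd_{u,0}=\id$) and applying $\DD_1(e_2)=\id_{\DD_1}\otimes\, e_2$, one obtains
\[
(\DD_1(e_2)\circ e_1)(a)=\sum_{i=0}^{m_1}\sum_{j=0}^{m_2}\epsilon_{1,i}\otimes\epsilon_{2,j}\otimes\dd_{2,j}\dd_{1,i}(a).
\]
Then I would compute the ``left-then-bottom'' composite $r^{e_1}\circ e_2$. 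The point to be careful about is exactly which algebra structures $r^{e_1}$ uses: it is the unique $R$-algebra homomorphism $\DD_2(R)\to\DD_1(\DD_2(R))$ extending $r$, where the $R$-algebra structure on the target is the $e_1$-structure $R\xrightarrow{e_1}\DD_1(R)\xrightarrow{\DD_1(\iota)}\DD_1(\DD_2(R))$. Since $r$ is the canonical embedding, $r(\epsilon_{2,j})=1\otimes\epsilon_{2,j}\otimes 1$, while the $R$-structure sends $a\mapsto\DD_1(\iota)(e_1(a))=\sum_i\epsilon_{1,i}\otimes 1\otimes\dd_{1,i}(a)$; multiplying these gives $r^{e_1}(\epsilon_{2,j}\otimes a)=\sum_i\epsilon_{1,i}\otimes\epsilon_{2,j}\otimes\dd_{1,i}(a)$. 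Applying this to $e_2(a)=\sum_{j}\epsilon_{2,j}\otimes\dd_{2,j}(a)$ yields
\[
(r^{e_1}\circ e_2)(a)=\sum_{i=0}^{m_1}\sum_{j=0}^{m_2}\epsilon_{1,i}\otimes\epsilon_{2,j}\otimes\dd_{1,i}\dd_{2,j}(a).
\]

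Finally, since $(\epsilon_{1,i}\otimes\epsilon_{2,j})_{i,j}$ is a $k$-basis of $\DD_1\otimes_k\DD_2$, the two displayed elements agree for every $a\in R$ if and only if $\dd_{2,j}\dd_{1,i}(a)=\dd_{1,i}\dd_{2,j}(a)$ for all $a\in R$ and all $0\le i\le m_1$, $0\le j\le m_2$; the cases $i=0$ or $j=0$ hold automatically because $\dd_{u,0}=\id$, so this is equivalent to the asserted condition for $1\le i\le m_1$, $1\le j\le m_2$. I do not expect a genuine obstacle here: the only subtlety is bookkeeping of the $R$-algebra structures (the distinction between $r^\iota$ and $r^{e_1}$) and remembering to carry out the multiplication in $\DD_1(\DD_2(R))$ with respect to the $e_1$-structure, after which the identification of coefficients is immediate.
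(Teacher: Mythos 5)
Your proposal is correct and follows essentially the same route as the paper: both arguments evaluate the two composites of diagram~\eqref{commdiagram} on an arbitrary $a\in R$, expand in the basis $(\epsilon_{1,i}\otimes\epsilon_{2,j})$ of $\DD_1(\DD_2(R))$ over $R$, and compare coefficients (the paper phrases your computation of $r^{e_1}$ as the identification $r^{e_1}=\DD_2(e_1)$, which is the same observation). Your explicit handling of the $e_1$-structure on the target and the remark that the $i=0$ or $j=0$ coefficients agree automatically are exactly the points the paper relies on.
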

\begin{proof}
Let  $\bar\epsilon_1=(1,\epsilon_{1,1},\dots,\epsilon_{1,m_1})$ and $\bar\epsilon_2=(1,\epsilon_{2,1},
\dots,\epsilon_{2,m_2})$ be the ranked bases of $\DD_1$ and $\DD_2$, respectively. Let $a\in R$. Then, the top and right arrows of diagram \eqref{commdiagram} yield
\begin{align*}
\DD_1(e_2)\circ e_1(a) & =  \DD_1(e_2)\, (a+\epsilon_{1,1}\dd_{1,1}(a)+\cdots+\epsilon_{1,m_1}\dd_{1,m_1}(a)) \\
& = 1\otimes e_2(a) +\epsilon_{1,1}\otimes e_2(\dd_{1,1}(a))+\cdots + \epsilon_{1,m_1}\otimes  e_2(\dd_{1,m_1}(a)) \\
& = 1\otimes 1 \otimes a+1\otimes \epsilon_{2,1}\otimes \dd_{2,1}(a)+\cdots +1\otimes \epsilon_{2,m_2}\otimes \dd_{2,m_2}(a)\; + \\ 
&\hspace{.5cm}  \epsilon_{1,1}\otimes 1\otimes \dd_{1,1}(a) +\epsilon_{1,1}\otimes\epsilon_{2,1}\otimes \dd_{2,1}\dd_{1,1}(a)+\cdots+ \epsilon_{1,1}\otimes \epsilon_{2,m_2}\otimes \dd_{2,m_2}\dd_{1,1}(a) \; + \\
& \hspace{.5cm} \vdots \\
& \hspace{.5cm} \epsilon_{1,m_1}\otimes 1\otimes \dd_{1,m_1}(a)+ \epsilon_{1,m_1}\otimes \epsilon_{2,1}\otimes \dd_{2,1}\dd_{1,m_1}(a)+\cdots + \epsilon_{1,m_1}\otimes \epsilon_{2,m_2}\otimes \dd_{2,m_2}\dd_{1,m_1}(a)
\end{align*}

On the other hand, using that $r$ is the canonical embedding, we get that $r^{e_1}=\DD_2(e_1)$ and thus a similar computation (on the bottom and left arrows of diagram~\eqref{commdiagram}) yields
\begin{align*}
r^{e_1}\circ e_2(a) & = 1\otimes 1 \otimes a+ \epsilon_{1,1}\otimes 1\otimes \dd_{1,1}(a)+\cdots + \epsilon_{1,m_1}\otimes 1\otimes \dd_{1,m_1}(a)\; + \\ 
&\hspace{.5cm}  1\otimes \epsilon_{2,1}\otimes \dd_{2,1}(a) +\epsilon_{1,1}\otimes \epsilon_{2,1}\otimes \dd_{1,1}\dd_{2,1}(a)+\cdots+ \epsilon_{1,m_1}\otimes \epsilon_{2,1}\otimes\dd_{1,m_1}\dd_{2,1}(a) \; + \\
& \hspace{.5cm} \vdots \\
& \hspace{.5cm} 1 \otimes \epsilon_{2,m_2}\otimes \dd_{2,m_2}(a)+\epsilon_{1,1}\otimes \epsilon_{2,m_2}\otimes\dd_{1,1}\dd_{2,m_2}(a)+\cdots +  \epsilon_{1,m_1}\otimes \epsilon_{2,m_2}\otimes\dd_{1,m_1}\dd_{2,m_2}(a)
\end{align*}
As $(\epsilon_{1,i}\otimes \epsilon_{2,j}: 0\leq i\leq m_1,0\leq j\leq m_2)$ is an $R$-linear basis of $\DD_1(\DD_2(R))$, recalling that $\epsilon_{1,0}=1$ and $\epsilon_{2,0}=1$, it follows that 
$$ \DD_1(e_2)\circ e_1(a)=r^{e_1}\circ e_2(a)$$ 
if and only if 
$$\dd_{1,i}\dd_{2,j}(a)=\dd_{2,j}\dd_{1,i}(a)\quad \text{for all } 1\leq i\leq m_1, 1\leq j\leq m_2.$$
\end{proof}

\begin{remark}
Let $\DD=\DD_1=\DD_2$ and $e=e_1=e_2$. Suppose $(R,\be)$ is a $\bDD$-ring and $r$ is the canonical embedding $\DD\hookrightarrow \DD\otimes_k\DD$ (i.e., $x\mapsto 1\otimes x$).
\begin{enumerate}
\item if $r$ lifted by $e$, then $e$ commutes on $R$ if and only if the operators from $\DD$ commute with each other (i.e. $\dd_i\dd_j(a)=\dd_j\dd_i(a)$ for all $a\in R$ and $1\leq i,j\leq m$).
\item if $r$ is lifted by $\iota$, then $e$ commutes on $R$ if and only if the operators from $\DD$ are all trivially zero (i.e. $\dd_i(a)=0$ for all $a\in R$ and $1\leq r\leq m$).
\item if $r':\DD\to \DD\otimes_k\DD$ is the composition of $\pi:\DD\to k$ and $k\hookrightarrow \DD\otimes_k \DD$ (where the latter is the canonical $k$-algebra structure), then $e$ commutes on $R$ (with respect to any lifting of $r'$) if and only if the operators from $\DD$ are all trivially zero.
\end{enumerate}
\end{remark}

We now spell out how to recover the motivating examples.

\begin{example}\label{Liecommexample} (Lie commuting derivations)
Let $m\in \NN$ and 
$$\DD=k[\epsilon_1,\dots,\epsilon_m]/(\epsilon_1,\dots,\epsilon_m)^2$$
with %$\pi(\epsilon_i)=0$ and 
ranked basis $(1,\epsilon_1,\dots,\epsilon_m)$. Recall that this recovers differential rings with $m$-many derivations (see Example \ref{firstexamples}(1)). Let $(F,e)$ be a $\DD$-field; in other words, $F$ is a field extension of $k$ equipped with $k$-linear derivations $\dd_1,\dots,\dd_m$. Let $(c_\ell^{ij})_{i,j,\ell=1}^m$ be a tuple from $F$ such that for each $\ell$ the $m\times m$-matrix $(c_\ell^{ij})_{i,j=1}^m$ is skew-symmetric. Consider the $k$-algebra homomorphism $r:\DD\to \DD(\DD(F))$ determined by
$$r(\epsilon_\ell)=1 \otimes \epsilon_\ell+\sum_{i,j=1}^{m}  \epsilon_i\otimes \epsilon_j\otimes c_{\ell}^{ji}$$
for $\ell=1,\dots,m$. Then, on any $\DD$-ring $(R,e)$, $e$ commutes on $R$ with respect to $r^e$ if and only if
$$[\dd_i,\dd_j]=c_1^{ij}\dd_1+\cdots+c_m^{ij}\dd_m$$
for $1\leq i, j\leq m$.
\end{example}

\begin{example}\label{exampleHS} (iterative truncated H-S derivations in positive characteristic) Assume $char(k)=p>0$. Let $n\in \NN$ and $\DD=k[\epsilon]/(\epsilon)^{p^n}$ with %$\pi(\epsilon)=0$ and 
	ranked basis $(1,\epsilon,\dots,\epsilon^{p^n-1})$. Recall that this recovers rings equipped with a $(p^n-1)$-truncated Hasse-Schmidt derivation $(\dd_i)_{i=1}^{p^n-1}$ (see Example~\ref{firstexamples}(2)). Consider the $k$-algebra homomorphism $r:\DD\to \DD\otimes_k\DD$ determined by 
$$r(\epsilon)=\epsilon\otimes 1+1\otimes \epsilon$$
(the assumption that $char(k)=p$ yields that $r$ is indeed a homomorphism). Then, on any $\DD$-ring $(R,e)$, $e$ commutes on $R$ with respect to $r^{\iota}$ if and only if for $1\leq i,j\leq n$ we have
$$\dd_j\dd_i=
\left\{
\begin{array}{cc}
\binom{i+j}{i}\,\dd_{i+j} &\hspace{.6cm}  i+j\leq p^n -1 \\
0  & i+j\geq p^n
\end{array}
\right.
$$
in other words, $(\dd_i)_{i=1}^{p^n-1}$ is iterative.
\end{example}
%{\bf Again, shall we modify or postpone the two examples below?}

\begin{example}\label{g-der}($\mathfrak{g}$-derivations)
Assume $char(k)=p>0$. Let $\ell,n\in \NN$ and 
$$\DD=k[\epsilon_1,\dots,\epsilon_\ell]/(\epsilon_1^{p^n},\dots,\epsilon_\ell^{p^n}).$$
Let us explain how the notion of a $\mathfrak{g}$-derivation studied by Hoffmann and Kowalski in \cite{HK} falls into our framework. Let $\mathfrak{g}$ be a finite group scheme over $k$ whose underlying scheme is $\spec(\DD)$. Let $r$ be the co-multiplication in the corresponding Hopf algebra. A \emph{$\mathfrak{g}$-derivation} on a $k$ algebra $R$ is a $k$-group scheme action of $\mathfrak g$ on $\spec R$ (see \cite[Definition 3.8]{HK}). 
By \cite[Remark 3.9]{HK}, a $\mathfrak{g}$-derivation on a $k$-algebra $R$ is the same as a $\DD$-operator on $R$ that commutes w.r.t. $r^\iota$ (in the sense of Definition \ref{def_comm}).

%Perhaps we want to add the case of Daniel-Piotr with a formal associative law. Need to add details...
\end{example}

\begin{example}\label{examplesevHS} (several iterative truncated H-S derivations that commute) Assume $char(k)=p>0$. Let $\ell,n\in \NN$ and 
$$\DD=k[\epsilon_1,\dots,\epsilon_\ell]/(\epsilon_1^{p^n},\dots,\epsilon_\ell^{p^n}).$$ 
We recover fields equipped with $\ell$-many iterative $(p^n-1)$-truncated H-S derivations that commute (with each other) by setting $\mathfrak{g}$ to be the truncated $k$-group scheme $\mathbb G_a^\ell[n]$ and considering fields equip with $\mathfrak{g}$-derivations in the sense of Example~\ref{g-der} (see \cite[Example 3.12(1)]{HK}). Alternatively, we can recover this setup as follows: let $\DD_i=k[\epsilon_i]/(\epsilon_i)^{p^{n}}$, for $i=1,\dots,\ell$, and $r_i$ as in Example~\ref{exampleHS}; then taking $\DD$ and $r$ to be the tensor products of the $\DD_i$'s and the $r_i$'s, respectively, yields that $\DD$-operators that commute w.r.t. $r^\iota$ correspond to $\ell$-many commuting iterative $(p^n-1)$-truncated derivations. We provide further details of this latter construction in Appendix~\ref{app_single_algebra}.
\end{example}

\begin{example}\label{allcombined}(Lie commutation and iterativity) Again assume $char(k)=p>0$. Let $m,\ell,n\in \NN$ and 
$$\DD_1=k[\epsilon_1,\dots,\epsilon_m]/(\epsilon_1,\dots,\epsilon_m)^2 \quad \text{ and } \quad \DD_2=k[\epsilon_1,\dots,\epsilon_\ell]/(\epsilon_1^{p^n},\dots,\epsilon_\ell^{p^n}).$$
Let $(F,e)$ be a $\DD_1$-field and $r_1:\DD_1\to \DD_1(\DD_1(F))$ a $k$-algebra homomorphism as in Example~\ref{Liecommexample}. Also  let $\mathfrak g$ be a finite $k$-group scheme with underlying scheme $\spec(\DD_2)$ and $r_2$ be co-multiplication in the Hopf algebra dual to $\mathfrak g$ (as in Example~\ref{g-der}). Furthermore, let $r_{12}:\DD_2\to \DD_1\otimes \DD_2$ be the $k$-algebra homomorphism $r(x)=1\otimes x$.

Then, in any $\bDD$-ring $(R,\be=(e_1,e_2))$, $e_1$ commutes w.r.t. $r_1^{e_1}$, $e_2$ commutes w.r.t. $r_2^\iota$, and $(e_1,e_2)$ commutes w.r.t. $r_{12}^{e_1}$ if and only if
\begin{itemize}
\item the operators $(\dd_{1,i})$ associated to $e_1$ are derivations that Lie-commute,
\item the operators $(\dd_{2,j})$ associated to $e_2$ are $\mathfrak g$-derivations, and 
\item each pair of operators $\dd_{1,i}$ and $\dd_{2,j}$ commute.
\end{itemize}
\end{example}

%%%%%%%%%%%commentoutbegin
\begin{confidential}

\begin{example}\label{examplesevHS} (iterative truncated H-S derivations that commute) Assume $char(k)=p>0$. Let $n_1,\dots,n_s$ be positive integers and $\DD_u=k[\epsilon]/(\epsilon)^{p^{n_u}}$, for $u=1,\dots,s$, with the natural choice of %$\pi_u$ and 
	ranked bases. In this case $\bDD$-rings correspond to rings equipped with $(\dd_{1,i})_{i=1}^{p^{n_1}-1},\dots,(\dd_{s,i})_{i=1}^{p^{n_s}-1}$ where each $(\dd_{u,i})$ is a $(p^{n_u}-1)$-truncated Hasse-Schmidt derivation. For each $1\leq i\leq j \leq s$, let $r_{i,j}:\DD_j\to \DD_j\otimes \DD_i$ be the $k$-algebra homomorphism defined by
$$
r_{i,j}(\epsilon)=
\left\{
\begin{array}{cc}
\epsilon\otimes 1 +1\otimes \epsilon & i=j \\
\epsilon\otimes 1 &    i\neq j
\end{array}
\right.
$$
Then, in any $\bDD$-ring $(R,\be=(e_1,\dots,e_s))$, $e_i$ commutes on $R$ with respect to $r_{i,i}^{\iota}$ and, for $i<j$, $(e_i,e_j)$ commute on $R$ with respect to $r_{i,j}^{e_i}$ if and only if the operators $(\dd_{u,i})$ are \emph{iterative} truncated Hasse-Schmidt derivations that \emph{commute} with each other.
\end{example}

\begin{example}\label{allcombined}(Lie commutation and iterativity) Again assume $char(k)=p>0$. As in Example~\ref{firstexamples}(4), we can consider $\bDD=\{\DD_0,\DD_1,\dots,\DD_s\}$ such that $\bDD$-rings correspond to rings equipped with 
$$(\dd_{0,i})_{i=1}^{m},(\dd_{1,i})_{i=1}^{p^{n_1}-1},\dots,(\dd_{s,i})_{i=1}^{p^{n_s}-1}$$
where $(\dd_{0,i})_{i=1}^{m}$ are $m$-many derivations and, for $u=1,\dots,s$, each $(\dd_{u,i})_{i=1}^{p^{n_u}-1}$ is an $(p^{n_u}-1)$-truncated Hasse-Schmidt derivation. Combining the previous examples one can find $k$-algebra homomorphisms $(r_{i,j}:0\leq i\leq j\leq s)$ such that our notion of commutativity corresponds precisely to:
\begin{itemize}
\item the derivations $(\dd_{0,1},\dots,\dd_{0,m})$ Lie-commute,
\item each truncated Hasse-Schmidt derivation $(\dd_{u,1},\dots,\dd_{u,p^{n_u}-1})$ is iterative, and
\item for $0\leq u<v\leq s$, the operators $\dd_{u,i}$ and $\dd_{v,j}$ commute.
\end{itemize}
\end{example}

\end{confidential}
 %%%%%%%%commentoutend

 \

\subsection{Commutativity in separable extensions and compositums} 

We carry on the notation from the previous subsection. In particular, $\bDD=\{\DD_1,\DD_2\}$ are two local operator-systems and $(F,\be=(e_1,e_2))$ is a fixed $\bDD$-field. All rings under consideration are $F$-algebras and $\bDD$-rings are $(F,\be)$-algebras. In this subsection we prove that our notion of commutativity is preserved when passing to separably algebraic field extensions and also on compositums.

\medskip

Recall that $\mm_i$ denotes the maximal ideal of $\DD_i$, for $i=1,2$. Due to a theorem of Sweedler \cite{Swee1975}, the algebra $\DD_{i,j}:=\DD_i\otimes_k\DD_j$ is local with maximal ideal
$$\tilde \mm:=\mm_i\otimes_k \DD_j+\DD_i\otimes_k \mm_j.$$
Hence we can view $\DD_{i,j}$ as a local operator-system (one can also attach a ranked basis of course, but in this section such basis will not be used). So now we may talk about $\DD_{i,j}$-structures. Note that the residue map is given by 
$$\pi_{i,j}:=\pi_i\otimes\pi_j:\DD_{i,j}\to k.$$

Let $A\leq R\leq S$ be an extension of rings and $\be:R\to \bDD(S)$ be a $\bDD$-operator such that $\be(A)\subseteq \bDD(R)$. Let $r:\DD_i\to \DD_{j,i}(F)$ be a $k$-algebra homomorphism and $*\in\{\iota,e_j\}$. Then, the map
$$A\xrightarrow{e_i}\DD_i(R)\xrightarrow{r^{*}} \DD_{j,i}(S)$$
yields a $\DD_{j,i}$-operator $A\to\DD_{j,i}(S)$. Indeed, $r^{*}\circ e_i$ is clearly a $k$-algebra homomorphism and a straightforward computation yields $\pi_{i,j}^R\circ r^{*}\circ e_i=\operatorname{Id}_A$ (using the fact that $r(\mm_i)\subseteq \tilde \mm\otimes_k F$ and $\pi_{i,j}(\tilde \mm)=0$).

\medskip

For the following proof we use the above observations when $i=2$ and $j=1$. In particular, $r:\DD_2\to \DD_{1,2}(F)$ and $*\in \{\iota,e_1\}$.

\begin{theorem}\label{commutingext}
Suppose $K\leq L\leq E$ is an extension of fields and $\be:L\to \bDD(E)$ is a $\bDD$-operator such that $\be(K)\subseteq \bDD(L)$. Assume $(e_1,e_2)$ commute on $(K,L,E)$ with respect to $r^*$. Then, for any $K\leq K'\leq L$ with $K'/K$ separably algebraic, we have that $\be(K')\subseteq \bDD(L)$ and $(e_1,e_2)$ commute on $(K',L,E)$ with respect to $r^*$.
\end{theorem}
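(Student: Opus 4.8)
The plan is to deduce everything from the uniqueness clauses in Lemma~\ref{extendstructure}(i), together with the observation (made just before the statement) that pre-composing $e_i$ with a lift $r^*$ of a homomorphism $\DD_i\to\DD_{j,i}(F)$ produces a $\DD_{j,i}$-operator.

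First I would check that $\be(K')\subseteq\bDD(L)$. Fix $a\in K'$; since $K'/K$ is separably algebraic, $a$ is separably algebraic over $K$. For each $u\in\{1,2\}$, apply Lemma~\ref{extendstructure}(i) to the $\DD_u$-operator $e_u|_K\colon K\to\DD_u(L)$ (using $\be(K)\subseteq\bDD(L)$) to get an extension $g_u\colon K(a)\to\DD_u(L)$, and then post-compose with $\DD_u(L)\hookrightarrow\DD_u(E)$; this is a $\DD_u$-operator $K(a)\to\DD_u(E)$ extending $e_u|_K$. But $e_u|_{K(a)}\colon K(a)\to\DD_u(E)$ is also such an extension, so by the uniqueness clause of Lemma~\ref{extendstructure}(i), now read over the field extension $K\leq E$, the two coincide; hence $e_u(a)=g_u(a)\in\DD_u(L)$. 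As $a\in K'$ was arbitrary, $\be(K')\subseteq\bDD(L)$.

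For the commutativity assertion, the key point is that both composites appearing around diagram~\eqref{commdiagram},
$$\varphi:=\DD_1(e_2)\circ e_1\colon K'\to\DD_{1,2}(E)\qquad\text{and}\qquad\psi:=r^*\circ e_2\colon K'\to\DD_{1,2}(E),$$
are $\DD_{1,2}$-operators (with respect to the inclusion $K'\hookrightarrow E$), where $\DD_{1,2}=\DD_1\otimes_k\DD_2$ is given the local operator-system structure recalled before the statement. For $\psi$ this is precisely the observation immediately preceding the theorem, applied with $i=2$, $j=1$ and the tower $K'\leq L\leq E$ — which is legitimate exactly because of the previous paragraph. For $\varphi$ it is a direct computation: it is a composite of $k$-algebra homomorphisms, and since $\pi_1\circ e_1=\id$ and $\pi_2\circ e_2=\id$ one gets $\pi_{1,2}^E\circ\varphi=\id_{K'}$. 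Now the hypothesis that $(e_1,e_2)$ commute on $(K,L,E)$ with respect to $r^*$ says exactly that $\varphi|_K=\psi|_K$; call this common $\DD_{1,2}$-operator $e_{1,2}\colon K\to\DD_{1,2}(E)$. Since $K'/K$ is separably algebraic, Lemma~\ref{extendstructure}(i) (applied over $K\leq E$, element by element) shows $e_{1,2}$ has a unique extension to a $\DD_{1,2}$-operator $K'\to\DD_{1,2}(E)$; as $\varphi$ and $\psi$ are both such extensions, $\varphi=\psi$ on $K'$, i.e. diagram~\eqref{commdiagram} commutes on $(K',L,E)$ with respect to $r^*$.

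The only genuinely delicate step is the bookkeeping in the second paragraph: one must be sure that the extension of $e_u|_K$ to $K'$ furnished by Lemma~\ref{extendstructure}(i) coincides with the restriction of the given $e_u$, rather than with some other $\DD_u$-operator into $\DD_u(E)$ — and this is exactly what invoking uniqueness over the larger field $E$ secures. Beyond that, the argument is purely formal: both sides of the commutativity equation are $\DD_{1,2}$-operators, and $\DD_{1,2}$-operators have unique extensions along separably algebraic extensions. In particular, unlike the proofs of Lemmas~\ref{explicit} and~\ref{partial_com}, no explicit manipulation of the operators $\dd_{u,i}$ is required.
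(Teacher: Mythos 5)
Your proof is correct and follows essentially the same route as the paper's: establish $\be(K')\subseteq\bDD(L)$ via Lemma~\ref{extendstructure}(i), observe that $\DD_1(e_2)\circ e_1$ and $r^*\circ e_2$ are both $\DD_{1,2}$-operators on $K'$ extending a common $\DD_{1,2}$-operator on $K$, and conclude by uniqueness of extensions along separably algebraic extensions. Your extra care in the first step (checking, via uniqueness over $E$, that the extension into $\DD_u(L)$ really is the restriction of the given $e_u$) is a detail the paper leaves implicit, but the argument is the same.
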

\begin{proof}
By Lemma~\ref{extendstructure}(i), applied to each $\DD_i$, we get $\be(K')\subseteq \bDD(L)$.  We now show that commutativity is preserved. By the preceding observations, the homomorphisms
\begin{equation}\label{keytoextend}
\DD_1(e_2)\circ e_1\quad \text{ and }\quad r^{*}\circ e_2\quad \text{ from }K'\to \DD_{2,1}(E)
\end{equation}
are both $\DD_{1,2}$-operators. Moreover, the former extends the $\DD_{1,2}$-operator
$$ \DD_1(e_2)\circ e_1:K\to \DD_{1,2}(E)$$
while the latter extends
$$ r^{*}\circ e_2:K\to \DD_{1,2}(E).$$
Since $(e_1,e_2)$ commute on $(K,L,E)$ with respect to $r^*$, the previous two $\DD_{1,2}$-operators $K\to\DD_{1,2}(E)$ coincide. Now, as $K'/K$ is separably algebraic, Lemma~\ref{extendstructure}(i) yields that $\DD_{1,2}$-structures extend uniquely from $K$ to $K'$, and thus the two $\DD_{1,2}$-structures on $K'\to \DD_{1,2}$ (displayed in \eqref{keytoextend} above) must coincide. In other words, $(e_1,e_2)$ commute on $(K',L,E)$ with respect to $r^*$.
\end{proof}

As a consequence, we obtain that $\bDD$-operators extend to separable closures in a commuting manner. 

\begin{corollary}\label{commuting_sep}
Let $(K,\be)$ be a $\bDD$-field. If $(e_1,e_2)$ commute on $K$ with respect to $r^*$, then the unique $\bDD$-extension to $(K^{\operatorname{sep}},\be)$ (given by Lemma~\ref{extendstructure}(i)) also commutes with respect to $r^*$.
\end{corollary}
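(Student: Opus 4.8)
The plan is to deduce Corollary~\ref{commuting_sep} directly from Theorem~\ref{commutingext}, since the separable closure $K^{\operatorname{sep}}$ is a directed union of finite separable extensions of $K$. First I would recall that by Lemma~\ref{extendstructure}(i), applied to each $\DD_u$ ($u=1,2$), the $\bDD$-operator $\be$ on $K$ extends uniquely to a $\bDD$-operator on $K^{\operatorname{sep}}$; write this extension again as $\be=(e_1,e_2):K^{\operatorname{sep}}\to\bDD(K^{\operatorname{sep}})$. The content of the corollary is then that this extension commutes on $K^{\operatorname{sep}}$ with respect to $r^*$, i.e.\ that diagram~\eqref{commdiagram} commutes with $A=R=S=K^{\operatorname{sep}}$.

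The key step is to apply Theorem~\ref{commutingext} with the field tower $K\leq K'\leq K^{\operatorname{sep}}$, where $K'$ ranges over the finite (equivalently, arbitrary) subextensions of $K^{\operatorname{sep}}/K$, and with $L=E=K^{\operatorname{sep}}$. Indeed, taking $K\leq L\leq E$ to be $K\leq K^{\operatorname{sep}}\leq K^{\operatorname{sep}}$, the hypothesis of the theorem is exactly that $(e_1,e_2)$ commute on $(K,K^{\operatorname{sep}},K^{\operatorname{sep}})$ with respect to $r^*$, which holds because $\be(K)\subseteq\bDD(K)\subseteq\bDD(K^{\operatorname{sep}})$ and $(e_1,e_2)$ commute on $K$ by assumption (note that commuting on $K$ in the sense of Definition~\ref{def_comm} with $A=R=S=K$ gives commuting on $(K,K^{\operatorname{sep}},K^{\operatorname{sep}})$, since all the maps in~\eqref{commdiagram} are just postcomposed with the inclusion $\bDD(K)\hookrightarrow\bDD(K^{\operatorname{sep}})$ and its iterates). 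Then Theorem~\ref{commutingext} yields, for every $K\leq K'\leq K^{\operatorname{sep}}$ with $K'/K$ separably algebraic, that $(e_1,e_2)$ commute on $(K',K^{\operatorname{sep}},K^{\operatorname{sep}})$ with respect to $r^*$.

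To finish, I would pass to the limit: an element $a\in K^{\operatorname{sep}}$ lies in some finite subextension $K'$, and commutativity of diagram~\eqref{commdiagram} evaluated at $a$ is precisely the assertion that $(e_1,e_2)$ commute on $(K',K^{\operatorname{sep}},K^{\operatorname{sep}})$, restricted to the input $a$. Since this holds for every $a$ (via the $K'$ containing it), the diagram commutes with $A=R=S=K^{\operatorname{sep}}$, which is the claim. Alternatively, one may invoke Remark~\ref{observe}(3): the union $\bigcup_{K'} K' = K^{\operatorname{sep}}$ is the field generated over $K$ by $B:=K^{\operatorname{sep}}$ itself, and since diagram~\eqref{commdiagram} commutes on each element of $B$ (by the previous paragraph applied to $K'=K(b)$ for $b\in B$, which is finite separable over $K$), part~(3) of that remark gives commutativity on $(K(B),K^{\operatorname{sep}},K^{\operatorname{sep}}) = (K^{\operatorname{sep}},K^{\operatorname{sep}},K^{\operatorname{sep}})$.

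I do not expect any serious obstacle here; the only point requiring a little care is the bookkeeping that "commutes on $K$" (the $A=R=S=K$ case) genuinely implies "commutes on $(K,K^{\operatorname{sep}},K^{\operatorname{sep}})$", so that the hypothesis of Theorem~\ref{commutingext} is met — this is immediate from functoriality of the base-change constructions $\DD_u(-)$ and $\DD_{1,2}(-)$ along the inclusion $K\hookrightarrow K^{\operatorname{sep}}$. Everything else is a direct application of the theorem together with the directed-union structure of the separable closure.
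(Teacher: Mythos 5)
Your proof is correct and follows essentially the same route as the paper, which simply applies Theorem~\ref{commutingext} with $E=L=K'=K^{\operatorname{sep}}$ in one stroke (the theorem already permits $K'=K^{\operatorname{sep}}$ since it is separably algebraic over $K$, so your detour through finite subextensions and the limit argument, while harmless, is not needed). The preliminary bookkeeping you flag — that commuting on $K$ yields commuting on $(K,K^{\operatorname{sep}},K^{\operatorname{sep}})$ — is indeed the only point the paper leaves implicit, and your justification of it is fine.
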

\begin{proof}
Apply Theorem~\ref{commutingext} with $E=L=K'=K^{\operatorname{sep}}$.
\end{proof}

We conclude with the observation that $\bDD$-operators also extend to compositums in a commuting manner.

\begin{lemma}\label{commcomp}
Let $(E,\be)$ be a $\bDD$-field and let $L_1$ and $L_2$ be $\bDD$-subfields. If $(e_1,e_2)$ commute on $L_i$ with respect to $r^*$ for $i=1,2$, then they also commute on $L_1\cdot L_2$ with respect to $r^*$.
\end{lemma}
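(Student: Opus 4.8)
The plan is to reduce the statement about the compositum $L_1 \cdot L_2$ to the preservation result already established in Theorem~\ref{commutingext} together with the elementary observations in Remark~\ref{observe}. The key point is that $L_1 \cdot L_2$ is generated, as a field over $L_1$, by the set $L_2$; so by Remark~\ref{observe}(3) (applied with $A = L_1$, $B = L_2$, and $R = S = E$) it suffices to check that the commutativity diagram~\eqref{commdiagram} commutes on the set $L_2$. But $(e_1,e_2)$ commutes on $L_2$ with respect to $r^*$ by hypothesis, which says exactly that diagram~\eqref{commdiagram} commutes with $A = R = S = L_2$; since all the maps involved are restrictions of the corresponding maps over $E$ (using $\be(L_2) \subseteq \bDD(L_2) \subseteq \bDD(E)$), the diagram commutes on $L_2$ viewed inside $E$ as well. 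Then Remark~\ref{observe}(3) gives that $(e_1,e_2)$ commute on $(L_1(L_2), E, E) = (L_1 \cdot L_2, E, E)$, which is the desired conclusion.

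First I would record that $\be(L_1 \cdot L_2) \subseteq \bDD(E)$, so that the statement even makes sense: $E$ being a $\bDD$-field, $\be$ is defined on all of $E \supseteq L_1 \cdot L_2$ with values in $\bDD(E)$. Next I would spell out that the hypothesis ``$(e_1,e_2)$ commute on $L_i$ with respect to $r^*$'' unwinds, via Definition~\ref{def_comm}, to the commutation of diagram~\eqref{commdiagram} with the triple $(L_i, L_i, L_i)$; composing with the inclusion $L_i \hookrightarrow E$ and using functoriality of $\DD_1(-)$, $\DD_2(-)$ and the naturality of the lifts $r^\iota$, $r^{e_1}$, this upgrades to commutation of the diagram on the set $L_i$ inside the triple $(L_1 \cdot L_2, E, E)$. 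The only mild care needed is that the two lifts $r^\iota$ and $r^{e_1}$ over $E$ restrict correctly to the ones over $L_i$; this is immediate because the $e_1$-structure used to form $r^{e_1}$ over $E$ restricts to the $e_1$-structure over $L_i$.

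The main (and essentially only) obstacle is purely bookkeeping: one must make sure that commutativity ``on $(A,A,A)$'' genuinely implies commutativity ``on the subset $A$ of the larger triple $(A', E, E)$'', i.e. that no information is lost when the ambient rings are enlarged. This is harmless because every arrow in diagram~\eqref{commdiagram} for the big triple is obtained from the arrow for the small triple by post-composition with a $k$-algebra embedding ($\DD_i(L_j) \hookrightarrow \DD_i(E)$, etc.), so equality of two composites on $L_j$ is preserved. Once this is observed, the result follows by a direct appeal to Remark~\ref{observe}(3), with no further computation required.

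\begin{proof}
Since $E$ is a $\bDD$-field, $\be$ is defined on all of $L_1\cdot L_2\subseteq E$ and takes values in $\bDD(E)$, and $\be(L_i)\subseteq\bDD(L_i)\subseteq\bDD(E)$ for $i=1,2$. Fix $i\in\{1,2\}$. By hypothesis $(e_1,e_2)$ commute on $L_i$ with respect to $r^*$, i.e. diagram~\eqref{commdiagram} commutes with $A=R=S=L_i$. Post-composing the maps in that diagram with the canonical $k$-algebra embeddings induced by $L_i\hookrightarrow E$ (and using that the lifts $r^\iota$, $r^{e_1}$ over $E$ restrict, respectively, to the lifts over $L_i$, the latter because the $e_1$-structure over $E$ restricts to the $e_1$-structure over $L_i$), we conclude that diagram~\eqref{commdiagram}, now formed with $A=L_1\cdot L_2$, $R=S=E$, commutes when restricted to the subset $L_i$ of $L_1\cdot L_2$.

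Now apply Remark~\ref{observe}(3) with $A=L_1$ (a field), $R=S=E$, and $B=L_2$: we have $\be(L_2)\subseteq\bDD(E)$ and diagram~\eqref{commdiagram} commutes on $L_2$, so $(e_1,e_2)$ commute on $(L_1(L_2),E,E)$ with respect to $r^*$. Since $L_1(L_2)=L_1\cdot L_2$, this says precisely that $(e_1,e_2)$ commute on $L_1\cdot L_2$ with respect to $r^*$.
\end{proof}
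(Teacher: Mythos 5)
Your proof is correct and follows essentially the same route as the paper, which proves the lemma by noting that all maps in diagram~\eqref{commdiagram} are ring homomorphisms and explicitly offers the appeal to Remark~\ref{observe}(3) as the alternative one-line argument; your version just spells out the (harmless) bookkeeping of passing between the triples $(L_i,L_i,L_i)$ and $(L_1\cdot L_2,E,E)$. No gaps.
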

\begin{proof}
First note that, since $e_1$ and $e_2$ are ring homomorphisms, the compositum $L_1\cdot L_2$ is a $\bDD$-subfield of $E$. Furthermore, all the maps involved in diagram~\eqref{commdiagram} are also ring homomorphisms; thus, commutativity of the diagram on $L_1\cdot L_2$ follows from commutativity in each $L_i$. Alternatively, one can simply invoke Remark~\ref{observe}(3).
\end{proof}

\

\section{Lie-Hasse-Schmidt commutativity}\label{sec4}

In this section we introduce the notion of Lie-Hasse-Schmidt commutation systems (or LHS-systems for short). We also discuss when these systems satisfy a property that we call Jacobi-associativity which will be used to prove the existence of principal realisations of $\gDD$-kernels (in particular, the existence of $\gDD$-polynomial rings) in Section \ref{kernels}.  We achieve this by building up to the general case in the next three subsections.

\subsection{Lie commutativity}\label{Lietype}

Throughout this section we let $(\DD,\bar\epsilon)$ be a local operator-system (with $\dim_k(\DD)=m+1$) and $(F,e)$ a $\DD$-field. We carry on our assumption that all rings are $F$-algebras and $\DD$-rings are $\DD$-algebras over $(F,e)$.

\medskip 

To define our notion of Lie commutation system, we will make use of the following terminology: recall that $\mm$ denotes the maximal ideal of $\DD$, the null of $\DD$ is defined as
$$\Null(\DD)=\{1\leq q\leq m: \epsilon_q\cdot \mm=(0)\}.$$
Note $\Null(\DD)$ is not empty as $q\in \Null(\DD)$ for those $q$'s with $\sigma(q)=d$. Recalling that $\alpha_{i}^{pq}$ denotes the coefficient of $\epsilon_i$ in the product $\epsilon_p\cdot\epsilon_q$, one easily checks that $q\in \Null(\DD)$ if and only if $\alpha_{i}^{pq}=0$ for all $i$ and $p$.

%\medskip

%Denote by 
%$$\operatorname{res}_1:\DD\otimes_k \DD(F)\to \DD(F)$$ 
%the residue map by the ideal $\DD\otimes_k \mm(F)$ of $\DD\otimes_k \DD(F)$. 

\begin{definition}
Let $r:\DD\to \DD\otimes _k\DD(F)$ be a $k$-algebra homomorphism. We say that $r$ is of Lie-commutation type if there exists a tuple  $(c_\ell^{ij})_{i,j,\ell=1}^m$ from $F$ such that
$$r(\epsilon_\ell)=1 \otimes \epsilon_\ell+ \sum_{i,j=1}^m \epsilon_i\otimes \epsilon_j\otimes c_\ell^{ji} $$
and  
$$c_{\ell}^{ji}=0 \quad \text{ unless }\quad i,j\in \Null(\DD).$$
%with $\gamma(d)$ as in Remark~\ref{operatornotation}. 
We call the tuple $(c_\ell^{ji})$ the Lie-coefficients of $r$ (with respect to the ranked basis $\bar \epsilon=(1,\epsilon_1,\dots,\epsilon_m)$). %Note that $r(\epsilon_i)= 1 \otimes \epsilon_i$ whenever $\sigma(i)\geq 2$.
\end{definition}

\begin{lemma}\label{Lie_sum}
Let $r$ be of Lie-commutation type. Then, on any $\DD$-ring $(R,e)$, $e$ commutes on $R$ with respect to $r^e$ if and only if
$$[\dd_i,\dd_j]= \sum_{\ell} c_{\ell}^{ij}\dd_\ell $$ %= c_1^{ij}\dd_1+\cdots +c_m^{ij}\dd_m$$
where $(c_\ell^{ij})$ is the tuple of Lie coefficients of $r$. In particular, 
$$[\dd_i,\dd_j]=0 \quad \text{ whenever one of $i,j$ is not in $\Null(\DD)$}.$$
\end{lemma}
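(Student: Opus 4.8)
The plan is to carry out the same kind of explicit computation that appears in the proof of the preceding lemma (the one recovering $\dd_{1,i}\dd_{2,j}=\dd_{2,j}\dd_{1,i}$ from the canonical embedding), now with the extra ``correction'' terms coming from the Lie-coefficients. First I would write $a\in R$ and expand $\DD_1(e_2)\circ e_1(a)$ — equivalently $\DD(e)\circ e(a)$ in the present $\DD=\DD_1=\DD_2$, $e=e_1=e_2$ notation — in the $R$-linear basis $(\epsilon_i\otimes \epsilon_j)_{0\le i,j\le m}$ of $\DD(\DD(R))$. As in that earlier proof, the coefficient of $\epsilon_i\otimes\epsilon_j$ with $i,j\ge 1$ is $\dd_j\dd_i(a)$ (applying $e$ coordinate-wise to $e(a)$), the coefficient of $\epsilon_i\otimes 1$ is $\dd_i(a)$, the coefficient of $1\otimes\epsilon_j$ is $\dd_j(a)$, and the coefficient of $1\otimes 1$ is $a$.

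Next I would compute $r^e\circ e(a)$, where $r^e$ is the lift of $r$ through $e$; that is, $r^e$ agrees with $r$ on $\DD$ and sends $1\otimes x\in\DD(R)$ to $\DD_1(\iota)\circ e(x)\in\DD(\DD(R))$ for $x\in R$. Writing $e(a)=a+\sum_\ell \epsilon_\ell\dd_\ell(a)$, we get
\begin{align*}
r^e\circ e(a) &= \DD_1(\iota)(e(a)) + \sum_\ell r(\epsilon_\ell)\cdot \DD_1(\iota)(e(\dd_\ell(a))) \\
&= \sum_{i} \epsilon_i\otimes 1\otimes \dd_i(a) + \sum_\ell\Bigl(1\otimes\epsilon_\ell + \sum_{i,j}\epsilon_i\otimes\epsilon_j\otimes c_\ell^{ji}\Bigr)\cdot\Bigl(\sum_{t}\epsilon_t\otimes 1\otimes \dd_t\dd_\ell(a)\Bigr),
\end{align*}
where I set $\dd_0=\operatorname{id}$. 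Now I multiply out. The term $1\otimes\epsilon_\ell$ times $\sum_t\epsilon_t\otimes 1\otimes\dd_t\dd_\ell(a)$ contributes $\sum_{\ell,t}\epsilon_t\otimes\epsilon_\ell\otimes\dd_t\dd_\ell(a)$, which is exactly the ``naive'' part producing $\dd_i\dd_j(a)$ in the coefficient of $\epsilon_i\otimes\epsilon_j$ (note the swap $i\leftrightarrow j$ compared to the $\DD(e)\circ e$ side — this is where the commutator shows up). The correction term $\sum_{i,j,\ell}\epsilon_i\otimes\epsilon_j\otimes c_\ell^{ji}$ times $\sum_t\epsilon_t\otimes 1\otimes\dd_t\dd_\ell(a)$ must be handled carefully: $\epsilon_i\epsilon_t\otimes\epsilon_j\cdot 1$ lies in $\mm\otimes_k\DD$ unless $t=0$, and the constraint $c_\ell^{ji}=0$ unless $i,j\in\Null(\DD)$ forces $\epsilon_i\epsilon_t\in\epsilon_i\mm=(0)$ for $t\ge 1$; hence only $t=0$ survives, giving $\sum_{i,j,\ell}\epsilon_i\otimes\epsilon_j\otimes c_\ell^{ji}\dd_\ell(a)$. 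So the coefficient of $\epsilon_i\otimes\epsilon_j$ (for $i,j\ge 1$) on the $r^e\circ e$ side is $\dd_i\dd_j(a)+\sum_\ell c_\ell^{ji}\dd_\ell(a)$, while on the $\DD(e)\circ e$ side it is $\dd_j\dd_i(a)$; the coefficients of $\epsilon_i\otimes 1$, $1\otimes\epsilon_j$, $1\otimes 1$ match automatically. Equating and relabelling indices ($i\leftrightarrow j$ in the correction sum, using that the statement is symmetric) yields that commutativity w.r.t.\ $r^e$ holds iff $\dd_j\dd_i(a)-\dd_i\dd_j(a)=\sum_\ell c_\ell^{ij}\dd_\ell(a)$ for all $i,j$ and all $a\in R$, i.e.\ $[\dd_i,\dd_j]=\sum_\ell c_\ell^{ij}\dd_\ell$. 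Finally, the ``in particular'' clause is immediate: if, say, $i\notin\Null(\DD)$ then $c_\ell^{ij}=c_\ell^{ji}=0$ for every $\ell,j$ by the defining constraint on Lie-coefficients, so the right-hand side vanishes and $[\dd_i,\dd_j]=0$.

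I expect the main obstacle to be purely bookkeeping: keeping the order of the two tensor factors straight (which factor $\epsilon_i$ sits in, and hence whether a product like $\epsilon_i\epsilon_t$ or $\epsilon_j\epsilon_t$ is killed by the nullity hypothesis), and making sure the index swap between the $\DD(e)\circ e$ side and the $r^e\circ e$ side is tracked consistently so that the commutator — rather than a mere sum — emerges with the correct sign. The one genuinely substantive point, as opposed to bookkeeping, is the use of $i,j\in\Null(\DD)$ to discard all the would-be higher-order cross terms $\epsilon_i\epsilon_t\otimes\epsilon_j\otimes(\cdots)$ with $t\ge 1$; this is exactly the reason the definition of Lie-commutation type imposes that restriction on the support of $(c_\ell^{ji})$, and it is what makes the equation close up into a clean Lie bracket identity rather than an unwieldy relation involving second-order operators.
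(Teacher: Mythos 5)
Your computation is correct and is exactly the ``straightforward computation'' the paper alludes to: it mirrors the explicit expansion in the proof of the preceding lemma, and the one substantive point you identify — that the constraint $c_\ell^{ji}=0$ unless $i,j\in\Null(\DD)$ kills all cross terms $\epsilon_i\epsilon_t\otimes\epsilon_j\otimes(\cdots)$ with $t\geq 1$ — is indeed the crux. The only blemish is a bookkeeping slip in your penultimate display: comparing coefficients of $\epsilon_i\otimes\epsilon_j$ directly gives $\dd_j\dd_i-\dd_i\dd_j=\sum_\ell c_\ell^{ji}\dd_\ell$, and only after relabelling $i\leftrightarrow j$ throughout does this become $[\dd_i,\dd_j]=\sum_\ell c_\ell^{ij}\dd_\ell$; as written, your identity pairs the superscript $ij$ with the composition order $\dd_j\dd_i$, which is off by a sign (and matters here, since skew-symmetry of the $c_\ell^{ij}$ is not assumed in this lemma), though your final conclusion is the right one.
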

\begin{proof}
This is a straightforward computation.
\end{proof}

From now on, if $(R,e)$ is a $\DD$-ring on which $e$ commutes with respect to $r^e$, we will simply say that $(R,e)$ is a $\DD^r$-ring.

\begin{remark}\label{skewind}
Let $r$ be of Lie-commutation type with Lie coefficients $(c_\ell^{ij})$. We note that if there exists a $\DD^r$-ring $(R,e)$ in which the operators $(\dd_1,\dots,\dd_m)$ are $F$-linearly independent (as functions from $R\to R$), then for each $\ell$ the $m\times m$-matrix $(c_\ell^{ij})_{i,j=1}^m$ is skew-symmetric. This follows from the previous lemma noting that $[\dd_i,\dd_j]+[\dd_j,\dd_i]=0$.
\end{remark}

We now introduce another notion that will be used in our discussion of realisations of $\gDD$-kernels in the next section.

\smallskip

%In the following lemma we characterise being Jacobi in terms of the Lie coefficients.

\begin{definition}\label{Liecoe}
	Let $r$ be of Lie-commutation type and let $(c_\ell^{ij})$ be its Lie-coefficients. We say that $r$ is Jacobi if
	\begin{enumerate}
		\item for each $\ell$, the $m\times m$ matrix $(c_{\ell}^{ij})_{i,j=1}^m$ is skew-symmetric,
		\item for each $1\leq i,j,k,r\leq m$
		$$\sum_{\ell=1}^m \left( c_\ell^{ij}c_r^{\ell k}+ c_{\ell}^{ki}c_r^{\ell j}  +c_\ell^{jk}c_r^{\ell i} \right)=\dd_i(c_r^{jk})+\dd_k(c_r^{ij})+\dd_j(c_r^{ki})$$
		(this is a form of the Jacobi identity),
		\item for each $1\leq i,j,k,r\leq m$
		$$\sum_{p=1}^m \left(\alpha_i^{pr}\dd_p(c_r^{jk})+\alpha_k^{pr} \dd_p(c_r^{ij}) +\alpha_j^{pr}\dd_p(c_r^{ki}) \right)=0.$$
		and in addition for $1\leq q<r\leq m$
		$$\sum_{p=1}^m \left(\alpha_i^{pq}\dd_p(c_r^{jk})+\alpha_k^{pq} \dd_p(c_r^{ij}) +\alpha_j^{pq}\dd_p(c_r^{ki}) + \alpha_i^{pr}\dd_p(c_q^{jk})+\alpha_k^{pr} \dd_p(c_q^{ij}) +\alpha_j^{pr}\dd_p(c_q^{ki}) \right)=0.$$
		where recall that $\alpha_i^{pq}\in k$ is the coefficient of $\epsilon_i$ in the product $\epsilon_p\cdot \epsilon_q$.
	\end{enumerate}
\end{definition}

\begin{remark}
We note that when the Lie coefficients $(c_\ell^{ij})$ are all zero, then $r$ is the canonical embedding $\DD\to \DD(\DD(F))$ (i.e., $x\to 1\otimes x$) and clearly this $r$ is Jacobi.
\end{remark}

The following explains why the Jacobi property is a natural condition. Recall that, for $r$ of Lie-commutation type, by a $\DD^r$-ring $(R,e)$ we mean a $\DD$-ring on which $e$ commutes with respect to $r^e$.

\begin{proposition}\label{just1}
Let $r:\DD\to\DD\otimes_k \DD(F)$ be of Lie-commutation type. If there exists a $\DD^r$-ring $(R,e)$ in which the operators
$$(\dd_1,\dots,\dd_m)\quad \text{ and } \quad (\dd_i\dd_j: 1\leq i\leq j\leq m)$$
are $F$-linearly independent (as functions $R\to R$), then $r$ is Jacobi.
\end{proposition}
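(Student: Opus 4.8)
The strategy is to expand the three "obvious" compatibility requirements that an associative/commutative $\DD$-structure forces, and check that in the presence of the stated linear independence these requirements are exactly conditions (1), (2), (3) of Definition~\ref{Liecoe}. Concretely, the Jacobi identity for the commutator bracket of operators on a ring always holds:
\[
[[\dd_i,\dd_j],\dd_k]+[[\dd_k,\dd_i],\dd_j]+[[\dd_j,\dd_k],\dd_i]=0
\]
as an identity of additive operators $R\to R$. Since $(R,e)$ is a $\DD^r$-ring, Lemma~\ref{Lie_sum} gives $[\dd_i,\dd_j]=\sum_\ell c_\ell^{ij}\dd_\ell$; substituting this into the Jacobi identity and using the Leibniz rule~\eqref{Leibrule} to compute $[\sum_\ell c_\ell^{ij}\dd_\ell,\dd_k]$ yields a sum of terms that are $F$-linear combinations of the single operators $\dd_\ell$ and the second-order operators $\dd_p\dd_q$ (the $\dd_p\dd_q$ terms enter precisely through the $\alpha$-coefficients in the product rule applied when $\dd_k$ hits a coefficient $c_\ell^{ij}\in F$).

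So the first step is to carry out that substitution carefully and collect, for each monomial $\dd_\ell$ and each monomial $\dd_p\dd_q$ (with $p\leq q$, say), the total $F$-coefficient appearing in the cyclic sum. The second step is to invoke the hypothesis: since $(\dd_1,\dots,\dd_m)$ together with $(\dd_i\dd_j:i\leq j)$ are $F$-linearly independent as functions $R\to R$, every such collected coefficient must vanish. Reading off the coefficient of $\dd_r$ gives exactly the Jacobi-identity relation~(2) of Definition~\ref{Liecoe}:
\[
\sum_\ell\bigl(c_\ell^{ij}c_r^{\ell k}+c_\ell^{ki}c_r^{\ell j}+c_\ell^{jk}c_r^{\ell i}\bigr)=\dd_i(c_r^{jk})+\dd_k(c_r^{ij})+\dd_j(c_r^{ki}),
\]
where the right-hand side comes from the Leibniz terms $\dd_k(c_\ell^{ij})\dd_\ell$ after relabelling. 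Reading off the coefficient of $\dd_p\dd_q$ — which arises only from the $\sum\alpha_i^{pq}\dd_p(\cdot)\dd_q(\cdot)$ tail of the product rule when the $\dd$'s differentiate the scalar coefficients $c_\ell^{ij}$ — gives the two families of relations in~(3), the first for the "diagonal" case $p=q$ (equivalently the coefficient of $\dd_r^2$-type terms, handled by the $\alpha_i^{pr}$ sum) and the second for $q<r$. Finally, condition~(1) (skew-symmetry of each matrix $(c_\ell^{ij})_{i,j}$) is immediate from Remark~\ref{skewind}, since the linear independence of $(\dd_1,\dots,\dd_m)$ is part of the hypothesis and $[\dd_i,\dd_j]+[\dd_j,\dd_i]=0$.

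The main obstacle is purely bookkeeping: expanding $[\sum_\ell c_\ell^{ij}\dd_\ell,\dd_k]$ via~\eqref{Leibrule} produces several kinds of terms (genuine second-order operators $c_\ell^{ij}\dd_\ell\dd_k$ and $c_\ell^{ij}\dd_k\dd_\ell$, which partially cancel in the commutator and partially get re-expressed through the $c_\bullet^{\ell k}$'s, first-order terms $\dd_k(c_\ell^{ij})\dd_\ell$, and the $\alpha$-twisted terms $\alpha_\bullet^{pq}\dd_p(c_\ell^{ij})\dd_q$), and one must organize the cyclic sum so that the coefficients of the linearly independent operators $\{\dd_\ell\}\cup\{\dd_p\dd_q\}$ are cleanly isolated. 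One should also double-check that the second-order operators appearing really can be taken in the normalized form $\dd_p\dd_q$ with $p\le q$ (using that $\dd_p\dd_q-\dd_q\dd_p$ is again a first-order operator, by Lemma~\ref{Lie_sum}, hence reabsorbed into the $\dd_\ell$-part), so that "linearly independent" is applied to the correct spanning set — this is exactly the reason the hypothesis is phrased with $1\le i\le j\le m$. Modulo this careful matching of coefficients, the proof is a direct computation, and I do not anticipate any conceptual difficulty beyond it.
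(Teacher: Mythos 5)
Your proposal is correct and follows essentially the same route as the paper: the paper's computation of $\dd_i\dd_j\dd_k$ in two ways is precisely the operator Jacobi identity you invoke, followed by the same coefficient comparison against the linearly independent family $\{\dd_\ell\}\cup\{\dd_p\dd_q: p\le q\}$, with skew-symmetry coming from Remark~\ref{skewind}. The one point your bookkeeping glosses over is that renormalizing $\dd_\ell\dd_q$ (for $\ell>q$) to $\dd_q\dd_\ell+\sum_s c_s^{q\ell}\dd_s$ feeds extra first-order terms of the form $\alpha_\bullet^{pq}\dd_p(c_\bullet^{\bullet\bullet})c_s^{q\ell}$ into the coefficient of $\dd_r$; these vanish because $c_s^{q\ell}=0$ unless $q\in\Null(\DD)$, in which case $\alpha_\bullet^{pq}=0$, and this cancellation is exactly what is needed for the coefficient of $\dd_r$ to reduce to condition~(2) rather than a longer identity.
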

\begin{proof}
It follows from Remark~\ref{skewind} that for each $\ell$ the matrix $(c_\ell^{ij})_{i,j=1}^m$ is skew-symmetric. Thus, it only remains to check (2) and (3) of Definition ~\ref{Liecoe}.

\

On the one hand, by Lemma~\ref{Lie_sum}, we have
$$\dd_i\dd_j\dd_k=\dd_j\dd_i\dd_k+\sum_\ell c_\ell^{ij}\dd_\ell\dd_k$$
On the other hand
\begin{align*}
\dd_i\dd_j\dd_k &=\dd_i\dd_k\dd_j+\sum_\ell \dd_i(c_\ell^{jk} \dd_\ell) \\
& = \dd_k\dd_i\dd_j +\sum_\ell c_\ell^{ik}\dd_\ell\dd_j+\sum_\ell \dd_i(c_\ell^{jk} \dd_\ell) \\
& = \dd_k\dd_j\dd_i +\sum_\ell \dd_k(c_\ell^{ij}\dd_\ell)+ \sum_\ell c_\ell^{ik}\dd_\ell\dd_j+\sum_\ell \dd_i(c_\ell^{jk} \dd_\ell) \\
& = \dd_j\dd_k\dd_i +\sum_\ell c_\ell^{kj}\dd_\ell\dd_i + \sum_\ell \dd_k(c_\ell^{ij}\dd_\ell)+ \sum_\ell c_\ell^{ik}\dd_\ell\dd_j+\sum_\ell \dd_i(c_\ell^{jk} \dd_\ell) \\
& = \dd_j\dd_i\dd_k+ \sum_\ell \dd_j(c_\ell^{ki}\dd_\ell)+\sum_\ell c_\ell^{kj}\dd_\ell\dd_i + \sum_\ell \dd_k(c_\ell^{ij}\dd_\ell)+ \sum_\ell c_\ell^{ik}\dd_\ell\dd_j+\sum_\ell \dd_i(c_\ell^{jk} \dd_\ell)
\end{align*}

So 
\[\dd_j\dd_i\dd_k+\sum_\ell c_\ell^{ij}\dd_\ell\dd_k =\dd_j\dd_i\dd_k+ \sum_\ell \dd_j(c_\ell^{ki}\dd_\ell)+\sum_\ell c_\ell^{kj}\dd_\ell\dd_i + \sum_\ell \dd_k(c_\ell^{ij}\dd_\ell)+ \sum_\ell c_\ell^{ik}\dd_\ell\dd_j+\sum_\ell \dd_i(c_\ell^{jk} \dd_\ell)\] 
and hence, using Remark~\ref{skewind} and Lemma~\ref{Lie_sum} in the third equality below, we get

\[0=\sum_\ell c_\ell^{kj}\dd_\ell\dd_i+\sum_\ell \dd_i(c_\ell^{jk} \dd_\ell) + \sum_\ell c_\ell^{ji}\dd_\ell\dd_k+ \sum_\ell \dd_k(c_\ell^{ij}\dd_\ell)+ \sum_\ell c_\ell^{ik}\dd_\ell\dd_j+ \sum_\ell \dd_j(c_\ell^{ki}\dd_\ell)\]
\[=\sum_\ell c_\ell^{kj}\dd_\ell\dd_i+\sum_\ell c_\ell^{jk}\dd_i\dd_\ell+\sum_\ell \dd_i(c_\ell^{jk})\dd_\ell+\sum_{\ell,p,q}\alpha_i^{pq}\dd_p(c_\ell^{jk})\dd_q\dd_\ell+\]
\[+\sum_\ell c_\ell^{ji}\dd_\ell\dd_k+\sum_\ell c_\ell^{ij}\dd_k\dd_\ell+\sum_\ell \dd_k(c_\ell^{ij})\dd_\ell+\sum_{\ell,p,q}\alpha_k^{pq}\dd_p(c_\ell^{ij})\dd_q\dd_\ell+ \]
\[+\sum_\ell c_\ell^{ik}\dd_\ell\dd_j+\sum_\ell c_\ell^{ki}\dd_j\dd_\ell+\sum_\ell \dd_j(c_\ell^{ki})\dd_\ell+\sum_{\ell,p,q}\alpha_j^{pq}\dd_p(c_\ell^{ki})\dd_q\dd_\ell \]
\[=\sum_\ell c_\ell^{jk}\sum_{p} c^{i\ell}_p\dd_p+\sum_\ell \dd_i(c_\ell^{jk})\dd_\ell+\sum_{l,p,q}\alpha_i^{pq}\dd_p(c_\ell^{jk})\dd_q\dd_\ell+\]
\[+\sum_\ell c_\ell^{ij}\sum_{p} c^{k\ell}_p\dd_p+\sum_\ell \dd_k(c_\ell^{ij})\dd_\ell+\sum_{l,p,q}\alpha_k^{pq}\dd_p(c_\ell^{ij})\dd_q\dd_\ell+\]
\[+\sum_\ell c_\ell^{ki}\sum_{p} c^{j\ell}_p\dd_p+\sum_\ell \dd_j(c_\ell^{ki})\dd_\ell+\sum_{l,p,q}\alpha_j^{pq}\dd_p(c_\ell^{ki})\dd_q\dd_\ell \]
\[= \sum_p(
\sum_{\ell} c_\ell^{jk}c_p^{i\ell}+\dd_i(c^{jk}_p)+
\sum_{\ell} c_\ell^{ij}c_p^{k\ell}+\dd_k(c^{ij}_p)+
\sum_{\ell} c_\ell^{ki}c_p^{j\ell}+\dd_j(c^{ij}_p))\dd_p+\]
\[+ \sum_{\ell,p,q}(
\alpha_i^{pq}\dd_p(c_\ell^{jk})+
\alpha_j^{pq}\dd_p(c_\ell^{ki})+
\alpha_k^{pq}\dd_p(c_\ell^{ij})
)\dd_q\dd_\ell\]
%{Now the coefficients by $\dd_p$ and $\dd_q\dd_\ell$ are exactly what we want to be zero, but not all $\dd_p$, $\dd_q\dd_\ell$ are linearly independent. If we want to eliminate the remaining $\dd_q\dd_\ell$ with $q>\ell$ we get:}
\[ =\sum_p(
\sum_{\ell} c_\ell^{jk}c_p^{i\ell}+\dd_i(c^{jk}_p)+
\sum_{\ell} c_\ell^{ij}c_p^{k\ell}+\dd_k(c^{ij}_p)+
\sum_{\ell} c_\ell^{ki}c_p^{j\ell}+\dd_j(c^{ij}_p))\dd_p+\]
\[+ \sum_{p}\sum_{q\leq \ell}(
\alpha_i^{pq}\dd_p(c_\ell^{jk})+
\alpha_j^{pq}\dd_p(c_\ell^{ki})+
\alpha_k^{pq}\dd_p(c_\ell^{ij}))\dd_q\dd_\ell+\] 
\[ +\sum_{p}\sum_{\ell>q}(
\alpha_i^{pq}\dd_p(c_\ell^{jk})+
\alpha_j^{pq}\dd_p(c_\ell^{ki})+
\alpha_k^{pq}\dd_p(c_\ell^{ij}))(\dd_\ell\dd_q+\sum_{s}c_s^{q\ell}\partial_s)
)\]
\[=
\sum_p(\sum_{\ell} c_\ell^{jk}c_p^{i\ell}+\dd_i(c^{jk}_p)+
\sum_{\ell} c_\ell^{ij}c_p^{k\ell}+\dd_k(c^{ij}_p)+
\sum_{\ell} c_\ell^{ki}c_p^{j\ell}+\dd_j(c^{ij}_p))\dd_p+\]
\[+ \sum_{p}\sum_{q< \ell}(
\alpha_i^{pq}\dd_p(c_\ell^{jk})+
\alpha_j^{pq}\dd_p(c_\ell^{ki})+
\alpha_k^{pq}\dd_p(c_\ell^{ij})+
\alpha_i^{pl}\dd_p(c_q^{jk})+
\alpha_j^{pl}\dd_p(c_q^{ki})+
\alpha_k^{pl}\dd_p(c_q^{ij})
)\dd_q\dd_\ell+\]
\[+\sum_{p}\sum_{\ell}(
\alpha_i^{p\ell}\dd_p(c_\ell^{jk})+
\alpha_j^{p\ell}\dd_p(c_\ell^{ki})+
\alpha_k^{p\ell}\dd_p(c_\ell^{ij}))\dd_\ell\dd_\ell+\]
\[+  \sum_{p}\sum_{\ell>q}(
\alpha_i^{pq}\dd_p(c_\ell^{jk})+
\alpha_j^{pq}\dd_p(c_\ell^{ki})+
\alpha_k^{pq}\dd_p(c_\ell^{ij}))(\sum_{s}c_s^{q\ell}\partial_s)
)
\]

Now note that 
\[ \sum_{p}\sum_{\ell>q}(
\alpha_i^{pq}\dd_p(c_\ell^{jk})+
\alpha_j^{pq}\dd_p(c_\ell^{ki})+
\alpha_k^{pq}\dd_p(c_\ell^{ij}))(\sum_{s}c_s^{q\ell}\partial_s)=0\]
as
%, by Remark \ref{zero_coefficients}, 
$c_s^{q\ell}=0$ unless $q\in \Null(\DD)$ in which case $\alpha_i^{pq}=\alpha_j^{pq}=\alpha_k^{pq}=0$.
So we get  \[
\sum_p(\sum_{\ell} c_\ell^{jk}c_p^{i\ell}+\dd_i(c^{jk}_p)+
\sum_{\ell} c_\ell^{ij}c_p^{k\ell}+\dd_k(c^{ij}_p)+
\sum_{\ell} c_\ell^{ki}c_p^{j\ell}+\dd_j(c^{ij}_p))\dd_p+\]
\[+ \sum_{p}\sum_{q< \ell}(
\alpha_i^{pq}\dd_p(c_\ell^{jk})+
\alpha_j^{pq}\dd_p(c_\ell^{ki})+
\alpha_k^{pq}\dd_p(c_\ell^{ij})+
\alpha_i^{pl}\dd_p(c_q^{jk})+
\alpha_j^{pl}\dd_p(c_q^{ki})+
\alpha_k^{pl}\dd_p(c_q^{ij})
)\dd_q\dd_\ell+\]
\[+\sum_{p}\sum_{\ell}(
\alpha_i^{p\ell}\dd_p(c_\ell^{jk})+
\alpha_j^{p\ell}\dd_p(c_\ell^{ki})+
\alpha_k^{p\ell}\dd_p(c_\ell^{ij}))\dd_\ell\dd_\ell=0\]

As all $\dd_p$ and $\dd_q\dd_\ell$ are linearly independent, the coefficients by $\partial_p$ must be $0$, giving us item (2) of Definition \ref{Liecoe},
and the coefficients by $\dd_q\dd_\ell$ and by $\dd_\ell\dd_\ell$ must be $0$ as well, giving us  item (3) of Definition \ref{Liecoe}.
%\[\sum_{p}(\alpha_i^{pq}\dd_p(c_\ell^{jk})+\alpha_j^{pq}\dd_p(c_\ell^{ki})+\alpha_k^{pq}\dd_p(c_\ell^{ij})+\alpha_i^{pl}\dd_p(c_q^{jk})+\alpha_j^{pl}\dd_p(c_q^{ki})+\alpha_k^{pl}\dd_p(c_q^{ij}))=0\]
%{which is not quite what we want in (3) in Lemma \ref{Liecoe} (in particular we get seemingly less equations than there, as we the one we got are indexed with only $q<l$ and $q=l$ but not $q>l$)}

\end{proof}

As we will see in Corollary~\ref{functionfield} below, the converse of this proposition also holds. Namely, the Jacobi property guarantees the existence of a $\DD^r$-ring where the operators and their compositions (in a fixed order) are linearly independent.

\begin{remark}
Suppose we are in the case $\DD={\mathbb Q}[\epsilon_1,\dots,\epsilon_m]/(\epsilon_1,\dots,\epsilon_m)^2$ with %$\pi(\epsilon_i)=0$ and 
ranked basis $(1,\epsilon_1,\dots,\epsilon_m)$. In this instance, $F$ is a field of characteristic zero equipped with derivations $\dd_1,\dots,\dd_m$. Let $r$ be of Lie-commutation type with Lie coefficients $(c_\ell^{ij})$, see Example~\ref{Liecommexample}. We note that in this case condition (3) of Definition~\ref{Liecoe} is trivially satisfied (as the $\alpha_{i}^{pq}$ are zero). Furthermore,
\begin{enumerate}
\item [(i)] conditions (1) and (2) of Definition~\ref{Liecoe} are equivalent to the existence of a Lie action (by derivations) on $F$ of an $m$-dimensional $F$-vector space $\mathfrak L$ equipped with a Lie-ring structure (where a fixed basis of $\mathfrak L$ maps to $(\dd_1,\dots,\dd_m)$). In other words, in the vector space $F^m$ with standard basis $b_1,\dots,b_m$, if we set 
$$[\alpha b_i,\beta b_j]=\alpha\beta\,(c_1^{ij}b_1+\cdots +c_m^{ij}b_m)+(\alpha\,\dd_i(\beta)b_j-\beta\,\dd_j(\alpha)b_i)$$
where $\alpha,\beta\in F$ and extend bi-additively, then this product yields a Lie-ring structure on $F^m$ (i.e. skew-symmetry and Jacobi identity) if and only if the Lie-coefficients satisfy conditions (1) and (2) of Definition~\ref{Liecoe}. Thus, in this instance, when $r$ is Jacobi we precisely recover the theory of Lie differential fields (in characteristic zero) as treated by Yaffe in \cite{Yaffe2001};

\item[(ii)] in \cite{Hub2005}, Hubert constructed the Lie-commuting analogue of the classical differential polynomial ring. It is noted there that such a construction requires conditions (1) and (2) of Definition~\ref{Liecoe}. In fact, these conditions appear explicitly in \S5.2 of \cite{Hub2005} (top of p.180).
\end{enumerate}
\end{remark}

\

\subsection{Hasse-Schmidt iterativity}\label{HStype} As in the previous section, $\DD$ denotes a local operator-system and we carry over our assumptions about the $\DD$-field $(F,e)$. 

%Denote by $\operatorname{res}_1$ and $\operatorname{res}_2$ the quotient maps $\DD\otimes_k \DD(F)\to \DD(F)$ by the ideals $$\DD\otimes_k \mm(F) \quad \text{ and } \quad \mm\otimes_k\DD(F)$$ of $\DD\otimes_k \DD(F)$, respectively. 

\begin{definition}
Let $r:\DD\to \DD\otimes _k\DD(F)$ be a $k$-algebra homomorphism. We say that $r$ is of Hasse-Schmidt-iteration type (or just HS-iteration type) if there exists a tuple $(c_\ell^{ij})_{i,j,\ell=1}^m$ from $F$ such that
$$r(\epsilon_\ell)=\epsilon_\ell\otimes 1+ 1\otimes \epsilon_\ell+ \sum_{i,j=1}^m \epsilon_i\otimes \epsilon_j\otimes c_\ell^{ij} .$$
We call the tuple $(c_\ell^{ij})$ the HS-coefficients of $r$ (with respect to the ranked basis $(1,\epsilon_1,\dots,\epsilon_m)$ of $\DD$).
\end{definition}

\interfootnotelinepenalty=10000
\begin{remark}\label{impliespositive} If $r$ is of HS-iteration type, then $char(k)>0$. Indeed, towards a contradiction, suppose $char(k)=0$. Then there is $\epsilon\in \mm$ such that $\epsilon^d\neq 0$ \footnote{This must be folklore, but we include an argument for completeness. Suppose there is no such $\epsilon$; we show that then $\mm^d=0$, which will be a contradiction (since $d$ was chosen smallest such that $\mm^{d+1}=0$). Let $m_0,\dots,m_{d-1}\in \mm$. Let $(\bar x^{\bar i_0}, \dots, \bar x^{\bar i_{\ell-1}})$ be the list of all monomials of degree $d$ in variables $\bar x=(x_0,\dots,x_{d -1})$.  
Consider new variables $\bar{ \bar x}=(\bar x_j)_{j<\ell}$ where each $\bar x_j=(x_{j,i})_{i<d}$ is a $d$-tuple.
%$\bar{ \bar x}=(x_{j,i})_{j<\ell, i<d}=(\bar x_j)_{j<\ell}$. 
Note that $P(\bar {\bar x}):=\det (\bar x_j^{\bar i_k})_{j,k<\ell}$ is a non-zero polynomial in $\mathbb Q[\bar{\bar x}]$, as the monomials $\prod_{j<\ell} \bar x_j^{\bar i_{\sigma(j)}}$ with $\sigma\in S_\ell$ are pairwise distinct. Thus there is a tuple $\bar{\bar q}=(\bar q_0,\dots,\bar q_{\ell -1})\in \mathbb{Q}^{\ell d}$ with $P(\bar {\bar q})\neq 0$ so the matrix $A:=(\bar q_j^{\bar i_k})_{j,k<\ell }$ is invertible. By assumption, for every $j$ we have $0=(q_{j,0}m_0+\dots +q_{j,d-1}m_{d-1})^d=\sum_{k<\ell} \bar q_j^{\bar i_k} {d \choose \bar i_k}\bar m^{\bar i_k}$ where $\bar m=(m_0,\dots,m_{d-1})$. Thus $A\cdot ( {d \choose \bar i_k} \bar m^{\bar i_k})_{k<\ell}=0$ so $( {d \choose \bar i_k}\bar m^{\bar i_k})_{k<\ell}=0$ as $A$ is invertible. In particular, $0= {d \choose 1,\dots,1}m_0m_1\dots m_{d-1}=d!m_0m_1\dots m_{d-1}$ so $m_0m_1\dots m_{d-1}=0$.}.
In particular, $\epsilon \notin \mm^2$. As $r$ is a homomorphism, 
\begin{align*}
0 & =(r(\epsilon))^{2d} \\
& =(\epsilon\otimes 1+ 1\otimes \epsilon+ \sum_{i,j=1}^m \epsilon_i\otimes \epsilon_j\otimes c_\ell^{ij})^{2d} \\
& =(\epsilon\otimes 1+ 1\otimes \epsilon)^{2d} \\
& ={2d \choose d}\epsilon^d\otimes \epsilon^d
\end{align*} 
%as all other products  give $0$ in one of the co-ordinates by the pigeonhole principle. 
Hence $\epsilon^d\otimes \epsilon^d=0$, a contradiction.
\end{remark}

\begin{lemma}\label{hscommuting}
Let $r$ be of HS-iteration type. Then, on any $\DD$-ring $(R,e)$, $e$ commutes on $R$ with respect to $r^\iota$ if and only if
$$\dd_i\dd_j=c_1^{ij}\dd_1+\cdots+c_m^{ij}\dd_m, \quad \text{ for all } 1\leq i,j\leq m,$$
where $(c_\ell^{ij})$ is the tuple of HS-coefficients of $r$. 
\end{lemma}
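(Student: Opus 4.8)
The plan is to unwind Definition~\ref{def_comm} in the case at hand, where $\DD_1=\DD_2=\DD$, $e_1=e_2=e$ and $A=R=S$. By definition, ``$e$ commutes on $R$ with respect to $r^\iota$'' asserts that the square~\eqref{commdiagram} commutes; that is, that the two $k$-algebra homomorphisms $\DD(e)\circ e$ and $r^\iota\circ e$ from $R$ to $\DD(\DD(R))=\DD\otimes_k\DD\otimes_k R$ agree. Since $(\epsilon_i\otimes\epsilon_j)_{0\le i,j\le m}$ (with $\epsilon_0=1$) is an $R$-linear basis of $\DD\otimes_k\DD\otimes_k R$, it suffices to evaluate both composites at an arbitrary $a\in R$ and to compare the coefficient of each basis vector $\epsilon_i\otimes\epsilon_j$.

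First I would compute the ``top--right'' composite. Writing $e(x)=1\otimes x+\sum_{p=1}^m\epsilon_p\otimes\dd_p(x)$ and applying $\DD(e)=\operatorname{Id}_\DD\otimes e$ term by term (using additivity of $e$) one obtains
$$\DD(e)\bigl(e(a)\bigr)=1\otimes 1\otimes a+\sum_{i=1}^m\epsilon_i\otimes 1\otimes\dd_i(a)+\sum_{j=1}^m 1\otimes\epsilon_j\otimes\dd_j(a)+\sum_{i,j=1}^m\epsilon_i\otimes\epsilon_j\otimes\dd_i\dd_j(a),$$
with no other terms. For the ``bottom--left'' composite the key observation is that $r^\iota$ extends the $k$-algebra homomorphism $r$ along the \emph{standard} $R$-algebra structure of the target, so that $r^\iota(1\otimes c)=1\otimes 1\otimes c$ for every $c\in R$; in particular no derivatives of the coefficients $\dd_\ell(a)$ are created (this is exactly where the computation diverges from the Lie-commutation case, in which the lift is taken along $e$). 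Using multiplicativity of $r^\iota$ together with the defining identity $r(\epsilon_\ell)=\epsilon_\ell\otimes 1+1\otimes\epsilon_\ell+\sum_{i,j}\epsilon_i\otimes\epsilon_j\otimes c_\ell^{ij}$ for an $r$ of HS-iteration type, one gets
$$r^\iota\bigl(e(a)\bigr)=1\otimes 1\otimes a+\sum_{\ell=1}^m\Bigl(\epsilon_\ell\otimes 1\otimes\dd_\ell(a)+1\otimes\epsilon_\ell\otimes\dd_\ell(a)+\sum_{i,j=1}^m\epsilon_i\otimes\epsilon_j\otimes c_\ell^{ij}\,\dd_\ell(a)\Bigr).$$

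Finally I would match coefficients. The constant terms agree; the coefficients of $\epsilon_i\otimes 1$ and of $1\otimes\epsilon_j$ agree on the two sides precisely because $r$ is of HS-iteration type (both $\epsilon_\ell\otimes 1$ and $1\otimes\epsilon_\ell$ occur in $r(\epsilon_\ell)$); hence the square commutes if and only if, for all $1\le i,j\le m$ and all $a\in R$, the coefficients of the ``degree-$(1,1)$'' basis vector $\epsilon_i\otimes\epsilon_j$ coincide, i.e.\ $\dd_i\dd_j(a)=\sum_{\ell=1}^m c_\ell^{ij}\dd_\ell(a)$. As $a\in R$ was arbitrary, this is exactly the system of identities $\dd_i\dd_j=c_1^{ij}\dd_1+\cdots+c_m^{ij}\dd_m$ of the statement. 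I do not anticipate a real obstacle: the argument is linear-algebra bookkeeping over the three tensor factors of $\DD\otimes_k\DD\otimes_k R$, and the only points requiring care are keeping the lifts $r^\iota$ and $r^{e}$ distinct and tracking which tensor slot records which application of $e$ (which is what pins down the order of composition in $\dd_i\dd_j$).
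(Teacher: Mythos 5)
Your strategy is exactly the ``straightforward computation'' the paper alludes to (its own proof of this lemma is a single line), and your treatment of the bottom--left composite is correct: since $r^\iota$ is lifted along the standard $R$-algebra structure, $r^\iota(1\otimes c)=1\otimes 1\otimes c$ and $r^\iota(\epsilon_\ell\otimes c)=r(\epsilon_\ell)\cdot(1\otimes 1\otimes c)$, so no derivatives of the coefficients $\dd_\ell(a)$ are created, as you observe. The one slip is in the top--right composite. Because $\DD(e)=\operatorname{Id}_\DD\otimes e$, applying it to $e(a)=1\otimes a+\sum_i\epsilon_i\otimes\dd_i(a)$ gives $\sum_i\epsilon_i\otimes e(\dd_i(a))$, whose $\epsilon_i\otimes\epsilon_j$-coefficient is $\dd_j(\dd_i(a))$ rather than $\dd_i(\dd_j(a))$: the first tensor slot records the \emph{inner} (first) application of $e$ and the second slot the outer one. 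This is visible in the paper's expanded computation for the canonical embedding in Section~3, where the coefficient of $\epsilon_{1,i}\otimes\epsilon_{2,j}$ in $\DD_1(e_2)\circ e_1(a)$ is $\dd_{2,j}\dd_{1,i}(a)$.

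With that correction, matching coefficients gives the condition $\dd_j\dd_i=\sum_\ell c_\ell^{ij}\dd_\ell$ for all $i,j$, equivalently $\dd_i\dd_j=\sum_\ell c_\ell^{ji}\dd_\ell$. Since the identity is required for all pairs $(i,j)$, this is the same family of identities as in the statement up to the labelling convention for the HS-coefficients; note that in the Lie-commutation definition the paper deliberately writes $c_\ell^{ji}$ for the coefficient of $\epsilon_i\otimes\epsilon_j$ precisely to absorb this transposition, whereas the HS definition does not, and in the motivating iterativity example the coefficients are symmetric so the discrepancy is invisible. So your proof is structurally the intended one; you should fix the order of composition in the top--right composite and then either transpose the superscripts in the conclusion or state explicitly which indexing convention for $(c_\ell^{ij})$ you are using.
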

\begin{proof}
This is a straightforward computation.
\end{proof}

%As in the previous section, if $(R,\be)$ is a $\bDD$-ring on which $\be$ commutes with respect to $r^\iota$, we will simply say that $(R,\be)$ is a $\bDD^r$-ring. 

In analogy to the Jacobi notion (introduced in the previous section), we now introduce the notion of associativity (which will also be used when $\gDD$-kernels are discussed in Section \ref{kernels}). 

\begin{definition}\label{HScoe}
 Let $r$ be of HS-iteration type with coefficients $(c_\ell^{ij})$. We say $r$ is associative if for each $1\leq i,j,k,r \leq m$
 $$\sum_\ell \left( c_\ell^{ij} c_r^{\ell k} - c_\ell^{jk} c_r^{i\ell}-\sum_{p,q=1}^m\alpha_i^{pq}\dd_p(c_\ell^{jk})c_r^{q\ell}\right)=\dd_i(c_r^{jk})$$
where again recall that $\alpha_i^{pq}\in k$ is the coefficient of $\epsilon_i$ in the product $\epsilon_p\cdot \epsilon_q$.
 \end{definition}

We now give a justification of this notion of associativity. For $r$ of HS-iteration type, we say that a $\DD$-ring $(R,e)$ is a $\DD^r$-ring when $e$ commutes with respect to $e^\iota$.

\begin{proposition}\label{just2}
Let $r$ be of HS-iteration type. If there exists a $\DD^r$-ring $(R,e)$ where the operators $(\dd_1,\dots,\dd_m)$ are $F$-linearly independent (as functions $R\to R$), then $r$ is associative.
\end{proposition}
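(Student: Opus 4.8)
The plan is to mimic the proof of Proposition \ref{just1}, but now exploiting associativity of the composition of operators rather than the Jacobi identity for commutators. Since $r$ is of HS-iteration type, Lemma \ref{hscommuting} tells us that in the $\DD^r$-ring $(R,e)$ we have $\dd_i\dd_j=\sum_\ell c_\ell^{ij}\dd_\ell$ for all $1\leq i,j\leq m$. The idea is to compute the operator $\dd_i(\dd_j\dd_k)$ in two ways: directly, substituting $\dd_j\dd_k=\sum_\ell c_\ell^{jk}\dd_\ell$ and then expanding $\dd_i(c_\ell^{jk}\dd_\ell)$ using the Leibniz-type rule \eqref{Leibrule} for $\dd_i$ (which introduces the $\alpha_i^{pq}$ terms via $\dd_i(c_\ell^{jk}\dd_\ell)=\dd_i(c_\ell^{jk})\dd_\ell+c_\ell^{jk}\dd_i\dd_\ell+\sum_{p,q}\alpha_i^{pq}\dd_p(c_\ell^{jk})\dd_q\dd_\ell$, acting on a generic element); and, on the other hand, as $(\dd_i\dd_j)\dd_k=\sum_\ell c_\ell^{ij}\dd_\ell\dd_k$. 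Associativity of composition of $F$-linear maps $R\to R$ forces these two expressions to agree.

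Carrying this out: first I would expand $\dd_i\dd_j\dd_k$ by associating as $(\dd_i\dd_j)\dd_k$, getting $\sum_\ell c_\ell^{ij}\dd_\ell\dd_k=\sum_{\ell,s}c_\ell^{ij}c_s^{\ell k}\dd_s$. Second, I would expand the same operator by associating as $\dd_i(\dd_j\dd_k)$; here one must be careful that $\dd_i$ does not commute with multiplication by the scalar $c_\ell^{jk}$, so applying $\dd_i\dd_j\dd_k$ to an element $x$ and using the product rule \eqref{Leibrule} on $\dd_i(c_\ell^{jk}\cdot\dd_\ell(x))$ yields $\sum_\ell\big(\dd_i(c_\ell^{jk})\dd_\ell+c_\ell^{jk}\dd_i\dd_\ell+\sum_{p,q}\alpha_i^{pq}\dd_p(c_\ell^{jk})\dd_q\dd_\ell\big)$, and then one rewrites $\dd_i\dd_\ell=\sum_s c_s^{i\ell}\dd_s$ and $\dd_q\dd_\ell=\sum_s c_s^{q\ell}\dd_s$ using Lemma \ref{hscommuting} again. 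Collecting everything as a combination of the single operators $\dd_s$ and equating the coefficient of each $\dd_s$ (legitimate since $(\dd_1,\dots,\dd_m)$ is assumed $F$-linearly independent and, after all substitutions, both sides are genuinely first-order combinations $\sum_s(\cdots)\dd_s$), one reads off exactly the identity
\[
\sum_\ell\left(c_\ell^{ij}c_r^{\ell k}-c_\ell^{jk}c_r^{i\ell}-\sum_{p,q=1}^m\alpha_i^{pq}\dd_p(c_\ell^{jk})c_r^{q\ell}\right)=\dd_i(c_r^{jk}),
\]
which is precisely the associativity condition of Definition \ref{HScoe}.

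I expect the main obstacle to be purely bookkeeping: tracking the product-rule correction terms correctly when $\dd_i$ is applied to the product $c_\ell^{jk}\cdot\dd_\ell(x)$, and making sure that after all the substitutions via Lemma \ref{hscommuting} the resulting expression is genuinely a first-order combination of the $\dd_s$ (so that linear independence of $(\dd_1,\dots,\dd_m)$ can be invoked). One subtlety worth noting, as in the proof of Proposition \ref{just1}, is that there is no separate skew-symmetry hypothesis to verify here (unlike the Jacobi case), so the argument is a single computation rather than a case split — and unlike Proposition \ref{just1} we only need linear independence of $(\dd_1,\dots,\dd_m)$, not of the degree-two compositions, because after substituting $\dd_j\dd_k$, $\dd_q\dd_\ell$, etc., every term collapses to first order. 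The verification that both sides reduce to $\sum_s(\cdots)\dd_s$ with no leftover second-order terms is the one place where one should double-check the computation.
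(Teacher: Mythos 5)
Your proposal is correct and follows essentially the same route as the paper: compute $\dd_i\dd_j\dd_k$ once as $(\dd_i\dd_j)\dd_k=\sum_r(\sum_\ell c_\ell^{ij}c_r^{\ell k})\dd_r$ and once as $\dd_i(\dd_j\dd_k)=\sum_\ell\dd_i(c_\ell^{jk}\dd_\ell)$ expanded via the product rule, reduce everything to first order using Lemma~\ref{hscommuting}, and compare coefficients of the $\dd_r$ via the assumed linear independence. Your observation that only independence of the first-order operators is needed (since all second-order compositions collapse) matches the paper exactly.
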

\begin{proof}
On the one hand, by Lemma~\ref{hscommuting}, we have
$$\dd_i\dd_j\dd_k=\sum_\ell c_\ell^{ij}\dd_\ell\dd_k=\sum_r\left(\sum_{\ell} c_\ell^{ij}c_r^{\ell k}\right)\dd_r$$
On the other hand,
\begin{align*}
\dd_i\dd_j\dd_k &=\sum_\ell \dd_i(c_\ell^{jk} \dd_\ell) \\
& = \sum_\ell \dd_i(c_\ell^{jk})\dd_\ell +\sum_\ell c_\ell^{jk}\dd_i\dd_\ell+\sum_\ell \sum_{p,q\geq 1}\alpha_i^{pq}\dd_p(c_\ell^{jk})\dd_q\dd_\ell \\
& = \sum_r \dd_i(c_r^{jk})\dd_\ell + \sum_{\ell,r}c_\ell^{jk}c_r^{i\ell}\dd_r+\sum_{\ell,r}\sum_{p,q\geq 1}\alpha_i^{pq}\dd_p(c_\ell^{jk})c_{r}^{q\ell}\dd_r \\
& =\sum_r\left( \dd_i(c_r^{jk}) + \sum_{\ell}c_\ell^{jk}c_r^{i\ell}+\sum_{\ell}\sum_{p,q\geq 1}\alpha_i^{pq}\dd_p(c_\ell^{jk})c_{r}^{q\ell}\right) \dd_r.
\end{align*}
As $(\dd_1,\dots,\dd_m)$ are $F$-linearly independent, comparing coefficients on both sides yields the equality in Definition~\ref{HScoe}.
\end{proof}

The content of Corollary~\ref{functionfield} below says that the converse of this proposition also holds. That is, being associative implies the existence of a $\DD^r$-ring where the operators are linearly independent.

\begin{example}\label{assHS}
Assume $char(k)=p>0$ and $F=k$. Suppose we are in the case $\DD=k[\epsilon]/(\epsilon)^{p^n}$ with basis $(1,\epsilon,\dots,\epsilon^{p^n-1})$. Let $r:\DD\to \DD\otimes_k\DD$ be defined by 
$r(\epsilon)=\epsilon\otimes 1 +1\otimes \epsilon.$
Recall that in this case commuting (with respect to $r^\iota$) $\DD$-rings correspond to rings equipped with an iterative Hasse-Schmidt derivation (see Example~\ref{exampleHS}). Furthermore, we observe that in this case $r$ is associative. Indeed, first note that in this case the HS-coefficients are 
$$
c_{\ell}^{ij}=
\left\{
\begin{array}{cc}
\binom{i+j}{i} & \ell=i+j \\
0 & o.w.
\end{array}
\right.
$$
And thus, the condition in Lemma~\ref{HScoe} simplifies to
$$c_{i+j}^{ij}\,c_{i+j+k}^{(i+j)k}=c_{j+k}^{jk}\, c_{i+j+k}^{i(j+k)}.$$
The latter is just
$$\binom{i+j}{i}\cdot \binom{i+j+k}{i+j}=\binom{j+k}{j}\cdot\binom{i+j+k}{i}$$
which is a well known binomial identity.
\end{example}

\begin{example}
Assume $char(k)=p>0$ and $F=k$. Let $\ell,n\in \NN$ and 
$$\DD=k[\epsilon_1,\dots,\epsilon_\ell]/(\epsilon_1^{p^n},\dots,\epsilon_\ell^{p^n}).$$
Let $\mathfrak{g}$ be a finite group scheme over $k$ whose underlying scheme is $\spec(\DD)$. Let $r$ be the co-multiplication in the corresponding Hopf algebra. Recall from Example~\ref{g-der} that a $\mathfrak{g}$-derivation on a $k$ algebra $R$ (in the sense of \cite[Definition 3.8]{HK}) is the same as a $\DD$-operator on $R$ that commutes w.r.t. $r^\iota$. We claim that this $r$ is of HS-iteration type and associative. Indeed, as $\DD$ is local, the co-unit map $E:\DD\to k$ must be the residue map, so the equalities 
$$(E\otimes \id_\DD) \circ r=\id_\DD=(\id_\DD\otimes E) \circ r$$ 
from the definition of Hopf algebra give us precisely that $r$ is of HS-iteration type. 
Now for any $1\leq p\leq \ell$ we have 
\[((r\otimes \id_\DD)\circ r)(X_p)=(r\otimes \id_\DD)(\sum_{q,k} c^{q k}_p X_q\otimes X_k)=\sum_{q,i,j,k}   c_q^{ij} c^{q k}_p X_i\otimes X_j\otimes X_k\]
and, similarly, 
\[(( \id_\DD\otimes r)\circ r)(X_p)=(\id_\DD\otimes r)(\sum_{i,q} c^{i q}_p X_i\otimes X_q)=\sum_{q,i,j,k}  c^{jk}_q c_p^{i q} X_i\otimes X_j\otimes X_k\]
%\[=\sum_{\ell,j,k,r} c_j^{r\ell } c^{k j}_i X_k\otimes X_r\otimes X_j \]

%Now for any $i,j,k,r$,
Comparing the coefficients of $X_i\otimes X_j\otimes X_k$ in 
$$((r\otimes \id_\DD)\circ r)(X_r) \quad \text{ and in }\quad (( \id_\DD\otimes r)\circ r)(X_r)$$ 
which are equal by co-associativity in a Hopf algebra, we get that 
$$\sum_{q} c^{ij}_\ell c_p^{q k} =\sum_q c^{jk}_q c_p^{i q},$$ 
so $r$ is associative in the sense of Definition~\ref{HScoe}.

\end{example}

%{\bf Perhaps we also want to explain that Daniel-Piotr is an example of this setup}

\

\subsection{The general case}\label{generalcase}

We now work with 
$$\bDD=\{(\DD_1,\bar\epsilon_1),(\DD_2,\bar\epsilon_2)\}$$ 
where each $(\DD_u,\bar\epsilon_u)$, $u\in \{1,2\}$, is a local operator-system. The notation from previous sections carries forward to each operator-system by adding an index; for instance, $\mm_u$ denotes the maximal ideal of $\DD_u$ and $dim_k(\DD_u)=m_u+1$, for $u=1,2$. Similarly, we denote the associated operators by $\dd_{u,i}$, and so these are additive operators satisfying
$$\dd_{u,i}(xy)=\dd_{u,i}(x)\, y +x\, \dd_{u,i}(y)+\sum_{p,q=1}^m\alpha_{u,i}^{pq}\dd_{u,p}(x)\dd_{u,q}(y)$$
where $\alpha_{u,i}^{pq}\in k$ is the coefficient of $\epsilon_{u,i}$ in the product $\epsilon_{u,p}\cdot \epsilon_{u,q}$ happening in $\DD_u$.
%for $(p,q)\in \gamma_u(i)$; otherwise, $\alpha_{p,q}^i=0$. See Remark~\ref{operatornotation}.

\medskip

Again, $(F,\be)$ is a fixed $\bDD$-field and all rings are $F$-algebras and $\bDD$-rings are $(F,\be)$-algebras. By a \emph{commutation system} (for $\bDD$ over $F$) we mean a pair
$$\Gamma=\{r_1,r_2\}$$
where each $r_u:\DD_u\to \DD_u\otimes_k\DD_u(F)$ is a $k$-algebra homomorphism. We say that a commutation-system $\Gamma$ is of Lie-Hasse-Schmidt type (or LHS-type for short or simply LHS-commutation system) if $r_1$ is of Lie-commutation type (as in \S\ref{Lietype}) and $r_2$ is of HS-iteration type (as in \S\ref{HStype}). In this case, by the coefficients of $\Gamma$ we mean the tuple $(c_{u,\ell}^{ij})$ from $F$ where $(c_{1,\ell}^{ij})$ are the Lie-coefficients of $r_1$ and $(c_{2,\ell}^{ij})$ are the HS-coefficients of $r_2$.

\medskip

For the remainder of this section we fix an LHS-commutation system $\Gamma$ with coefficients $(c_{u,\ell}^{ij})$.
% We also fix the following notation: for $ u,v\in \{1,2\}$ we set 
%$$\chi{v,u}:\DD_v\to \DD_v\otimes_k \DD_u(F)$$
%to be the canonical embedding ($a\mapsto a\otimes 1$).

\begin{definition}\label{defgammacomm}
Let $A\leq R\leq S$ be an extension of rings and $\be:R\to S$ a $\bDD$-operator such that $\be(A)\subseteq \bDD(R)$. We say that $\be$ commutes on $(A,R,S)$ with respect to $\Gamma$ if
\begin{enumerate}
\item $e_1$ commutes on $(A,R,S)$ with respect to $r_1^{e_1}$,
\item  $e_2$ commutes on $(A,R,S)$ with respect to $r_2^{\iota}$, and
\item $(e_1,e_2)$ commute on $(A,R,S)$ with respect to $r_{12}^{e_1}$.
\end{enumerate}
where $r_{12}:\DD_2\to \DD_1(\DD_2(F))$ is the canonical embedding (i.e., $x\mapsto 1\otimes x$).
When this occurs and $A=R=S$, we may simply say that $(R,\be)$ is a $\gDD$-ring.
\end{definition}

\begin{lemma}\label{meaningcomm}
Let $(c_{u,\ell}^{ij})$ denote the coefficients of $\Gamma$. On any $\bDD$-ring $(R,\be)$, $\be$ commutes on $R$ with respect to $\Gamma$ if and only if 
$$[\dd_{1,i},\dd_{1,j}]=c_{1,1}^{ij}\dd_{1,1}+\cdots+c_{1,m_1}^{ij}\dd_{1,m_1}$$

$$\dd_{2,i}\dd_{2,j}=c_{2,1}^{ij}\dd_{2,1}+\cdots+c_{2,m_2}^{ij}\dd_{2,m_2}$$
and 
$$[\dd_{1,i},\dd_{2,j}]=0.$$
\end{lemma}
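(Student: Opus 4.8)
The plan is to unpack Definition~\ref{defgammacomm} and apply the three characterisations already established for each component commutativity rule. Concretely, $\be$ commutes on $R$ with respect to $\Gamma$ means precisely the conjunction of the three conditions (1)--(3) of Definition~\ref{defgammacomm}, so it suffices to translate each of them separately into the stated identities on the operators.

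First I would handle condition (1): since $r_1$ is of Lie-commutation type, Lemma~\ref{Lie_sum} applies directly and gives that $e_1$ commutes on $R$ with respect to $r_1^{e_1}$ if and only if $[\dd_{1,i},\dd_{1,j}]=\sum_{\ell}c_{1,\ell}^{ij}\dd_{1,\ell}$ for all $1\leq i,j\leq m_1$, which is the first displayed equality. Next, for condition (2): since $r_2$ is of HS-iteration type, Lemma~\ref{hscommuting} applies and yields that $e_2$ commutes on $R$ with respect to $r_2^{\iota}$ if and only if $\dd_{2,i}\dd_{2,j}=\sum_{\ell}c_{2,\ell}^{ij}\dd_{2,\ell}$ for all $1\leq i,j\leq m_2$, which is the second displayed equality. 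Finally, for condition (3): $r_{12}$ is the canonical embedding $\DD_2\hookrightarrow\DD_1(\DD_2(F))$, so the unnumbered lemma following Definition~\ref{def_comm} (the one computing both sides of diagram~\eqref{commdiagram} in terms of ranked bases) applies and gives that $(e_1,e_2)$ commute on $R$ with respect to $r_{12}^{e_1}$ if and only if $\dd_{1,i}\dd_{2,j}(a)=\dd_{2,j}\dd_{1,i}(a)$ for all $a\in R$, $1\leq i\leq m_1$, $1\leq j\leq m_2$, i.e.\ $[\dd_{1,i},\dd_{2,j}]=0$.

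Putting the three equivalences together gives the lemma. The argument is essentially bookkeeping: the only mild subtlety is to make sure the cited lemmas, which are stated for a single $\DD$-ring $(R,e)$ (for (1) and (2)) and for the pair case with the canonical embedding (for (3)), are being invoked with the correct algebras ($\DD_1$ in case (1), $\DD_2$ in case (2)) and the correct liftings ($r_1^{e_1}$, $r_2^{\iota}$, $r_{12}^{e_1}$, matching exactly the choices built into Definition~\ref{defgammacomm}). There is no real obstacle here; the proof is short and reduces entirely to the previously proved computations, so in the paper this would likely be dispatched with a sentence citing Lemmas~\ref{Lie_sum} and~\ref{hscommuting} together with the earlier diagram computation.
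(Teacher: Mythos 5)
Your proposal is correct and matches the paper's intent: the paper dismisses this lemma as "a straightforward computation," which is exactly the bookkeeping you carry out by splitting Definition~\ref{defgammacomm} into its three clauses and citing Lemma~\ref{Lie_sum}, Lemma~\ref{hscommuting}, and the unnumbered lemma on the canonical embedding, each with the correct algebra and lifting. Nothing is missing.
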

\begin{proof}
As in the previous sections, this is a straightforward computation.
\end{proof}

\begin{definition}\label{Jacobiass}
We say that $\Gamma$ is Jacobi-associative if $r_1$ is Jacobi (as in \S\ref{Lietype}), $r_2$ is associative (as in \S\ref{HStype}), and
for $u,v\in \{1,2\}$ with $u\neq v$, $1\leq k\leq m_u$, and $1\leq i,j,r \leq m_v$, we have
$$\dd_{u,k}(c_{v,r}^{ij})=0.$$ 
%where $(c_{u,\ell}^{ij})$ are the coefficients of the commuting system $\Gamma$.
\end{definition}

\smallskip

Putting Definitions \ref{Liecoe}, \ref{HScoe}, \ref{Jacobiass} together, we get:

\begin{remark}\label{altogether}
%Let $\Gamma$ be an LHS-commutation system with coefficients $(c_{u,\ell}^{ij})$. Then, 
The LHS-commutation system $\Gamma$ is Jacobi-associative if and only if it the following three conditions hold:
\begin{enumerate}
\item  for each $1\leq \ell\leq m_1$, the matrix $(c_{1,\ell}^{ij})_{i,j=1}^{m_1}$ is skew-symmetric, \\
for each $1\leq i,j,k,r\leq m$ we have
$$\sum_{\ell=1}^m \left( c_\ell^{ij}c_r^{\ell k}+ c_{\ell}^{ki}c_r^{\ell j}  +c_\ell^{jk}c_r^{\ell i} \right)=\dd_i(c_r^{jk})+\dd_k(c_r^{ij})+\dd_j(c_r^{ki}),$$
for each $1\leq i,j,k,r\leq m$ we have
$$\sum_{p=1}^m \left(\alpha_i^{pr}\dd_p(c_r^{jk})+\alpha_k^{pr} \dd_p(c_r^{ij}) +\alpha_j^{pr}\dd_p(c_r^{ki}) \right)=0,$$
and for each $1\leq q<r\leq m$ we have
$$\sum_{p=1}^m \left(\alpha_i^{pq}\dd_p(c_r^{jk})+\alpha_k^{pq} \dd_p(c_r^{ij}) +\alpha_j^{pq}\dd_p(c_r^{ki}) + \alpha_i^{pr}\dd_p(c_q^{jk})+\alpha_k^{pr} \dd_p(c_q^{ij}) +\alpha_j^{pr}\dd_p(c_q^{ki}) \right)=0.$$

% for each $1\leq i,j,k,r\leq m_1$
%$$\sum_{\ell=1}^{m_1} \left( c_{1,\ell}^{ij}c_{1,r}^{\ell k}+ c_{1,\ell}^{ki}c_{1,r}^{\ell j}  +c_{1,\ell}^{jk}c_{1,r}^{\ell i} \right)=\dd_{1,i}(c_{1,r}^{jk})+\dd_{1,k}(c_{1,r}^{ij})+\dd_{1,j}(c_{1,r}^{ki})$$
%and for each $1\leq i,j,k,r,q\leq m_1$
%$$\sum_{p=1}^{m_1} \left(\alpha_{1,i}^{pq}\dd_{1,p}(c_{1,r}^{jk})+\alpha_{1,k}^{pq} \dd_{1,p}(c_{1,r}^{ij}) +\alpha_{1,j}^{pq}\dd_{1,p}(c_{1,r}^{ki}) \right)=0$$

\item for each $1\leq i,j,k,r \leq m_2$
 $$\sum_{\ell=1}^{m_2} \left( c_{2,\ell}^{ij} c_{2,r}^{\ell k} - c_{2,\ell}^{jk} c_{2,r}^{i\ell}-\sum_{p,q=1}^{m_2}\alpha_{2,i}^{pq}\dd_{2,p}(c_{2,\ell}^{jk})c_{2,r}^{q\ell}\right)=\dd_{2,i}(c_{2,r}^{jk})$$

\item for $u,v\in \{1,2\}$ with $u\neq v$, for all $1\leq k\leq m_u$, and $1\leq i,j,r\leq m_v$ we have
$$\dd_{u,k}(c_{v,r}^{ij})=0.$$
\end{enumerate}
\end{remark}

\medskip

As before, the notion of Jacobi-associativity is justified by the following. By a $\gDD$-ring $(R,\be)$ we mean a $\bDD$-ring such that $\be$ commutes w.r.t. $\Gamma$.
%(see Definition~\ref{defgammacomm}).

\begin{proposition}\label{needindependent}
Let $\Gamma$ be an LHS-commuting system (for $\bDD$ over $F$). If there exists a $\gDD$-ring $(R,\be)$ where the operators 
\begin{itemize}
\item $(\dd_{u,i}: u\in \{1,2\}, 1\leq i\leq m_u)$, 
\item $(\dd_{1,i}\dd_{1,j}: 1\leq i\leq j\leq m_1)$, and 
\item $(\dd_{1,i}\dd_{2,j}: 1\leq i\leq m_1, 1\leq j\leq m_2)$
\end{itemize}
are $F$-linearly independent (as functions $R\to R$), then $\Gamma$ is Jacobi-associative.
\end{proposition}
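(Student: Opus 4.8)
The plan is to separate the three clauses of Definition~\ref{Jacobiass} and reduce the first two to the single-system results already at our disposal. Since the $\DD_1$-operators $(\dd_{1,i})_{i=1}^{m_1}$ together with the products $(\dd_{1,i}\dd_{1,j})_{1\le i\le j\le m_1}$ are $F$-linearly independent on $R$, the $\DD_1$-reduct $(R,e_1)$ is a $\DD_1^{r_1}$-ring satisfying the hypothesis of Proposition~\ref{just1}, so $r_1$ is Jacobi; similarly, the linear independence of $(\dd_{2,i})_{i=1}^{m_2}$ makes Proposition~\ref{just2} apply to $(R,e_2)$, so $r_2$ is associative. What remains is the mixed clause: that $\dd_{u,k}(c_{v,r}^{ij})=0$ whenever $\{u,v\}=\{1,2\}$, $1\le k\le m_u$, and $1\le i,j,r\le m_v$.

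For this I would run the same ``compute a third-order operator two ways'' argument used in the proofs of Propositions~\ref{just1} and~\ref{just2}, now applied to an operator mixing the two systems. Take first $u=1$, $v=2$ and consider $\dd_{1,k}\dd_{2,i}\dd_{2,j}$ acting on $R$. Expanding from the inside with $\dd_{2,i}\dd_{2,j}=\sum_r c_{2,r}^{ij}\dd_{2,r}$ (Lemma~\ref{meaningcomm}) and then applying the Leibniz rule of $\dd_{1,k}$ yields
$$\dd_{1,k}\dd_{2,i}\dd_{2,j}=\sum_r \dd_{1,k}(c_{2,r}^{ij})\,\dd_{2,r}+\sum_r c_{2,r}^{ij}\,\dd_{1,k}\dd_{2,r}+\sum_{r,p,q}\alpha_{1,k}^{pq}\,\dd_{1,p}(c_{2,r}^{ij})\,\dd_{1,q}\dd_{2,r}.$$
On the other hand, using $[\dd_{1,k},\dd_{2,i}]=[\dd_{1,k},\dd_{2,j}]=0$ (again Lemma~\ref{meaningcomm}) to move $\dd_{1,k}$ to the right of $\dd_{2,i}\dd_{2,j}$, and then $\dd_{2,r}\dd_{1,k}=\dd_{1,k}\dd_{2,r}$, one gets simply $\dd_{1,k}\dd_{2,i}\dd_{2,j}=\sum_r c_{2,r}^{ij}\,\dd_{1,k}\dd_{2,r}$. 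Subtracting,
$$\sum_r \dd_{1,k}(c_{2,r}^{ij})\,\dd_{2,r}+\sum_{r,p,q}\alpha_{1,k}^{pq}\,\dd_{1,p}(c_{2,r}^{ij})\,\dd_{1,q}\dd_{2,r}=0.$$
Now every $\dd_{2,r}$ belongs to the first distinguished family and every $\dd_{1,q}\dd_{2,r}$ (with $q\ge 1$) to the third, and all of these are pairwise distinct members of the combined family assumed $F$-linearly independent; hence each coefficient vanishes, and in particular $\dd_{1,k}(c_{2,r}^{ij})=0$ for all admissible $k,i,j,r$.

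The case $u=2$, $v=1$ is entirely parallel: apply $\dd_{2,k}$ to the identity $[\dd_{1,i},\dd_{1,j}]=\sum_r c_{1,r}^{ij}\dd_{1,r}$, expanding once via the Leibniz rule of $\dd_{2,k}$ and once by commuting $\dd_{2,k}$ past $\dd_{1,i}$ and $\dd_{1,j}$ so that it lands on the right of the bracket, then subtract; here one uses $\dd_{2,q}\dd_{1,r}=\dd_{1,r}\dd_{2,q}$ to recognise the mixed second-order operators that appear as members of the third distinguished family, and linear independence again forces $\dd_{2,k}(c_{1,r}^{ij})=0$. Assembling the three clauses gives that $\Gamma$ is Jacobi-associative. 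I expect the only delicate point to be the bookkeeping of the Leibniz correction terms $\sum_{p,q}\alpha_{u,k}^{pq}(\cdots)$, together with the check that, once commutativity has been used, every operator surviving in the final identity genuinely lies in one of the three families listed in the hypothesis, so that no unwanted cancellation between a priori distinct operators can occur; once that is in place, the vanishing statements fall out immediately.
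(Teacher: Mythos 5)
Your proposal is correct and follows essentially the same route as the paper: clauses (1) and (2) of Jacobi-associativity are delegated to Propositions~\ref{just1} and~\ref{just2}, and the mixed clause is obtained by evaluating a third-order operator combining the two systems in two ways (via the commutation rule plus Leibniz, versus via $[\dd_{1,\cdot},\dd_{2,\cdot}]=0$) and invoking linear independence of the $\dd_{v,\ell}$ and the mixed products $\dd_{1,q}\dd_{2,\ell}$. The only cosmetic difference is that the paper treats both directions $u\neq v$ in a single computation with a parameter $\beta\in\{0,1\}$, whereas you split into two symmetric cases.
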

\begin{proof}
Conditions (1) and (2) of Remark~\ref{altogether} are satisfied by Propositions~\ref{just1} and \ref{just2}. Thus, it remains only to check condition (3). 
Let $u,v\in \{1,2\}$ with $u\neq v$, $1\leq k\leq m_u$, and $1\leq i,j,r\leq m_v$. On the one hand
$$\dd_{v,i}\dd_{v,j}\dd_{u,k}=\dd_{u,k}\dd_{v,i}\dd_{v,j}$$
On the other hand, setting $\beta=1$ when $v=1$ and $\beta=0$ when $v=2$, we get
\begin{align*}
\dd_{v,i}\dd_{v,j}\dd_{u,k}& =\beta \dd_{v,j}\dd_{v,i}\dd_{u,k}+ \sum_\ell c_{v,\ell}^{ij}\dd_{v,\ell}\dd_{u,k} \\
& = \beta \dd_{u,k}\dd_{v,j}\dd_{v,i}+\dd_{u,k}(\dd_{v,i}\dd_{v,j} -\beta\dd_{v,j}\dd_{v,i})- \sum_\ell\dd_{u,k}(c_{v,\ell}^{ij})\dd_{v,\ell} \\
& \quad -\sum_\ell\sum_{p,q\geq 1}\alpha_{u,k}^{pq}\dd_{u,p}(c_{v,\ell}^{ij})\dd_{u,q}\dd_{v,\ell} \\
& = \dd_{u,k}\dd_{v,i}\dd_{v,j} - \sum_\ell\dd_{u,k}(c_{v,\ell}^{ij})\dd_{v,\ell} - \sum_\ell\sum_{p,q\geq 1}\alpha_{u,k}^{pq}\dd_{u,p}(c_{v,\ell}^{ij})\dd_{u,q}\dd_{v,\ell}.
\end{align*}
By the $F$-linear independence assumption (in particular the third bullet point), comparing coefficients yields $\dd_{u,k}(c_{v,\ell}^{ij})=0$.
\end{proof}

Below, in Corollary~\ref{functionfield}, we prove a strong converse of Proposition~\ref{needindependent}.

%By Subsection \ref{app_single_algebra}, the case of one Jacobi commutativity condition of Lie type and finitely many associative conditions of HS-iteration type (together with the condition saying that the operators associated to different algebra trivially commute) can be naturally expressed in our setting with two algebras $\DD_1,\DD_2$. Hence our framework includes, for example, the following situation.

\begin{example}
Assume $char(k)=p>0$ and $F=k$. Consider the case 
$$\DD_1=k[\epsilon_1,\dots,\epsilon_m]/(\epsilon_1,\dots,\epsilon_m)^2 \quad \text{ and } \quad \DD_2=k[\epsilon]/(\epsilon)^{p^{n}}$$
with natural choice of %$\pi_0$, $\pi_u$, 
and ranked bases. Let $(c_{1,\ell}^{ij})_{i,j,\ell=1}^{m}$ be a tuple from $k$ such that for each $\ell$ the matrix $(c_{1,\ell}^{ij})_{i,j=1}^m$ is skew-symmetric. Set $\Gamma=\{r_1,r_2\}$ where
$$r_1(\epsilon_\ell)=1 \otimes \epsilon_\ell+\sum_{i,j=1}^{m}\epsilon_i\otimes \epsilon_j \, c_{\ell}^{ij}, \quad \text{ for }\ell=1,\dots, m,$$
and 
$$r_2(\epsilon)=\epsilon\otimes 1+1\otimes\epsilon.$$
Then, in any $\bDD$-ring $(R,\be)$, $\be$ commutes on $R$ with respect to $\Gamma$ if and only if
\begin{itemize}
\item the derivations $(\dd_{1,1},\dots,\dd_{1,m})$ Lie-commute, with coefficients $(c_{1,\ell}^{ij})$,
\item the truncated Hasse-Schmidt derivation $(\dd_{2,1},\dots,\dd_{2,p^{n}-1})$ is iterative, and
\item for $1\leq u\neq v\leq 2$, the operators $\dd_{u,i}$ and $\dd_{v,j}$ commute.
\end{itemize}
%(cf. Example \ref{allcombined}). 
We further note that in this case $\Gamma$ is Jacobi-associative if and only if, 
%for each $\ell$, the $m\times m$ matrix $(c_{0,\ell}^{ij})_{i,j=1}^m$ is skew-symmetric and, 
for each $1\leq i,j,k,r\leq m$,
$$\sum_{\ell=1}^m  \left( c_{0,\ell}^{ij}c_{0,r}^{\ell k}+ c_{0,\ell}^{ki}c_{0,r}^{\ell j}  +c_{0,\ell}^{jk}c_{0,r}^{\ell i} \right)=0.$$
Indeed, since the $c_{1,\ell}^{ij}$ are from $k$, we get $\dd_{u,p}(c_{0,\ell}^{ij})=0$ for any $p$. Hence, condition (1) of Remark~\ref{altogether} reduces to the above conditions; condition (3) is now trivial; and we have seen in Example~\ref{assHS} that condition (2) holds in this set up. 
\end{example}

\begin{remark}
At the moment we do not know whether our notion of Jacobi-associative LHS-commutation system can be recovered in the setup of generalised Hasse-Schmidt iterativity introduced by Moosa and Scanlon in \cite{MooScan2011}. We leave this for future work.
\end{remark}

\

\section{$\bDD$-kernels}\label{kernels}\label{sec5}

Based on the notion of differential kernels \cite{GLS2018,Lando1970,Pierce2014}, in this section we introduce the notions of $\bDD$-kernel and $\gDD$-kernel. We prove in Theorem~\ref{thebigone} that, under certain conditions, $\gDD$-kernels have (unique) principal realisations. We carry forward the notation of \S\ref{generalcase}. Namely, 
$$\bDD=\{(\DD_1,\bar\epsilon_1),(\DD_2,\bar\epsilon_2)\}$$ 
where each $(\DD_u,\bar\epsilon_u)$ is a local operator-system. Also, $\mm_u$ denotes the maximal ideal of $\DD_u$ and $dim_k(\DD_u)=m_u+1$, for $u\in \{1,2\}$. $(F,\be)$ is a fixed $\bDD$-field and we assume that all rings are $F$-algebras, and that $\bDD$-rings are $(F,\be)$-algebras. We also fix, throughout, a commutation system $\Gamma=\{r_1,r_2\}$ for $\bDD$ over $F$ of Lie-Hasse-Schmidt type, and denote its coefficients by $(c_{u,\ell}^{ij})$.

Recall that we denote the associated operators by $\dd_{u,i}$, and that they are additive operators satisfying
\begin{equation}\label{rulemulti}
\dd_{u,i}(xy)=\dd_{u,i}(x)\, y +x\, \dd_{u,i}(y)+\sum_{p,q=1}^{m_u}\alpha_{u,i}^{pq}\dd_{u,p}(x)\dd_{u,q}(y)
\end{equation}
where $\alpha_{u,i}^{pq}\in k$ is the coefficient of $\epsilon_{u,i}$ in the product $\epsilon_{u,p}\cdot \epsilon_{u,q}$ happening in $\DD_u$, see Remark~\ref{operatornotation}.

%for $(p,q)\in \gamma_u(i)$; otherwise, $\alpha_{u,i}^{p,q}=0$

Also recall that, by Lemma~\ref{meaningcomm}, $\Gamma$-commutativity translates to
$$[\dd_{1,i},\dd_{1,j}]=c_{1,1}^{ij}\dd_{1,1}+\cdots+c_{1,m_1}^{ij}\dd_{1,m_1}$$
\begin{equation}\label{gammacomm}
\dd_{2,i}\dd_{2,j}=c_{2,1}^{ij}\dd_{2,1}+\cdots+c_{2,m_2}^{ij}\dd_{2,m_2}
\end{equation}
$$[\dd_{1,i},\dd_{2,j}]=0.$$

At this point we would like to warn the reader that this section is rather technical and, while we have attempted to give a clear presentation, it does take effort to go through it. We thank the committed reader in advance. 

\medskip

\subsection{Definitions and notation}\label{secnotation} For presentation sake, we fix some notation first. We set
$$\md=\{(u,i): u\in\{1,2\}, \, 1\leq i\leq m_u\}$$
and, for $r\in \mathbb N_0$, denote by $\md^r$ the set of $r$-tuples with entries in $\md$, and by $\md^{\leq r}$ those tuples of length at most $r$. For $i=(u,i')\in \md$, we say that $i$ is of \emph{Lie-type} if $u=1$ and of \emph{HS-type} otherwise (i.e., $u=2$). We set the \emph{type} of an operator $\dd_i$ to be that of $i$. 

\medskip

For $\xi\in \md^r$, we set 
 $$
\chi_\xi= 
\left\{
\begin{array}{cc}
	0 & \text{ if $\xi$ has at least two entries of HS-type} \\
	1 & \text{otherwise} 
\end{array}
\right.
$$
Furthermore, whenever $i,p,q\in \md$ have the same type, we set $\alpha_{i}^{pq}:=\alpha_{u,i'}^{p'q'}$  and $c_{i}^{pq}:=c_{u,i'}^{p',q'}$ where $i=(u,i')$, $p=(u,p')$ and $q=(u,q')$; otherwise we set $\alpha_{i}^{pq}=0$ and $c_{i}^{pq}=0$. This allows us to write the multiplication rule \eqref{rulemulti} as
$$\dd_{i}(xy)=\dd_{i}(x)\, y +x\, \dd_{i}(y)+\sum_{p,q\in \md}\alpha_{i}^{pq}\dd_{p}(x)\dd_{q}(y)$$
and the $\Gamma$-commutativity rule \eqref{gammacomm} simply as
$$\dd_i\dd_j=\chi_{i,j}\dd_i\dd_j+\sum_{\ell\in \md}c_\ell^{ij}\dd_\ell.$$

%\medskip

\begin{notation}\label{notation1}
From now on we equip $\md$ with the following order
$$(2,1)<\cdots <(2,m_2)<(1,1),\cdots < (1,m_1).$$
For $r\in \NN_0$, we set
$$\NN^\md_{r}=\{(i_1,\cdots, i_r)\in \md^r: i_1\geq \cdots \geq i_r\ \text{ and at most one entry $i_j$ is of HS-type}\}$$
and $\NN^\md_{\leq r}=\NN^\md_{0}\cup \cdots \cup \NN^\md_{r}$, and also $\NN^\md_{<\infty}=\bigcup_{r\in \NN_0} \NN^\md_{r}$. Finally, we define 
$$\rho:\md^{<\infty}\to \NN^\md_{<\infty}$$ 
as follows: if $\xi\in \md^{<\infty}$ has at least two entries of HS-type we put $\rho(\xi)=\emptyset$; otherwise we put $\rho(\xi)$ as the (unique) element of  $\NN^\md_{<\infty}$ obtained from $\xi$ by reordering its entries.
\end{notation}

\smallskip

\begin{remark}\label{notation2}
We explain the notation introduced above. For $i\in \md$ and $\xi\in \md^{<\infty}$, we set 
$$\#_i(\xi)=\text{the number of occurrences of $i$ in $\xi$}.$$
We can then construct the map $\psi: \NN^\md_{<\infty}\to \NN_0^{m_1}\times \NN_0^{m_2}$ given by 
$$\xi\mapsto ((\#_{(1,m_1)}(\xi),\dots,\#_{(1,1)}(\xi)),(\#_{(2,m_2)}(\xi),\dots,\#_{(2,1)}(\xi)))$$
and it is straightforward to check that $\psi$ is injective. Furthermore, the image of $\NN^\md_r$ under $\psi$ corresponds to those elements whose coordinates add up to $r$. It is worth noting that for $\xi\in \NN^\md_{<\infty}$ we have $\#_{(2,m_2)}(\xi)+\dots +\#_{(2,1)}(\xi)\leq 1$.
Also, when $\xi\in \md^{<\infty}$ has at most one entry of HS-type, we have that $\rho(\xi)$ is the unique element of $\NN^\md_{<\infty}$ such that 
$$\psi(\rho(\xi))=((\#_{(1,m_1)}(\xi),\dots,\#_{(1,1)}(\xi)),(\#_{(2,m_2)}(\xi),\dots,\#_{(2,1)}(\xi))).$$
\end{remark}

\medskip

Henceforth, we fix an infinite family $(w^\xi:\, \xi\in \NN^\md_{<\infty})$ of algebraic indeterminates over $F$. We define $F$-vector spaces
$$V_F=span_F((w^{\xi})_{\xi\in \NN^\md_{<\infty}}) \text{ and }V_F(r)=span_F((w^{\xi})_{\xi\in \NN^\md_{\leq r}}).$$
As convention we set $V_F(-1)$ to be the null vector space. The first intermediate step towards the main result of this section (Theorem \ref{thebigone}) is to prove that we can equip $V_F$ with suitable additive operators $(\dd_{i}:i\in \md)$ in a way that they $\Gamma$-commute. This is made precise in Lemma~\ref{horrible} below.

\begin{definition}\label{def_ell}
For each $i\in \md$ and $r\in \NN_0$, we define $\dd_i:V_F(r)\to V_F(r+1)$ by induction on $r$ as follows. For $r=0$, we put
$$\dd_i( cw^\emptyset) =\dd_i(c)w^\emptyset+cw^i+\sum_{p,q\in \md} \alpha_{i}^{pq}\dd_p(c)w^q$$
Now assume $r\geq 1$. We first define $\dd_i(w^{\xi})$ for $\xi\in \NN_{r}^\md$. For $i=(1,m_1)$, we set
$$\dd_{(1,m_1)}(w^\xi)=w^{((1,m_1),\xi)}.$$
We may now assume that we have defined $\dd_k$ for all $k>i$. Now write $\xi=(j,\eta)$ and consider two cases.

\smallskip

\noindent {\bf Case 1}. Suppose $i\geq j$. On the one hand, if $i$ is of HS-type, $j$ must also be of HS-type, and we put
$$\dd_i(w^\xi)=\sum_{\ell\in \md}c_{\ell}^{ij}\dd_\ell(w^{\eta})$$
where note that $\dd_\ell(w^\eta)$ has already been defined by induction (as $\eta\in \NN^\md_{r-1}$). On the other hand, if $i$ is of Lie-type, we put 
$$\dd_i(w^\xi)=w^{(i,\xi)}.$$

\smallskip		

\noindent {\bf Case 2}. Suppose $i<j$. In this case $\dd_{j}(\dd_i(w^\eta))$ has already been defined and hence we may set
		\[\dd_i(w^\xi)=\chi_{i,j}\dd_j(\dd_i(w^\eta))+\sum_{\ell\in \md}c_{\ell}^{ij}\dd_\ell(w^\eta).\]

Now that $\dd_i(w^\xi)$ has been defined for all $\xi\in \NN_{r}^\md$ and $i\in \md$, we put 
$$\dd_{i}(cw^\xi)=\dd_i(c)\,w^\xi+c\, \dd_i(w^\xi)+\sum_{p,q\in \md}\alpha_i^{pq}\dd_p(c) \dd_q (w^\xi)$$
and extend additively to all of $V_F(r)$. This defines the desired operators from $V_F(r)\to V_F(r+1)$.
		\end{definition} 

Note that the operators defined above yield additive operators
$$\dd_i:V_F \to V_F, \quad \text{ for } i\in \md,$$
such that 
$$\dd_{i}(cv)=\dd_i(c)\,v+c\, \dd_i(v)+\sum_{p,q\in \md}\alpha_i^{pq}\dd_p(c) \dd_q (v)$$
for all $c\in F$ and $v\in V_F$.

Now consider the auxiliary function $\ell_*:\md^{<\infty} \to V_F$ defined as follows: for $\xi=(i_1,\dots,i_r)\in\md^{r}$,
$$\ell_\xi:=\dd_\xi w^\emptyset  - \chi_\xi w^{\rho(\xi)}$$
where $\dd_\xi=\dd_{i_1}\cdots\dd_{i_r}$ and $\rho:\md^{<\infty}\to \NN^\md_{<\infty}$ was defined in Notation~\ref{notation1}. Note that from the definition we get $\dd_\xi w^\emptyset=\chi_\xi\dd_{\rho(\xi)}w^\emptyset +\ell_\xi$. It is also worth noting that
\begin{itemize}
\item for $i<j\in \md$; we have $\ell_{i,j}=\sum_{k\in\md}c_k^{ij}\dd_k(w^\emptyset) \in V_F(1)$,
\item if $\xi\in \NN^\md_{<\infty}$, then $\ell_\xi=0$, 
\item for $\xi\in \NN^\md_{<\infty}$ with all entries of Lie-type, if $j\in \md$ is of HS-type, then $\ell_{j,\xi}=0$.
\end{itemize}

\medskip

We now prove a series of lemmas establishing the connection of the auxiliary function $\ell_{*}$ and the operators $\dd_i:V_F\to V_F$, culminating in the promised Lemma~\ref{horrible}.

\medskip

To reduce notation, in the proofs of the following three lemmas when we write $\dd_\tau$ (with no term immediately after) we mean $\dd_\tau w^{\emptyset}$ (i.e., $\dd_\tau(w^\emptyset)$) for any $\tau\in \md^{<\infty}$, where recall that $\dd_\tau=\dd_{i_1}\cdots\dd_{i_r}$ when $\tau=(i_1,\dots,i_r)$. With this notation in mind, we have $\dd_{\rho(\tau)}=w^{\rho(\tau)}$, which will be used repeatedly.

\begin{lemma}\label{properror} If $\xi \in \md^r$, then $\ell_\xi\in V_F(r-1)$.
	\end{lemma}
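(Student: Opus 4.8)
The plan is to prove Lemma~\ref{properror} by induction on the length $r$ of $\xi$, unwinding the recursive definition of the operators $\dd_i:V_F\to V_F$ given in Definition~\ref{def_ell} and tracking how the auxiliary quantity $\ell_\xi=\dd_\xi w^\emptyset-\chi_\xi w^{\rho(\xi)}$ behaves under prepending one more operator. The base cases $r=0$ and $r=1$ are immediate: for $r=0$ we have $\ell_\emptyset=w^\emptyset-w^\emptyset=0\in V_F(-1)$ (by convention the null space), and for $r=1$, $\ell_i=\dd_i w^\emptyset-w^i$ equals $\sum_{p,q\in\md}\alpha_i^{pq}\dd_p(1)w^q=0$ (recall $\dd_p(1)=0$), which lies in $V_F(0)$; more to the point, the bullet list preceding the lemma already records $\ell_{i,j}\in V_F(1)$ for $i<j$.

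For the inductive step, write $\xi=(i,\eta)$ with $\eta\in\md^{r-1}$, so $\dd_\xi w^\emptyset=\dd_i(\dd_\eta w^\emptyset)=\dd_i(\chi_\eta w^{\rho(\eta)}+\ell_\eta)$. By the inductive hypothesis $\ell_\eta\in V_F(r-2)$, and each $\dd_i$ raises the degree of a vector in $V_F(s)$ by at most one (the operators are defined as maps $V_F(s)\to V_F(s+1)$), so $\dd_i(\ell_\eta)\in V_F(r-1)$. Thus it remains to analyse $\chi_\eta\,\dd_i(w^{\rho(\eta)})$ and compare it with $\chi_\xi w^{\rho(\xi)}$. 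Here I would split into the cases of Definition~\ref{def_ell} applied to the sorted tuple $\rho(\eta)$ (which is already in $\NN^\md_{<\infty}$, so write it as $(j,\zeta)$ when nonempty): if $i\geq j$ and $i$ is of Lie-type, then $\dd_i(w^{\rho(\eta)})=w^{(i,\rho(\eta))}=w^{\rho(\xi)}$ exactly (here $\chi_\xi=\chi_\eta$ since prepending a Lie-type entry does not change the count of HS-type entries), so this term contributes precisely $\chi_\xi w^{\rho(\xi)}$ and cancels, leaving $\ell_\xi\in V_F(r-1)$; if $i\geq j$ and $i$ is of HS-type (forcing $j$ of HS-type, so $\eta$ has $\geq 1$ HS-entry and $\xi$ has $\geq 2$, whence $\chi_\xi=0$), then $\dd_i(w^{\rho(\eta)})=\sum_\ell c_\ell^{ij}\dd_\ell(w^\zeta)$, a sum of terms in $V_F(r-1)$ since $\zeta\in\NN^\md_{r-2}$; if $i<j$, then $\dd_i(w^{\rho(\eta)})=\chi_{i,j}\dd_j(\dd_i(w^\zeta))+\sum_\ell c_\ell^{ij}\dd_\ell(w^\zeta)$, and one must argue this differs from $\chi_\xi w^{\rho(\xi)}$ by an element of $V_F(r-1)$ — the summation term is manifestly in $V_F(r-1)$, and for the leading term one recurses: $\dd_i(w^\zeta)$ is, by another application of the analysis (or by invoking the identity $\dd_\tau w^\emptyset=\chi_\tau\dd_{\rho(\tau)}w^\emptyset+\ell_\tau$ together with the inductive hypothesis on shorter tuples), equal to $\chi_{(i,\zeta)}w^{\rho(i,\zeta)}$ modulo $V_F(r-3)$, and then $\dd_j$ of that is $w^{\rho(\xi)}$ modulo $V_F(r-1)$.

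The main obstacle I anticipate is the $i<j$ case, because there the recursion does not terminate in one step: one reorders the tuple by repeatedly commuting adjacent operators, and each commutation both swaps two entries and produces lower-order correction terms (the $\sum_\ell c_\ell^{ij}\dd_\ell$ pieces, plus the $\chi$-factors that kill terms with two HS-entries). To handle this cleanly I would likely not do a naive nested induction but instead prove a slightly stronger statement simultaneously — for instance, that for every $\tau\in\md^{<\infty}$ of length $r$ one has $\dd_\tau w^\emptyset\equiv \chi_\tau w^{\rho(\tau)}\pmod{V_F(r-1)}$ — so that the $i<j$ case can quote the statement for the length-$(r-1)$ tuple $(i,\zeta)$ directly. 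An alternative, perhaps cleaner, route is to observe that $V_F$ is filtered by the $V_F(s)$, that each $\dd_i$ is a filtered map of degree $\leq 1$, and that on the associated graded the leading behaviour is exactly $w^\xi\mapsto w^{(i,\xi)}$ for Lie-type $i$ and annihilation or lower-order for the commutator corrections — but since the subsequent lemmas (leading to Lemma~\ref{horrible}) seem to want the explicit combinatorics, I would keep the proof at the level of the explicit recursion and simply be careful with the case bookkeeping, noting where $\chi_\xi=0$ makes several terms vanish outright.
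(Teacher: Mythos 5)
Your proposal is correct and follows essentially the same route as the paper: induction on $|\xi|$, writing $\xi=(i,\eta)$ and $\rho(\eta)=(j,\zeta)$, splitting into the cases of Definition~\ref{def_ell}, and in the $i<j$ case applying the induction hypothesis to the length-$(r-1)$ tuple $(i,\zeta)$ (i.e.\ to $\ell_{i,\zeta}$) before applying $\dd_j$. The only slip is the intermediate claim ``modulo $V_F(r-3)$'', which should read $V_F(r-2)$ since $(i,\zeta)$ has length $r-1$; this does not affect the conclusion, as $\dd_j$ then lands the error term in $V_F(r-1)$ as required.
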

	\begin{proof}
		We proceed by induction on $r$. When $r\leq 1$ the conclusion is obvious as in this case $\ell_\xi=0$. So let us assume  $r\geq 2$. Write $\xi=(i,\eta)$ for some $\eta\in \md^{r-1}$, and let $j\in \md$ and $\lambda\in \NN_{r-2}^{\md}$ be such that $\chi_\eta\dd_{\rho(\eta)}=\chi_{\eta}\dd_{j,\lambda}$ (recall that when $\rho(\eta)=\emptyset$ we have $\chi_\eta=0$). Similar to Definition~\ref{def_ell}, we consider two cases:
	
	\medskip
\noindent {\bf Case 1.} Suppose $i\geq j$. On the one hand, if $i$ is of HS-type, $j$ must also be of HS-type and then $\chi_{\xi}=0$, and so, using Case (1) of Definition~\ref{def_ell} in the fifth equality below, we get
\begin{align*}
	\ell_{\xi} & =\dd_{\xi}-\chi_\xi\dd_{\rho(\xi)} \\
	& = \dd_i\dd_{\eta} \\
	& = \dd_i(\chi_\eta\dd_{\rho(\eta)} + \ell_\eta)   \\
	& = \chi_\eta \dd_{i,\rho(\eta)} + \dd_i \ell_\eta   \\
	& = \sum_{k\in \md}\chi_\eta \, c_{k}^{ij}\, \dd_k\dd_\lambda + \dd_i \ell_\eta.
	\end{align*}
and both of the last two summands are in $V_F(r-1)$. On the other hand, if $i$ is of Lie-type, we have $\chi_{\xi}=\chi_\eta$ and $\chi_\xi \dd_{\rho(\xi)}=\chi_{\xi}\dd_{i,\rho(\eta)}$; and so 
	\begin{align*}
	\ell_{\xi} %& =\dd_{\xi}-\chi_\xi\dd_{\rho(\xi)} \\
	 & = \dd_i\dd_{\eta} - \chi_{\xi} \dd_{\rho(\xi)} \\
	& = \dd_i(\chi_\eta\dd_{\rho(\eta)} + \ell_\eta)  - \chi_{\xi} \dd_{\rho(\xi)}  \\
	& = \chi_\xi \dd_{i,\rho(\eta)} + \dd_i \ell_\eta   - \chi_{\xi} \dd_{\rho(\xi)} \\
	& = \dd_i \ell_\eta
	\end{align*}
	and, by induction, $\dd_i \ell_n \in V_F(r-1)$. 

\medskip
\noindent {\bf Case 2.} Suppose $i<j$. Using Case (2) of Definition~\ref{def_ell} in the third equality below, we get

\begin{align*}
	\ell_{\xi} %& =\dd_{\xi}-\chi_\xi\dd_{\rho(\xi)} \\
	%& = \dd_i\dd_{\eta} - \chi_{\xi} \dd_{\rho(\xi)} \\
	& = \dd_i(\chi_\eta\dd_{\rho(\eta)} + \ell_\eta)  - \chi_{\xi} \dd_{\rho(\xi)}  \\
	& = \chi_\eta \dd_i\dd_j\dd_{\lambda} + \dd_i \ell_\eta  - \chi_{\xi} \dd_{\rho(\xi)} \\
	& = \chi_\eta\chi_{i,j}\dd_j\dd_i\dd_\lambda +\sum_{k\in \md}\chi_\eta c_k^{ij}\dd_k
	\dd_\lambda+ \dd_i \ell_\eta   - \chi_{\xi} \dd_{\rho(\xi)} \\
	& = \chi_\eta\chi_{i,j}\dd_j(\chi_{i,\lambda}\dd_{\rho(i,\lambda)}+\ell_{i,\lambda}) +\sum_{k\in \md}\chi_\eta c_k^{ij}\dd_k
	\dd_\lambda+ \dd_i \ell_\eta  - \chi_{\xi} \dd_{\rho(\xi)} \\
	& = \chi_\xi\dd_{\rho(\xi)}+\chi_\eta\chi_{i,j}\dd_j \ell_{i,\lambda}  +\sum_{k\in \md}\chi_\eta c_k^{ij}\dd_k
	\dd_\lambda+ \dd_i \ell_\eta  - \chi_{\xi} \dd_{\rho(\xi)} \\
	& =\chi_\eta\chi_{i,j}\dd_j \ell_{i,\lambda}  +\sum_{k\in \md}\chi_\eta c_k^{ij}\dd_k
	\dd_\lambda+ \dd_i \ell_\eta
	\end{align*}
	Using induction, we see all of the last summands are in $V_F(r-1)$.
	\end{proof}

\begin{lemma}\label{errors} Suppose $\lambda=(k,\eta)\in \NN_{<\infty}^\md$ and $i,j\in \md$.

	\begin{enumerate}
			\item\label{for1} 	 If $k>i$ then
		\[
		\ell_{i,\lambda}=\chi_{i,k}\dd_k\ell_{i,\eta}+\sum_{\ell \in \md} c_\ell^{ik}\dd_\ell\dd_\eta w^{\emptyset}
		\]
		
		\item\label{for2} If  $k>i,j$ then
		\[
		\ell_{j,\rho(i,\lambda)}=\chi_{j,k}\dd_k \, \ell_{j,\rho(i,\eta)} + \sum_{\ell \in \md} c_\ell^{jk}\dd_\ell\dd_{\rho(i,\eta)}w^{\emptyset}.
		\]
	\end{enumerate}
\end{lemma}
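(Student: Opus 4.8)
The plan is to prove (1) by unwinding Definition~\ref{def_ell} a single step, and then to deduce (2) by applying (1) to the reordered tuple $\rho(i,\lambda)$. \emph{For (1):} since $\lambda=(k,\eta)\in\NN^\md_{<\infty}$ we have $\chi_\lambda=1$ and $\ell_\lambda=0$, so $\dd_\lambda w^\emptyset=w^\lambda$ and hence $\dd_{i,\lambda}w^\emptyset=\dd_i w^\lambda$. As $i<k$, Case~2 of Definition~\ref{def_ell} applied to $w^\lambda=w^{(k,\eta)}$ gives $\dd_i w^\lambda=\chi_{i,k}\,\dd_k(\dd_i w^\eta)+\sum_{\ell\in\md}c_\ell^{ik}\,\dd_\ell w^\eta$. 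Substituting $\dd_i w^\eta=\dd_{i,\eta}w^\emptyset=\chi_{i,\eta}w^{\rho(i,\eta)}+\ell_{i,\eta}$ and $\dd_\ell w^\eta=\dd_\ell\dd_\eta w^\emptyset$, and recalling that $\ell_{i,\lambda}=\dd_{i,\lambda}w^\emptyset-\chi_{i,\lambda}w^{\rho(i,\lambda)}$, the statement of (1) reduces to the bookkeeping identity
\[
\chi_{i,k}\chi_{i,\eta}\,\dd_k w^{\rho(i,\eta)}=\chi_{i,\lambda}\,w^{\rho(i,\lambda)} .
\]
Here a short case distinction on the type of $i$ (using that $\lambda\in\NN^\md_{<\infty}$ has at most one HS-type entry) gives $\chi_{i,k}\chi_{i,\eta}=\chi_{i,\lambda}$; if this common value is $0$ both sides vanish, and if it equals $1$ then $\rho(i,\eta)\in\NN^\md_{<\infty}$ and, $\lambda$ being decreasing, $k$ exceeds $i$ and every entry of $\eta$, so $(k,\rho(i,\eta))$ is again a decreasing tuple in $\NN^\md_{<\infty}$; hence $\dd_k w^{\rho(i,\eta)}=w^{(k,\rho(i,\eta))}=w^{\rho(i,\lambda)}$, the last equality because $(k,\rho(i,\eta))$ is precisely the decreasing rearrangement of $(i,\lambda)$.

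\emph{For (2):} if $(i,\lambda)$ has two or more HS-type entries --- which, since $\lambda$ has at most one, forces $i$ to be of HS-type and $\lambda$ to have exactly one --- then $\rho(i,\lambda)=\emptyset$ and the identity is checked directly from the degenerate instances of Definition~\ref{def_ell}. Otherwise $\rho(i,\lambda)$ is an honest rearrangement; exactly as in the proof of (1) its head is $k$ and $\rho(i,\lambda)=(k,\rho(i,\eta))$ with $\rho(i,\eta)\in\NN^\md_{<\infty}$. Since moreover $k>j$, applying part~(1) with $i$ replaced by $j$ and with $\lambda$ replaced by the tuple $(k,\rho(i,\eta))$ --- so that the role of $\eta$ is now played by $\rho(i,\eta)$ --- yields exactly
\[
\ell_{j,\rho(i,\lambda)}=\chi_{j,k}\,\dd_k\,\ell_{j,\rho(i,\eta)}+\sum_{\ell\in\md}c_\ell^{jk}\,\dd_\ell\dd_{\rho(i,\eta)}w^\emptyset .
\]

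The routine part is the one-step unwinding via Case~2 of Definition~\ref{def_ell}; the part that demands genuine care is the combinatorics of the indicator $\chi$ and the rearrangement $\rho$ --- tracking how prepending an index affects the count of HS-type entries and the location of the maximal index, and confirming that the degenerate cases (two or more HS-type entries, so $\rho=\emptyset$) are exactly those in which the correction terms drop out. This is the same flavour of case analysis already carried out in the proof of Lemma~\ref{properror}.
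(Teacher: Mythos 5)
Your argument for (1) is correct and is essentially the paper's: the paper simply cites the computation in Case 2 of Lemma~\ref{properror} applied to $\xi=(i,\lambda)$ and uses $\chi_\lambda=1$, $\ell_\lambda=0$, which is exactly the one-step unwinding you carry out by hand, and your reduction to the identity $\chi_{i,k}\chi_{i,\eta}\,\dd_k w^{\rho(i,\eta)}=\chi_{i,\lambda}w^{\rho(i,\lambda)}$ together with the verification that $k$ must be of Lie-type when $\chi_{i,k}=1$ and $k>i$ is sound. Your derivation of (2) from (1) in the non-degenerate case, via $\rho(i,\lambda)=(k,\rho(i,\eta))$ and $k>j$, is also exactly the paper's one-line argument.

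The one point where you go beyond the paper is where you slip. You claim that when $(i,\lambda)$ has two or more HS-type entries, so that $\rho(i,\lambda)=\emptyset$, the identity in (2) ``is checked directly''. It is not: in that case the left-hand side is $\ell_{(j)}=0$, but the right-hand side need not vanish. Concretely, take $\DD_1=k$, $\DD_2=\mathbb F_5[\epsilon]/(\epsilon^5)$ with the iterative structure $r_2(\epsilon)=\epsilon\otimes 1+1\otimes\epsilon$, and $i=j=(2,1)$, $k=(2,2)$, $\lambda=(k)$, $\eta=\emptyset$. Then $\chi_{j,k}=0$ and the right-hand side equals $\sum_\ell c_\ell^{jk}\dd_\ell w^{(i)}=c_3^{12}\,c_4^{31}\,w^{((2,4))}=\binom{3}{1}\binom{4}{3}w^{((2,4))}=2\,w^{((2,4))}\neq 0$ in $V_F$, while the left-hand side is $0$. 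So the statement of (2) is simply false in the degenerate case, and no direct check can rescue it. The correct resolution is to observe that (2) is only asserted (and only used, e.g.\ in Case 1.2 of Lemma~\ref{horrible}) with the prefactor $\chi_{i,\lambda}$, equivalently under the standing identification $\rho(i,\lambda)=(k,\rho(i,\eta))$, which holds precisely in the non-degenerate case; that is also all the paper's own proof establishes. You should delete the sentence about the degenerate case or replace it by the remark that (2) is to be read only when $\rho(i,\lambda)=(k,\rho(i,\eta))$.
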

\begin{proof}
Note that (2) follows by applying (1) with $\rho(i,\lambda)$ in place of $\lambda$ and $j$ in place of $i$ (noting that $\rho(i,\lambda)=(k,\rho(i,\eta))$), so it suffices to prove (1). The computation in Case (2) of Lemma~\ref{properror} with $(i,\lambda)$ in place of $\xi$ yields
$$\ell_{i,\lambda}=\chi_{k,\eta}\chi_{i,k}\dd_k\ell_{i,\eta} + \sum_{\ell\in \md}\chi_{k,\eta}c_\ell^{ik}\dd_\ell\dd_\eta+\dd_i \ell_{k,\eta}$$
but as $(k,\eta)\in \NN_{<\infty}^\md$, we have $\chi_{k,\eta}=1$ and $\ell_{k,\eta}=0$, and so the above reduces to the desired equality. 
\end{proof}

The following formulas will be used in the proof of Theorem \ref{thebigone}.

\begin{lemma}\label{horrible}
	Suppose $\Gamma$ is Jacobi-associative (see \S\ref{generalcase}). For any $\lambda\in  \NN_{<\infty}^\md$ and $i,j\in \md$ we have 
	$$(1) \hspace{1.1cm}  \chi_{j,\lambda}\ell_{i,\rho(j,\lambda)}+\dd_i\ell_{j,\lambda}=\chi_{i,j}\chi_{i,\lambda}\ell_{j,\rho(i,\lambda)}+\chi_{i,j}\dd_j\ell_{i,\lambda}+\sum_{\ell\in \md} c_\ell^{ij}\dd_\ell\dd_\lambda w^\emptyset$$
	and 
	$$(2) \hspace{2.6cm} \dd_i\dd_j\dd_\lambda w^\emptyset=\chi_{i,j}\dd_j\dd_i\dd_\lambda w^\emptyset+\sum_{\ell\in\md}c^{ij}_
	\ell \dd_\ell\dd_\lambda w^\emptyset \hspace{2cm} $$
	
\end{lemma}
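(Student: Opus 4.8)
The plan is to prove (1) and (2) simultaneously by a single induction on the length $r$ of $\lambda\in\NN^{\md}_{<\infty}$, exploiting the fact that each of the three auxiliary formulas in Lemma~\ref{errors} and the definition of $\dd_i$ on $V_F(r)$ are themselves recursive in $r$ and in the order on $\md$. Concretely, one first disposes of the base case: when $\lambda=\emptyset$, (2) is exactly the defining relation $\dd_i\dd_j w^\emptyset=\chi_{i,j}\dd_j\dd_i w^\emptyset+\sum_\ell c_\ell^{ij}\dd_\ell w^\emptyset$ unwound from Definition~\ref{def_ell} (the two subcases $i\geq j$ and $i<j$ of that definition were designed precisely to make this hold, using skew-symmetry of the Lie-coefficients in the former and the definition directly in the latter), and (1) with $\lambda=\emptyset$ reduces, after noting $\ell_i=0$ for $i\in\md$ (singletons lie in $\NN^\md_{<\infty}$), to the same identity rewritten via $\ell_{i,j}=\sum_k c_k^{ij}\dd_k w^\emptyset$ when $i<j$, or to $0=0$ type bookkeeping in the other orderings. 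Then I would observe that (2) for a given $\lambda$ follows from (1) for $\lambda$ together with the definitions: indeed $\dd_i\dd_j\dd_\lambda w^\emptyset=\dd_i(\chi_{j,\lambda}\dd_{\rho(j,\lambda)}w^\emptyset+\ell_{j,\lambda})$ and $\dd_{\rho(j,\lambda)}w^\emptyset=w^{\rho(j,\lambda)}$, and one expands $\dd_i$ of that basis element using Definition~\ref{def_ell}; comparing with the analogous expansion of $\dd_j\dd_i\dd_\lambda w^\emptyset$ turns (2) into precisely the $\ell$-identity (1). So the whole weight of the lemma is on establishing (1).

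For the inductive step on (1), write $\lambda=(k,\eta)$ with $k$ the largest entry and $\eta\in\NN^{\md}_{r-1}$, and split into cases according to how $i$ and $j$ compare with $k$. When $k>i$ and $k>j$, I would substitute the formulas from Lemma~\ref{errors}(1) for $\ell_{i,\rho(j,\lambda)}$ and $\ell_{j,\lambda}$, and from Lemma~\ref{errors}(2) for $\ell_{j,\rho(i,\lambda)}$ and $\ell_{i,\lambda}$, into both sides of (1); each of those four substitutions produces a term of the form $\chi_{\cdot,k}\dd_k(\text{shorter }\ell)$ plus a term $\sum_\ell c_\ell^{\cdot k}\dd_\ell\dd_{(\text{shorter})}w^\emptyset$. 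The $\dd_k(\cdots)$ terms are matched by applying $\dd_k$ to the inductive hypothesis (1) for $\eta$ in place of $\lambda$ (which is legitimate since $\eta$ is shorter), and the bare $\sum_\ell c_\ell\dd_\ell\dd_\cdots w^\emptyset$ terms are matched by invoking the inductive hypothesis (2) for various shorter tuples to move a $\dd_k$ past a $\dd_\ell$, etc. What is left over after these cancellations is exactly a sum of products $c_\ell^{\bullet}c_\bullet^{\bullet}$ and $\dd_\bullet(c_\bullet^{\bullet})$ times $\dd$-monomials; the coefficient of each such $\dd$-monomial must vanish, and this is where I expect to invoke the hypothesis that $\Gamma$ is Jacobi-associative — the three clauses of Remark~\ref{altogether} (the Jacobi identity for the Lie-coefficients, the associativity identity for the HS-coefficients, and the mixed vanishing $\dd_{u,k}(c_{v,r}^{ij})=0$) are exactly what is needed to kill the residual coefficients, case by case according to the types of $i,j,k$. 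The cases $i\geq k$ or $j\geq k$ are handled more directly: then $\rho(i,\lambda)$ or $\rho(j,\lambda)$ simply prepends $i$ or $j$ in front, fewer $\ell$-reductions are triggered, and one falls back on Lemma~\ref{errors}(1) only on one side while using Definition~\ref{def_ell} directly on the other.

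The main obstacle, and the reason this lemma is the technical heart of Section~\ref{kernels}, is the bookkeeping: keeping track of the $\chi$-factors (which record whether too many HS-type entries have accumulated, collapsing a term to zero), of the reordering map $\rho$, and of which inductive hypothesis — (1) at length $r-1$, or (2) at various shorter lengths — licenses each rearrangement, all while not losing or double-counting any $c^{\bullet}_\bullet$-coefficient term. I would organize the computation so that each side of (1) is rewritten into a canonical form "$\chi$-scalar times $\dd_k$ applied to an $\ell$ of length $r-1$" plus "an explicit $F$-linear combination of $\dd$-monomials $\dd_\ell\dd_\mu w^\emptyset$ with $\mu$ of length $\leq r-1$", then cancel the $\dd_k(\ell)$ parts using IH(1) and reduce the monomial parts to a common normal form using IH(2) repeatedly, and finally check the scalar identity among the remaining coefficients reduces to a consequence of Jacobi-associativity. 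A useful sanity-preserving device throughout is the observation already recorded before the lemma that $\ell_\xi\in V_F(r-1)$ when $\xi\in\md^r$ (Lemma~\ref{properror}) and that $\ell_\xi=0$ for $\xi\in\NN^{\md}_{<\infty}$, which both limits which terms can appear and guarantees that the induction is well-founded.
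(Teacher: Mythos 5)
Your proposal is correct and follows essentially the same route as the paper: the paper likewise first shows that (1) implies (2) for each fixed $\lambda$ via the definition of $\ell_\xi$, then proves (1) by induction on $|\lambda|$, unfolding the $\ell$-terms with Lemma~\ref{errors}, invoking the inductive hypotheses (1) and (2) at shorter tuples, and reducing the leftover coefficient identities to the Jacobi, associativity, and mixed-vanishing clauses of Jacobi-associativity (the identity $(\dagger)$/Claim~\ref{jacobi_fix}). The only cosmetic difference is that the paper organizes the case split primarily by the types (Lie vs.\ HS) of $i$ and $j$ before comparing with the leading entry $k$ of $\lambda$, rather than the other way around.
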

\begin{proof}
	We first prove that for any fixed $\lambda\in  \NN_{<\infty}^\md$ and $i,j\in \md$, condition (1) implies condition (2). Indeed, assuming (1) and using it in the third equality below, we get 
	\begin{align*}
	\dd_i\dd_j\dd_\lambda & =\dd_{i}(\chi_{j,\lambda} \dd_{\rho(j,\lambda)})+\dd_i\ell_{j,\lambda} \\
	& =\chi_{i,j,\lambda}\dd_{\rho(i,j,\lambda)}+\chi_{j,\lambda}\ell_{i,\rho(j,\lambda)}+\dd_{i}\ell_{j,\lambda} \\
	& =\chi_{i,j,\lambda}\dd_{\rho(i,j,\lambda)}+\chi_{i,j}\chi_{i,\lambda}\ell_{j,\rho(i,\lambda)}+\chi_{i,j}\dd_{j}\ell_{i,\lambda}+\sum c^{ij}_\ell \dd_\ell \dd_\lambda \\
	& =\chi_{ij}(\chi_{j,i,\lambda}\dd_{\rho(j,i,\lambda)}+\chi_{i,\lambda}\ell_{j,\rho(i,\lambda)})+\chi_{ij}\dd_{j}\ell_{i,\lambda}+\sum c^{ij}_\ell \dd_\ell \dd_\lambda \\
	& =\chi_{ij}(\dd_j(\chi_{i,\lambda}\dd_{\rho(i,\lambda)}))+\chi_{ij}\dd_{j}\ell_{i,\lambda}+\sum c^{ij}_\ell \dd_\ell \dd_\lambda \\
	& =\chi_{ij}(\dd_j(\chi_{i,\lambda}\dd_{\rho(i,\lambda)}+\ell_{i,\lambda}))+\sum c^{ij}_\ell \dd_\ell \dd_\lambda \\
	& =\chi_{ij}\dd_j\dd_i\dd_\lambda+\sum c^{ij}_\ell \dd_\ell \dd_\lambda.
	\end{align*}
	This yields (2).
	
	We prove (1) by induction on $|\lambda|$, the length of $\lambda$ as a tuple (i.e., $\lambda\in \md^{|\lambda |}$). We now assume (1), and hence also (2), holds for all $\eta\in \NN_{<|\lambda|}^\md$. 
	
	We will make use of the following claim (note that its proof relies on the Jacobi assumption on $\Gamma$).
	
	\begin{claim}\label{jacobi_fix}
		Assume $\eta\in \NN_{<|\lambda|}^\md$ and that $i,j,k\in \md$ are of Lie-type. Then
	\begin{align*}
	\sum_\ell \dd_i(c_\ell^{jk}\dd_\ell\dd_{\eta})  & + \sum_\ell \dd_j(c_\ell^{ki}\dd_\ell\dd_{\eta})  +\sum_\ell \dd_k(c_\ell^{ij}\dd_\ell\dd_{\eta})\\
	& +\sum_\ell c_\ell^{kj}\dd_\ell (\dd_{i}\dd_{\eta})+ \sum_\ell c_\ell^{ik}\dd_\ell (\dd_{j}\dd_{\eta})+ \sum_\ell c_\ell^{ji}\dd_\ell (\dd_{k}\dd_{\eta})=0.
	\end{align*}
	\end{claim}
	\begin{proof}%[Proof of Claim~\ref{jacobi_fix}]
	Using the Jacobi condition and that equality (2) holds for $\eta$, going backwards in the calculation in the second part of the proof of Proposition \ref{just1} yields the desired equality.
	\end{proof}

To continue with the proof of the lemma, we consider two cases.

\medskip

\noindent {\bf Case 1.} Assume that at least one of $i$ and $j$ is of Lie-type, i.e. $\chi_{i,j}=1$.
	
				\smallskip
	
	Note that if $i=j$ then both $i$ and $j$ are of Lie-type and the equality in (1) is clear (recalling that $\Gamma$ being Jacobi implies $c_\ell^{ii}=0$). So we may assume $i< j$ and that $j$ is of Lie-type. 
	
	The formula in (1) slightly reduces to
	$$\ell_{i,\rho(j,\lambda)}+\dd_i\ell_{j,\lambda}=\chi_{i,\lambda}\ell_{j,\rho(i,\lambda)}+\dd_j\ell_{i,\lambda}+\sum c_\ell^{ij}\dd_\ell\dd_\lambda$$
	
 Assume first that  $|\lambda|=0$. When $i$ is of Lie-type (recall that $j$ is assumed to be of Lie-type) the equality reduces to
	$$\ell_{i,j}=\sum c_\ell^{ij}\dd_\ell$$
	which is clearly true. On the other hand, if $i$ is of HS-type, then $c_\ell^{ij}=0$ and $\ell_{i,j}=\ell_{j,i}=0$ from which the equality follows.
	
	\
	
	Now assume $|\lambda|>0$ and write $\lambda=(k,\eta)$ with $k\in \md$ and $\eta\in \NN^\md_{<|\lambda|}$. 	
	We consider two subcases.
	
	\
	
	\noindent {\bf Case 1.1.} Assume $k\leq j$. In this case we have $\ell_{j,\lambda}=0$ and $\ell_{j,\rho(i,\lambda)}=0$. Thus equality (1) reduces to 
	$$\ell_{i,\rho(j,\lambda)}=\dd_j\ell_{i,\lambda}+\sum_\ell c_\ell^{ij}\dd_\ell\dd_\lambda $$
	but this holds by Lemma ~\ref{errors}(1).
		
	\
	
	\noindent {\bf Case 1.2.} Assume $k>j$ (note that this implies that $k$ is of Lie-type). In this case, using Lemma \ref{errors} and that $\chi_{i,k}=\chi_{j,k}=1$ in the second equality below, and the inductive assumption (2) applied to (the summands in) $\ell_{j,\eta}\in V_F(|\lambda|-1)$ in the third equality, we get:
	\begin{align*}
		\ell_{i,\rho(j,\lambda)}+\dd_i\ell_{j,\lambda} & =\ell_{i,k,\rho(j,\eta)}+\dd_i\ell_{j,k,\eta} \\
		& = \dd_k\ell_{i,\rho(j,\eta)}+ \sum_\ell c_\ell^{ik}\dd_\ell\dd_{\rho(j,\eta)} + \dd_i(\dd_k\ell_{j,\eta})+\sum_\ell \dd_i(c_\ell^{jk}\dd_\ell\dd_\eta) \\
		& = \dd_k\left( \ell_{i,\rho(j,\eta)} +\dd_i\ell_{j,\eta} \right)+ \sum_\ell c_\ell^{ik}\dd_\ell\ell_{j,\eta} + \sum_\ell c_\ell^{ik}\dd_\ell\dd_{\rho(j,\eta)} + \sum_\ell \dd_i(c_\ell^{jk}\dd_\ell\dd_\eta)\\
		& = \dd_k\left( \ell_{i,\rho(j,\eta)} +\dd_i\ell_{j,\eta} \right)+ \sum_\ell c_\ell^{ik}\dd_\ell\dd_{j}\dd_{\eta} + \sum_\ell \dd_i(c_\ell^{jk}\dd_\ell\dd_\eta)\\
	\end{align*}
	
	Since $|\eta|<|\lambda|$ and $\ell_{i,\eta}\in V_F(|\lambda|-1)$, we can apply inductive assumptions (1) and (2), and Lemma \ref{errors}\eqref{for2} together with $c_\ell^{jk}=-c_\ell^{kj}$, to get that the above is equal to
	\begin{align*}
		&  \dd_k(\chi_{i,\eta}\ell_{j,\rho(i,\eta)}+ \dd_j\ell_{i,\eta}+\sum_\ell c_\ell^{ij}\dd_\ell\dd_\eta) + \sum_\ell c_\ell^{ik}\dd_\ell\dd_{j}\dd_{\eta} +  \sum_\ell \dd_i(c_\ell^{jk}\dd_\ell\dd_\eta) \\
		& =\chi_{i,\eta} \dd_k\ell_{j,\rho(i,\eta)}+ \dd_k\dd_j\ell_{i,\eta}+\sum_\ell \dd_k(c_\ell^{ij}\dd_\ell\dd_\eta) + \sum_\ell c_\ell^{ik}\dd_\ell\dd_{j}\dd_{\eta} +  \sum_\ell \dd_i(c_\ell^{jk}\dd_\ell\dd_\eta)\\
		& =\chi_{i,\lambda} \ell_{j,\rho(i,\lambda)}+ \chi_{i,\eta}\sum_\ell c_\ell^{kj}\dd_\ell\dd_{\rho(i,\eta)} + \dd_j\dd_k\ell_{i,\eta}+ \sum_\ell c_\ell^{kj}\dd_\ell \ell_{i,\eta}+\\ &  + \sum_\ell \dd_k(c_\ell^{ij}\dd_\ell\dd_\eta) + \sum_\ell c_\ell^{ik}\dd_\ell \dd_{j}\dd_{\eta} + \sum_\ell \dd_i(c_\ell^{jk}\dd_\ell\dd_\eta)\\
		& =\chi_{i,\lambda}\ell_{j,\rho(i,\lambda)}+  \dd_j\dd_k\ell_{i,\eta}+ \sum_\ell c_\ell^{kj}\dd_\ell \dd_{i}\dd_{\eta}  + \sum_\ell \dd_k(c_\ell^{ij}\dd_\ell\dd_\eta) + \sum_\ell c_\ell^{ik}\dd_\ell \dd_{j}\dd_{\eta} + \sum_\ell \dd_i(c_\ell^{jk}\dd_\ell\dd_\eta)\\
	\end{align*}
	
	Now applying Lemma \ref{errors}\eqref{for1}, and the fact that $c_\ell^{ik}=-c_\ell^{ki}$, to the 2nd term of the last line we get that the above sum equals
	\begin{align*}
		& \chi_{i,\lambda} \ell_{j,\rho(i,\lambda)}+  \dd_j\ell_{i,\lambda}  \\
		\quad & + \sum_\ell \dd_j(c_\ell^{ki}\dd_\ell\dd_\eta) + \sum_\ell c_\ell^{kj}\dd_\ell \dd_{i}\dd_{\eta} + \sum_\ell \dd_k(c_\ell^{ij}\dd_\ell\dd_\eta) + \sum_\ell c_\ell^{ik}\dd_\ell \dd_{j}\dd_{\eta} + \sum_\ell \dd_i(c_\ell^{jk}\dd_\ell\dd_\eta)\\
	\end{align*}
	
	Equality (1) will follow once we show that the above sum equals 
	$$\chi_{i,\lambda} \ell_{j,\rho(i,\lambda)}+  \dd_j\ell_{i,\lambda}+\sum_\ell c_\ell^{ij}\dd_\ell(\dd_{k}\dd_{\eta})$$
	%$$\chi_{i,\lambda} \ell_{j,\rho(i,\lambda)}+  \dd_j\ell_{i,\lambda}+\sum_\ell c_\ell^{ij}\dd_\ell\dd_{\lambda}$$ 
	In other words, using that $c_\ell^{ij}=-c_\ell^{ji}$, we must show that
	
	\begin{align*}
	(\dagger)\quad \sum_\ell \dd_i(c_\ell^{jk}\dd_\ell\dd_{\eta})  & + \sum_\ell \dd_j(c_\ell^{ki}\dd_\ell\dd_{\eta})  +\sum_\ell \dd_k(c_\ell^{ij}\dd_\ell\dd_{\eta})\\
	& +\sum_\ell c_\ell^{kj}\dd_\ell \dd_{i}\dd_{\eta}+ \sum_\ell c_\ell^{ik}\dd_\ell \dd_{j}\dd_{\eta}+ \sum_\ell c_\ell^{ji}\dd_\ell \dd_{k}\dd_{\eta}=0.
	\end{align*}
	
	\begin{itemize}
	\item When $i$ is of Lie-type, ($\dagger$) holds by Claim \ref{jacobi_fix} (recall that this claim uses the Jacobi condition on $\Gamma$).
	\medskip
	
	\item When $i$ is of HS-type, using that $j, k$ are of Lie-type, note that $c_\ell^{ki}=c_\ell^{ij}=c_\ell^{ik}=c^{ji}_\ell=0$. Also, $\dd_i(c_\ell^{jk})=0$ by Jacobi-associativity of $\Gamma$, and $\dd_\ell\dd_i=\dd_i\dd_\ell$ when $\ell$ is of Lie-type. All this implies that in this case ($\dagger$) reduces to 
	$$\sum_\ell c_\ell^{kj}\dd_\ell \dd_{i}\dd_{\eta} +  \sum_\ell  \dd_i(c_\ell^{jk}\dd_\ell\dd_\eta)=\sum_\ell (c^{kj}+c^{jk})\dd_\ell\dd_i\dd_\eta =0$$
	but this holds as $c_\ell^{kj}+c_\ell^{jk}=0$ (since $j,k$ are of Lie-type, recall that Jacobi requires skew-symmetry of the Lie-coefficients). 
	\end{itemize}
We have finished the proof for (sub)Case 1.2 and hence also for Case 1.

	\medskip
	
	\noindent{\bf Case 2.} Now assume that both $i$ and $j$ are of HS-type, so $\chi_{ij}=0$.
	
	\medskip

	In this case we have	$$\dd_i\ell_{j,\lambda}=\dd_i\dd_j\dd_\lambda - \chi_{j,\lambda}\dd_i\dd_{\rho(j,\lambda)}$$
	
	so, to prove (1), it is enough to prove that $\dd_i\dd_j\dd_\lambda =\sum_\ell c_\ell^{ij}\dd_\ell\dd_\lambda $.
	
	\medskip
	
	Write $\lambda=(k,\eta)$ for some $k\in \md$ and $\eta\in \NN^\md_{<|\lambda|}$.  First assume that $k$ is of HS-type. Using the inductive assumption (2) and the same calculation as in Proposition \ref{just2}, we get	
	\begin{align*}
	 \dd_{i}\dd_{j}\dd_{\lambda} & = \dd_{i}\dd_{j}\dd_{k}\dd_{\eta} \\
	 & =\sum_{r}\left( \dd_{i}(c_{r}^{jk}) + \sum_{\ell}c_{\ell}^{jk}c_{r}^{i\ell}+\sum_{\ell}\sum_{pq}\alpha_{i}^{pq}\dd_{p}(c_{\ell}^{jk})c_{r}^{q\ell}\right) \dd_{r}\dd_{\eta}.
	 \end{align*}
	By associativity of $\Gamma$, this equals $ \sum_r(\sum_\ell c_{\ell}^{ij}c_{r}^{\ell k})\dd_{r}\dd_{\eta}$, which, again by the inductive assumption, equals 
	$$\sum_{\ell} c_{\ell}^{ij}\dd_{\ell}\dd_{k}\dd_{\eta}=\sum_{\ell} c_{\ell}^{ij}\dd_{\ell}\dd_{\lambda}$$ 
	yielding the desired equality.
	
	\smallskip
	Now assume $k$ is of Lie-type. Then, using the inductive assumption (2) in the second equality below, using that we have already proved (2) in case one of the operators is of Lie-type (Case 1) in the third equality,  using the inductive assumption (2) in the fourth equality, and $\dd_k(c_\ell^{ij})=0$ in the fifth one (this is due to Jacobi-associativity of $\Gamma$), we get
	\begin{align*}
	\dd_{i}\dd_{j}\dd_{\lambda} & =\dd_{i}\dd_{j}\dd_{k} \dd_{\eta} \\
	& =\dd_{i}\dd_{k}\dd_{j} \dd_{\eta} \\
	& = \dd_{k}\dd_{i}\dd_{j} \dd_{\eta} \\
	& = \dd_{k}( \sum_{\ell} c^{ij}_\ell \dd_\ell\dd_{\eta}) \\
	& =\sum_{\ell} c^{ij}_\ell \dd_{k} \dd_\ell\dd_{\eta} \\
	& =\sum_{\ell} c^{ij}_\ell  \dd_\ell \dd_{k}\dd_{\eta} \\
	& =\sum_{\ell} c^{ij}_\ell  \dd_\ell  \dd_{\lambda}
	\end{align*} 
	as required.

\smallskip
	
	This covers all cases, and so we have proved the lemma.

	%\[\dd_i(\dd_j\dd_\lambda=\chi_{ij}\dd_j\dd_i\lambda+\sum_{\ell}c^{ij}_
	%\ell \dd_\ell(\dd_\lambda)\]
\end{proof}

\begin{remark}\
\begin{enumerate}
\item The above lemma shows that the operators $(\dd_i)_{i\in \md}$ defined on $V_F$ satisfy $\Gamma$-commutativity; namely, that for all $v\in V_F$ we have
$$(\star) \hspace{2.2cm} \dd_i\dd_j (v) =\chi_{i,j}\dd_j\dd_i (v)+\sum_{\ell\in\md}c^{ij}_\ell \dd_\ell(v). \hspace{2cm}$$
Indeed, it suffices to check this holds for all $w^\lambda$ for $\lambda\in \NN^\md_{<\infty}$, but this follows from part (2) of Lemma~\ref{horrible} after noting $w^\lambda=\dd_\lambda w^\emptyset$.
\item One can consider the natural notion of $\bDD$-vector spaces over $(F,\be)$; namely, an $F$-vector space $U$ equipped with additive operators $(\dd_i:U \to U)_{i\in \md}$ such that  for $c\in F$ and $v\in U$
$$\dd_{i}(cv)=\dd_i(c)\,v+c\, \dd_i(v)+\sum_{p,q\in \md}\alpha_i^{pq}\dd_p(c) \dd_q (v),$$
and we can say that $(U,(\dd_i)_{i\in \md})$ is a $\gDD$-vector space if the operators $\Gamma$-commute (in the sense of ($\star$) above). One can easily see that the argument in Proposition~\ref{needindependent} holds in this context; namely, if there exists a $\gDD$-vector space $(U,(\dd_i)_{i\in \md})$ where the operators 
\begin{itemize}
\item $(\dd_{i}: i\in \md)$, and
\item $(\dd_{i}\dd_{j}: i\geq j\in \md \text{ with $i$ of Lie-type})$
%\item $(\dd_{i}\dd_{j}: i,j\in \md \text{ with $i$ of Lie-type and $j$ of HS-type})$
\end{itemize}
are $F$-linearly independent (as functions $U\to U$), then $\Gamma$ is Jacobi-associative. What part (2) of Lemma~\ref{horrible} shows is the converse; that is, if $\Gamma$ is Jacobi-associative we can build a $\gDD$-vector space satisfying these linear independencies -- namely the vector space $V_F$ -- .
\end{enumerate}
\end{remark}

\bigskip

We now introduce the notion of $\bDD$-kernel and $\gDD$-kernel. Let $(K,\be)$ be a $\bDD$-field such that $\be$ commutes on $K$ with respect to $\Gamma$. When the context is clear we will simply say that $(K,\be)$ is a  $\bDD$-field with $\Gamma$-commuting operators or that $(K,\be)$ is a $\gDD$-field. 

\medskip

Let $r\in \NN_0$, $n\in \NN$ and $\mn:=\{1,\dots,n\}$. A \emph{$\bDD$-kernel of length $r$ (in $n$-variables) over $(K,\be)$} consists of a field extension of the form 
$$L_r=K(a_t^\xi: (\xi,t)\in \md^{\leq r}\times \mn)$$
together with a $\bDD$-operator $\be:L_{r-1}\to \bDD(L_r)$ extending that on $K$ such that for each $u\in\{1,2\}$ and $\xi\in \md^{\leq r-1}$ we have
$$e_u(a_t^\xi)=\epsilon_{u,0}\, a_t^\xi +\epsilon_{u,1}\, a_t^{((u,1),\xi)}+\cdots +\epsilon_{u,m_u}\, a_t^{((u,m_u),\xi)},$$
where $\epsilon_{u,0}=1$. In terms of the associated operators, and using the notation introduced above, this can be written as 
$$\dd_{i}(a_t^\xi)=a_t^{(i,\xi)} \quad \text{ for $i\in \md$}.$$
We set $L_{-1}=K$ and normally assume that $r\geq 2$. Note that $\be(L_{r-2})\subseteq \bDD(L_{r-1})$. 

\begin{definition}\label{def5.1} Let $(L_r,\be)$ be a $\bDD$-kernel. When $r\geq 2$, we say that the $\bDD$-kernel has $\Gamma$-commuting operators if $\be$ commutes on $(L_{r-2},L_{r-1},L_r)$ with respect to $\Gamma$ (see Definition~\ref{defgammacomm}). In this case we may also say that $L_r$ is a $\gDD$-kernel.
\end{definition}

Note that, by Lemma~\ref{meaningcomm}, a $\bDD$-kernel has $\Gamma$-commuting operators (i.e. it is a $\gDD$-kernel) if and only if for every $a_t^{\xi}$ with $\xi\in \md^{\leq r-2}$ we have
$$\dd_i\dd_j(a_t^\xi)=\chi_{i,j}\dd_i\dd_j(a_t^\xi)+\sum_{\ell\in \md}c_\ell^{ij}\dd_\ell(a_t^\xi).$$
for all $i,j\in \md$

\medskip

Using the injective map $\psi:\NN^\md_{<\infty}\to \NN_0^{m_1+m_2}$ introduced in Remark~\ref{notation2}, we now equip $\NN_{<\infty}^\md\times \mn$ with two orders: for $(\xi,t)$ and $(\eta,t)$ in $\NN_{<\infty}^\md\times \mn$ we set $(\xi,t)\leq(\eta,t')$ if $t=t'$ and $\psi(\xi)\leq \psi(\eta)$ in the product order of $\NN^{m_1+m_2}$; on the other hand, $(\xi,t) \ineq(\eta,t')$ when 
$$(|\xi|,t,\psi(\xi))\leq_{\operatorname{lex}}(|\eta|,t',\psi(\eta))$$
where $\leq_{\operatorname{lex}}$ denotes the left-lexicographic order on $\NN_0^{2+m_1+m_2}$ and recall that $|\xi|$ denotes the length of $\xi$ (i.e., $\xi\in \md^{|\xi|}$). Note that $(\xi,t) \ineq(\eta,t')$ if and only if $(|\xi|,t,\xi)\leq_{\operatorname{lex}}(|\eta|,t',\eta)$ where now $\leq_{\operatorname{lex}}$ is left-lexicographic on $\NN_0^2\times \md^{<\infty}$.

\medskip

\begin{definition}
Let $(L_r,\be)$ be a $\gDD$-kernel and $(\xi,t)\in \NN_{\leq r}^\md\times \mn$.
\begin{enumerate}
\item We set 
$$\hat L_r:=K(a_{t'}^\eta:  (\eta,t')\in \NN_r^\md\times \mn)$$

$$\hat L_{\ine(\xi,t)}:=K(a_{t'}^{\eta}:(\eta,t')\in \NN_r^{\md}\times \mn \text{ and }(\eta,t')\ine (\xi,t))$$ and
$$\hat L_{\ineq(\xi,t)}:=K(a_{t'}^{\eta}:(\eta,t')\in \NN_r^{\md}\times \mn \text{ and }(\eta,t')\ineq (\xi,t)).$$
\smallskip

\item We say that $(\xi,t)\in \NN_{\leq r}^\md\times \mn$ is a separable leader (inseparable leader) of $L_r$ if $a_t^\xi$ is separably algebraic (inseparably algebraic) over $\hat L_{\ine(\xi,t)}$. We say $(\xi,t)$ is a leader (of $L_r$) if it is either a separable or inseparable leader.
\smallskip

\item We say that $(\xi,t)$ is a minimal-separable leader if it is a separable leader and there is \emph{no} separable leader $(\eta,t)$ with $(\eta,t)<(\xi,t)$.
\smallskip

\item $L_r$ is said to be separable if there is \emph{no} inseparable leader $(\xi,t)$ with $|\xi|=r$.

\item For any $\ell$ in the vector space $V_F(r)$ and $t\in \mn$, we let
$$\ell(L_{r,t}):=\ell((a^\eta_t)_{\eta\in \NN_r^\md})$$
where the latter is the (unique) element of $L_r$ obtained by substituting each $w^{\eta}$ in $\ell$ with $a_t^\eta$ (recall that $\ell$ is a an $F$-linear combination of $(w^\eta)_{\eta\in \NN_r^\md})$.
\end{enumerate}
\end{definition}

\smallskip		
		\begin{remark}\label{a_xi}
		Let $L_r=K(a^\xi_t)_{(\xi,t)\in \md^{\leq r}\times \mn}$ be a  $\bDD^\Gamma$-kernel. 
			\begin{enumerate}
				\item For any $\ell\in V_F(r-1)$, any $i\in \md$ and any $t\in \mn$ we have  
				$$(\dd_i(\ell))(L_{r,t})=\dd_i(\ell(L_{r,t})).$$
				\item Consequently, $(\dd_\xi(w^\emptyset))(L_{r,t})= a^\xi_t$ for all $\xi\in \md^{\leq r}$.
				
				\end{enumerate}
		\end{remark}
		
		\smallskip

		\begin{lemma}\label{reorder}
For every  $\xi\in\md^{\leq r}$, every $\gDD$-kernel $L_r$, and every $t\in \mn$ we have 
$$
a_t^{\xi}=
\chi_{\xi}a_t^{\rho(\xi)}+\ell_\xi(L_{r-1,t}).
$$ 
Note that the latter is in $L_{|\xi|-1}$.
%where $L_{s,t}:=(a_t^\xi)_{\xi\in \md^r}$.
\end{lemma}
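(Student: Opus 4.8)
The plan is to recognise the claimed identity as nothing more than the defining identity for the auxiliary element $\ell_\xi$ in the vector space $V_F$, pushed forward into the kernel $L_r$ via the evaluation (substitution) map $w^\eta\mapsto a_t^\eta$. Recall from the definition of $\ell_*$ that $\ell_\xi=\dd_\xi w^\emptyset-\chi_\xi w^{\rho(\xi)}$, and since $w^{\rho(\xi)}=\dd_{\rho(\xi)}w^\emptyset$ this is exactly the identity
\[
\dd_\xi w^\emptyset=\chi_\xi\, w^{\rho(\xi)}+\ell_\xi
\]
holding in $V_F$ (this was already recorded right after Definition of $\ell_*$). Everything then reduces to evaluating both sides of this identity ``at $(L_{r,t})$'', i.e.\ applying the $F$-linear map sending each $w^\eta$ to $a^\eta_t$.

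\textbf{Key steps.} First I would evaluate the left-hand side: by Remark~\ref{a_xi}(2), and using the hypothesis $\xi\in\md^{\leq r}$, we get $(\dd_\xi w^\emptyset)(L_{r,t})=a^\xi_t$. Next I would evaluate the right-hand side using $F$-linearity of the substitution map: it sends $\chi_\xi w^{\rho(\xi)}+\ell_\xi$ to $\chi_\xi\,(w^{\rho(\xi)})(L_{r,t})+\ell_\xi(L_{r,t})$. The first summand equals $\chi_\xi a^{\rho(\xi)}_t$ directly from the definition of evaluation, noting that when $\chi_\xi=1$ we have $\rho(\xi)\in\NN^\md_{|\xi|}\subseteq\NN^\md_{\leq r}$ so that $a^{\rho(\xi)}_t$ is indeed a coordinate of $L_r$ (when $\chi_\xi=0$ this summand vanishes). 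For the second summand, Lemma~\ref{properror} gives $\ell_\xi\in V_F(|\xi|-1)\subseteq V_F(r-1)$, so $\ell_\xi(L_{r,t})=\ell_\xi(L_{r-1,t})$, and moreover only the $a^\eta_t$ with $|\eta|\leq|\xi|-1$ occur, which is exactly why $\ell_\xi(L_{r-1,t})\in L_{|\xi|-1}$. Equating the two sides yields $a^\xi_t=\chi_\xi a^{\rho(\xi)}_t+\ell_\xi(L_{r-1,t})$, which is the assertion.

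\textbf{Main obstacle.} I do not expect a genuine difficulty here; the argument is essentially bookkeeping once Remark~\ref{a_xi} is available, so the only real content is that remark itself. If Remark~\ref{a_xi} has not yet been established, the place where the work sits is its part~(1), the compatibility $(\dd_i\ell)(L_{r,t})=\dd_i(\ell(L_{r,t}))$ for $\ell\in V_F(r-1)$; I would prove this by induction on $r$, matching the recursion in Definition~\ref{def_ell} term by term against the kernel axioms $\dd_i(a^\eta_t)=a^{(i,\eta)}_t$ and the $\Gamma$-commutation relations holding in a $\gDD$-kernel, with part~(2) following by taking $\ell=w^\emptyset$ and iterating. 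That induction is routine but somewhat tedious, and is the only step requiring care.
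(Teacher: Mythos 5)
Your argument is correct and is exactly the paper's proof: the authors dispatch this lemma in one line by citing the definition of $\ell_*$ together with Remark~\ref{a_xi}(2), which is precisely the evaluation argument you spell out (with Lemma~\ref{properror} supplying the final observation that the right-hand side lies in $L_{|\xi|-1}$). Your added remarks on where the real content sits — namely in Remark~\ref{a_xi}(1) — are accurate but not needed here, since that remark is already available.
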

\begin{proof}
This is clear by the definition of the function $\ell_*:\md^{\leq r}\to V_F(r-1)$ and by Remark \ref{a_xi}(2).
\end{proof} 
		
\smallskip

\begin{corollary}\label{aboveseparable}
Let $L_r$ be a $\gDD$-kernel. 
\begin{enumerate}
\item[(i)] Then, $L_r=\hat L_r$.
\item[(ii)] If $(\xi,t)$ is a separable leader of $L_r$, then any $(\eta,t)>(\xi,t)$ is also a separable leader. In fact, $a_t^\eta\in \hat L_{\ine (\eta,t)}$.
\end{enumerate}
\end{corollary}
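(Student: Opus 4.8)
The plan is to read off part~(i) from the reordering formula of Lemma~\ref{reorder}, and to prove part~(ii) by propagating the leader condition along the order $\ine$, the main tools being Lemmas~\ref{reorder} and~\ref{explicit}. \emph{For part~(i):} since $\hat L_r\subseteq L_r$ is clear, it suffices to show $a_t^\xi\in\hat L_r$ for every $(\xi,t)\in\md^{\leq r}\times\mn$, which I would do by induction on $|\xi|$. If $|\xi|\leq 1$ then $\xi\in\NN^\md_{\leq r}$ already and there is nothing to show. If $|\xi|\geq 2$, Lemma~\ref{reorder} gives $a_t^\xi=\chi_\xi a_t^{\rho(\xi)}+\ell_\xi(L_{|\xi|-1,t})$: when $\chi_\xi=1$ the index $\rho(\xi)$ lies in $\NN^\md_{\leq r}$, so $a_t^{\rho(\xi)}$ is a defining generator of $\hat L_r$, and by Lemma~\ref{properror} $\ell_\xi\in V_F(|\xi|-1)$, so $\ell_\xi(L_{|\xi|-1,t})$ is a $K$-linear combination of the $a_t^\eta$ with $|\eta|\leq|\xi|-1$, each of which lies in $\hat L_r$ by the inductive hypothesis. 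Hence $a_t^\xi\in\hat L_r$.

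\emph{Part (ii).} Fix a separable leader $(\xi,t)$; we may assume $|\xi|<r$, as otherwise there is no $(\eta,t)>(\xi,t)$ inside $\NN^\md_{\leq r}\times\mn$. Since membership in a field entails separable algebraicity over it, it is enough to prove the ``in fact'' clause: $a_t^\eta\in\hat L_{\ine(\eta,t)}$ for every $(\eta,t)\in\NN^\md_{\leq r}\times\mn$ with $(\eta,t)>(\xi,t)$. I would induct on $(\eta,t)$ along the well-order $\ine$. Given such $(\eta,t)$, pick $i\in\md$ with $\#_i(\eta)>\#_i(\xi)$ and let $\eta'\in\NN^\md_{<\infty}$ be $\eta$ with one occurrence of $i$ removed; then $|\eta'|=|\eta|-1\leq r-1$, $\rho(i,\eta')=\eta$, and $\#_j(\eta')\geq\#_j(\xi)$ for all $j$, so either $\eta'=\xi$ or $(\eta',t)>(\xi,t)$. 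In the first case $a_t^{\eta'}$ is separably algebraic over $\hat L_{\ine(\eta',t)}$ by hypothesis; in the second it lies in $\hat L_{\ine(\eta',t)}$ by the inductive hypothesis (applicable since $|\eta'|<|\eta|$ forces $(\eta',t)\ine(\eta,t)$). Either way $a_t^{\eta'}$ is separably algebraic over $\hat L_{\ine(\eta',t)}$, and, as $(\eta',t)\ine(\eta,t)$, we have $a_t^{\eta'}\in\hat L_{\ine(\eta,t)}$.

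Now write $\hat L_{\ine(\eta',t)}=K(\bar b)$ with $\bar b$ a finite tuple and choose a polynomial $f$ over $K$ with $f(\bar b,a_t^{\eta'})=0$ and $\frac{\partial f}{\partial x_n}(\bar b,a_t^{\eta'})\neq 0$ (possible since $a_t^{\eta'}$ is separably algebraic over $K(\bar b)$). Applying Lemma~\ref{explicit} with $\DD=\DD_u$, $i=(u,i')$ (and using that $(K,\be)$ is a $\bDD$-field) expresses $\dd_i(a_t^{\eta'})$ as a rational function over $K$ of $\bar b$, $a_t^{\eta'}$, the $\dd_q(b_j)$ with $q\in\supp(i)$, and the $\dd_i(b_j)$. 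Each $b_j$ is some $a_{t'}^\mu$ with $(\mu,t')\ine(\eta',t)$ (so $|\mu|\leq r-1$); the kernel relations give $\dd_q(b_j)=a_{t'}^{(q,\mu)}$, $\dd_i(b_j)=a_{t'}^{(i,\mu)}$, which Lemma~\ref{reorder} rewrites as $a_{t'}^{\rho(q,\mu)}$, $a_{t'}^{\rho(i,\mu)}$ modulo $K$-linear combinations of lower-level generators. One verifies $(\rho(q,\mu),t')\ine(\eta,t)$ and $(\rho(i,\mu),t')\ine(\eta,t)$: by length alone this is clear unless $|\mu|=|\eta'|$, in which case for $\dd_i$ one uses $(\mu,t')\ine(\eta',t)$ directly (the same index $i$ is appended on both sides), and for $\dd_q$ one uses that $q\in\supp(i)$ forces $q<i$ in $\md$ (since $\sigma(q)<\sigma(i)$), so that against the left-lexicographic order on the $\psi$-vectors from Notation~\ref{notation1} one gets $\psi(\rho(q,\mu))<_{\lex}\psi(\eta)$. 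Hence $\dd_i(a_t^{\eta'})\in\hat L_{\ine(\eta,t)}$, its denominator being a nonzero element of that field. Finally, as $\rho(i,\eta')=\eta\in\NN^\md_{\leq r}$ has at most one HS-entry, $\chi_{(i,\eta')}=1$, so Lemma~\ref{reorder} yields $a_t^\eta=a_t^{\rho(i,\eta')}=\dd_i(a_t^{\eta'})-\ell_{(i,\eta')}(L_{|\eta'|,t})\in\hat L_{\ine(\eta,t)}$, closing the induction.

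The step I expect to be most delicate is the order bookkeeping at the end of part~(ii): keeping precise track of which reordered multi-indices $\rho(q,\mu)$ stay $\ine$-below $(\eta,t)$. That is exactly where the hypothesis $q\in\supp(i)$ — hence $q<i$ — has to be matched against the particular left-lexicographic order built from $\psi$ in Notation~\ref{notation1}; everything else is a routine nested induction.
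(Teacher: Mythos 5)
Your proof is correct and follows essentially the same route as the paper's: part (i) is read off from Lemma~\ref{reorder}, and part (ii) reduces to the single step $\eta=\rho(i,\eta')$ handled by Lemma~\ref{explicit} together with Lemma~\ref{reorder}, your downward induction along $\ine$ being a more explicit version of the paper's ``iterate''. The only slip is that when listing the arguments of $h_i$ from Lemma~\ref{explicit} you omit $\dd_q(a_t^{\eta'})$ for $q\in\supp(i)$ (the lemma allows $p\le n$ there), but your own order bookkeeping for $\dd_q(b_j)$ in the case $|\mu|=|\eta'|$ applies verbatim to this term, so nothing is lost.
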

\begin{proof}
(i) is immediate from Lemma~\ref{reorder}.

\smallskip

(ii) It suffices to consider the case when $\eta=\rho(i,\xi)$ for some $i\in \md$ (as one can then iterate). Since $(\xi,t)$ is a separable leader,  Lemma~\ref{explicit} implies that 
$$\dd_{i}(a_t^\xi)\in K((a_s^\tau,\dd_{q}(a_s^\tau))_{(\tau,s)\ineq(\xi,t), q\in\supp(i)}, (\dd_{i}(a_s^\tau))_{(\tau,s)\ine(\xi,t)}).$$ 
But then Lemma~\ref{reorder} implies that $a_t^\eta\in \hat L_{\ine (\eta,i)}$ (noting that $\chi_{i,\xi}=1$).
\end{proof}

\medskip

Fix a $\gDD$-kernel $L_r=K(a_i^\xi: (\xi,i)\in \md^{\leq r}\times\mn)$.

\begin{definition}
Let $s\geq r$ and $E_s=K(b_i^\xi: (\xi,i)\in \md^{\leq s}\times\mn)$ be a $\gDD$-kernel (of length $s$ in $n$-variables over $(K,\be)$).
\begin{enumerate}
\item We say that $E_s$ is a prolongation of $L_r$ if $b_i^\xi=a_i^\xi$ for all $\xi\in \md^{\leq r}$.
\item We say that a prolongation $E_s$ of $L_r$ is generic if the minimal-separable leaders and inseparable leaders of $E_s$ are the same as those of $L_r$.
\item If $s=r$, we say that $E_r$ is a specialisation of $L_r$ if the tuple $(b_i^\xi:(\xi,i)\in \NN_{\leq r}^\md\times \mn)$ is an algebraic specialisation of the tuple $(a_i^\xi:(\xi,i)\in \NN_{\leq r}^\md\times \mn)$ over $K$ (this means that the algebraic vanishing ideal over $K$ of the latter tuple is contained in that of the former). In case the specialisation is generic (i.e. the ideals coincide) we say that $E_r$ and $L_r$ are isomorphic.
\end{enumerate}
\end{definition}

\begin{proposition}\label{genericspecial}
Suppose $L_s$ and $E_s$ are $\gDD$-kernels that prolong $L_r$. If $L_s$ is a generic prolongation, then $E_s$ is a specialisation of $L_s$. It follows that any two generic prolongations of length $s$ are isomorphic.
\end{proposition}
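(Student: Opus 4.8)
The plan is to reduce the statement to an inclusion of vanishing ideals and then prove that inclusion by induction along the linear order $\ine$ on the index set $\NN_{\le s}^\md\times\mn$. First I would write the coordinates of $L_s$ and $E_s$ as $(a_t^\xi)$ and $(c_t^\xi)$, observe that $a_t^\xi=c_t^\xi$ whenever $|\xi|\le r$ (both prolong $L_r$), and note that, by definition, $E_s$ is a specialisation of $L_s$ precisely when $\mathrm{van}_K\big((a_t^\eta)_{(\eta,t)\in\NN_{\le s}^\md\times\mn}\big)\subseteq \mathrm{van}_K\big((c_t^\eta)_{(\eta,t)\in\NN_{\le s}^\md\times\mn}\big)$. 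I would then prove, by induction on $(\eta,t)$ in the order $\ine$, the statement $(\ast_{(\eta,t)})$: \emph{the tuple $(a_{t'}^{\eta'})_{(\eta',t')\ineq(\eta,t)}$ specialises over $K$ to $(c_{t'}^{\eta'})_{(\eta',t')\ineq(\eta,t)}$}. The $\ine$-largest index then delivers the proposition.

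For indices with $|\eta|\le r$ the two tuples coincide, so $(\ast_{(\eta,t)})$ is trivial. For the inductive step with $|\eta|>r$ I would assume $(\ast)$ on the initial segment $\ine(\eta,t)$, realised by a $K$-algebra homomorphism $\phi$ sending $a_{t'}^{\eta'}\mapsto c_{t'}^{\eta'}$ and killing the vanishing ideal of the $a$-tuple, and try to extend $\phi$ by $x_t^\eta\mapsto c_t^\eta$. Here I would exploit that $L_s$ is a \emph{generic} prolongation: its minimal-separable leaders and inseparable leaders all have length $\le r$, so $(\eta,t)$ is either \emph{(d)} not a leader of $L_s$, or \emph{(c)} a separable leader lying $<$-above a minimal-separable leader $(\eta_0,t)$ with $|\eta_0|\le r$ (Corollary~\ref{aboveseparable}(ii)). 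In case \emph{(d)} the element $a_t^\eta$ is transcendental over $\hat L_{\ine(\eta,t)}$, the vanishing ideal of the $\ineq(\eta,t)$-tuple is the free extension of that of the $\ine(\eta,t)$-tuple, and $\phi$ extends sending $x_t^\eta$ to any value, in particular $c_t^\eta$.

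The hard part will be case \emph{(c)}. The idea is that tracing through the proof of Corollary~\ref{aboveseparable}(ii) — which iterates the single step $\xi\mapsto\rho(i,\xi)$, invoking Lemma~\ref{explicit} and Lemma~\ref{reorder} at each step — produces an explicit relation $Q((a))\cdot a_t^\eta=P((a))$ with $P,Q$ polynomials over $K$ in the coordinates indexed $\ine(\eta,t)$ and $Q((a))\ne 0$. Two points demand care: (i) that all indices occurring in $P,Q$ are genuinely $\ine(\eta,t)$ — this should follow from the fact that $q\in\supp(i)$ forces $q< i$ in $\md$, which keeps the indices produced by Lemma~\ref{explicit} below $(\eta,t)$ in the order $\ine$; and (ii) that $P$ and $Q$ depend only on $K$, $\bDD$, $\Gamma$ and $L_r$, not on $L_s$ — the only genuinely algebraic ingredient is the minimal polynomial of $a_t^{\eta_0}$ over its lower coordinates, which lives at level $\le r$ and is therefore shared by $L_r\subseteq E_s$, every later step contributing only a degree-one polynomial built from the previously-constructed $P,Q$. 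Granting this, the same computation run inside $E_s$ — legitimate because $(\eta_0,t)$ is a separable leader of $E_s$ witnessed by the same polynomial — gives $Q((c))\cdot c_t^\eta=P((c))$ with $Q((c))\ne 0$ (the non-vanishing being exactly the separability of $(\eta_0,t)$ recorded in $E_s$). A routine denominator-clearing argument then finishes: given $g$ over $K$ in the coordinates $\ineq(\eta,t)$ with $g((a))=0$ and $d=\deg_{x_t^\eta}g$, divide $Q^{d}g$ by $Qx_t^\eta-P$ in $x_t^\eta$ to obtain $\tilde g$ over $K$ in the lower coordinates with $Q^{d}g-\tilde g\in(Qx_t^\eta-P)$; then $\tilde g((a))=0$, hence $\tilde g((c))=0$ by the inductive hypothesis, hence $Q((c))^{d}g((c))=0$, hence $g((c))=0$ since $E_s$ is a domain and $Q((c))\ne0$.

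The main obstacle is thus exactly point (ii) above: making precise that the algebraic relation forced on $a_t^\eta$ is \emph{structural}, i.e.\ that the procedure producing it refers only to the fixed data $K,\bDD,\Gamma$ and to the level-$\le r$ part of the kernel (the part common to $L_s$ and $E_s$), so that it transfers verbatim to $E_s$; this has to be combined with the $\ine$-bookkeeping of (i) and the preservation of the separability non-vanishing conditions. Finally, the closing sentence is immediate: if $E_s$ is itself a generic prolongation, applying the proposition with $L_s$ and $E_s$ interchanged shows $L_s$ is likewise a specialisation of $E_s$, so the two vanishing ideals coincide and $L_s$, $E_s$ are isomorphic $\gDD$-kernels.
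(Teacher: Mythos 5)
Your overall strategy --- induction along $\ineq$, free extension at non-leaders, and transferring the algebraic relation coming from a separable leader below via Lemmas \ref{explicit} and \ref{reorder} --- is exactly the paper's, as is your closing isomorphism argument. Where you diverge is in how the leader case is closed, and this is where the "main obstacle" you flag can be dissolved. The paper does not iterate all the way down from a minimal-separable leader to manufacture a composite relation $Q((a))\cdot a_t^\eta=P((a))$ over $K$; it takes a single step: writing $\tau=\rho(\ell,\xi)$ with $(\xi,j)$ a separable leader immediately below, it applies Lemma \ref{explicit} once to get $\frac{\partial f}{\partial x_j^\xi}((a))\cdot\dd_\ell(a_j^\xi)=h((a))$ with $h$ over $K$ in coordinates $\ine(\tau,j)$, and then applies the already-constructed specialisation $\phi$ to both sides. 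The inductive hypothesis transports $f((a))=0\mapsto f((b))=0$ and $h((a))\mapsto h((b))$ for free, so the only thing to control is the multiplier on the left, and the paper observes it can always be arranged to lie in $L_r$: if $|\xi|=r$ it is automatically a polynomial in level-$\leq r$ coordinates, and if $|\xi|>r$ then Corollary \ref{aboveseparable}(ii) lets one take $f$ monic linear in $x_j^\xi$, so the multiplier is $1$. Since elements of $L_r$ are common to both kernels and fixed by $\phi$, one divides, gets $\phi(\dd_\ell(a_j^\xi))=\dd_\ell(b_j^\xi)$, and concludes $\phi(a_j^\tau)=b_j^\tau$ by Lemma \ref{reorder}. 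This is precisely the "structural" input your point (ii) asks for, but localized to one step; your claim that the whole chain of relations down to the minimal leader is determined by level-$\leq r$ data is true in spirit but unnecessary, and your $Q^d$ denominator-clearing is likewise avoided because the inductive specialisation already handles everything strictly below $(\tau,j)$. Both routes work; the paper's is shorter and sidesteps the bookkeeping you rightly identify as the delicate part.
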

\begin{proof}
Let $L_s=K(a_{i}^{\xi}:(\xi,i)\in\md^{\leq s}\times \mn)$ and $E_s=K(b_i^{\xi}:(\xi,i)\in\md^{\leq s}\times \mn)$. Then, 
$$a_i^\xi=b_i^\xi, \quad \text{ for } |\xi|\leq r.$$
We proceed by induction with respect to $\ineq$. Let $(\tau,j)\in \NN_{\leq s}^\md\times \mn$ with $|\tau|>r$ and suppose we have an algebraic specialisation over $L_r$ 
$$\phi:K[a_{i}^{\xi}:(\xi,i)\ine (\tau,j)]\to K[b_{i}^{\xi}:(\xi,i)\ine(\tau,j)].$$
If $(\tau,j)$ is not a leader, then we can clearly extend the specialisation to $a_j^{\tau}$. Now assume $(\tau,j)$ is a leader. Since $L_s$ is generic, $(\tau,j)$ must be a separable leader but not a minimal one. Thus, there are $\xi\in \NN^\md_{<s}$ and $\ell\in \md$ such that $(\xi,j)$ is a separable leader and $\tau=\rho(\ell,\xi)$. Then, there exists a polynomial $f$ over $K$ such that
$$f((a_i^\eta)_{(\eta,i)\ineq (\xi,j)})=0$$
and 
$$\frac{\partial f}{\partial x_j^\xi}((a_i^\eta)_{(\eta,i)\ineq (\xi,j)})\neq 0$$
We now observe that we may choose $\frac{\partial f}{\partial x_j^\xi}((a_i^\eta)_{(\eta,i)\ineq (\xi,j)})$ to be an element of $L_r$. Indeed, if $|\xi|=r$ then we are done; otherwise, by Lemma~\ref{aboveseparable}(ii), $f$ can be chosen monic of degree one in $x_j^\xi$ and so $\frac{\partial f}{\partial x_j^\xi}=1$. 

By applying the specialisation $\phi$ we get
$$f((b_i^\eta)_{(\eta,i)\ineq (\xi,j)})=0$$
By Lemma~\ref{explicit} and Lemma \ref{reorder}, there is a polynomial $h$ over $K$ such that 
$$\frac{\partial f}{\partial x_j^\xi}((a_i^\eta)_{(\eta,i)\ineq (\xi,j)})\cdot \dd_{\ell}(a_j^\xi)=h((a_i^\eta)_{(\eta,i)\ine(\tau,j)})$$
and similarly 
$$\frac{\partial f}{\partial x_j^\xi}((b_i^\eta)_{(\eta,i)\ineq (\xi,j)})\cdot \dd_{\ell}(b_j^\xi)=h((b_i^\eta)_{(\eta,i)\ine(\tau,j)}).$$
Apply $\phi$ to the former and use $\frac{\partial f}{\partial x_j^\xi}((a_i^\eta)_{(\eta,i)\ineq (\xi,j)})=\frac{\partial f}{\partial x_j^\xi}((b_i^\eta)_{(\eta,i)\ineq (\xi,j)})\in L_r$ to get $\phi(\dd_{\ell}(a_j^\xi))=\dd_{\ell}(b_j^\xi)$. Then Lemma~\ref{reorder} yields $\phi(a_j^\tau)=b_j^\tau$. Hence, the specialisation $\phi$ on $L_{\ine (\tau,j)}\to E_{\ine(\tau,j)}$ is already a specialisation on $L_{\ineq (\tau,j)}\to E_{\ineq(\tau,j)}$.
\end{proof}

\

\subsection{Realisations of $\gDD$-kernels} We carry forward the notation from the previous subsection. Given a $\gDD$-field $(L,\be)$ extension of $(K,\be)$ generated (as a $\bDD$-field) by an $n$-tuple $(a_1,\dots,a_n)$, we may think of $L$ as a $\gDD$-kernel of lenght $\infty$, and sometimes write $L=L_{\infty}$. The notions of separable/inseparable leader and minimal-separable leader are defined for $\gDD$-extensions such as $L$ in the natural manner, as well as the notions of specialisations and isomorphisms. The latter notions yield $\bDD$-homomorphisms and $\bDD$-isomorphisms, and thus we will refer to them as $\bDD$-specialisation and $\bDD$-isomorphism, respectively.
\smallskip

Our first observation is that the set of minimal-separable leaders is always finite. This is a key feature that will enable us to show the existence of a model-companion in Section~\ref{modeltheory}.

\begin{lemma}\label{dick}
Let $(L,\be)$ be a $\gDD$-field extension of $(K,\be)$ which is finitely generated (as a $\bDD$-field over $K$). Then, the set of minimal-separable leaders of $L$ is finite.
\end{lemma}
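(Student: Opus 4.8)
The plan is to show that the minimal-separable leaders form an \emph{antichain} with respect to the product order on $\NN_{<\infty}^\md\times \mn$, and then invoke a Dickson-type finiteness property of that order. Recall that, via the injective map $\psi:\NN_{<\infty}^\md\to \NN_0^{m_1}\times \NN_0^{m_2}$ of Remark~\ref{notation2}, the order $\leq$ on $\NN_{<\infty}^\md\times \mn$ (with a fixed second coordinate $t$) is identified with the product order on a subset of $\NN_0^{m_1+m_2}$; hence $\NN_{<\infty}^\md\times \mn$ is a finite union (over $t\in\mn$) of copies of $(\NN_0^{m_1+m_2},\leq)$, and by Dickson's lemma every subset of it has only finitely many minimal elements and, more to the point, contains no infinite antichain.

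First I would observe that the set $S$ of separable leaders of $L$ is upward closed: if $(\xi,t)\in S$ and $(\eta,t)\geq(\xi,t)$, then $(\eta,t)\in S$. This is exactly the content of Corollary~\ref{aboveseparable}(ii) (iterating the one-step case $\eta=\rho(i,\xi)$, which is what moving up by one in the product order amounts to), together with the fact that being separably algebraic over a subfield is preserved when the subfield grows — here $\hat L_{\ine(\eta,t)}\supseteq \hat L_{\ine(\xi,t)}$. Then I would note that, by definition, $(\xi,t)$ is a minimal-separable leader precisely when $(\xi,t)\in S$ but no $(\eta,t)<(\xi,t)$ lies in $S$; equivalently, $(\xi,t)$ is a $\leq$-minimal element of $S$. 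Since $S$ is upward closed in $\bigsqcup_{t\in\mn}(\NN_0^{m_1+m_2},\leq)$, its set of minimal elements is an antichain, and any antichain in this order is finite by Dickson's lemma. Therefore the set of minimal-separable leaders is finite.

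The one point that needs a little care — and which I expect to be the only genuine obstacle — is the transfer of the combinatorics to $\NN_0^{m_1+m_2}$: one must check that for a \emph{fixed} index $t$ the map $\xi\mapsto\psi(\xi)$ is not merely injective but an order-embedding of $(\NN_{\leq r}^\md,\leq)$ (and in the limit of $\NN_{<\infty}^\md$) into $(\NN_0^{m_1+m_2},\leq_{\mathrm{prod}})$, and that the image, while not all of $\NN_0^{m_1+m_2}$ (the HS-coordinates sum to at most $1$), is still such that Dickson's lemma applies to its subsets — which it trivially does, since any subset of a well-quasi-ordered set is well-quasi-ordered. The definition of $\leq$ on $\NN_{<\infty}^\md\times\mn$ given just before the statement of Definition~5.? literally says $(\xi,t)\leq(\eta,t')$ iff $t=t'$ and $\psi(\xi)\leq\psi(\eta)$, so this is immediate from the definitions; I would spell it out in one sentence and then apply Dickson. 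No delicate estimate or new algebraic input is required; the proof is essentially a bookkeeping argument once Corollary~\ref{aboveseparable}(ii) is in hand.
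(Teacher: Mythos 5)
Your proposal is correct and follows essentially the same route as the paper: both arguments reduce to the observation that, by Corollary~\ref{aboveseparable}(ii) (and indeed directly from the definition of minimal-separable leader), the minimal-separable leaders form an antichain for $\leq$, which via the order-embedding $\psi$ and Dickson's lemma must be finite. The only cosmetic difference is that the paper phrases this as a proof by contradiction rather than via the upward-closedness of the set of separable leaders, but the mathematical content is identical.
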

\begin{proof}
Let $M$ be the set of minimal-separable leaders of $L$ and assume $M$ is infinite. By Corollary~\ref{aboveseparable}(ii), $M$ forms an antichain in $\NN_{<\infty}^{\md}\times \mn$ with respect to $\leq$. But, via the injective map $\psi:\NN^\md_{<\infty}\to \NN_0^{m_1+m_2}$ defined in Remark~\ref{notation2}, this yields an infinite antichain in $\NN_0^{m_1+m_2}$ with respect to the product order. However, this is impossible as such antichains are finite by Dickson's lemma (see for instance \cite{figueira2011}). 
\end{proof}

\begin{definition} Let $L_r=K(a_i^\xi:(\xi,i)\in \md^{\leq r}\times \mn)$ be a $\gDD$-kernel (in $n$-variables).
\begin{enumerate}
\item A regular realisation of $L_r$ is a $\gDD$-field $(L,\be)$ generated (as $\bDD$-field) by the tuple $(a_i^
\emptyset)_{i=1}^n$ from $L_r$ such that the $\bDD$-structure on $L$ extends that on $L_r$.
\item A principal realisation of $L_r$ is a regular realisation $(L,\be)$ such that the minimal-separable leaders and inseparable leaders of $L$ are the same as those of $L_r$.
\end{enumerate}
\end{definition}

We now observe that principal realisations, if they exist, are unique (up to $\bDD$-isomorphism).

\begin{lemma}\label{uniquereal}
Suppose $L_r$ is a $\gDD$-kernel. If $(E,\be)$ is a regular realisation and $(L,\be)$ is a principal realisation, then $E$ is a $\bDD$-specialisation of $L$. It follows that any two principal realisations are $\bDD$-isomorphic.
\end{lemma}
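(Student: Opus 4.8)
The plan is to construct a $\bDD$-specialisation $\phi\colon L\to E$ over $K$ — that is, a $K$-algebra homomorphism between the respective coordinate rings carrying the generators $a^\emptyset_i$ of $L$ to those of $E$ — and then to deduce the uniqueness statement by symmetry. Write $L=L_\infty=K(a^\xi_i:(\xi,i)\in\md^{<\infty}\times\mn)$ and $E=E_\infty=K(b^\xi_i:(\xi,i)\in\md^{<\infty}\times\mn)$; since $E$ is a regular realisation, $a^\xi_i=b^\xi_i$ for every $\xi\in\md^{\leq r}$. By the infinite analogue of Corollary~\ref{aboveseparable}(i) (Lemma~\ref{reorder} applies to all multi-indices, not just those of length $\leq r$), each of $L,E$ is generated over $K$ by the elements indexed by reordered multi-indices $\xi\in\NN^\md_{<\infty}$, so it is enough to build an algebraic specialisation
$$K[a^\xi_i:(\xi,i)\in\NN^\md_{<\infty}\times\mn]\longrightarrow K[b^\xi_i:(\xi,i)\in\NN^\md_{<\infty}\times\mn],\qquad a^\xi_i\mapsto b^\xi_i.$$
We do this by induction along the well-order $\ineq$ on $\NN^\md_{<\infty}\times\mn$, which has order type $\omega$ since each level $\{(\xi,j):|\xi|=r\}$ is finite (as $\md$ is finite and the multi-indices in $\NN^\md_r$ are non-increasing). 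The argument is the one in the proof of Proposition~\ref{genericspecial}, now carried out with $s=\infty$.

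For the base of the induction, $\phi$ is the identity on $K[a^\xi_i:|\xi|\leq r]$ because there $a^\xi_i=b^\xi_i$. For the inductive step, suppose $\phi$ has been defined on $K[a^\eta_i:(\eta,i)\ine(\tau,j)]$ with $|\tau|>r$, and we extend it to $a^\tau_j$. If $(\tau,j)$ is not a leader of $L$, then $a^\tau_j$ is transcendental over $\hat L_{\ine(\tau,j)}$ and we may set $\phi(a^\tau_j)=b^\tau_j$. If $(\tau,j)$ is a leader of $L$: because $L$ is a principal realisation, its inseparable leaders and its minimal-separable leaders are exactly those of $L_r$, all of length $\leq r<|\tau|$; hence $(\tau,j)$ is a separable, non-minimal leader of $L$. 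Fixing a separable leader $(\zeta,j)$ with $\psi(\zeta)<\psi(\tau)$, writing $\psi(\tau)=\psi(\zeta)+v$ with $v$ a nonzero tuple of non-negative integers, choosing $\ell\in\md$ in the support of $v$, and letting $\xi\in\NN^\md_{<\infty}$ be the unique multi-index such that $\psi(\xi)$ is obtained from $\psi(\tau)$ by decreasing the $\ell$-coordinate by $1$, we obtain $\tau=\rho(\ell,\xi)$, $|\xi|<|\tau|$, and $(\xi,j)\geq(\zeta,j)$, so $(\xi,j)$ is again a separable leader by Corollary~\ref{aboveseparable}(ii). We are now in precisely the situation of Proposition~\ref{genericspecial}: pick $f$ over $K$ vanishing at $(a^\eta_i)_{(\eta,i)\ineq(\xi,j)}$ with $\frac{\partial f}{\partial x^\xi_j}$ nonzero there, arranged (via Corollary~\ref{aboveseparable}(ii), taking $f$ monic linear in $x^\xi_j$ when $|\xi|>r$) so that this partial evaluates into $L_r$; apply Lemma~\ref{explicit} and Lemma~\ref{reorder} to obtain $h$ over $K$ with $\frac{\partial f}{\partial x^\xi_j}(\cdot)\cdot\dd_\ell(a^\xi_j)=h\big((a^\eta_i)_{(\eta,i)\ine(\tau,j)}\big)$ and the analogous identity over $E$; apply $\phi$, cancel the nonzero $\phi$-fixed partial, and use Lemma~\ref{reorder} once more to see that the only consistent choice is $\phi(a^\tau_j)=b^\tau_j$. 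Since on generators $\phi$ intertwines each $\dd_i$ — we have $\dd_i(a^\xi_j)=a^{(i,\xi)}_j$ and $\dd_i(b^\xi_j)=b^{(i,\xi)}_j$, with reordering governed by Lemma~\ref{reorder} — the resulting $\phi$ is a $\bDD$-specialisation, so $E$ is a $\bDD$-specialisation of $L$.

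Finally, a principal realisation is in particular a regular realisation; so applying the first part in both directions to two principal realisations $L$ and $L'$ yields $\bDD$-specialisations $L\to L'$ and $L'\to L$ over $K$. Hence the vanishing ideals over $K$ of the two generating tuples coincide, the two $\bDD$-specialisations are mutually inverse, and $L$ and $L'$ are $\bDD$-isomorphic.

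I expect no conceptual obstacle here: the substance is entirely contained in Proposition~\ref{genericspecial}. The only points requiring a little care in the inductive step — that every argument of $h$ precedes $(\tau,j)$ under $\ine$, so that $\phi$ is already defined on it, and that the reduction to $\tau=\rho(\ell,\xi)$ with $(\xi,j)$ a separable leader is legitimate — are exactly the bookkeeping performed in that proof. The single new feature compared with the prolongation setting is that the induction now runs along an order of type $\omega$ rather than a finite one; this is still an ordinary induction and changes nothing.
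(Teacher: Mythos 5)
Your proof is correct and is essentially the paper's argument: the paper simply truncates both realisations to finite kernels $L_s$ and $E_s$ for each $s>r$, applies Proposition~\ref{genericspecial} to conclude $E_s$ is a specialisation of $L_s$, and iterates on $s$, whereas you inline that proposition's induction and run it once along the (order type $\omega$) well-order $\ineq$ on $\NN^\md_{<\infty}\times\mn$. The extra bookkeeping you supply (locating $\xi$ and $\ell$ with $\tau=\rho(\ell,\xi)$ and $(\xi,j)$ a separable leader) is exactly what Proposition~\ref{genericspecial} already does, so nothing new is needed.
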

\begin{proof}
%For $\xi\in \md^{r}$ we write $\dd^\xi:=\dd_{\xi_1}\cdots\dd_{\xi_r}$ when $\xi=(\xi_1,\dots,\xi_r)$ and each $\xi_i$ is of the form $(u,j)$ for $0\leq u\leq s$ and $1\leq j\leq m_u$. 
Write $L=K(a_{i}^{\xi}:(\xi,i)\in\md^{<\omega}\times \mn)$ and $E=K(b_i^{\xi}:(\xi,i)\in\md^{<\omega}\times \mn)$. Now, for each $s>r$, the $\gDD$-kernels
$$L_s=K(\dd_\xi a_i: (\xi,i)\in \md^{\leq s}\times\mn)$$
and 
$$E_s=K(\dd_\xi b_i: (\xi,i)\in \md^{\leq s}\times\mn)$$
are prolongations of $L_r$. Furthermore, by assumption, $L_s$ is a generic prolongation. Hence, by Proposition~\ref{genericspecial}, $E_s$ is a specialisation of $L_s$. Iterating on $s$ yields the desired $\bDD$-specialisation.
\end{proof}

%{\bf for the proof of the next two lemmas we write $\rho(\tau)$ to mean $\tau$ re-shuffled in decreasing order (rather than increasing, as originally) as for this modified meaning of $\rho$ the elements $\tau=\rho(\tau)$ are the ones that are special for the constructions that are inductive with respect to $<_{\rlex}$ (as all the other ones are automatically algebraic over the previous ones by Lemma \ref{reorder}). This should not cause any mathematical issues as being a leader for this $\rho$ is the same as for the original one, Lemma \ref{reorder} works unchanged etc.)}

%The next lemma says that specialising the highest level of a $\bDD$-kernel yields another $\bDD$-kernel.

The main result of this section is Theorem~\ref{thebigone} which gives sufficient conditions for a $\gDD$-kernel to have a principal realisation. For the proof we will make use of the results obtained in \S\ref{secnotation} together with the following lemma.

\begin{lemma}\label{spec_kernel}
%Suppose $\Gamma$ is Jacobi-associative.
 Let $r\geq 1$ and let $L_{r+1}=K(a^\eta_t)_{ (\eta,t)\in \md^{\leq r+1}\times\mn}$ be a $\bDD$-kernel. 
 Suppose $(b^\eta_t)_{ (\eta,t)\in \md^{r+1}\times\mn}$ is an algebraic specialisation of $(a^\eta_t)_{ (\eta,t)\in \md^{r+1}\times\mn}$ over $L_r$. Then 
$$E_{r+1}:=K((a^\eta_t)_{ (\eta,t)\in \md^{\leq r}\times\mn},(b^\eta_t)_{ (\eta,t)\in \md^{r+1}\times\mn})$$ 
is a $\bDD$-kernel over $(K,\be)$.
\end{lemma}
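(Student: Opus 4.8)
The plan is to transport the $\bDD$-structure of $L_{r+1}$ along the given specialisation, using a localisation to keep track of denominators. Set $R:=L_r[\,a^\eta_t:(\eta,t)\in\md^{r+1}\times\mn\,]$, so that $L_{r+1}=\operatorname{Frac}(R)$, and note that the length-$r$ part of $E_{r+1}$ is, as a field, just $L_r$. Since $(b^\eta_t)_{(\eta,t)\in\md^{r+1}\times\mn}$ is an algebraic specialisation of $(a^\eta_t)_{(\eta,t)\in\md^{r+1}\times\mn}$ over $L_r$, there is a surjective $L_r$-algebra homomorphism $\phi\colon R\to L_r[\,b^\eta_t:(\eta,t)\in\md^{r+1}\times\mn\,]\subseteq E_{r+1}$ with $\phi(a^\eta_t)=b^\eta_t$; its kernel $\mathfrak p$ is a prime ideal of $R$, the target being a domain sitting inside the field $E_{r+1}$. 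As $\phi$ fixes $L_r$ pointwise we have $\mathfrak p\cap L_r=(0)$, hence $L_r\setminus\{0\}\subseteq R\setminus\mathfrak p$; and since $\phi$ sends $R\setminus\mathfrak p$ into $E_{r+1}^{\times}$, it extends to a local homomorphism $\bar\phi\colon R_{\mathfrak p}\to E_{r+1}$ which still fixes $L_r$.

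The main step is to show that $\be(L_r)\subseteq\bDD(R_{\mathfrak p})$, i.e. $e_u(L_r)\subseteq\DD_u(R_{\mathfrak p})$ for $u=1,2$. Using $\dd_i(a^\xi_t)=a^{(i,\xi)}_t$ for $\xi\in\md^{\leq r}$, the fact that $\dd_i(K)\subseteq K$, and the Leibniz rules, one sees that $e_u(K[a^\xi_t:|\xi|\leq r])\subseteq\DD_u(K[a^\xi_t:|\xi|\leq r+1])\subseteq\DD_u(R)$. Now take $x\in L_r$ and write $x=P/Q$ with $P,Q\in K[a^\xi_t:|\xi|\leq r]$ and $Q\neq 0$. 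Then $e_u(P),e_u(Q)\in\DD_u(R)$; the residue of $e_u(Q)$ is $Q$, which is a unit of $R_{\mathfrak p}$ because $Q\in L_r\setminus\{0\}\subseteq R\setminus\mathfrak p$; and since $\mm_u$ is nilpotent, an element of $\DD_u(R_{\mathfrak p})$ with invertible residue is itself invertible. Hence $e_u(Q)^{-1}\in\DD_u(R_{\mathfrak p})$ and $e_u(x)=e_u(P)e_u(Q)^{-1}\in\DD_u(R_{\mathfrak p})$, as wanted.

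Now define $\be'=(e'_1,e'_2)\colon L_r\to\bDD(E_{r+1})$ by $e'_u:=\DD_u(\bar\phi)\circ e_u$. Each $e'_u$ is a composite of $k$-algebra homomorphisms; since base change commutes with the residue maps and $\bar\phi|_{L_r}=\operatorname{id}$, the composite $\pi_u^{E_{r+1}}\circ e'_u$ equals $\bar\phi\circ(L_r\hookrightarrow R_{\mathfrak p})$, i.e. the inclusion $L_r\hookrightarrow E_{r+1}$, so $\be'$ is a $\bDD$-operator, and it extends $\be|_K$ since $\bar\phi$ fixes $K$. It remains to check the kernel identity: for $\xi\in\md^{\leq r}$ and $t\in\mn$ we have $e_u(a^\xi_t)=1\otimes a^\xi_t+\sum_{i=1}^{m_u}\epsilon_{u,i}\otimes a^{((u,i),\xi)}_t$, so
\[
e'_u(a^\xi_t)=1\otimes\bar\phi(a^\xi_t)+\sum_{i=1}^{m_u}\epsilon_{u,i}\otimes\bar\phi\bigl(a^{((u,i),\xi)}_t\bigr),
\]
and $\bar\phi$ fixes each $a^\eta_t$ with $|\eta|\leq r$ and sends $a^\eta_t$ with $|\eta|=r+1$ to $b^\eta_t$; thus $e'_u(a^\xi_t)$ has exactly the form required of a $\bDD$-kernel, whether $|\xi|<r$ or $|\xi|=r$. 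Therefore $(E_{r+1},\be')$ is a $\bDD$-kernel (of length $r+1$) over $(K,\be)$.

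The only delicate point is the containment $\be(L_r)\subseteq\bDD(R_{\mathfrak p})$: one must be sure that every denominator created by applying $\be$ to an element of $L_r$ already lies in $L_r\setminus\{0\}$, and that such denominators avoid $\mathfrak p$ — which is exactly where it matters that the specialisation was taken over $L_r$, not merely over $K$. Everything else is routine bookkeeping with base changes and with the explicit shape of the kernel conditions.
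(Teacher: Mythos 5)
Your proof is correct, but it proceeds differently from the one in the paper. The paper's argument works through the extension criterion of Lemma~\ref{extendstructure}: it observes that the obstruction to defining $e'_u$ on $L_r$ with the prescribed values is the vanishing of $f^{e_u}$ at those values for every $f$ over $L_{r-1}$ vanishing at $(a^\xi_t)_{|\xi|=r}$, rewrites this condition via the basis $(\epsilon_{u,i})$ as a system of polynomial equations \emph{over $L_r$} in the length-$(r+1)$ coordinates, and notes that this system is satisfied by the $a$'s (because $L_{r+1}$ is a kernel) and hence by the $b$'s (because the specialisation is over $L_r$). You instead transport the existing operator $e_u\colon L_r\to\DD_u(L_{r+1})$ directly along the specialisation, by localising $R=L_r[a^\eta_t:|\eta|=r+1]$ at the prime $\mathfrak p=\ker\phi$ and setting $e'_u=\DD_u(\bar\phi)\circ e_u$; the work is then concentrated in showing $e_u(L_r)\subseteq\DD_u(R_{\mathfrak p})$, which you handle correctly via the nilpotence of $\mm_u$ and the fact that denominators lie in $L_r\setminus\{0\}\subseteq R\setminus\mathfrak p$. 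Both arguments hinge on the same essential point — that the specialisation is taken over $L_r$, not merely over $K$ — but yours replaces the verification of a vanishing condition by a functorial push-forward, which avoids the basis decomposition at the cost of the localisation bookkeeping; it also immediately gives the required form of $e'_u(a^\xi_t)$ for $|\xi|\leq r$ by applying $\bar\phi$ coordinatewise. I see no gap.
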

\begin{proof} 
Let $u\in \{1,2\}$. We need to extend the $k$-algebra homomorphism $e_u:L_{r-1}\to \DD_u(L_r)$ to $e'_u:L_r\to \DD_u(E_{r+1})$ in such a way that 
$$e'_u(a_t^\xi)=\epsilon_{u,0}a_t^\xi+ \epsilon_{u,1}b_t^{((u,1),\xi)}+\cdots + \epsilon_{u,m_u}b_t^{((u,m_u),\xi)}$$
for all $(\xi,t)\in \md^r\times\mn$. Hence, it suffices to show that 
\begin{equation}\label{toshow}
f^{e_u}((\epsilon_{u,0}a_t^\xi+ \epsilon_{u,1}b_t^{((u,1),\xi)}+\cdots + \epsilon_{u,m_u}b_t^{((u,m_u),\xi)})_{(\xi,t)\in \md^r\times\mn})=0
\end{equation}
for $f\in L_{r-1}[(x_t^\xi)_{(\xi,t)\in \md^r\times\mn}]$ vanishing at $(a_t^\xi)_{(\xi,t)\in \md^r\times\mn}$, where recall that $f^{e_u}$ is the polynomial over $\DD_u(L_r)$ obtained by applying $e_u$ to the coefficients of $f$. 

Let $(x^\eta_t)_{ (\eta,t)\in \md^{r+1}\times\mn}$ be a set of variables over $L_r$ and consider the polynomial equation
\begin{equation}\label{exhom}
f^{e_u}((\epsilon_{u,0}a_t^\xi+ \epsilon_{u,1}x_t^{((u,1),\xi)}+\cdots + \epsilon_{u,m_u}x_t^{((u,m_u),\xi)})_{(\xi,t)\in \md^r\times\mn})=0
\end{equation}
Because $(\epsilon_{u,i})_{i=0}^{m_u}$ forms a linear basis for $\DD_u(L_r[(x^\eta_t)_{ (\eta,t)\in \md^{r+1}\times\mn}])$ over $L_r[(x^\eta_t)_{ (\eta,t)\in \md^{r+1}\times\mn}]$, we can rewrite the polynomial in \eqref{exhom} as
$$\epsilon_{u,0}f_0((x^\eta_t)_{ (\eta,t)\in \md^{r+1}\times\mn})+\epsilon_{u,1} f_{1}((x^\eta_t)_{ (\eta,t)\in \md^{r+1}\times\mn})+\cdots + \epsilon_{u,m_u}f_{m_u}((x^\eta_t)_{ (\eta,t)\in \md^{r+1}\times\mn})$$
for some $f_i\in L_r[(x^\eta_t)_{ (\eta,t)\in \md^{r+1}\times\mn}]$, and hence equation~\eqref{exhom} is equivalent to
\begin{equation}\label{reduced}
f_0((x^\eta_t)_{ (\eta,t)\in \md^{r+1}\times\mn})=0 \; \land \cdots \land f_{m_u}((x^\eta_t)_{ (\eta,t)\in \md^{r+1}\times\mn})=0
\end{equation}
Since $L_{r+1}$ is a $\bDD$-kernel, there is an extension of $e_u:L_{r-1}\to \DD_u(L_r)$ to $e_u:L_r\to \DD_u(E_{r+1})$ such that
$$e_u(a_t^\xi)=\epsilon_{u,0}a_t^\xi+ \epsilon_{u,1}a_t^{((u,1),\xi)}+\cdots + \epsilon_{u,m_u}a_t^{((u,m_u),\xi)}$$
for all $(\xi,t)\in \md^r\times\mn$. Thus, $(a^\eta_t)_{ (\eta,t)\in \md^{r+1}\times\mn}$ satisfies \eqref{reduced}, but as $(b^\eta_t)_{ (\eta,t)\in \md^{r+1}\times\mn}$ is an algebraic specialisation of $(a^\eta_t)_{ (\eta,t)\in \md^{r+1}\times\mn}$ over $L_r$, the former must also be a solution to \eqref{reduced}. It follows that \eqref{toshow} holds, as desired.
\end{proof}

%\medskip

We can now prove

\begin{theorem}\label{thebigone}
Suppose $\Gamma$ is Jacobi-associative. Let $r\geq 1$ and let $(L_{2r},\be)$ be a separable $\gDD$-kernel over $(K,\be)$. If the minimal-separable leaders of $L_{2r}$ are the same as those of $L_r$, then $L_{2r}$ has a principal realisation.
\end{theorem}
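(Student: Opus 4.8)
The plan is to adapt Pierce's argument for differential kernels: show that $L_{2r}$ prolongs, one step at a time, to separable $\gDD$-kernels of every finite length $\geq 2r$ whose minimal-separable and inseparable leaders stay fixed, and then take the union. The engine is a one-step prolongation, run by induction on $s\geq 2r$: given a separable $\gDD$-kernel $L_s$ over $(K,\be)$ all of whose minimal-separable leaders coincide with those of $L_r$ — hence have length $\leq r$ — I would produce a generic prolongation $L_{s+1}$ with the same property (which is automatically separable, its inseparable leaders being those of $L_s$). To build the field $L_{s+1}$: by Corollary~\ref{aboveseparable}(i), $L_s=K(a_t^\eta:(\eta,t)\in\NN^\md_s\times\mn)$, so $L_s$ is generated over $L_{s-1}$ by its level-$s$ elements, and for $\eta\in\NN^\md_s$ the pair $(\eta,t)$ is a leader if and only if $(\zeta,t)\leq(\eta,t)$ for some minimal-separable leader $(\zeta,t)$, in which case it is a non-minimal separable leader with $a_t^\eta\in\hat L_{\ine(\eta,t)}$ by Corollary~\ref{aboveseparable}(ii) (there are no inseparable leaders at level $s$ by separability, and no minimal-separable ones since $s>r$), and otherwise $a_t^\eta$ is transcendental over $\hat L_{\ine(\eta,t)}$. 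Introduce new indeterminates $b_t^\theta$ for $\theta\in\NN^\md_{s+1}$: if $\theta=\rho(i,\eta)$ for some separable leader $(\eta,t)$ — so that the minimal polynomial of $a_t^\eta$ may be taken monic linear and Lemma~\ref{explicit} makes $\dd_i(a_t^\eta)$ a prescribed rational expression in data of lower rank — set $b_t^\theta$ to be the corresponding element of $L_s$; otherwise take $b_t^\theta$ transcendental, in increasing $\ineq$-order. Only finitely many minimal-separable leaders occur (Lemma~\ref{dick}), and one must check the forced values are unambiguous when one $\theta$ is hit by several $(i,\eta)$; via Lemmas~\ref{reorder}, \ref{explicit} and \ref{partial_com} this reduces to the coherence of the operators on $V_F$, i.e. Lemma~\ref{horrible}. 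Set $L_{s+1}:=L_s(b_t^\theta:(\theta,t)\in\NN^\md_{s+1}\times\mn)$ and, for $\xi\in\md^{s+1}$, put $a_t^\xi:=\chi_\xi b_t^{\rho(\xi)}+\ell_\xi(L_{s,t})$ (reading $b_t^{\rho(\xi)}=a_t^{\rho(\xi)}$ when $|\rho(\xi)|\leq s$, and $a_t^\xi=\ell_\xi(L_{s,t})$ when $\chi_\xi=0$).

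Next I would install the $\bDD$-structure. Adjoining the level-$s$ generators $a_t^\eta$ to $L_{s-1}$ one at a time in $\ineq$-order, I extend $e_u\colon L_{s-1}\to\DD_u(L_s)$ to $e'_u\colon L_s\to\DD_u(L_{s+1})$ via Lemma~\ref{extendstructure}: part~(ii) at the transcendental steps, prescribing $e'_u(a_t^\eta)=\sum_{i=0}^{m_u}\epsilon_{u,i}a_t^{((u,i),\eta)}$; part~(i) at the separable-leader steps, where the extension is unique and one checks, again via Lemma~\ref{explicit}, that it sends $a_t^\eta$ to that same element, consistently with the choice of the $b$'s. No inseparable step arises at level $s$ since $L_s$ is separable, and inseparable leaders at lower levels already carry their $\bDD$-structure in $L_s$. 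This gives a $\bDD$-kernel $L_{s+1}$ prolonging $L_s$ (one may also phrase the algebraic part of this verification through Lemma~\ref{spec_kernel}, viewing $L_{s+1}$ as a specialisation of the prolongation in which all forced relations among the $b$'s are dropped).

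Then I would verify $\Gamma$-commutativity and genericity. That $\be'=(e'_1,e'_2)$ commutes on $(L_{s-1},L_s,L_{s+1})$ with respect to $\Gamma$ means, by Lemma~\ref{meaningcomm}, that $\dd_i\dd_j(a_t^\xi)=\chi_{i,j}\dd_j\dd_i(a_t^\xi)+\sum_{\ell\in\md}c_\ell^{ij}\dd_\ell(a_t^\xi)$ for all $\xi\in\md^{\leq s-1}$ and all $i,j\in\md$, $t\in\mn$. For $|\xi|\leq s-2$ this is inherited from $L_s$ being a $\gDD$-kernel; for $|\xi|=s-1$ it is new, and by Remark~\ref{a_xi} it is exactly the evaluation at $L_{s+1,t}$ of the identity $\dd_i\dd_j(\dd_\xi w^\emptyset)=\chi_{i,j}\dd_j\dd_i(\dd_\xi w^\emptyset)+\sum_\ell c_\ell^{ij}\dd_\ell(\dd_\xi w^\emptyset)$ in $V_F$, which holds by $(\star)$ of the Remark following Lemma~\ref{horrible} — here, and essentially only here, is the hypothesis that $\Gamma$ is Jacobi-associative used. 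Finally, every new level-$(s+1)$ element is either transcendental or lies above an existing minimal-separable leader, so no new minimal-separable or inseparable leader is created: $L_{s+1}$ is the desired generic separable prolongation. Iterating from $L_{2r}$ produces a chain $L_{2r}\subseteq L_{2r+1}\subseteq\cdots$; its union $(L,\be)$, with $\be=\varinjlim\be^{(s)}$, is a $\gDD$-field (each $\Gamma$-relation is witnessed at a finite stage), is generated as a $\bDD$-field over $K$ by $(a_t^\emptyset)_{t\in\mn}$ with $\bDD$-structure extending that of $L_{2r}$, and has the same minimal-separable and inseparable leaders as $L_{2r}$; hence it is a principal realisation of $L_{2r}$, unique up to $\bDD$-isomorphism by Lemma~\ref{uniquereal}.

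The main obstacle is the $\Gamma$-commutativity bookkeeping inside the one-step prolongation. The quantities $\dd_i\dd_j(a_t^\xi)$ that must be compared — both when checking that the forced $b$'s are unambiguous and when checking the new $\Gamma$-commutativity relations — live one level above where the defining rule $a^\bullet=\chi\,b^{\rho(\bullet)}+\ell_\bullet$ is anchored, and the nontrivial fact that these reductions cohere is precisely Lemma~\ref{horrible}(2); this is where the Jacobi identity, skew-symmetry and associativity packaged into ``$\Gamma$ Jacobi-associative'' are all consumed, via the technical preparation of \S\ref{secnotation}. The role of starting at length $2r$ rather than $r+1$ is the familiar one from Pierce's argument: it pins every minimal-separable leader to length $\leq r$, so that from level $2r$ on every separable leader is non-minimal and hence linear over the data already constructed — which is exactly what makes the one-step prolongation simultaneously possible (the forced values are genuine field elements and the $\bDD$-structure extends uniquely there) and generic (no new leaders can appear).
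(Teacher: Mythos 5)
Your overall strategy is the paper's: iterate a one-step generic prolongation (forced values above separable leaders, fresh transcendentals elsewhere), check well-definedness and $\Gamma$-commutativity, and take the union. But the proposal papers over the two points where all the real work lies, and in doing so misattributes them to Lemma~\ref{horrible}. The claim that the ambiguity of the forced values ``reduces to the coherence of the operators on $V_F$, i.e.\ Lemma~\ref{horrible}'' is not correct: Lemma~\ref{horrible} is a statement about the \emph{free} object $V_F$, where $\dd_i(w^\xi)$ is \emph{defined} by the formal rules, whereas at a separable leader $(\eta,t)$ the value $\dd_i(a_t^\eta)$ is \emph{forced} by Hensel's lemma (Lemma~\ref{extendstructure}(i)) and need not a priori obey those rules. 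Showing that two forced values with $\rho(i,\eta)=\mu=\rho(j,\eta')$ agree is exactly Claim~\ref{claim_well_def} of the paper, and its proof requires (a) the hypothesis that the minimal-separable leaders of $L_{2r}$ equal those of $L_r$ — used to find minimal leaders $\xi_1'\leq\rho(j,\tau)$, $\xi_2'\leq\rho(i,\tau)$ of length $\leq r$, so that $|\xi_1'\vee\xi_2'|\leq 2r<s+1$ and one can descend to a common $\xi\in\NN^\md_{s-2}$ with both $\rho(i,\xi)$ and $\rho(j,\xi)$ separable leaders; (b) Lemma~\ref{partial_com} applied to the tensor-product operator systems $\DD_u\otimes\DD_v$ together with the inductively maintained $\Gamma$-commutativity at all $\ineq$-earlier positions; and (c) Jacobi-associativity itself (in the $(\dagger)$-type cancellation and in Case C). Your proposal never locates the use of the $2r$-versus-$r$ hypothesis inside this argument — your closing remark about it only explains why separable leaders at level $>r$ are non-minimal, which is the easy half.

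The same issue infects your verification of $\Gamma$-commutativity at level $s-1$: you assert it is ``exactly the evaluation at $L_{s+1,t}$'' of the identity $(\star)$ in $V_F$ via Remark~\ref{a_xi}. But Remark~\ref{a_xi}(1) only lets you commute evaluation with $\dd_i$ on elements of $V_F(r-1)$; computing $\dd_i\dd_j(a_t^\xi)=\dd_i(a_t^{(j,\xi)})$ requires knowing that $a_t^{(i,\tau)}=\chi_{i,\tau}a_t^{\rho(i,\tau)}+\ell_{i,\tau}(L_{s,t})$ also when $(\tau,t)$ is a leader and the left-hand side is Hensel-forced — this is Claim~\ref{cl2}, which rests on Claim~\ref{claim_well_def} and the case analysis of the specialisation step, not on $(\star)$ alone. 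So the assertion that Jacobi-associativity is used ``essentially only here'' is also wrong. In short: the skeleton is right, but the central well-definedness claim is treated as a formal consequence of the free computation when it is in fact the main theorem-specific content, and without it the induction does not close.
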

\begin{proof}
We assume we have a $\gDD$-kernel $L_s=K(a^\xi_t)_{(\xi,t)\in \md^{\leq s}\times \mn}$ which is a generic prolongation of $L_{2r}$ and show the existence of a generic prolongation $L_{s+1}$ (this clearly suffices). Fix a universal field $\Omega$; i.e., a sufficiently big algebraically closed field containing $L_s$.

Let $(X_{t}^\mu)_{(\mu,t)\in \NN_{s+1}^\md\times \mn}$ be a tuple from $\Omega$ which is algebraically independent over $L_{s}$. We will first define a (not necessarily $\Gamma$-commuting) $\bDD$-structure $\be: L_s\to \bDD(\Omega)$ extending $\be:L_{s-1}\to \bDD(L_s)$; this will be obtained by choosing elements $(a'^{(i,\tau)}_t)_{i\in \md}$ inductively on $(\tau, t)\in \NN_s^\md\times \mn$ with respect to $\ineq$.

Let $(\tau,t)\in \NN_s^\md\times\mn$ and suppose we have extended the $\bDD$-structure $\be:L_{s-1}\to \bDD(L_s)$ to $\be: L_{\ine (\tau,t)}\to \bDD(\Omega)$. 
We consider two cases:
\medskip

\noindent {\bf Case 1.} Suppose $(\tau,t)$ is a leader. Since $|\tau|\geq 2r>r$ and $L_s$ is a generic prolongation of $L_r$, $(\tau,t)$ is a separable leader. By Lemma~\ref{extendstructure}(i), there is a unique $\bDD$-structure  $L_{\ineq(\tau,j)}\to \bDD(\Omega)$ extending  $L_{\ine(\tau,t)}\to \bDD(\Omega)$. Hence we can put $a'^{(i,\tau)}_t:=\dd_i(a^{\tau}_t)$ for each $i\in \md$.

\medskip

\noindent {\bf Case 2.} Suppose $(\tau,t)$ is not a leader. By Lemma~\ref{extendstructure}(ii), we can choose $(a'^{(i,\tau_0)}_t)_{i\in \md}$ arbitrarily; so we may set
$$a'^{(i,\tau)}_t:= \chi_{i,\tau} X^{\rho(i,\tau)}_t+\ell_{i,\tau}(L_{s,t}).$$

\medskip

This construction yields a $\bDD$-structure $\be:L_s\to \bDD(\Omega)$ (here we are using the fact that $L_s=\hat L_s$, see Lemma~\ref{aboveseparable}), which in turn yields a $\bDD$-kernel $L'_{s+1}$. However, this might not be a $\gDD$-kernel (i.e., the operators need not $\Gamma$-commute on $L_{s-1}$). We fix this now. 
%Write $({a'}_t^\xi)_{(\xi,t)\in \md^{ s+1}\times\mn}$ for the last level of the kernel $L'_{s+1}$.

\medskip

We will prove by induction on $(\mu,t)\in \NN_{s+1}^\md\times \mn$ with respect to $\ineq$ that there is a specialisation $({a}_t^\xi)_{(\xi,j)\in \md^{ s+1}\times\mn}$ of the tuple $({a'}_t^\xi)_{(\xi,t)\in \md^{ s+1}\times\mn}$ over $L_s$ such that, for the $\bDD$-kernel $L_s(a_t^\xi)_{(\xi,t)\in \md^{s+1}\times \mn}$ (note this is indeed a $\bDD$-kernel by Lemma~\ref{spec_kernel}), the following conditions hold:
\begin{enumerate}
\item\label{1}  
For all $(\tau,t')\in \NN_{s-1}^\md\times \mn$ and $i,j\in \md$ with $(\rho(i,j,\tau),t')\trianglelefteq (\mu,t)$ we have \[\dd_i\dd_j(a^\tau_{t'})=\chi_{ij}\dd_j\dd_i(a^\tau_{t'})+\sum_\ell c^{ij}_\ell \dd_\ell(a^\tau_{t'}).\]

%for all $(\tau,t')$, $1\leq p,q\leq m_1$ and $1\leq  p',q'\leq m_2$ with $(\rho(p,q,\tau),t')\ineq (\mu, t)$ we have $$\dd_{2,p'}\dd_{2,q'}(a_{t'}^\tau)=c_{2,1}^{p'q'}\dd_{2,1}(a_{t'}^\tau)+\cdots +c_{2,m_2}^{p'q'}\dd_{2,m_2}(a_{t'}^\tau),$$ $$[\dd_{1,p},\dd_{1,q}](a_{t'}^\tau)=c_{1,1}^{pq}\dd_{1,1}(a_{t'}^\tau)+\cdots+c_{1,m_1}^{pq}\dd_{1,m_1}(a_{t'}^\tau),$$ and $$[\dd_{1,p},\dd_{2,q'}](a_{t'}^\tau)=0.$$

\item\label{2} Let $(\nu,t')\in\NN_{s+1}^\md\times\mn$ with $(\nu,t')\ineq (\mu,t)$. If there are no $\eta\in \NN_s^\md$ and $i\in \md$ such that $(\eta,t')$ is a leader and $\rho(i,\eta)=\nu$, then $a_{t'}^\nu = X_{t'}^\nu$; otherwise (when there are such $\eta$ and $i$) we have $a_{t'}^\nu\in K(a_{t''}^\tau)_{(\tau,t'')\ine (\nu,t')}$.
 
 %\item\label{2} If $(\mu',t')\in \NN_{s+1}^\md\times \mn$  with $(\mu',t')\trianglelefteq (\mu,t)$, and there are no $\eta\in \NN_s^\md$ and $i\in \md$ such that $(\eta,t')$ is a leader and $\rho(i,\eta)=\mu'$, then $a_{t'}^{\mu'}=X^{\mu'}_{t'}$.

%\item\label{3} If $(\nu,t')\in \md^{s+1}\times \mn$ and  $(\rho(\nu),t')\triangleright (\mu,t)$ then  $a^{\nu}_{t'}=a'^{\nu}_{t'}$.
 
% \item\label{4} For all $\tau\in \NN_0^\md(s+1)$ and  ${t'}\in \mn$ we have $a^{\tau}_{t'}\in L_s(X_{\mu',t''})_{(\rho(\mu'),t'')\trianglelefteq (\rho(\tau),{t'})}$.
 \end{enumerate}
 
 \medskip
 
 This will clearly be enough, as then for $(\mu,t):=\max(\NN_{s+1}^\md\times \mn, \trianglelefteq)$ we obtain the desired $\gDD$-kernel: condition (1) yields $\Gamma$-commutativity  while condition (2) guarantees that we introduce no new minimal-separable leaders nor inseparable leaders.
 
 \medskip
 
 Suppose that $(\mu,t)\in \NN_{s+1}^\md\times \mn$ and that we have found a specialisation $({b}_t^\xi)_{(\xi,t)\in \md^{ s+1}\times\mn}$ of $({a'}_t^\xi)_{(\xi,t)\in \md^{ s+1}\times\mn}$ over $L_s$ such that the above conditions hold at the $\trianglelefteq$-predecessor of $(\mu,t)$. In order to perform the desired specialisation at step $(\mu,t)$ we will need the following claim -- whose proof is lengthy and quite technical but is arguably the main ingredient to prove the theorem --. In the claim (and its subclaims) all computations take place in the $\bDD$-kernel $L_s(b_t^\xi)_{(\xi,t)\in \md^{s+1}\times \mn}$.
  
 \begin{claim}\label{claim_well_def} Suppose that $\rho(i,\eta)=\mu=\rho(j,\eta')$ for some $i, j \in \md$ and some $\eta,\eta'\in \NN_s^\md$ such that $(\eta,t)$ and $(\eta', t)$ are (necessarily separable) leaders. Then 
 $$\dd_{i}(a^{\eta}_t)-\ell_{i,\eta}(L_{s,t})=\dd_{j}(a^{\eta'}_t)-\ell_{j,\eta'}(L_{s,t}).$$
 \end{claim}
 
 \begin{proof} We may assume that $i\neq j$ (when they are equal the desired equality is obvious). Let $\tau\in \NN_{s-1}^\md$ be such that $\rho(i,j,\tau)=\mu$; which implies $\eta=\rho(j,\tau)$ and $\eta'=\rho(i,\tau)$. We consider three cases; Cases A, B, and C.
 
 \smallskip
 
{\bf Case A}. Assume $i$ and $j$ are both of Lie-type.
  
 \begin{Sclaim}\label{ETS}
To prove Claim~\ref{claim_well_def}, it suffices to prove that 
$$(*)\hspace{2cm} \dd_i\dd_j (a^\tau_{t}) - \dd_j\dd_i(a^\tau_{t}) =\sum_\ell c_\ell^{ij}\dd_{\ell}(a^\tau_{t}).\hspace{2cm}$$
\end{Sclaim}
\begin{proof}
Suppose the (*) holds, so 
\begin{align*}
\sum_\ell c_\ell^{ij}\dd_{\ell}(a^\tau_{t}) & =\dd_{i}\dd_{j} (a^\tau_{t}) - \dd_{j}\dd_{i}(a^\tau_{t}) \\
& =\dd_{i}(a^{\rho(j,\tau)}_{t}+\ell_{j,\tau}(L_{s,t}))-\dd_{j}(a^{\rho(i,\tau)}_{t}+\ell_{i,\tau}(L_{s,t})) \\
& = \dd_{i} (a^\eta_{t})+\dd_{i}(\ell_{j,\tau})(L_{s,t})-\dd_{j}(a^{\eta'}_{t})-\dd_{j}(\ell_{i,\tau}(L_{s,t})) \\
& = \dd_{i}(a^{\eta}_t)-\ell_{i,\eta}(L_{s,t})-\dd_{j}(a_t^{\eta'})+\ell_{j,\eta'}(L_{s,t})+\sum_\ell c_\ell^{ij}\dd_{\ell}(a^\tau_t)
\end{align*}
where the last equality follows by Lemma \ref{horrible}(1). Thus 
\[ \dd_{i}(a^{\eta}_t)-\ell_{i,\eta}(L_{s,t})=\dd_{j}(a^{\eta'}_t)-\ell_{j,\eta'}(L_{s,t}),\] as required.
\end{proof}
 
We now aim to prove that (*) holds. 

\smallskip

As both $(\eta,t)=(\rho(j,\tau),t)$ and $(\eta',t)=(\rho(i,\tau),t)$ are separable leaders, by the assumption on $L_{2r}$ there are minimal-separable leaders $(\xi'_1,t)\leq (\rho(j,\tau),t)$ and $(\xi'_2,t)\leq (\rho(i,\tau),t)$ with $|\xi'_1|,|\xi'_2|\leq r$. 
Then, letting  $\xi'_1\vee\xi'_2$  
be the least upper bound of $\xi'_1$ and $\xi'_2$ in $\NN^\md_{<\infty}$ with respect to $\leq$, we get $|\xi'_1\vee\xi'_2|\leq 2r$ and so $\xi'_1\vee\xi'_2< \rho(i,j,\tau)=\mu$. Hence there is some $k\in \md $ such that $\rho(k,\xi'_1\vee\xi'_2)\leq \rho(i,j,\tau)$.
%Suppose first that $k\neq i,j$. 
%+++
Then, choosing $\xi_1,\xi_2\in \NN_{s-1}^\md$  so that $\rho(j,k,\xi_1)=\rho(i,j,\tau)=\rho(i,k,\xi_2)$, we have that $\xi'_1\leq \xi_1$ and $\xi'_2\leq \xi_2$, so $(\xi_1,t)$ and $(\xi_2,t)$ are separable leaders (by Lemma~\ref{aboveseparable}(2)).  Let $\xi\in\NN^\md_{s-2}$ be such that $\rho(i,\xi)=\xi_1$; hence also $\rho(j,\xi)=\xi_2$ and $\rho(k,\xi)=\tau.$

%As $\xi_1\geq \xi'_1$ and $\xi_2\geq \xi'_2$, both $\xi_1$ and $\xi_2$ are separable leaders. 

%We will reuse some calculations from the proof of Proposition \ref{just1}, hence, to match the notations, below we will write $i,j,k$ for $(1,p)$, $(1,q)$ and $(u,k)$, respectively.
%change j in a_j to j_0

\begin{Sclaim}\label{6swaps} For $\xi\in\NN^\md_{s-2}$ and $k\in\md$ as chosen above, we have
\begin{align*}
\dd_i\dd_j\dd_k (a^\xi_t) & =\dd_j\dd_i\dd_k(a^\xi_t)+ \sum_\ell \dd_j(c_\ell^{ki}\dd_\ell(a^\xi_t)) \\
& + \sum_\ell c_\ell^{kj}\dd_\ell\dd_i(a^\xi_t) + \sum_\ell \dd_k(c_\ell^{ij}\dd_\ell(a^\xi_t))+ \sum_\ell c_\ell^{ik}\dd_\ell\dd_j(a^\xi_t)+\sum_\ell \dd_i(c_\ell^{jk} \dd_\ell(a^\xi_t))
\end{align*}
\end{Sclaim}
\begin{proof}
The calculation follows the same lines as the one in the proof of Proposition \ref{just1}. However, that computation uses $\Gamma$-commutativity; hence, to repeat the calculation, we have to check (using our induction hypothesis) that we can apply $\Gamma$-commutativity at each step of that computation. To do this, we need:

\begin{SSclaim}\label{partialop} For $\xi\in\NN^\md_{s-2}$ and $k\in\md$ as chosen above, we have
$$\dd_i\dd_k\dd_j(a^\xi_t)=\dd_k\dd_i\dd_j(a^\xi_t) +\sum_\ell c_\ell^{ik}\dd_\ell\dd_j(a^\xi_t)$$
\end{SSclaim}
\begin{proof}
	Assume first that $k$ is of Lie type. 
Consider the local algebra  $ \DD_1\otimes \DD_1$ with basis $(\epsilon_{i',j'})_{0\leq i',j'\leq m_1}$  where $\epsilon_{i',j'}:=\epsilon_{i'}\otimes \epsilon_{j'}$. Note that, for any $1\leq i',j'\leq m_1$, we have $\supp((i',j'))=\supp(i')\times \supp(j')$ where the former support is computed in $\DD_1\otimes \DD_1$ while the latter supports are computed in $\DD_1$. Consider the maps 
$$f:=\DD_1(e_1)\circ e_1:L_s\to \DD_1\otimes \DD_1 (\Omega)$$ 
and 
$$f':= r_1^{e_1}\circ e_1:L_s\to \DD_1\otimes \DD_1 (\Omega)$$ 
with co-ordinate functions $(D_{i',j'})_{1\leq i',j'\leq m_1}$ and $(D'_{i',j'})_{1\leq i',j'\leq m_1}$, respectively, where $e_1$ is the homomorphism $L_s\to \DD_1(\Omega)$ induced by the kernel $L_s(b^\nu_{t'})_{(\nu,t')\in\md^{s+1}\times \mn}$. 
Then  $D_{i',j'}=\dd_{i'}\dd_{j'}$ and $D'_{i',j'}=\dd_{j'}\dd_{i'}+\sum_\ell c_\ell^{i'j'}\dd_\ell$. % for all $1\leq i,j\leq m_1$. 
Note that if $(i',k')\in \supp((i,k))=\supp(i)\times \supp(k)$ and $(\nu,z)\trianglelefteq (\rho(j,\xi),t)$, or $(i',k')=(i,k)$ and  $(\nu,z)\triangleleft (\rho(j,\xi),t)$, then $(\rho(i',k',\nu),z)\triangleleft (\mu,t)$, so, by the inductive assumption (1), we have   $D_{i',k'}(a^\nu_z)=D'_{i',k'}(a^\nu_z)$. Hence, as $(\rho(j,\xi),t)=(\xi_2,t)$ is a separable leader, %,,, double check it's really \xi_1 
  by Lemma \ref{partial_com} we get \[\dd_i\dd_k(a^{\rho(j,\xi)}_t)=D_{i,k}(a_t^{\rho(j,\xi)})=D'_{i,k}(a_t^{\rho(j,\xi)})=\dd_k\dd_i(a_t^{\rho(j,\xi)})+\sum_\ell c_\ell^{ik}\dd_\ell (a_t^{\rho(j,\xi)}).\]

By $\Gamma$-commutativity of the kernel $L_s$ we have $\dd_j(a^\xi_t)=a_t^{\rho(j,\xi)}+\ell_{j,\xi}(L_{s-2,t})$. As the latter term is in $L_{s-2}$, it also  follows that 
$$\dd_i\dd_k(\ell_{j,\xi}(L_{s-2,t}))=\dd_k\dd_i(\ell_{j,\xi}(L_{s-2,t}))+\sum_\ell c_\ell^{ik}\dd_\ell( \ell_{j,\xi}(L_{s-2,t}))$$
and so the conclusion of the subsubclaim follows by additivity.

\medskip

The case when $k$ is of HS-type can be dealt with similarly by applying Lemma~\ref{partial_com} to the homomorphisms $$\DD_1(e_2)\circ e_1:L_s\to \DD_1\otimes \DD_2 (\Omega)$$ 
and 
$$\chi\circ \DD_2(e_1)\circ e_2:L_s\to \DD_1\otimes \DD_2 (\Omega)$$ 
where $\chi:\DD_2\otimes \DD_1 (\Omega)\to \DD_1\otimes \DD_2 (\Omega)$ is the natural isomorphism. These yield co-ordinate functions $D_{(i',j')}=\dd_{i'}\dd_{j'}$ and $D'_{(i',j')}=\dd_{j'}\dd_{i'}$.
 
%instead of applying Lemma \ref{partial_com}, we directly use Remarks~\ref{operatornotation} and \ref{explicit}  to deduce that if $\dd_{1,i'}$ and $\dd_{2,j'}$ commute on some set $A\subseteq L_s$ for all $i'\in \supp (p)$ and $j'\in \supp( q)$, then $\dd_{1,p}$ and $\dd_{2,q}$ commute on the separable closure of the field generated by $A$.

This concludes the proof of the subsubclaim.
\end{proof}

%First, as $a^\xi_j\in L_{s-2}$ and $\dd_j(a^\xi_j)=a_j^{\rho(j,\xi)}+\ell_{(j,\xi)}(\tilde a_j^{\rho(j,\xi)})=a_j^{\xi_2}+\ell_{(j,\xi_2)}(\tilde a_j^{\xi_2})$ is algebraic over $L_{s-2}$, by $\Gamma$-commutativity of $L_s$ and Lemma~\ref{partial_com} we get

To finish the proof of Subclaim~\ref{6swaps}, note that as $a^\xi_t\in L_{s-2}$, we can apply $\Gamma$-commutativity at $a^\xi_t$; hence applying Subsubclaim~\ref{partialop} we obtain
\begin{align*}
 \dd_i\dd_j\dd_k (a^\xi_t)& =\dd_i\dd_k\dd_j(a^\xi_t)+\sum_\ell \dd_i(c_\ell^{jk} \dd_\ell(a^\xi_t)) \\
& =\dd_k\dd_i\dd_j(a^\xi_t) +\sum_\ell c_\ell^{ik}\dd_\ell\dd_j(a^\xi_t)+\sum_\ell \dd_i(c_\ell^{jk} \dd_\ell(a^\xi_t))%\hspace{2cm} (\dagger)
\end{align*}

Similarly, using $\Gamma$-commutativity at $a^\xi_t\in L_{s-2}$ and  applying Subsubclaim~\ref{partialop} with the roles of $i$ and $j$ interchanged, we get that the last expression equals
%$$\dd_i(a^\xi_t)=a_t^{\rho(i,\xi)}+\ell_{(i,\xi)}({L_{s-2,t}})=a_t^{\xi_1}+\ell_{(i,\xi)}(L_{s-2,t})$$ and applying Lemma \ref{partial_com} again, 
%we can write ($\dagger$) above as 
\[\dd_k\dd_j\dd_i(a^\xi_t) +\sum_\ell \dd_k(c_\ell^{ij}\dd_\ell(a^\xi_t))+ \sum_\ell c_\ell^{ik}\dd_\ell\dd_j(a^\xi_t)+\sum_\ell \dd_i(c_\ell^{jk} \dd_\ell(a^\xi_t))\]
\[ = \dd_j\dd_k\dd_i(a^\xi_t) +\sum_\ell c_\ell^{kj}\dd_\ell\dd_i(a^\xi_t) + \sum_\ell \dd_k(c_\ell^{ij}\dd_\ell(a^\xi_t))+ \sum_\ell c_\ell^{ik}\dd_\ell\dd_j(a^\xi_t)+\sum_\ell \dd_i(c_\ell^{jk} \dd_\ell(a^\xi_t))\] 
%and, using once more $\Gamma$-commutativity at $a^\xi_t\in L_{s-2}$, we obtain that the last term equals
\begin{align*}
=\dd_j\dd_i\dd_k(a^\xi_t)+ \sum_\ell \dd_j(c_\ell^{ki}\dd_\ell(a^\xi_t)) & +\sum_\ell c_\ell^{kj}\dd_\ell\dd_i(a^\xi_t)  \\
& + \sum_\ell \dd_k(c_\ell^{ij}\dd_\ell(a^\xi_t))+ \sum_\ell c_\ell^{ik}\dd_\ell\dd_j(a^\xi_t)+\sum_\ell \dd_i(c_\ell^{jk} \dd_\ell(a^\xi_t))
\end{align*}
This proves Subclaim~\ref{6swaps}.
\end{proof}

By Subclaim \ref{6swaps} and using $c_\ell^{ij}=-c_\ell^{ji}$ (since $i,j$ are of Lie-type), we get 
\[\dd_i\dd_j\dd_k (a^\xi_t) - \dd_j\dd_i\dd_k (a^\xi_t) -\sum_\ell c_\ell^{ij}\dd_\ell\dd_k(a^\xi_t) \] 
\[=\sum_\ell c_\ell^{ji}\dd_\ell\dd_k(a^\xi_t)+\sum_\ell \dd_j(c_\ell^{ki}\dd_\ell(a^\xi_t))+\sum_\ell c_\ell^{kj}\dd_\ell\dd_i(a^\xi_t)+\]   \[+\sum_\ell \dd_k(c_\ell^{ij}\dd_\ell(a^\xi_t))+ \sum_\ell c_\ell^{ik}\dd_\ell\dd_j(a^\xi_t)+\sum_\ell \dd_i(c_\ell^{jk} \dd_\ell(a^\xi_t))\]

Using that $\Gamma$ is Jacobi-associative, we can deduce as in $(\dagger)$, inside the proof of Lemma~\ref{horrible}, that the following holds
%But, as  (in particular $r_1$ is Jacobi), by the (reversed) proof of Proposition \ref{just1} we get that 
\[\sum_\ell c_\ell^{ji}\dd_\ell\dd_k+\sum_\ell \dd_j(c_\ell^{ki}\dd_\ell)+\sum_\ell c_\ell^{kj}\dd_\ell\dd_i+\sum_\ell \dd_k(c_\ell^{ij}\dd_\ell)+ \sum_\ell c_\ell^{ik}\dd_\ell\dd_j+\sum_\ell \dd_i(c_\ell^{jk} \dd_\ell)=0\] 
on $L_{s-2}$, so in particular, evaluated at $a_t^\xi$. Hence 
$$\dd_i\dd_j\dd_k (a^\xi_t) - \dd_j\dd_i\dd_k (a^\xi_t) -\sum_\ell c_\ell^{ij}\dd_\ell\dd_k(a^\xi_t)=0$$
 Since $\dd_k(a^\xi_t)=a^\tau_t+\ell_{k,\xi}(L_{s-2,t})$, using additivity and $\Gamma$-commutativity at $\ell_{k,\xi}(L_{s-2,t})$, the above equality implies (*). Hence we have proven Case A of Claim~\ref{claim_well_def}.

\

\noindent {\bf Case B.} Assume $i$ and $j$ are of different type. As in Subclaim \ref{ETS}, one easily concludes using Lemma \ref{horrible} that it is enough to show that
$$[\dd_i,\dd_j](a_t^\tau)=0.$$
The proof of this equality is very similar to the proof in Case A, so we omit the details.
% but instead of applying Lemma \ref{partial_com}, we directly use Remarks~\ref{operatornotation} and \ref{explicit}  to deduce that if $\dd_{1,i'}$ and $\dd_{2,j'}$ commute on some set $A\subseteq L_s$ for all $i'\in \supp (p)$ and $j'\in \supp( q)$, then $\dd_{1,p}$ and $\dd_{2,q}$ commute on the separable closure of the field generated by $A$.

% Now let $\Phi:L_s((X_{\mu,t})_{(\mu,t)\in \NN^\md(s+1)\times \mn})$+++

%We may assume $u=1$ and $v=2$.
%Let $(w,k)\in \mm$ and $\tau'\in \md^{s-2}$ be such that $\tau=((w,k),\tau')$. If $w=1$, then, using Case A in the third equality below, we have \[ \dd_{1,p}\dd_{2,q} (a_j^\tau)=\dd_{1,p}\dd_{2,q}\dd_{1,k}(a_j^\tau)=\dd_{1,p}\dd_{1,k}\dd_{2,q}(a_j^{\tau'})=\dd_{1,k}\dd_{1,p}(\dd_{2,q}(a_j^{\tau'})+ \sum_{\ell} c^{pk}_\ell(\dd_{2,q}(a_j^{\tau'})))=\]
%\[= \]

\

\noindent {\bf Case C.} Assume $i$ and $j$ are of HS-type. Note that in this case, as $\rho(i,j,\tau)=\mu$, we must have $\chi_{i,\tau}=\chi_{j,\tau}=1$ (i.e., $\tau$ has no entry of HS-type).
%Again by Lemma \ref{horrible}, 
\begin{Sclaim}
In this case, to prove Claim~\ref{claim_well_def}, it is enough to prove
$$(+)\hspace{2cm} \dd_i\dd_j(a_t^\tau)=\sum_\ell c^{ij}_\ell\dd_\ell(a_t^\tau).\hspace{3cm}$$
\end{Sclaim}
\begin{proof}
Assume 	$(+)$. Using Lemma \ref{horrible}(1) in the fourth equality below 
%(and $\Gamma$-commutativity of the kernel $L_s$ in other equalities) 
we get 
	 \begin{align*}
	 \sum_\ell c^{ij}_\ell\dd_\ell(a_t^\tau) & =\dd_i\dd_j(a_t^\tau) \\
	 & =\dd_i(a^{\rho(j,\tau)}_t+\ell_{j,\tau}(L_{s,t})) \\
	 & =\dd_i(a^\eta_t)+(\dd_i(\ell_{j,\tau}))(L_{s,t})\\
	 & =\dd_i(a^\eta_t)+	\sum_\ell c^{ij}_\ell\dd_\ell(a_t^\tau)-\ell_{i,\eta}(L_{s,t}).
	 \end{align*}
	 So $\dd_i(a^\eta_t)-\ell_{i,\eta}(L_{s,t})=0$ and similarly we get $\dd_j(a^{\eta'}_t)-\ell_{j,\eta'}(L_{s,t})=0$ and hence $\dd_i(a^\eta_t)-\ell_{i,\eta}(L_{s,t})=\dd_j(a^{\eta'}_t)-\ell_{j,\eta'}(L_{s,t})$, as required.
	 
	\end{proof}
	
	We now prove that ($+$) holds.
	\smallskip
	
	By the assumption on $L_{2r}$, $(\eta,t)$ is not a minimal-separable leader. Hence, there are some $\nu\in \NN_{s-1}^\md$ and $k\in \md$ such that $\eta=\rho(k,\nu)$ and $(\nu,t)$ is a leader. Note that $k\neq j$, as otherwise both $k$ and $i$ would be of HS-type, so, as, $\mu(\rho(i,\eta))$ and $\eta=\rho(k,\nu)$, we would get $\mu=\emptyset$, which cannot happen as $|\mu|=s+1>0$. 
	
	\smallskip
	 
	 Since $i$ and $j$ are of HS-type, $k$ must be of Lie type.
	Let $\xi\in \NN_{s-2}^\md$ be such that $\mu=\rho(i,j,k,\xi)$, so $\nu=\rho(j,\xi)$ and $\tau=\rho(k,\xi)$.
	Then, using Case~B in the second equality below and that $\dd_p(c_\ell^{ij})=0$ whenever $p$ of Lie-type (due to Jacobi-associativity of $\Gamma$) in the fourth one, we get
	\begin{align*}
\dd_i\dd_j\dd_{k}(a^\xi_t) & =\dd_i\dd_k\dd_{j} (a_t^\xi) \\
 & =\dd_{k}\dd_{i}\dd_{j} (a_t^\xi) \\
 & =\dd_{k}( \sum_{\ell} c^{ij}_\ell \dd_\ell(a_t^\xi))\\
 & = \sum_{\ell} c^{ij}_\ell \dd_{k}( \dd_\ell(a_t^\xi)) \\
 & =\sum_{\ell} c^{ij}_\ell  \dd_\ell  \dd_{k}(a_t^\xi)
% &=\sum_{\ell} c^{ij}_\ell  \dd_\ell  (a_t^\tau)
\end{align*}
Since $\dd_k(a^\xi_t)=a^\tau_t+\ell_{k,\xi}(L_{s-2,t})$, once again using additivity, the above equality implies (+), as required.

%\smallskip
	
%	Now assume $k=j$, which implies $\nu=\tau$ and so $(\tau,t)$ is a leader. One can carry out the same argument as in Subsubclaim~\ref{partialop} but now in the (local) algebra $\DD_2\otimes \DD_2$ with morphisms
 %$$\DD_2(e_2)\circ e_2:L_s\to \DD_2\otimes \DD_2 (\Omega)$$ 
%and 
%$$r_2^\iota\circ e_2:L_s\to \DD_2\otimes \DD_2 (\Omega)$$ 
%Note that these yield co-ordinate functions $D_{(i',j')}=\dd_{i'}\dd_{j'}$ and $D'_{(i',j')}=\sum_\ell c^{i'j'}_\ell\dd_\ell$. One can then conclude, as $(\tau,t)$ is a separable leader, that Lemma~\ref{partial_com} yields
%	$$\dd_i\dd_j(a_t^\tau)=\sum_\ell c^{ij}_\ell\dd_\ell(a_t^\tau)$$
%	as desired.
	
%	\ 

 This completes the proof of Claim \ref{claim_well_def}.
 \end{proof}
 
 We now resume where we left off in the proof of the theorem (see paragraph above Claim~\ref{claim_well_def}). Recall that $(b_t^\xi)_{(\xi,t)\in \md^{s+1}\times \mn}$ denotes the specialisation of $(a'^\xi_t)_{(\xi,t)\in \md^{s+1}\times \mn}$ such that conditions (1) and (2) hold for the $\ineq$-predecessor of $(\mu,t)$.
 
 We now produce the specialisation $(a_t^\xi)_{(\xi,t)\in \md^{s+1}\times \mn}$ that will yield conditions (1) and (2) for $(\mu,t)$. On the one hand, if there are no $i\in \md$ and $\tau\in \NN_s^\md$ such that $(\tau,t)$ is a leader and $\mu=\rho(i,\tau)$, then put $a_t^\xi:=b_t^\xi$ for all $(\xi,t)\in \md^{s+1}\times\mn$; on the other hand, if there are such $i\in \md$ and $\tau\in \NN_s^\md$, consider the $L_s$-algebra homomorphism 
$$\Psi:L_s[(X_{z}^\nu)_{(\nu,z)\in \NN_{s+1}^\md\times \mn}]\to L_s[(X_{z}^\nu)_{(\nu,z)\in \NN_{s+1}^\md\times \mn}]$$ given by $X_{z}^\nu\mapsto X_{z}^\nu$ for $(\nu,z)\neq (\mu,t)$, and $X_{t}^\mu\mapsto \dd_i(a^\tau_t)-\ell_{i,\tau}(L_{s,t})$. Note that by Claim \ref{claim_well_def} this does {\bf not} depend on the choice of $i$ and $\tau$. In this case we put $a_t^\xi:=\Psi(b_t^\xi)$ for all $(\xi,t)\in \md^{s+1}\times\mn$.

\smallskip
 
Condition (2) follows easily by construction. %and the inductive assumption. 
We verify condition (1) at $(\mu,t)$. To do so, we will need the following (expected) identity.
 
 \begin{claim}\label{cl2}
 	For all $i\in \md$ and $\tau\in \NN_s^\md$ such that $\chi_{i,\tau}\rho(i,\tau)=\chi_{i,\tau}\mu$ we have 
	$$a^{(i,\tau)}_t=\chi_{i,\tau}a^{\rho(i,\tau)}_t+\ell_{i,\tau}(L_{s,t}).$$
 \end{claim}
 \begin{proof}
 Assume first that $\chi_{i,\tau}=0$. If $(\tau,t)$ is not a leader, then we land in Case 2 of the kernel construction, which gives $a'^{(i,\tau)}_t=\ell_{i,\tau}(L_{s,t})\in L_s$. As the specialisation performed above preserves $L_s$, we also have $a^{(i,\tau)}_t=\ell_{i,\tau}(L_{s,t})$, as required. On the other hand, if $(\tau,t)$ is a leader, then Case 1 of the kernel construction yields $a'^{(i,\tau)}_t=\dd_i(a_t^\tau)$, hence $a^{(i,\tau)}_t=\dd_i(a^\tau_t)$. Let $j\in \md$ be the unique entry of $\tau$ of HS-type, then $\tau=\rho(j,\nu)$ with $\nu\in \NN_{s-1}^\md$ having all entries of Lie-type. Note that then $\ell_{j,\nu}=0$. It follows that $\dd_j(a_t^\nu)=a_t^{\rho(j,\nu)}=a_t^\tau$; and so, by (+) in Case C of Claim~\ref{claim_well_def}, we have 
	$$\dd_i(a_t^\tau)=\dd_i\dd_j(a_t^\nu)=\sum_\ell c^{ij}_\ell\dd_\ell(a_t^\nu)$$
Now, by Lemma \ref{horrible}(1), using $\ell_{j,\nu}=0$ we have 
$$\sum_\ell c^{ij}_\ell\dd_\ell\dd_\nu(a_t^\emptyset)=\ell_{i,\rho(j,\nu)}(L_{s,t})=\ell_{i,\tau}(L_{s,t})$$
putting all this together yields $a^{(i,\tau)}_t=\ell_{i,\tau}(L_{s,t})$, as required.

 \

 We may now assume that $\chi_{i,\tau}=1$. This implies that $\rho(i,\tau)=\mu$. Let $k\in\md$ and $\eta\in \NN^\md_{s}$ such that $(k,\eta)=\rho(i,\tau)=\mu$. Note that then $\ell_{k,\eta}=0$. We must show that 
 $$a^{(i,\tau)}_t=a^{(k,\eta)}_t+\ell_{i,\tau}(L_{s,t}).$$
 We consider cases.
 
 \medskip
 
 \noindent {\bf Case (i).} Suppose $(\tau,t)$ and $(\eta,t)$ are leaders. Since $\rho(i,\tau)=\mu=(k,\eta)=\rho(k,\eta)$, by Claim~\ref{claim_well_def} we have
 $$\dd_i(a_t^\tau)-\ell_{i,\tau}(L_{s,t})=\dd_k(a_t^\eta)-\ell_{k,\eta}(L_{s,t})=\dd_k(a_t^\eta)$$
It then follows, using Case 1 of the kernel construction, that
 $$a_t^{(i,\tau)}=\dd_i(a_t^\tau)=\dd_k(a_t^\eta)+\ell_{i,\tau}(L_{s,t})=a_t^{(k,\eta)}+\ell_{i,\tau}(L_{s,t})$$
 as desired.
 
 \medskip
 
 \noindent {\bf Case (ii).} Suppose exactly one of $(\tau,t)$ and $(\eta,t)$ is a leader. Assume $(\tau,t)$ is a leader; the other case can be treated similarly. In this case, by Case 2 of the construction of the kernel, $a'^{(k,\eta)}_t=X_t^{(k,\eta)}$. Furthermore, since $(\tau,t)$ is a leader, the specialisation maps $X_t^{(k,\eta)}\mapsto \dd_i(a_t^\tau)-\ell_{i,\tau}(L_{s,t})$, and the latter equals $a_t^{(k,\eta)}$. It then follows that $a_t^{(i,\tau)}=a_t^{(k,\eta)}+\ell_{i,\tau}(L_{s,t})$.
 
 \medskip

 \noindent {\bf Case (iii).} Suppose $(\tau,t)$ and $(\eta,t)$ are not leaders. By Case 2 of the kernel construction $a_t'^{(i,\tau)}=X_t^{(k,\eta)}+\ell_{i,\tau}(L_{s,t})=a'^{(k,\eta)}_t+\ell_{i,\tau}(L_{s,t})$. Specialising then yields $a_t^{(i,\tau)}=a^{(k,\eta)}_t+\ell_{i,\tau}(L_{s,t})$.
 
 \smallskip
 
 This finishes the proof of Claim~\ref{cl2}.
 \end{proof}

We now check that condition (1) holds at $(\mu,t)$. Fix any $i,j\in \md$, $\lambda \in \NN_0^\md(s-1)$ with $\chi_{i,j,\lambda}\rho(i,j,\lambda)=\chi_{i,j,\lambda}\mu$. Using $\Gamma$-commutativity of $L_s$ in the second equality below, and Claim~\ref{cl2} in the third one, we get
\begin{align*}
\dd_i\dd_j(a^\lambda_t) & =\dd_ia^{(j,\lambda)}_t \\
& =\chi_{j,\lambda}\dd_ia^{\rho(j,\lambda)}_t+\dd_i\ell_{j,\lambda}(L_{s,t}) \\
& =\chi_{i,j,\lambda}a^{\rho(i,j,\lambda)}_t+\chi_{j,\lambda}\ell_{i,\rho(j,\lambda)}(L_{s,t})+\dd_i\ell_{j,\lambda}(L_{s,t})
\end{align*}
Similarly, we get
\begin{align*}
\chi_{ij}\dd_j\dd_ia^\lambda_t & =\chi_{ij}\dd_ja^{(i,\lambda)}_t \\
& =\chi_{ij}\chi_{i,\lambda}\dd_j a^{\rho(i,\lambda)}_t+\chi_{ij}\dd_j\ell_{i,\lambda}(L_{s,t}) \\
& =\chi_{i,j,\lambda}a^{\rho(j,i,\lambda)}_t+\chi_{ij}\chi_{i,\lambda}\ell_{j,\rho(i,\lambda)}(L_{s,t})+\chi_{ij}\dd_j\ell_{i,\lambda}(L_{s,t})
\end{align*}
and we conclude by Lemma \ref{horrible}(1) that $\dd_i\dd_j(a^\lambda_{t})=\chi_{ij}\dd_j\dd_i(a^\lambda_{t})+\sum_\ell c^{ij}_\ell \dd_\ell(a^\lambda_{t})$, hence proving condition (\ref{1}) and completing the inductive step. 

\smallskip

% If both $(\rho(i,\tau),t)$ and $(\rho(j,\tau),t)$ are leaders, then (1) holds by the proof of Claim \ref{claim_well_def}.

This concludes the proof of Theorem ~\ref{thebigone}
\end{proof}

\begin{remark}\label{removeminimal}
We note that when $\dim_k(\DD_1)=1$ the assumption on minimal-separable leaders in Theorem~\ref{thebigone} is automatically satisfied for arbitrary $r\geq 1$,  as, in this case, $\NN_s^\md=\emptyset$ for any $s\geq 2$.
\end{remark}

As not all kernels are separable (and this is part of the assumptions in Theorem~\ref{thebigone}), we will need the following proposition to get around inseparability issues in the next section. Recall that when $char(k)=p>0$, for $u\in\{1,2\}$, we denote by $\operatorname{Fr}_u$ the Frobenius endomorphism on $\DD_u$.

\begin{proposition}\label{careful}
Suppose $\Gamma$ is Jacobi-associative. Assume $char(k)=p>0$ and $\mm_u=\ker(\operatorname{Fr}_u)$ for $u\in \{1,2\}$. If $L_{s+1}$ is a $\gDD$-kernel, then there is a separable $\gDD$-kernel $E_{s+1}$ that prolongs $L_{s}$ such that the minimal-separable leaders of $E_{s+1}$ coincide with those of $L_{s}$.
\end{proposition}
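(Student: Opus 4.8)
The goal is to replace a given $\gDD$-kernel $L_{s+1}$ by a \emph{separable} one, $E_{s+1}$, still prolonging $L_s$ and with the same minimal-separable leaders as $L_s$. The only obstruction to $L_{s+1}$ being separable is the possible existence of inseparable leaders $(\xi,t)$ with $|\xi|=s+1$; by Lemma~\ref{extendstructure}(iii) (applicable since $\mm_u=\ker(\operatorname{Fr}_u)$) the $\bDD$-structure is determined once we choose, for each non-leader $(\tau,t)$ at level $s$, arbitrary values of $\dd_i(a^\tau_t)$ subject only to $\pi$-compatibility. The plan is therefore to rebuild the top level of the kernel: keep $L_s$ untouched, and for each $(\tau,t)\in\NN^\md_s\times\mn$ that is \emph{not} a leader of $L_s$, instead of letting the new coordinates $a^{(i,\tau)}_t$ be algebraically independent (or inseparable) over $L_s$, choose them so that they are \emph{transcendental} over the field generated so far. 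Concretely: enumerate $\NN^\md_{s+1}\times\mn$ by $\ineq$; for $(\mu,t)$ with $\mu=\rho(i,\tau)$, if $(\tau,t)$ is a leader of $L_s$ put $a^{(i,\tau)}_t:=\dd_i(a^\tau_t)$ computed by the unique (separable) extension of the $\bDD$-structure, which by Lemma~\ref{partial_com}/Lemma~\ref{aboveseparable}(ii) already lies in $\hat L_{\ine(\mu,t)}$; if $(\tau,t)$ is not a leader, pick $a^{(i,\tau)}_t:=\chi_{i,\tau}X^{\rho(i,\tau)}_t+\ell_{i,\tau}(E_{s,t})$ where $X^{\rho(i,\tau)}_t$ is transcendental over the (countably generated) field built so far inside a fixed universal domain.

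The key steps in order: (1) Verify that this recipe actually defines a $\bDD$-operator $\be:L_s\to\bDD(E_{s+1})$ extending that on $L_{s-1}$ — this is exactly the content of Lemma~\ref{extendstructure}, parts (i) and (ii), applied coordinate-wise to $\DD_1$ and $\DD_2$, together with the observation $L_s=\hat L_s$ from Corollary~\ref{aboveseparable}(i). (2) Check that $E_{s+1}$ is a $\gDD$-kernel, i.e. that the operators $\Gamma$-commute on $L_{s-1}$: this is automatic because $E_{s+1}$ prolongs $L_s$ and $\Gamma$-commutativity on $L_{s-1}$ only involves coordinates of level $\leq s+1$ indexed by $\NN^\md$, which by Lemma~\ref{reorder} are already determined inside $L_{s-1}$ together with the $\ell_\xi$-corrections; one must however confirm that the chosen top-level values are consistent with the relation $a^\xi_t=\chi_\xi a^{\rho(\xi)}_t+\ell_\xi$, which is built into the recipe for non-leaders and follows from $\Gamma$-commutativity of $L_{s+1}$ (hence uniqueness of the separable extension) for leaders. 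Actually the cleanest route here is to note that it suffices to re-run the first half of the proof of Theorem~\ref{thebigone} — the construction of the (not necessarily commuting) kernel $L'_{s+1}$ followed by the specialisation argument of Claim~\ref{claim_well_def} and Claim~\ref{cl2} — with the single modification that in Case~2 one uses a \emph{transcendental} $X^{\rho(i,\tau)}_t$ rather than an algebraically independent tuple over $L_s$; since that argument only used $L_s=\hat L_s$ and the $\Gamma$-commutativity bookkeeping, it goes through verbatim and produces a $\gDD$-kernel. (3) Verify separability: if $(\xi,t)$ with $|\xi|=s+1$ were an inseparable leader of $E_{s+1}$, write $\xi$ (up to reordering) as $\rho(i,\tau)$; if $(\tau,t)$ is a non-leader then $a^\xi_t$ is transcendental over $\hat L_{\ine(\xi,t)}$ by construction, contradiction; if $(\tau,t)$ is a leader of $L_s$ then by Lemma~\ref{aboveseparable}(ii) $a^\xi_t\in\hat L_{\ine(\xi,t)}$, so it is not a leader at all, again a contradiction. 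Hence $E_{s+1}$ is separable. (4) Verify the minimal-separable leaders of $E_{s+1}$ equal those of $L_s$: the levels $\leq s$ are unchanged, and by the transcendence choices no new leaders of any kind are introduced at level $s+1$ beyond those forced (via Corollary~\ref{aboveseparable}(ii)) by leaders of $L_s$, which are above minimal-separable leaders of $L_s$; so no new \emph{minimal}-separable leaders appear.

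\textbf{Main obstacle.} The delicate point is step~(2): ensuring that after making the top-level coordinates as transcendental as possible we still have a genuine $\gDD$-kernel, i.e. that $\Gamma$-commutativity is not violated. The subtlety is that $\Gamma$-commutativity at a coordinate $a^\tau_{t}$ with $|\tau|=s-1$ relates certain level-$(s+1)$ coordinates, and these relations must be compatible with both the "free" transcendental choices and the "forced" values coming from separable leaders at level $s$. This is precisely the scenario handled by Claim~\ref{claim_well_def} in the proof of Theorem~\ref{thebigone} (the well-definedness of the specialisation $\Psi$), and the resolution is to quote that argument: one performs the same $\ineq$-induction and the same specialisation of $(a'^\xi_t)$ over $L_s$, the only change being the nature of the free parameters, which does not affect any computation in Claims~\ref{claim_well_def}--\ref{cl2}. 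I would therefore structure the proof as: "Run the construction in the proof of Theorem~\ref{thebigone} with $L_s$ in place of the generic prolongation, replacing the algebraically independent tuple in Case~2 by a tuple transcendental over $L_s$; Claims~\ref{claim_well_def} and~\ref{cl2} apply without change, yielding a $\gDD$-kernel $E_{s+1}$; finally observe separability and the leader count as above." The remaining work is then purely the bookkeeping of steps (3) and (4), which is routine given Corollary~\ref{aboveseparable}.
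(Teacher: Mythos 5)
Your overall strategy is the paper's: re-run the first half of the construction from Theorem~\ref{thebigone} with transcendental choices at the free positions, then quote the specialisation machinery. But there are two genuine gaps.

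First, your case analysis at level $s$ is wrong exactly where the proposition's hypotheses do their work. You split into ``$(\tau,t)$ is a leader of $L_s$'' (forced value via ``the unique (separable) extension'') versus ``not a leader'' (free transcendental choice). This omits the case that $(\tau,t)$ is an \emph{inseparable} leader of $L_s$: there is no unique separable extension there, Lemma~\ref{aboveseparable}(ii) does not apply, and if you keep the values $\dd_i(a^\tau_t)$ coming from $L_{s+1}$ you may retain precisely the inseparable leaders at level $s+1$ that you are trying to kill. The correct move is to treat inseparable leaders like non-leaders, i.e.\ choose the images $\dd_i(a^\tau_t)$ freely as transcendentals; this is licensed by Lemma~\ref{extendstructure}(iii), which needs both $\mm_u=\ker(\operatorname{Fr}_u)$ \emph{and} the existence of at least one extension of the $\bDD$-structure past $a^\tau_t$ --- and the latter is certified exactly by the hypothesis that the prolongation $L_{s+1}$ exists. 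Your plan paragraph cites Lemma~\ref{extendstructure}(iii) but applies it to non-leaders (where part (ii) is the relevant statement), so the one case where (iii) and the Frobenius hypothesis are actually needed is never engaged, and your separability check in step (3) has a hole at inseparable leaders of $L_s$.

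Second, the assertion that Claim~\ref{claim_well_def} ``goes through verbatim'' is not justified: its proof in Theorem~\ref{thebigone} uses the hypothesis that the minimal-separable leaders of $L_{2r}$ coincide with those of $L_r$ (that is how the auxiliary index $k$ and the tuples $\xi_1,\xi_2$ are produced in Case~A), and no such hypothesis is available here. The claim is nevertheless true in this setting, but for a different and much simpler reason: when $(\eta,t)$ and $(\eta',t)$ are separable leaders, the forced values $\dd_i(a^\eta_t)$ and $\dd_j(a^{\eta'}_t)$ agree with the corresponding coordinates of the given $\gDD$-kernel $L_{s+1}$, whose $\Gamma$-commutativity yields $\dd_i(a^\eta_t)-\ell_{i,\eta}(L_{s,t})=\dd_j(a^{\eta'}_t)-\ell_{j,\eta'}(L_{s,t})$ directly. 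You should replace the appeal to the long computation by this observation.
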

\begin{proof}
	
	We proceed as in the proof of Theorem \ref{thebigone}. Let $(X_t^\mu)_{(\mu,t)\in \NN_{s+1}^\md\times \mn}$ be a tuple from the algebraically closed field $\Omega$ which is algebraically independent over $L_{s}$. Let $(\tau,t)\in \NN_s^\md\times\mn$ and suppose we have extended the $\DD$-structure $\be:L_{s-1}\to \bDD(L_s)$ to $\be: L_{\ine (\tau,t)}\to \bDD(\Omega)$. We consider cases:
	
	\medskip

	\noindent {\bf Case 1.} Suppose $(\tau,t)$ is a separable leader. By Lemma~\ref{extendstructure}(i), there is a unique $\bDD$-structure  $L_{\ineq(\tau,t)}\to \bDD(\Omega)$ extending  $L_{\ine(\tau,t)}\to \bDD(\Omega)$. We put $a'^{(i,\tau)}_t:=\dd_i(a^{\tau}_t)$.
	
	\medskip
	
	\noindent {\bf Case 2.} Suppose $(\tau,t)$ is not a separable leader. Using Lemma~\ref{extendstructure}(ii) when $(\tau,t)$ is not a leader and  Lemma~\ref{extendstructure}(iii) when $(\tau, t)$ is an inseparable leader (note this uses the fact the that $L_s$ has at least one prolongation, namely $L_{s+1}$), we can extend the $\bDD$-structure to $a_t^\tau$ arbitrarily; i.e., we have freedom to choose $a'^{(i,\tau)}$. We put
	$$a'^{(i,t)}_t:=\chi_{i,\tau}X_t^{\rho(i,\tau)}+\ell_{i,\tau}(L_{s,t}).$$ 
	
As in Theorem~\ref{thebigone}, this construction yields a $\bDD$-kernel $L'_{s+1}$ extending $L_s$	 that is not necessarily a $\gDD$-kernel. But this can be fixed in the same way as in the theorem. In fact, Claim~\ref{claim_well_def} can be proved easily with our current assumptions; indeed, if $\rho(i,\eta)=\rho(j,\eta')$ for some separable leaders $(\eta,t),(\eta',t)\in \NN_{s}^{\md}\times \mn$, then because $L_{s+1}$ is a $\gDD$-kernel we have $\dd_{i}(a^{\eta}_t)-\ell_{i,\eta}(L_{s,t})=\dd_{j}(a^{\eta'}_t)-\ell_{j,\eta'}(L_{s,t})$. From here, the same specialisation process performed in Theorem~\ref{thebigone} yields a separable $\gDD$-kernel $E_{s+1}$ prolonging $L_s$.
\end{proof}

We provide an example that shows that the conclusion of the proposition above does not generally hold if we remove the assumption $\mm_u=\ker(\operatorname{Fr}_u)$.

\begin{example}
	Let $k=\mathbb F_2$ and $\DD=k[\epsilon]/(\epsilon)^3$ 
	%with $\pi(\epsilon)=0$ 
	with ranked basis $(1,\epsilon,\epsilon^2)$. Note that in this case $\mm=(\epsilon)$ while $\ker(\operatorname{Fr})=(\epsilon^2)$. We set $\Gamma=\{r\}$ where $r:\DD\to \DD\otimes_k\DD$ is the canonical embedding ($a\mapsto 1\otimes a$). Thus, $\Gamma$ is Jacobi of Lie-commutation type. In this instance, a $\DD^\Gamma$-ring is a ring equipped with a 2-truncated Hasse-Schmidt derivation $(\dd_1,\dd_2)$ such that $\dd_1\dd_2=\dd_2\dd_1$. Let $K=k(s,t)$, where $s$ and $t$ are algebraically independent over $k$, and equip $K$ with the $\gD$-structure determined by
	$$\dd_1(t)=0,\quad \dd_1(s)=0, \quad \dd_2(t)=s,\quad  \dd_2(s)=0.$$ 
	Let $a^\emptyset=t^{1/2}$, $a^{(1)}=s^{1/2}$, and $a^{(2)}=0$. Then, one can readily check that 
	$$L_1=K(a^\xi: \xi\in \{1,2\}^{\leq 1})=K(a^\emptyset, a^{(1)}, a^{(2)})$$
	is a $\gD$-kernel over $K$ of length one. Note that $L_1$ is not a separable kernel as $a^{(1)}$ is an inseparable leader. We claim that, in fact, there is no separable $\gD$-kernel of length one that prolongs $L_0=K(a^\emptyset)=K(t^{1/2})$. Indeed, let $(E_1,e)$ be a $\gD$-kernel that prolongs $L_0$. Writing $E_1=K(b^\emptyset, b^{(1)}, b^{(2)})$, we have $b^\emptyset=a^\emptyset=t^{1/2}$. Then $(e(b^{\emptyset}))^2=e(t)$, and, after expressing this in terms of $\dd_i$'s, this is equivalent to
	$$(\dd_1(b^\emptyset))^2=\dd_2(t),$$
	which is the same as $b^{(1)}=s^{1/2}$, and hence $E_1$ is not a separable kernel.
\end{example}

We finish this section by proving the converse of Proposition~\ref{needindependent}. In fact, we prove a stronger statement which yields the existence of $\gDD$-polynomial rings.

\begin{corollary}\label{functionfield}
Assume $\Gamma$ is Jacobi-associative and let $x$ be a transcendental over $K$. Then, there exists a $\gDD$-field generated (as $\bDD$-field) by $x$ such that the tuple
$$(\dd_{i_1}\dots\dd_{i_s}(x))_{(i_1,\dots,i_s)\in \NN_{<\infty}^\md}$$
is algebraically independent over $K$. We denote this $\gDD$-field by $K(x)_{\gDD}$.
\end{corollary}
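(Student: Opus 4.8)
The plan is to build $K(x)_{\gDD}$ by hand, as a purely transcendental extension of $K$ carrying the ``free'' $\gDD$-structure produced by the $\gDD$-vector space of Lemma~\ref{horrible}. First observe that the construction of Definition~\ref{def_ell} and Lemma~\ref{horrible} may be carried out with $K$ in place of $F$: the coefficients $(c_{u,\ell}^{ij})$ of $\Gamma$ lie in $F\subseteq K$, and since $K$ extends $F$ as a $\bDD$-field the Jacobi--associativity conditions of Remark~\ref{altogether} read over $K$ coincide with those read over $F$, so $\Gamma$ is still Jacobi--associative over $K$. Accordingly, fix a family $(w^\mu)_{\mu\in\NN^\md_{<\infty}}$ in some large field with $w^\emptyset=x$ and $(w^\mu)_{\mu\neq\emptyset}$ algebraically independent over $K$ (possible since $x$ is transcendental over $K$), set $L=K(w^\mu:\mu\in\NN^\md_{<\infty})=\bigcup_r K(w^\mu:\mu\in\NN^\md_{\leq r})$, and let $V_K=\mathrm{span}_K(w^\mu:\mu\in\NN^\md_{<\infty})\subseteq L$, which by Definition~\ref{def_ell} carries additive operators $(\dd_i)_{i\in\md}$ satisfying the twisted Leibniz rule over $K$. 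Put $v_i^\mu:=\dd_i(w^\mu)\in V_K$.

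Next I would promote $(\dd_i)$ to an honest $\bDD$-operator on the field $L$. Adding the generators $w^\mu$ ($\mu\neq\emptyset$) one at a time and applying Lemma~\ref{extendstructure}(ii) to each $\DD_u$ (starting from the given $\bDD$-structure on $K$, and sending $e_u(w^\mu)$ to $w^\mu+\sum_{i=1}^{m_u}\epsilon_{u,i}\,v_{(u,i)}^\mu$, whose residue is $w^\mu$), then passing to the directed union, one obtains a $\bDD$-operator $\be\colon L\to\bDD(L)$ with $\dd_{u,i}(w^\mu)=v_{(u,i)}^\mu$. The one point needing care is the compatibility claim: the operators $\dd_i$ thus obtained on $L$, restricted to the subset $V_K\subseteq L$, agree with the abstract operators of Definition~\ref{def_ell}. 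This holds because both are additive and both are computed on $K$-multiples of basis vectors by the same formula $\dd_i(cw^\mu)=\dd_i(c)w^\mu+c\,v_i^\mu+\sum_{p,q}\alpha_i^{pq}\dd_p(c)v_q^\mu$ --- the first because $e_u$ is a $\DD_u$-operator, the second by the definition of the $V_K$-operators. In particular, induction on $s$ gives $\dd_{i_1}\cdots\dd_{i_s}(x)=\dd_\mu(w^\emptyset)=w^\mu$ for every $\mu=(i_1,\dots,i_s)\in\NN^\md_{<\infty}$ (using $\chi_\mu=1$ and $\ell_\mu=0$ from the bullet points following the definition of $\ell_*$). Hence $L$ is generated by $x$ as a $\bDD$-field, and $(\dd_{i_1}\cdots\dd_{i_s}(x))_{(i_1,\dots,i_s)\in\NN^\md_{<\infty}}=(w^\mu)_{\mu\in\NN^\md_{<\infty}}$ is algebraically independent over $K$ by construction.

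It remains to see that $(L,\be)$ is a $\gDD$-ring, i.e.\ that $\be$ commutes with respect to $\Gamma$. By Lemma~\ref{meaningcomm} this means the operator identities $[\dd_{1,i},\dd_{1,j}]=\sum_\ell c_{1,\ell}^{ij}\dd_{1,\ell}$, $\dd_{2,i}\dd_{2,j}=\sum_\ell c_{2,\ell}^{ij}\dd_{2,\ell}$ and $[\dd_{1,i},\dd_{2,j}]=0$ hold on $L$. They hold on $K$ since $(K,\be)$ is a $\gDD$-field, and by the compatibility claim together with Lemma~\ref{horrible}(2) (equivalently, identity $(\star)$ in the remark following it --- this is where Jacobi--associativity of $\Gamma$ is used) they hold at every generator $w^\mu$. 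Each identity being equivalent to the commutativity at the relevant element of the corresponding diagram of Definition~\ref{def_comm}, and these diagrams consisting of ring homomorphisms, Remark~\ref{observe}(3) applied to each of the three conditions of Definition~\ref{defgammacomm} (with $A=K$, $R=L$) propagates them to $L=K(w^\mu:\mu\in\NN^\md_{<\infty})$. Thus $\be$ commutes with respect to $\Gamma$, so $(L,\be)$ is the required $\gDD$-field, which we denote $K(x)_{\gDD}$; it is moreover unique up to $\bDD$-isomorphism over $K(x)$, being a $\bDD$-specialisation of any $\gDD$-field generated over $K$ by a single $\bDD$-generator (cf.\ Lemma~\ref{uniquereal}).

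The step I expect to be the main obstacle is precisely the compatibility claim of the second paragraph --- checking that the $\DD$-operator (Leibniz) extension of the operators from the generators $w^\mu$ to all of $L$ restricts on $V_K$ to the operators of Definition~\ref{def_ell}, so that Lemma~\ref{horrible}(2) may legitimately be invoked at the generators. Everything else is a direct application of results already established. As an alternative one could instead invoke Theorem~\ref{thebigone}: for each $r$ the field $K(w^\mu:\mu\in\NN^\md_{\leq 2r})$ with the above structure is a separable $\gDD$-kernel with no leaders at all, so its set of minimal-separable leaders (empty) agrees with that of its length-$r$ truncation, and a principal realisation of it is a $\gDD$-field of the desired form.
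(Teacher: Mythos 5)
Your proof is correct, but your main route is genuinely different from the paper's. The paper's proof is a two-line application of Theorem~\ref{thebigone}: it forms the length-$2$ kernel $L_2=K(x^\xi:\xi\in\NN^\md_{\leq 2})$ with $\dd_i(x)=x^i$ and $\dd_i(x^j)=\chi_{i,j}x^{\rho(i,j)}+\ell_{i,j}(L_1)$, observes that it has no leaders whatsoever (so it is separable and the minimal-separable-leader condition is vacuous), and takes a principal realisation --- exactly the alternative you sketch in your final sentences. Your primary argument instead builds the realisation by hand: you transport the $\gDD$-vector-space structure of Definition~\ref{def_ell} and Lemma~\ref{horrible} from $V_F$ to $V_K$, promote it to a $\bDD$-operator on the purely transcendental field $L$ via Lemma~\ref{extendstructure}(ii), check that the field-level Leibniz extension restricts on $V_K$ to the abstract operators (this compatibility check is sound, and you are right that it is the load-bearing step), and then obtain $\Gamma$-commutativity from Lemma~\ref{horrible}(2) at the generators plus Remark~\ref{observe}(3). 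This buys a self-contained construction that does not hide the inductive specialisation machinery of Theorem~\ref{thebigone} inside a black box --- indeed in this leaderless situation that machinery degenerates (Case~2 of the kernel construction always applies and no specialisation is ever needed), which is precisely what your explicit construction exhibits; the cost is that you re-prove a special case of work the paper has already done. One minor slip in an inessential aside: the uniqueness remark at the end has the specialisation going the wrong way --- $K(x)_{\gDD}$ is the generic object, so every singly $\bDD$-generated $\gDD$-field is a specialisation \emph{of it} (Remark~\ref{Dpolyring}(1)), not conversely; uniqueness of $K(x)_{\gDD}$ itself follows from Lemma~\ref{uniquereal} applied to the leaderless kernel.
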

\begin{proof}
Consider the $\gDD$-kernel $L_2$ given by 
$$L_2=K(x^\xi: \xi\in \NN_{\leq 2}^\md)$$
where $(x^\xi: \xi\in \NN_{\leq 2}^\md)$ are new independent transcendentals with $x=x^\emptyset$ and we set $\dd_i(x)=x^i$ and $\dd_i(x^j)=\chi_{i,j}x^{\rho(i,j)}+\ell_{i,j}(L_1)$. Note that $L_{2}$ has no leaders (in particular, no separable leaders). Thus, we can apply Theorem~\ref{thebigone} to obtain a principal realisation of $L_{2}$. This realisation has the desired properties.
%... {\bf perhaps need to add dertails...}
\end{proof}

\begin{remark}\label{Dpolyring} Assuming $\Gamma$ is Jacobi-associative.
\begin{enumerate}
\item We denote the $\gDD$-algebra over $(K,\be)$ generated by $x$ inside $K(x)_{\gDD}$ by $K[x]_{\gDD}$. This algebra can be considered as the \emph{$\gDD$-polynomial ring over $(K,\be)$}. Indeed, it follows that if $A$ is a $\gDD$-algebra over $(K,\be)$ generated by a singleton $a\in A$, then $a$ is a $\bDD$-specialisation of $x$.
\item The above can be extended to a tuple $\bar x=(x_1,\dots,x_n)$ and yields the $\gDD$-polynomial ring in $n$-variables $K[\bar x]_{\gDD}$.
\end{enumerate}
\end{remark}

\

\subsection{Alternative axioms for $\DD$-CF}

In this section we restrict ourselves to a single local operator-system $(\DD,\bar\epsilon)$ and note how to use $\DD$-kernels to provide alternative axioms for $\DD$-CF (when $\DD$ is local). The original axioms in characteristic zero appear in \cite{MooScan2014}, while in positive characteristic appear in \cite{BHKK2019}. This will somewhat serve as a warm-up for our axioms $\gDD$-CF in the next section. 

We fix a $\DD$-field $(K,e)$ (we no longer require any commutativity). As we are working with a single operator system, note that $\md=\{1,\dots,m\}$ where $\dim_k(\DD)=m+1$. We also note that the order $\ineq$ can be extended from $\NN_{<\infty}^\md\times\mn$ to $\md^{<\omega}\times \mn$ by setting $(\xi,i)\ineq (\tau,j)$ when
$$(|\xi|,i,\xi)\leq_{\operatorname{lex}}(|\tau|,j,\tau)$$
where $\leq_{\operatorname{lex}}$ is the left-lexicographic order on $\NN_0^2\times\md^{<\infty}$ (recall $|\xi|$ is the length of $\xi$). 
%(the convention here is that when we are down to comparing pairs of the form $(u,i)$, with $0\leq u\leq s$ and $1\leq i\leq m_u$, we do this left-lexicographically). 
With respect to this order, we extend the notion of a leader to tuples from $\md^{<\omega}\times \mn$ as follows: Let $(L_r,e)$ be a $\DD$-kernel over $(K,e)$ and $(\xi,i)\in \md^{\leq r}\times\mn$; we say that $(\xi,i)$ is a separable (inseparable) leader of $L_r$ if $a_i^{\xi}$ is separably (inseparably) algebraic over 
$$L_{\ine (\xi,i)}:=K(a_j^\eta: (\eta,j)\ine (\xi,i)).$$
We may now say that $L_r$ is a separable kernel if there is no inseparable leader $(\xi,i)\in \md^{\leq r}\times\mn$ with $|\xi|=r$. Similarly, we may adapt the notions of prolongations and regular/principal realisation by removing the requirement that the operators $\Gamma$-commute.

Straightforward adaptations of the proofs of Theorem~\ref{thebigone} and Proposition~\ref{careful} yield the following proposition. We leave the details of these adaptations to the interested reader; in fact, the arguments are much shorter as Claim~\ref{claim_well_def} and the specialisation processes performed there were only required to obtain $\Gamma$-commutativity, and hence can be omitted if one simply wishes to produce $\bDD$-kernels.

\begin{proposition}\label{adaptD}
Let $L_r$ be a $\DD$-kernel. 
\begin{enumerate}
\item If $L_r$ is separable, then $L_r$ has a principal realisation.
\item Assume ${\operatorname{char}}(k)=p>0$ and $\mm=\ker(\operatorname{Fr})$. Then, there is a separable $\DD$-kernel $E_r$ that prolongs $L_{r-1}$.
\end{enumerate}
\end{proposition}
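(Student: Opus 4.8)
\textbf{Proof plan for Proposition~\ref{adaptD}.}
The strategy is to mimic, in a considerably lighter form, the two main results on $\gDD$-kernels: Theorem~\ref{thebigone} for part (1) and Proposition~\ref{careful} for part (2). The crucial simplification is that here there is no commutation system $\Gamma$ to satisfy, so the entire apparatus built around the auxiliary function $\ell_*$, the spaces $V_F(r)$, Lemma~\ref{horrible}, and Claim~\ref{claim_well_def} — all of which exist only to force $\Gamma$-commutativity after the fact — can be deleted. What remains is essentially the construction of prolongations of $\DD$-kernels using Lemma~\ref{extendstructure}, together with the specialisation argument of Proposition~\ref{genericspecial} to see that generic prolongations are unique.

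For part (1), I would first set up the analogue of Proposition~\ref{genericspecial} in the $\DD$-kernel (no-$\Gamma$) setting: if $E_s$ is any prolongation of a separable $\DD$-kernel $L_r$ and $L_s$ is a \emph{generic} prolongation (meaning the minimal-separable leaders and the inseparable leaders of $L_s$ agree with those of $L_r$ — which for a separable kernel means there are no new inseparable leaders at level $>r$), then $E_s$ is a specialisation of $L_s$. The proof copies the one given: induct along $\trianglelefteq$, and at a leader $(\tau,j)$ that is not minimal use Lemma~\ref{explicit} to write $\dd_i(a_j^\xi)$ as a rational function of the earlier coordinates, then push the specialisation through; for a non-leader coordinate the specialisation extends freely. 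Hence any two generic prolongations of length $s$ are isomorphic. Then, to get a principal realisation, it suffices to show that for every $s\geq r$ a separable $\DD$-kernel $L_s$ of length $s$ admits a generic prolongation $L_{s+1}$ of length $s+1$ (iterating and taking the union gives the realisation, whose minimal-separable and inseparable leaders then equal those of $L_r$ by genericity at every stage). To build $L_{s+1}$: working in a big algebraically closed $\Omega\supseteq L_s$, choose coordinates $a_t^{(i,\tau)}$ for $(\tau,t)\in \md^{s}\times\mn$ inductively along $\trianglelefteq$; if $(\tau,t)$ is a (necessarily separable, since $L_s$ is separable of length $\geq r$ and generic) leader, Lemma~\ref{extendstructure}(i) forces a unique extension of the $\DD$-operator, so set $a_t^{(i,\tau)}:=\dd_i(a_t^\tau)$; if $(\tau,t)$ is not a leader, Lemma~\ref{extendstructure}(ii) lets us pick the $a_t^{(i,\tau)}$ to be algebraically independent transcendentals over everything chosen so far. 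This produces a $\DD$-kernel $L_{s+1}$ prolonging $L_s$, and by construction no new minimal-separable or inseparable leaders are introduced at level $s+1$, so it is generic. That it is genuinely a $\DD$-kernel (i.e.\ the partial $\DD$-operator is well-defined) is exactly the content of the $\bDD$-kernel analogue of Lemma~\ref{spec_kernel}, applied to the trivial specialisation; alternatively it is immediate from the construction since we extended via Lemma~\ref{extendstructure} at each step.

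For part (2), assume $\operatorname{char}(k)=p>0$ and $\mm=\ker(\operatorname{Fr})$, and let $L_r$ be an arbitrary $\DD$-kernel. We want a separable $\DD$-kernel $E_r$ prolonging $L_{r-1}$. Again we mirror Proposition~\ref{careful}, but without the $\Gamma$-fixing step: build $E_r$ by choosing the level-$r$ coordinates over $L_{r-1}$ inductively along $\trianglelefteq$. At a \emph{separable} leader $(\tau,t)$ of $L_{r-1}$-data, Lemma~\ref{extendstructure}(i) gives a unique extension and we take $a_t^{(i,\tau)}:=\dd_i(a_t^\tau)$. At a coordinate that is \emph{not} a separable leader, we are in one of two situations: either it is not a leader at all, in which case Lemma~\ref{extendstructure}(ii) lets us choose fresh transcendentals; or it is an inseparable leader, in which case — this is where $\mm=\ker(\operatorname{Fr})$ is used — Lemma~\ref{extendstructure}(iii) applies (the hypothesis that an extension $\hat e$ exists is met because $L_r$ itself is a prolongation of $L_{r-1}$, so the relevant minimal polynomial has coefficients in $C_{L_{r-1}}$), and again we may choose the value freely, in particular as a new transcendental. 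The resulting $E_r$ is a $\DD$-kernel prolonging $L_{r-1}$ in which, by construction, every level-$r$ coordinate that was previously an inseparable leader has been made transcendental over the earlier data; hence $E_r$ has no inseparable leader at level $r$, i.e.\ it is separable, and its minimal-separable leaders coincide with those of $L_{r-1}$ (no new algebraic relations were introduced among the non-separable-leader coordinates). This is the statement.

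\textbf{Main obstacle.} The genuinely delicate point in part (1) is the specialisation/uniqueness lemma (the $\DD$-analogue of Proposition~\ref{genericspecial}): one must be careful that at a non-minimal separable leader the relevant partial-derivative factor $\partial f/\partial x_j^\xi$ can be taken to lie in the base $L_{r-1}$ (using the degree-one reduction coming from the analogue of Corollary~\ref{aboveseparable}(ii)), so that applying the specialisation to it is harmless; this is the same subtlety as in the proof of Proposition~\ref{genericspecial} and carries over verbatim. In part (2) the only thing to watch is the verification that the hypothesis of Lemma~\ref{extendstructure}(iii) is satisfied at each inseparable leader, which is precisely where the standing assumption $\mm=\ker(\operatorname{Fr})$ enters and where the existence of the prolongation $L_r$ is used. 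Everything else is bookkeeping along the order $\trianglelefteq$, and — as the authors note — is strictly simpler than the $\Gamma$-commuting case because Claim~\ref{claim_well_def} and its attendant specialisation loop are no longer needed.
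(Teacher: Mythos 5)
Your proposal is correct and follows exactly the route the paper intends: the paper gives no separate proof of this proposition, stating only that it follows by "straightforward adaptations" of Theorem~\ref{thebigone} and Proposition~\ref{careful} in which Claim~\ref{claim_well_def} and the specialisation loop (needed only to restore $\Gamma$-commutativity) are omitted, which is precisely the simplification you identify and carry out. Your use of Lemma~\ref{extendstructure}(i)--(iii) at separable leaders, non-leaders and inseparable leaders respectively, and of the analogue of Proposition~\ref{genericspecial} for uniqueness, matches the paper's own arguments step for step.
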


We will make the following assumption.

\begin{assumption}\label{onFro}
If $char(k)=p>0$, then $\mm=\ker(\operatorname{Fr})$.
\end{assumption}

\begin{theorem}\label{altaxioms}
Under Assumption~\ref{onFro}. Let $(K,e)$ be a $\DD$-field. Then, $(K,e)$ is existentially closed in the class of $\DD$-fields if and only if, for every $r,n\in \NN$, if $L_r=K(a_i^\xi:(\xi,i)\in \md^{\leq r}\times \mn)$ is a separable $\DD$-kernel of length $r$ in $n$-variables over $(K,e)$, then the tuple 
$$(a_i^\xi:\, (\xi,i)\in \md^{\leq r}\times \mn)$$
has an algebraic specialisation over $K$ of the form
$$(\dd_{\xi}b_i:\, (\xi,i)\in \md^{\leq r}\times \mn)$$
for some $n$-tuple $(b_1,\dots,b_n)$ from $K$, where 
$\dd_{\xi}=\dd_{\xi_s}\cdots\dd_{\xi_s}$ when $\xi=(\xi_1,\dots,\xi_s)$. 
\end{theorem}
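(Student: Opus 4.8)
The statement is a characterisation of existential closedness (in the class of $\DD$-fields) in terms of realisability of separable $\DD$-kernels, so both directions must be established. The plan is to follow the classical template from \cite{Pierce2014,GLS2018}, adapted to the present framework using Proposition~\ref{adaptD}.

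First I would handle the easy (left-to-right) direction. Suppose $(K,e)$ is existentially closed in the class of $\DD$-fields, and let $L_r=K(a_i^\xi:(\xi,i)\in\md^{\leq r}\times\mn)$ be a separable $\DD$-kernel of length $r$ in $n$ variables. By Proposition~\ref{adaptD}(1), $L_r$ admits a principal realisation $(L,e)$, which is a $\DD$-field extension of $(K,e)$ in which the tuple $(\dd_\xi b_i)_{(\xi,i)}$ (with $b_i=a_i^\emptyset$) satisfies exactly the algebraic relations over $K$ holding in $L_r$. Now the existence of an $n$-tuple whose prolongation of length $r$ specialises $(a_i^\xi)$ is expressible as a first-order condition over $K$ in the language of $\DD$-fields: take the (finite, by Hilbert basis) set of generators $f_1,\dots,f_N\in K[x_i^\xi]$ of the vanishing ideal of $(a_i^\xi)$ over $K$, and assert $\exists b_1,\dots,b_n\,\bigwedge_j f_j((\dd_\xi b_i)_{(\xi,i)})=0$; one must also encode, via a further existential quantifier, that the specialisation is \emph{proper}, i.e. that it does not satisfy some polynomial \emph{not} in the ideal — equivalently, one asserts the existence of a tuple $(c_i^\xi)$ which \emph{is} a generic point, but since $L_r$ itself realises this over $K$, by existential closedness $K$ does too. (Here one is slightly careful: the precise first-order statement to feed into the definition of existential closedness is ``there exist $b_1,\dots,b_n$ such that $(\dd_\xi b_i)$ is a generic specialisation of $(a_i^\xi)$'', which holds in $L$ and hence in $K$.) This yields the desired $(b_1,\dots,b_n)$ in $K$.

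For the converse (right-to-left), suppose the kernel-realisation property holds, let $(K,e)\leq (M,e)$ be a $\DD$-field extension, and let $\varphi(\bar x)$ be a quantifier-free formula with parameters in $K$ realised by some tuple $\bar a=(a_1,\dots,a_n)$ in $M$; I must find a realisation in $K$. Passing to the $\DD$-subfield of $M$ generated by $\bar a$ over $K$, and using that $\varphi$ involves only finitely many of the operators $\dd_\xi$, there is $r$ such that $\varphi(\bar a)$ is determined by the algebraic type over $K$ of the finite tuple $(\dd_\xi a_i:(\xi,i)\in\md^{\leq r}\times\mn)$. This finite tuple, together with the field it generates over $K$, is a $\DD$-kernel of length $r$ over $(K,e)$ — but possibly not separable. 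This is where Assumption~\ref{onFro} and Proposition~\ref{adaptD}(2) enter: by repeatedly applying the separabilisation in Proposition~\ref{adaptD}(2) (prolonging and replacing by a separable kernel with the same minimal-separable leaders, hence specialising the original one level at a time), one obtains a \emph{separable} $\DD$-kernel $L_r$ that is a specialisation of the original kernel, and in particular still witnesses $\varphi$ (since $\varphi$ is preserved under the specialisation — one must check the quantifier-free formula $\varphi$, which is a Boolean combination of polynomial equations, is inherited; for this it is cleanest to absorb the negated atomic conditions of $\varphi$ into the choice of kernel by adjoining suitable inequations, so that \emph{any} specialisation of the kernel still satisfies $\varphi$). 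Then by hypothesis $(a_i^\xi)$ has a specialisation of the form $(\dd_\xi b_i)$ with $\bar b\in K^n$, and this $\bar b$ satisfies $\varphi$, as required.

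\textbf{Main obstacle.} The delicate point is the bookkeeping in the converse direction: ensuring that the quantifier-free formula $\varphi$ survives both the separabilisation step (Proposition~\ref{adaptD}(2), which in general only produces a \emph{specialisation} of the given kernel, not an isomorphic copy) and the final specialisation down to $K$. The clean way to manage this is to work not with $\varphi$ directly but with a \emph{complete} quantifier-free type: replace $\varphi$ by the full algebraic locus of $(\dd_\xi a_i)_{(\xi,i)\in\md^{\le r}\times\mn}$ over $K$ together with all the inequations needed to pin down its generic point, observe that realising the positive part suffices once one knows (via Proposition~\ref{adaptD}) that a \emph{principal} realisation exists matching the leaders, and verify that the separabilisation of Proposition~\ref{adaptD}(2) can be arranged not to collapse the inequations recording the non-vanishing of the relevant derivatives (this is exactly what ``same minimal-separable leaders'' buys us). One should also double-check the subtle case-split on characteristic: in characteristic zero every kernel is automatically separable, so Proposition~\ref{adaptD}(2) is vacuous and the argument simplifies, whereas in positive characteristic Assumption~\ref{onFro} is precisely what makes the separabilisation available. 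Beyond this, the argument is routine: the equivalence is, modulo these care points, a direct transcription of \cite[Theorem 4.10]{Pierce2014} / \cite[Theorem 11]{GLS2018}.
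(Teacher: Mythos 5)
Your overall architecture matches the paper's: the forward direction is Proposition~\ref{adaptD}(1) (principal realisation) plus existential closedness, and the converse is ``form the kernel of the witnessing tuple, separabilise via Proposition~\ref{adaptD}(2), then apply the hypothesis''. Two points need correcting, one of which affects the logic of your converse.

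First, in the forward direction the detour through genericity is both unnecessary and problematic: the theorem only asks for an algebraic specialisation, so the existential formula $\exists \bar b\,\bigwedge_j f_j((\dd_\xi b_i))=0$ (with $f_j$ generating the vanishing ideal of $(a_i^\xi)$ over $K$) is all you need to transfer from the principal realisation to $K$; asserting that $(\dd_\xi b_i)$ is a \emph{generic} specialisation is not a quantifier-free (or even obviously first-order) condition, and you should simply drop it.

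Second, and more substantively, you misread what Proposition~\ref{adaptD}(2) delivers and how it is used. A kernel is separable when there is no inseparable leader $(\xi,i)$ with $|\xi|=r$, i.e.\ only the \emph{top} level matters; accordingly the proposition is applied exactly once, to $E_r$, and returns a separable kernel $L_r$ that \emph{prolongs} $E_{r-1}$ — the entries of length $\le r-1$ are literally unchanged, and only the length-$r$ entries are replaced (generically, by fresh transcendentals or Hensel-forced values). It is not a specialisation of $E_r$, and there is no level-by-level iteration. This matters because your justification that $\phi$ survives (``$\phi$ is preserved under the specialisation'') is false as stated — specialisations do not preserve inequations — and an iterated, bottom-up separabilisation would alter the low-order derivatives of $\bar a$ and so could destroy even the equations of $\phi$. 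The correct reason $\phi$ survives is that $r$ was chosen so that $\phi$ only mentions compositions of length $\le r-1$, and those entries are untouched by the one application of Proposition~\ref{adaptD}(2). The inequation issue genuinely arises only at the final step (specialising down to $K$), where your Rabinowitsch-style fix (adjoining a variable $z$ with $gz=1$ for each inequation $g\neq 0$ before forming the kernel) is the right, standard remedy — the paper glosses over this point as well.
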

\begin{proof}
($\Rightarrow$) By Proposition~\ref{adaptD}(i), $L_r$ has a principal realisation. As $(K,e)$ is existentially closed in such realisation, the existence of the algebraic specialisation follows.

\smallskip

($\Leftarrow$) Let $\phi(x_1,\dots,x_n)$ be a quantifier-free formula over $K$ with a realisation $(a_1,\dots,a_n)$ in some $\DD$-field extension $(E,e)$. Let $r\in \NN$ such that the maximum length of compositions of the operators appearing in $\phi$ is at most $r-1$. Now consider the $\DD$-kernel
$$E_{r}=K(\dd_\xi a_i:(\xi,i)\in \md^{\leq r}\times \mn).$$
By Proposition~\ref{adaptD}(ii), there is a separable $\DD$-kernel $L_{r}$ that prolongs $E_{r-1}$. Then, the algebraic specialisation produced by our assumption realises $\phi$ in $K$.
\end{proof}

\

\section{The model companion $\gDD$-CF}\label{modeltheory}\label{sec6}

In this section we prove that, under Assumption~\ref{theassumption} below (cf. the assumption in Proposition~\ref{careful}), the theory of $\gDD$-fields has a model companion which we denote by $\gDD$-CF. Then, in \S\ref{char0}, we restrict to the case $char(k)=0$ and establish some of the model-theoretic properties of this theory; namely, we observe that it is a complete $|F|$-stable theory with quantifier elimination and elimination of imaginaries, it has the canonical base property and satisfies (the expected form of) Zilber's dichotomy for finite dimensional types.

\medskip

We carry forward the notation of \S\ref{generalcase} and \S\ref{kernels}. Namely, 
$$\bDD=\{(\DD_1,\bar\epsilon_1),(\DD_2,\bar\epsilon_2)\}$$ 
where each $(\DD_u,\bar\epsilon_u)$ is a local operator-system. $(F,\be)$ is a fixed $\bDD$-field and all rings are $F$-algebras and $\bDD$-rings are $(F,\be)$-algebras. We also fix, throughout, a commutation system $\Gamma=\{r_1,r_2\}$ for $\bDD$ over $F$ of Lie-Hasse-Schmidt type, and denote its coefficients by $(c_{u,\ell}^{ij})$. By a $\gDD$-field we mean a $\bDD$-field with $\Gamma$-commuting operators. We further assume that $\Gamma$ is Jacobi-associative and that $(F,\be)$ is a $\gDD$-field. 

Recall that we denote the associated operators by $\dd_{u,i}$, and that they are additive operators satisfying
\begin{equation}\label{rulesaxioms}
	\dd_{u,i}(xy)=\dd_{u,i}(x)\, y +x\, \dd_{u,i}(y)+\sum_{p,q=1}^m\alpha_{u,i}^{pq}\dd_{u,p}(x)\dd_{u,q}(y)
\end{equation}
where $\alpha_{u,i}^{pq}\in k$ is the coefficient of $\epsilon_{u,i}$ in the product $\epsilon_{u,p}\cdot \epsilon_{u,q}$ happening in $\DD_u$. 
%for $(p,q)\in \gamma_u(i)$; otherwise, $\alpha_{p,q}^i=0$ (cf. Remark~\ref{operatornotation}).

\bigskip

The class of $\gDD$-fields is (universally) axiomatisable in the language
$$\mathcal L_{\bDD}(F)=\{0,1,+,-,*,\;^{-1},(\lambda_a)_{a\in F},(\dd_{u,i}:u\in \{1,2\},1\leq i\leq m_u )\}$$
where $\lambda_a$ denotes scalar multiplication by $a\in F$. Note that the axioms include the quantifier-free diagram of $(F,\be)$, since $\bDD$-fields are assumed to be $\bDD$-extensions of $F$. They also specify that the operators $\dd_{i,u}$ are additive, satisfying \eqref{rulesaxioms}, and that they $\Gamma$-commute (which is given by universal sentences using the coefficients $c_{u,\ell}^{ij}$, see Lemma~\ref{meaningcomm}). We denote the theory of $\gDD$-fields by $\gDD$-F. Note that $\gDD$-F is a consistent theory as $(F,\be)$ is a model.

\smallskip

We will prove that a model companion for $\gDD$-fields exists under the following assumption (see the hypothesis in Proposition~\ref{careful}). Note that this type of assumption also appears in \cite[Assumption 2.5]{BHKK2019} where they prove that in the ``free'' context (i.e., not requiring $\Gamma$-commutativity) this assumption is equivalent to the existence of a companion. We make further comments around this after Remark~\ref{remark61} below.

\begin{assumption}\label{theassumption}
	If $char(k)=p>0$ and $\dim_k(\DD_1)>1$, then $\mm_u=\ker(\operatorname{Fr_u})$ for $u\in \{1,2\}$. Recall that $\operatorname{Fr}_u$ denotes the Frobenius endomorphism on $\DD_u$.
\end{assumption}

For the remainder of this section we work under Assumption~\ref{theassumption}. Note that this assumption holds in the context of Examples ~\ref{Liecommexample} to \ref{examplesevHS}; and it also holds in Example~\ref{allcombined} when the $n$ there equals 1.

\smallskip

We are now in the position to prove companiability. 

\begin{theorem}\label{companiontheory}
	Under Assumption~\ref{theassumption}. Let $(K,\be)$ be a $\gDD$-field. Then, $(K,\be)$ is existentially closed in the class of $\gDD$-fields if and only if, for every $r,n\in \mathbb N$, if $L_{2r}=K(a_i^\xi:(\xi,i)\in \md^{\leq 2r}\times \mn)$ is a separable $\gDD$-kernel of length $2r$ in $n$-variables over $(K,\be)$ such that the minimal-separable leaders of $L_{2r}$ are the same as those of $L_r$, then the tuple 
	$$(a_i^\xi:\, (\xi,i)\in \md^{\leq r}\times \mn)$$
	has an algebraic specialisation over $K$ of the form
	$$(\dd_{\xi}b_i:\, (\xi,i)\in \md^{\leq r}\times \mn)$$
	for some $n$-tuple $(b_1,\dots,b_n)$ from $K$, where 
	$\dd_{\xi}=\dd_{\xi_1}\cdots\dd_{\xi_r}$ when $\xi=(\xi_1,\dots,\xi_r)$. 
\end{theorem}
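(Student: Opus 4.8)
The plan is to prove this as a standard Robinson-style test for existential closedness, using the kernel machinery from Section~\ref{sec5}. The theorem characterizes existentially closed $\gDD$-fields by a ``geometric axiom'' about kernels, and the proof splits into the two implications, with the forward direction being routine and the backward direction carrying the real content.

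\textbf{($\Rightarrow$)} Suppose $(K,\be)$ is existentially closed. Given a separable $\gDD$-kernel $L_{2r}$ of length $2r$ whose minimal-separable leaders agree with those of $L_r$, Theorem~\ref{thebigone} produces a principal realisation $(L,\be)$, which is a $\gDD$-field extension of $(K,\be)$. Inside $L$ the tuple $(\dd_\xi b_i)$ (with $b_i = a_i^\emptyset$) satisfies precisely the algebraic relations cut out by the vanishing ideal of $(a_i^\xi)$ over $K$, which is a system of polynomial equations and inequations over $K$ expressible in the language $\mathcal L_{\bDD}(F)$; existential closedness of $K$ in $L$ then yields the desired specialisation inside $K$. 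First I would just write down which quantifier-free formula to use: finitely many generators of the ideal give the equations, and a single inequation witnessing that $(a_i^\xi)$ is a ``generic'' point of the corresponding variety (i.e.\ that the point is not in any proper subvariety defined over $K$ — one picks a nonzero polynomial not in the ideal). This is entirely parallel to the classical differential-kernel arguments in \cite{Pierce2014,GLS2018}, and to the $\DD$-case already handled in Theorem~\ref{altaxioms}.

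\textbf{($\Leftarrow$)} Conversely, assume the kernel condition holds, and let $\phi(x_1,\dots,x_n)$ be a quantifier-free $\mathcal L_{\bDD}(F)$-formula over $K$ with a realisation $(a_1,\dots,a_n)$ in some $\gDD$-field extension $(E,\be)$. Choose $r$ large enough that all compositions of operators appearing in $\phi$ have length at most $r$ (so $\phi$ ``lives'' on the kernel of length $r$), and form the $\gDD$-kernel of length $2r$ generated by $(\dd_\xi a_i : (\xi,i)\in\md^{\le 2r}\times\mn)$ over $K$. This kernel may fail to be separable and may have too many minimal-separable leaders, so the key step is to replace it by a better one: applying Proposition~\ref{careful} (which is where Assumption~\ref{theassumption} is used) I would pass to a separable $\gDD$-kernel prolonging the length-$(2r-1)$ part, and then — iterating Proposition~\ref{careful} together with Lemma~\ref{dick} and Dickson's lemma (the set of minimal-separable leaders stabilizes after finitely many prolongation steps) — obtain a separable $\gDD$-kernel $L_{2r}$ of length $2r$, still a prolongation of the original length-$r$ kernel, whose minimal-separable leaders coincide with those of its length-$r$ truncation $L_r$. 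Here I must be a little careful: the truncation of the original kernel to length $r$ already records enough of $(a_i)$ that $\phi$ holds there, and this is preserved under taking prolongations (prolongations do not change the bottom $r$ levels). Now the hypothesis applies to $L_{2r}$ and produces a tuple $(b_1,\dots,b_n)$ from $K$ with $(\dd_\xi b_i)$ an algebraic specialisation of $(a_i^\xi)$ over $K$. Since $\phi$ is quantifier-free and only involves operator-compositions of length $\le r$, and specialisation preserves the vanishing of polynomials (hence the quantifier-free positive part of the diagram), $(b_1,\dots,b_n)$ realises $\phi$ in $K$; some care is needed with the inequations in $\phi$, but these are handled because the kernel construction keeps the relevant nonvanishing conditions — one builds $\phi$ into the choice of which generic point to take, exactly as in the proof of Theorem~\ref{altaxioms}.

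\textbf{Main obstacle.} The delicate point is the bookkeeping in the backward direction: ensuring that after the (possibly several) applications of Proposition~\ref{careful} one lands on a kernel of length exactly $2r$ that is \emph{simultaneously} separable, a prolongation of the original length-$r$ data, and has minimal-separable leaders equal to those of its own length-$r$ truncation. The subtlety is that each prolongation step can in principle introduce new minimal-separable leaders at higher levels, but Dickson's lemma (via Lemma~\ref{dick}) guarantees the antichain of minimal-separable leaders is finite, so after finitely many steps no new ones appear below level $2r$; one then takes $r$ itself large enough at the outset to absorb this. I would also double-check the degenerate case $\dim_k(\DD_1)=1$, where by Remark~\ref{removeminimal} the minimal-separable-leader hypothesis is vacuous and Assumption~\ref{theassumption} imposes no Frobenius-kernel condition, so the argument simplifies. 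The remaining verifications — that the class of $\gDD$-fields is elementary (already established) and inductive, so that a geometric characterization of e.c.\ models axiomatizes the model companion — are standard once this theorem is in hand, and are presumably recorded in the immediately following results.
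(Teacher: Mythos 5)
Your proof follows the same route as the paper's: the forward direction is identical (principal realisation via Theorem~\ref{thebigone}, then existential closedness), and the backward direction has the same skeleton (choose $r$, form the length-$2r$ kernel generated by the witnesses, apply Proposition~\ref{careful} once to repair separability at the top level, then invoke the hypothesis).

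The one place where your argument is off is the bookkeeping you yourself flag as the main obstacle. Your proposed mechanism — iterating Proposition~\ref{careful} until the minimal-separable leaders "stabilize" — does not do what you want: that proposition takes a single kernel $L_{s+1}$ and returns a separable kernel prolonging $L_s$ with the \emph{same} minimal-separable leaders as $L_s$; it is not a process whose iteration controls which leaders appear between levels $r+1$ and $2r$, and Lemma~\ref{dick} is a statement about a finitely generated $\gDD$-field, not about a sequence of kernels. The clean fix (and the paper's) is to apply Lemma~\ref{dick} \emph{before} choosing $r$: after replacing $E$ by the $\gDD$-field generated by $(a_1,\dots,a_n)$ over $K$, the set of minimal-separable leaders of $E$ is finite, so one can pick $r$ simultaneously larger than the operator-compositions in $\phi$ and larger than $|\xi|$ for every minimal-separable leader $(\xi,i)$ of $E$. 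With that choice, the kernel $E_{2r}=K(\dd_\xi a_i:(\xi,i)\in\md^{\leq 2r}\times\mn)$ automatically has the same minimal-separable leaders as $E_r$, and a single application of Proposition~\ref{careful} to $E_{2r}$ (prolonging $E_{2r-1}$) yields the separable kernel $L_{2r}$ to which the hypothesis applies. Your closing remark that one should "take $r$ large enough at the outset" is exactly this, but you leave unspecified the finite set that $r$ must dominate; making that precise is the whole content of the step, and it removes any need for iteration.
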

\begin{proof}
	Essentially this is the content of Theorem~\ref{thebigone} together with Proposition~\ref{careful}; and the argument follows the same lines as the proof of Theorem~\ref{altaxioms}.
	
	\smallskip
	
	($\Rightarrow$) By Theorem~\ref{thebigone}, $L_r$ has a principal realisation. As $(K,\be)$ is existentially closed in such realisation, the existence of the algebraic specialisation follows.
	
	\smallskip
	
	($\Leftarrow$) Let $\phi(x_1,\dots,x_n)$ be a quantifier-free formula over $K$ with a realisation $(a_1,\dots,a_n)$ in some $\gDD$-field extension $(E,\be)$. We may assume that $E$ is $\DD$-generated by $(a_1,\dots,a_n)$ over $K$. Let $r\in \NN$ be such that the maximum length of compositions of the operators appearing in $\phi$ is at most $r-1$ and if $(\xi,i)$ is a minimal-separable leader of $E$ then $|\xi|\leq r-1$ (this can be achieved as the set of minimal-separable leaders is finite by Lemma~\ref{dick}). Now consider the (nonnecessarily separable) $\gDD$-kernel
	$$E_{2r}=K(\dd^\xi a_i:(\xi,i)\in \md^{\leq 2r}\times \mn).$$
	By Proposition~\ref{careful} (this is where we use Assumption~\ref{theassumption}), there is a separable $\gDD$-kernel $L_{2r}$ that prolongs $E_{2r-1}$ whose minimal-separable leaders are those of $E_{2r-1}$. By the choice of $r$, the minimal-separable leaders of $L_{2r}$ are the same as those of $L_r$. Then, the algebraic specialisation produced by our assumption realises $\phi$ in $K$.
\end{proof}

\begin{corollary}
	Under Assumption~\ref{theassumption}. The theory of $\gDD$-fields has a model companion. We denote the model companion by $\gDD$-CF and call its models $\gDD$-closed fields.
\end{corollary}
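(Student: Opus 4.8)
The plan is to derive companionability directly from the existential-closedness criterion established in Theorem~\ref{companiontheory}. Recall that a model companion for a theory $T$ exists if and only if the class of existentially closed models of $T$ is elementary (i.e.\ axiomatisable). So the task reduces to showing that the property ``$(K,\be)$ is existentially closed in the class of $\gDD$-fields'' is expressible by a set of first-order sentences in the language $\mathcal L_{\bDD}(F)$. Since $\gDD$-F is already a (universal) first-order theory, it suffices to add to it a further schema of sentences capturing the criterion from Theorem~\ref{companiontheory}.

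First I would observe that for each fixed pair $r,n\in\NN$, the criterion in Theorem~\ref{companiontheory} can be phrased as follows: for every separable $\gDD$-kernel $L_{2r}$ of length $2r$ in $n$-variables over $(K,\be)$ whose minimal-separable leaders agree with those of $L_r$, the finite tuple $(a_i^\xi : (\xi,i)\in\md^{\leq r}\times\mn)$ admits an algebraic specialisation over $K$ of the form $(\dd_\xi b_i)$ for some $\bar b$ from $K$. The key point is that ``being a separable $\gDD$-kernel with a prescribed set of minimal-separable leaders'' is a condition on the (finitely many) coefficients of the polynomials describing the field extension $L_{2r}/K$ together with the $\bDD$-structure maps, and by Lemma~\ref{dick} the relevant combinatorial data (the set of minimal-separable leaders) is finite and, by Corollary~\ref{aboveseparable}, essentially bounded. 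Thus one can quantify universally over the (bounded) coefficient data defining such a kernel --- this is a standard ``definability of kernels'' argument, entirely analogous to the one used by Pierce in \cite{Pierce2014} for differential kernels and by the authors of \cite{BHKK2019,MooScan2014} --- and assert the existence of the required specialisation $\bar b\in K^n$ by an existential quantifier over $K$. Each such assertion is a single first-order $\mathcal L_{\bDD}(F)$-sentence $\theta_{r,n}$, and $\gDD$-F together with $\{\theta_{r,n}: r,n\in\NN\}$ axiomatises exactly the existentially closed $\gDD$-fields by Theorem~\ref{companiontheory}.

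The concrete steps, in order, are: (1) fix $r,n$ and parametrise separable $\gDD$-kernels $L_{2r}$ in $n$-variables over an ambient $\gDD$-field by a tuple of parameters (coefficients of a finite system of polynomials over $K$ encoding the field extension and the operator values $a_i^\xi$); (2) express, uniformly in these parameters, that the system defines a domain, that the extension is separable in the relevant range, that the $\bDD$-operator conditions and $\Gamma$-commutativity hold, and that the minimal-separable leaders of $L_{2r}$ coincide with those of $L_r$ --- all of which, using Corollary~\ref{aboveseparable} and Lemma~\ref{dick}, reduce to finitely many polynomial (in)equalities in the parameters; (3) assert $\exists\, \bar b\in K^n$ such that the tuple $(\dd_\xi b_i)_{(\xi,i)\in\md^{\leq r}\times\mn}$ satisfies the algebraic vanishing ideal of $(a_i^\xi)_{(\xi,i)\in\md^{\leq r}\times\mn}$; (4) collect these into sentences $\theta_{r,n}$ and conclude via Theorem~\ref{companiontheory} that $\gDD\text{-F}\cup\{\theta_{r,n}\}_{r,n}$ has as its models precisely the existentially closed $\gDD$-fields, hence is the desired model companion $\gDD$-CF. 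The main obstacle --- really the only non-routine point --- is step~(2): verifying that the condition ``$L_{2r}$ is a separable kernel with the same minimal-separable leaders as $L_r$'' is genuinely first-order-expressible uniformly in the parameters, which hinges on the finiteness of the minimal-separable leader set (Lemma~\ref{dick}) and on the degree-one structure above separable leaders (Corollary~\ref{aboveseparable}(ii)) to bound the complexity of the kernels that need to be considered; once that is in place, the rest is bookkeeping of the kind already carried out in \cite{MooScan2014,BHKK2019,Pierce2014}.
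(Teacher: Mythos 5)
Your proposal is correct and follows essentially the same route as the paper: the paper likewise notes that $\gDD$-F is inductive (indeed universal), so companionability reduces to the class of existentially closed models being elementary, and then invokes the standard definability-of-kernels argument (citing Kowalski and Pierce) to express the conditions of Theorem~\ref{companiontheory} as a first-order axiom scheme. Your elaboration of step (2), using Lemma~\ref{dick} and Corollary~\ref{aboveseparable} to bound the kernel data, is exactly the content the paper delegates to those references.
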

\begin{proof}
	This an immediate consequence of Theorem~\ref{companiontheory} as the theory $\gDD$-F is inductive (in fact universal), and so a model companion exists if and only if the class of existentially closed models is elementary. In addition, a standard/common argument explains why the conditions of the theorem (i.e., expressing the property of being a separable $\gDD$-kernel and when leaders are separable-leaders) can be written as a scheme of first-order axioms in the language $\mathcal L_{\bDD}(F)$; for example, see \cite[\S 2]{Kow} and \cite[Proof of Corollary 4.6]{Pierce2014} for such explanations.
\end{proof}

\begin{remark}\label{remark61}
	We note that when $\dim_k(\DD_1)=1$ in the axioms of $\gDD$-CF appearing in Theorem~\ref{companiontheory} one can remove the condition that ``the minimal-separable leaders of $L_{2r}$ are the same as those of $L_r$'' by requiring that $r\geq 2$. Indeed, as we pointed out in Remark~\ref{removeminimal}, in this case the condition on minimal-separable leaders is immediately satisfied as soon as $r\geq 2$.
\end{remark}

It follows from Corollary~\ref{commuting_sep} that models of $\gDD$-CF are separably closed fields. In the following subsection we derive several model-theoretic properties of the theory $\gDD$-CF when restricted to $char(k)=0$. We leave the exploration of the case when $char(k)=p>0$ for future work.

\medskip

We conclude this section by noting that at the moment we do not know whether the companion still exists after removing Assumption~\ref{theassumption}. As we already noted before Assumption ~\ref{theassumption}, in the case of $\DD$-fields (i.e., the \emph{free} context) the main result of \cite{BHKK2019} shows that a companion exists if and only if this assumption holds. The argument is based on 
%\cite[Proposition 7.2]{MooScan2014} which states that when Assumption~\ref{theassumption} does not hold then the theory of $\DD$-fields does not admit a model companion, and this is based on 
\cite[Proposition 7.1]{MooScan2014} which states that when Assumption~\ref{theassumption} fails then the condition of having a $p$-th root in a $\DD$-field extension is not first-order. More precisely, if there is $\epsilon\in \mm$ such that $\epsilon^p\neq 0$, then there is no first-order formula $\phi$ that describes (uniformly) the following set in an arbitrary $\DD$-field $(K,e)$:
\begin{equation}\label{defineroot}
	\{a\in K: \; a \text{ has a $p$-th root in a $\DD$-field extension of } K\}.
\end{equation}
However, we now observe that that proposition (i.e., \cite[Proposition 7.1]{MooScan2014}) does not generally hold in the case of $\Gamma$-commutativity. Consider the case $\DD=k[\epsilon]/(\epsilon)^{n+1}$ and $\Gamma$ imposes that the operators $(\dd_1,\dots,\dd_{n})$ pairwise commute. Note that when $n\geq p$ then Assumption~\ref{theassumption} fails (as $\epsilon^p\neq 0$). We claim that in this instance the formula 
\begin{equation}\label{oneformula}
	\bigwedge_{p\;  \nmid \;  i} \; \dd_i(x)=0
\end{equation}
describes the set in \eqref{defineroot} for any $\DD^\Gamma$-field $(K,e)$. Indeed, the argument in \cite[Proposition 7.1]{MooScan2014} yields that the set \eqref{defineroot} is type-definable and in this case given by
$$\{\dd_i\dd_{s_1p}\cdots\dd_{s_mp}(x)=0: \; p\;\nmid \; i, m\geq 0, \text{ and integers } 1\leq s_1,\dots,s_m\leq n/p\}.$$
In the \emph{free} context this cannot be reduced to a single formula; however, in the context of pairwise commuting operators this partial type is equivalent to \eqref{oneformula}.

%$a$ will have a $p$-th root in some $\DD^\Gamma$-field extension iff there is $b$ in some $\DD^\Gamma$-extension with $e(a)=e(b)^p$. This is equivalent to  $$a+\dd_1(a)\epsilon+\cdots+ \dd_n(a)\epsilon^n=b^p+\dd_1(b)^p \epsilon^p+\cdots \dd_n(b)^p\epsilon^{np};$$ comparing coefficients this translates to 

\medskip

In fact we expect that such a formula will exist for any theory of $\gDD$-fields and thus we conjecture: 

\begin{conjecture} \
	\begin{enumerate}
		\item There is an $\mathcal L_{\bDD}(F)$-formula which (uniformly) describes the set \eqref{defineroot} in any $\gDD$-field.
		\item The model companion of $\gDD$-fields exists even when Assumption~\ref{theassumption} is dropped.
	\end{enumerate}
\end{conjecture}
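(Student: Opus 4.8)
The plan is to establish part~(1) first and then feed the resulting definable $p$-th-root predicate into the kernel machinery of Section~\ref{kernels} in order to remove Assumption~\ref{theassumption} from Proposition~\ref{careful} and hence from Theorem~\ref{companiontheory}. For part~(1), the starting point is the observation, already implicit in the discussion following Remark~\ref{remark61}, that the Moosa--Scanlon argument of \cite[Proposition 7.1]{MooScan2014} goes through verbatim in our setting to show that in any $\gDD$-field $(K,\be)$ the set in \eqref{defineroot} is type-definable, cut out by a partial type $\Sigma(x)$ consisting of equations $\dd_\tau(x)=0$ where $\tau$ ranges over an explicit infinite family of tuples in $\md^{<\infty}$ (those having at most one entry $i=(u,i')$ with $p\nmid i'$, the remaining entries being of ``$p$-divisible'' type). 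The new content is to show that, modulo the $\Gamma$-commutativity axioms of $\gDD$-F, the partial type $\Sigma(x)$ is equivalent to a \emph{finite} conjunction, hence to a single $\mathcal L_{\bDD}(F)$-formula; the pairwise-commuting case of \eqref{oneformula} is the prototype of this collapse.

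To carry out the collapse I would use the reordering map $\rho$ and the correction terms $\ell_\xi$ from \S\ref{secnotation}: by Lemma~\ref{meaningcomm} and the formulas of \S\ref{secnotation}, applied to an element $x$ of a $\gDD$-field, every $\dd_\tau(x)$ rewrites as an $F$-linear combination of $\dd_\eta(x)$ with $\eta\in\NN^\md_{<\infty}$, the coefficients being polynomial in the structure constants $c^{ij}_{u,\ell}$ and their $\dd$-derivatives. Since for each $u$ there are only finitely many basis indices $i'$ with $p\nmid i'$, since at most one entry of $\eta\in\NN^\md_{<\infty}$ is of HS-type, and since precomposing with an HS-type operator of $p$-divisible index either lands in the null part or, via associativity of $r_2$, reduces to a scalar multiple of a single basis operator, one should be able to show that $\Sigma(x)$ is generated modulo $\gDD$-F by the finitely many equations obtained by letting $\eta$ range over the $\NN^\md_{<\infty}$-tuples whose unique ``bad'' entry has index not divisible by $p$ and whose remaining (bounded) entries are $p$-divisible of Lie-type. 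The finiteness of the surviving set of $\eta$'s is a Dickson's-lemma-type statement in the spirit of Lemma~\ref{dick}, and the vanishing $\dd_{u,k}(c^{ij}_{v,r})=0$ across types from Jacobi-associativity is what keeps the Lie-type contributions under control.

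Granting part~(1), I would deduce part~(2) by adapting Proposition~\ref{careful}. Expand the language by a unary function symbol $\varrho$, interpreted on the definable set \eqref{defineroot} as the (unique, in the relevant separably closed setting) $p$-th root obtained inside $K^{1/p}$, together with a relative version of Lemma~\ref{extendstructure}(iii) guaranteeing that the $\gDD$-structure extends uniquely to the purely inseparable extension generated by such roots. The only place Assumption~\ref{theassumption} enters the proof of Proposition~\ref{careful} is Case~2, where the $\bDD$-structure is extended across an inseparable leader via Lemma~\ref{extendstructure}(iii); with part~(1) in hand the locus of such extensible elements is definable, so one can build a separable prolongation of $L_s$ by replacing each inseparable-leader generator $a^\tau_t$ with its $\varrho$-image and re-running the specialisation argument of Theorem~\ref{thebigone} — Claim~\ref{claim_well_def} still applies since $L_{s+1}$ is assumed a genuine $\gDD$-kernel. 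Plugging this Assumption-free prolongation statement into the proof of Theorem~\ref{companiontheory} then yields companionability, and the extra clause involving $\varrho$ is first-order by part~(1).

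The hard part will be part~(1), and specifically the collapse of $\Sigma(x)$ to a single formula: one must control, uniformly across all $\gDD$-fields, how the correction terms $\ell_\tau$ and the Jacobi coefficients interact with the $p$-divisibility conditions, and in particular rule out that arbitrarily long strings of $p$-divisible operators generate genuinely new constraints through the coefficients $c^{ij}_{1,\ell}$. I expect this is exactly where Jacobi-associativity is indispensable and where a naive induction on length could fail; careful bookkeeping of supports (in the sense of $\supp(i)$) together with the cross-type vanishing of $\dd_{u,k}(c^{ij}_{v,r})$ should suffice, but this is the crux of the problem, and it is conceivable that for some pathological $\Gamma$ the partial type genuinely fails to be finitely generated, in which case part~(1) — and with it the present approach to part~(2) — would have to be restricted.
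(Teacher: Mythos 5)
The statement you are asked to prove is labelled a \emph{conjecture} in the paper: the authors offer no proof of either part, only the single worked example $\DD=k[\epsilon]/(\epsilon)^{n+1}$ with pairwise-commuting operators, where the partial type collapses to the formula \eqref{oneformula}. So there is no proof in the paper to compare yours against, and what you have written is, by your own admission in the final paragraph, a research plan rather than a proof.

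The genuine gap in part (1) is exactly the one you flag yourself: you do not establish that the partial type $\Sigma(x)$ is finitely generated modulo $\gDD$-F for an arbitrary Jacobi-associative $\Gamma$, and you concede that ``it is conceivable that for some pathological $\Gamma$ the partial type genuinely fails to be finitely generated.'' The rewriting via $\rho$ and the correction terms $\ell_\xi$ does reduce arbitrary words $\dd_\tau(x)$ to combinations of $\dd_\eta(x)$ with $\eta\in\NN^\md_{<\infty}$, but the coefficients involve iterated $\dd$-derivatives of the $c^{ij}_{u,\ell}$, and nothing in your sketch bounds the set of $\eta$'s that actually occur with nonzero coefficient as the length of $\tau$ grows; Dickson's lemma controls antichains of leaders, not the support of these linear combinations. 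Until that is done, part (1) remains open, and with it part (2) under your approach.

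Part (2) has a second, unacknowledged difficulty. Definability of the locus \eqref{defineroot} is not what makes the proof of Proposition~\ref{careful} work: what is used there is Lemma~\ref{extendstructure}(iii), whose proof relies on $\mm=\ker(\operatorname{Fr})$ to conclude $b^{p}=a^{p}$ and hence that the $\DD$-structure extends across an inseparable leader for \emph{any} lift $b$. Without Assumption~\ref{theassumption} you must (a) characterise when a $\DD$-operator extends to $K(a)$ for $a$ inseparable, (b) show that some such extension can be chosen so that $\Gamma$-commutativity is preserved (Theorem~\ref{commutingext} only covers separably algebraic extensions), and (c) do this uniformly enough to keep the axioms first-order. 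Your proposal to interpret a root function $\varrho$ inside $K^{1/p}$ and ``re-run the specialisation argument'' does not address (b) at all, and it also quietly changes the language, whereas the conjecture asks for a companion of $\gDD$-F in $\mathcal L_{\bDD}(F)$. As it stands, the proposal is a plausible strategy whose two load-bearing steps are both unproven.
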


\

\subsection{Model theoretic properties of $\gD$-CF in characteristic zero} \label{char0}

In this subsection we assume $char(k)=0$ (while carrying forward the same data $(\bDD,\Gamma)$ with $\Gamma$ an LHS-commutation system that satisfies Jacobi-associativity). As we noted in Remark~\ref{impliespositive}, the characteristic zero assumption implies that $\DD_2=k$, and thus we simply write $\DD$ for $\DD_1$ and use the notation $\gD$ instead of $\gDD$. Note that, in this case, $\DD$-CF recovers the theory explored in \cite{MooScan2014} by Moosa-Scanlon. We will deploy some of the results  there, together with the more recent \cite{LeonMohamed2024}, to deduce properties of $\gD$-CF. 

\smallskip

In \cite{LeonMohamed2024}, the notion of a theory $T$ being derivation-like with respect to another theory $T_0$, equipped with a suitable notion of independence $\ind^0$, was introduced. When the model companion $T_+$ of $T$ exists, it was shown in that paper that several model-theoretic properties transfer from $T_0$ to $T_+$ (the ones relevant to us will be stated in Corollary~\ref{fromderivationlike} below). We now observe that $\gD$-F is derivation-like with respect to ACF$_0$ (we provide the definition in the course of the proof).

\begin{proposition}
	The theory $\gD$-F (i.e., the theory of $\gD$-fields) is derivation-like w.r.t. $(ACF_0,\ind^{\operatorname{alg}})$. Here $\ind^{\operatorname{alg}}$ denotes the algebraic disjointness relation.
\end{proposition}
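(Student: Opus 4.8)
The plan is to verify the definition of ``derivation-like'' from \cite{LeonMohamed2024} clause by clause, taking $T_0 = \operatorname{ACF}_0$ with $\ind^0 = \ind^{\operatorname{alg}}$ (algebraic disjointness, i.e. $A \ind^{\operatorname{alg}}_C B$ means the fields generated by $A$ and $B$ over $C$ are algebraically disjoint, equivalently linearly disjoint over the algebraic closure of $C$). Recall that the definition requires, roughly: (i) $T$ is a theory of $\operatorname{ACF}_0$-structures expanded by some extra operators, with $T$ universal (or at least inductive) and its models being fields on which the operators act; (ii) a ``freeness/extension'' axiom: over any model, and any algebraically disjoint configuration of field extensions, the operator structure extends generically; and (iii) a uniqueness/amalgamation clause ensuring the extension in (ii) is essentially canonical. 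First I would unwind exactly which clauses \cite{LeonMohamed2024} demands and line them up with what is already available in this paper.

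The core of the argument is the extension theory for $\gD$-operators developed in Sections 2--5. Since $\operatorname{char}(k)=0$, by Remark~\ref{impliespositive} we have $\DD_2 = k$, so we are in the pure Lie-commuting-derivations setting: $\bDD = (\DD,k)$ and $\Gamma$ is Jacobi. The key inputs are: Lemma~\ref{extendstructure}(i)--(ii), which says a $\DD$-operator on a field extends (uniquely on separably algebraic extensions, with free choice on transcendentals); Theorem~\ref{commutingext} and Corollary~\ref{commuting_sep}, which say $\Gamma$-commutativity is preserved under these extensions; Lemma~\ref{commcomp}, preservation under compositums; and Corollary~\ref{functionfield}/Remark~\ref{Dpolyring}, the existence of the $\gD$-polynomial ring $K[\bar x]_{\gD}$, which supplies the ``generic'' extension of the operator structure along a purely transcendental field extension in a way compatible with $\Gamma$. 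Concretely, given a $\gD$-field $K$ and a field extension $L = K(\bar a)$ with $\bar a$ a transcendence basis over $K$ (the algebraically disjoint/generic case), one transports the $\gD$-structure of $K[\bar x]_{\gD}$ along $\bar x \mapsto \bar a$ to get a $\gD$-structure on $L$; uniqueness of this structure given the values $\dd_i(\bar a)$ follows from Lemma~\ref{extendstructure}(ii) plus the multiplicativity rule, and $\Gamma$-commutativity is inherited because it holds in $K[\bar x]_{\gD}$. For the part of a configuration that is algebraic, Lemma~\ref{extendstructure}(i) and Theorem~\ref{commutingext} give existence and uniqueness. Combining these along a decomposition ``transcendental part then algebraic part'' gives the required generic extension axiom, and the amalgamation/uniqueness clause reduces to the same two lemmas applied to the compositum inside a common extension, using Lemma~\ref{commcomp}.

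I would organize the write-up as: (1) recall the precise definition of derivation-like from \cite{LeonMohamed2024}; (2) observe $\gD$-F is a theory of $\operatorname{ACF}_0$-expansions with the operators being additive maps satisfying the (fixed, quantifier-free) Leibniz and $\Gamma$-commutativity rules, hence $\forall$-axiomatizable over $\operatorname{ACF}_0$; (3) verify the generic-extension clause using Corollary~\ref{functionfield} for the transcendental direction and Lemma~\ref{extendstructure}(i) + Corollary~\ref{commuting_sep} for the algebraic direction, patching via Theorem~\ref{commutingext}; (4) verify the uniqueness/amalgamation clause via Lemma~\ref{extendstructure} and Lemma~\ref{commcomp}; (5) note that $\ind^{\operatorname{alg}}$ satisfies the axioms of an independence relation for $\operatorname{ACF}_0$ (this is classical and presumably cited in \cite{LeonMohamed2024}).

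\textbf{Main obstacle.} The substantive point is not any single hard lemma — all the algebraic machinery is already in place — but rather matching the bookkeeping of \cite{LeonMohamed2024}'s definition to this paper's framework: in particular checking that the ``generic extension of the operator over an algebraically disjoint configuration'' in their sense corresponds exactly to the transport-of-structure from $K[\bar x]_{\gD}$, and that their independence axioms for $\ind^{\operatorname{alg}}$ over $\operatorname{ACF}_0$ are the standard ones. The one genuinely technical verification is that when a configuration mixes transcendental and (separably) algebraic generators, the order in which one extends does not matter and the result is still $\Gamma$-commuting; this follows from uniqueness in Lemma~\ref{extendstructure}(i) together with Theorem~\ref{commutingext}, applied inside a fixed ambient extension, but it should be spelled out carefully since it is the crux of why ``derivation-like'' holds.
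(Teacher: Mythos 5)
Your proposal correctly identifies most of the supporting lemmas (Lemma~\ref{extendstructure}, Theorem~\ref{commutingext}, Corollary~\ref{commuting_sep}, Lemma~\ref{commcomp}), but it misreads what the derivation-like condition actually asks for, and consequently reaches for the wrong main tool. The condition from \cite[Definition 2.1]{LeonMohamed2024}, as used in the paper, is an \emph{amalgamation} statement: given $\gD$-fields $K$ and $L$ with a common $\gD$-subfield $E$ such that $K/E$ and $L/E$ are \emph{regular} and $K\ind^{\operatorname{alg}}_E L$, one must (1) produce a single $\gD$-field $M$ containing both $K$ and $L$ as $\DD$-subfields, and (2) show the $\gD$-structure on any field between $K\cdot L$ and $(K\cdot L)^{\operatorname{alg}}\cap M$ is the unique one extending both. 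There is no separate ``generic extension'' clause. Your plan to ``transport the $\gD$-structure of $K[\bar x]_{\gD}$ along $\bar x\mapsto \bar a$'' (Corollary~\ref{functionfield}) builds a \emph{generic} extension of $K$ alone, in which the $\dd_\xi(\bar a)$ are fresh transcendentals; this cannot produce the amalgam, because the values $\dd_i(a)$ for $a\in L$ are already prescribed by $L$'s structure and must be respected. This is the genuine gap in the existence step.

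The paper's route is different and cleaner: regularity of $K/E$ together with $K\ind^{\operatorname{alg}}_E L$ upgrades algebraic disjointness to \emph{linear} disjointness, so $K\cdot L\cong \operatorname{Frac}(K\otimes_E L)$; then \cite[Proposition 2.20]{BHKK2019} gives the unique $\DD$-structure on $K\otimes_E L$ extending both prescribed structures, and Lemma~\ref{commcomp} shows the resulting operators on $K\cdot L$ still $\Gamma$-commute. Your proposal never mentions the regularity hypothesis or linear disjointness, which is precisely what makes the compositum controllable. Your approach could be repaired without the tensor product -- extend $K$'s structure to $K(\bar a)$ for a transcendence basis $\bar a$ of $L/E$ by \emph{choosing} $e(a_i):=e_L(a_i)$ (not generic values) via Lemma~\ref{extendstructure}(ii), then pass to the separably algebraic extension $K\cdot L$ and check agreement with $e_L$ by uniqueness -- but as written, the appeal to $K[\bar x]_{\gD}$ does not do the job. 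Your treatment of the uniqueness clause (2) via Lemma~\ref{extendstructure}(i), Theorem~\ref{commutingext} and Corollary~\ref{commuting_sep} matches the paper.
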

\begin{proof}
	We first note that in \cite[\S3.2]{LeonMohamed2024} it was already observed that $\gD$-F is derivation-like in the case when $\Gamma$ imposes that the operators pairwise commute; we provide details to cover the general case (i.e., arbitrary $\Gamma$).   Fix a monster model $\mathfrak C$ of ACFA$_0$. Let $K, L, E$ be $\gD$-fields which are subfields of $\mathfrak C$ (note that $\mathfrak C$ is not equipped with a $\DD$-structure) such that $E$ is a common $\DD$-subfield of $K$ and $L$, the field extensions $K/E$ and $L/E$ are regular (i.e., $E$ is algebraically closed in $K$ and $L$), and $K\ind^{\operatorname{alg}}_E L$. From \cite[Definition 2.1]{LeonMohamed2024}, to prove $\gD$-CF is derivation-like we must show that
	\begin{enumerate}
		\item there exists a field $M< \mathfrak C$ equipped with a $\gD$-structure such that $K$ and $L$ are $\DD$-subfields of $M$, and 
		\item for any $\gD$-field $M$ as above and any field $F$ with
		$$K\cdot L\leq F\leq (K\cdot L)^{\operatorname{alg}}\cap M$$
		we have that $F$ is a $\DD$-subfield of $M$ and this $\DD$-structure on $F$ is the unique one making it a $\gD$-field and extending those on $K$ and $L$.
	\end{enumerate}
	Indeed, since $K/E$ is regular, algebraic-disjointness implies that $K$ and $L$ are linearly-disjoint over $E$. Then, the compositum $K\cdot L$ is isomorphic to the fraction field of $K\otimes_E L$. By \cite[Proposition 2.20]{BHKK2019}, there is a (unique) $\DD$-structure on $K\otimes_E L$ extending those of $K$ and $L$. This induces a $\DD$-structure on $K\cdot L$, and by Lemma~\ref{commcomp} this is a $\gD$-structure (i.e., the operators $\Gamma$-commute). This yields part (1) with $M=K\cdot L$. Part (2) follows from the fact that $\gD$-structures extend uniquely to $\gD$-structures on separably algebraic extensions, see Theorem ~\ref{commutingext} and Corollary~\ref{commuting_sep} (and recall that we are in characteristic zero).
\end{proof}

Given that $\gD$-F is derivation-like w.r.t. to ACF$_0$ and it has a companion $\gD$-CF, we may collect some of the model-theoretic properties that follow from the results of \cite[\S2]{LeonMohamed2024}.

\begin{corollary}\label{fromderivationlike}
	The theory $\gD$-CF has the following properties:
	\begin{enumerate}
		\item [(i)] it is a complete theory with quantifier elimination,
		\item [(ii)] the model-theoretic $\dcl$ equals the $\DD$-field generated,
		\item [(iii)] the model-theoretic $\acl$ equals the field-theoretic algebraic closure of the $\DD$-field generated,
		\item [(iv)] it is a stable theory and nonforking independence coincides with algebraic disjointness of the $\gD$-fields generated by the parameter sets.
	\end{enumerate}
\end{corollary}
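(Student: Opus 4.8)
The statement collects four consequences of the fact that $\gD$-F is derivation-like with respect to $(\mathrm{ACF}_0,\ind^{\mathrm{alg}})$ together with the existence of the companion $\gD$-CF (both just established). The plan is therefore to invoke the transfer results of \cite[\S2]{LeonMohamed2024} one by one, checking that each applies in our situation, rather than re-deriving anything from scratch. First I would recall that, since $\gD$-F is a universal theory whose models embed into models of $\gD$-CF and any two models of $\gD$-F embed into a common model (for instance into the $\gD$-field generated by their amalgamated compositum, built using \cite[Proposition 2.20]{BHKK2019} and Lemma~\ref{commcomp}), the theory $\gD$-CF is the model companion of a theory with the amalgamation property; this is what yields completeness and quantifier elimination in (i). Concretely, completeness follows because the prime $\gD$-field $(F,\be)$ (or, if $F=\mathbb{Q}$ with trivial operators, $\mathbb{Q}$ itself) embeds into every model, and quantifier elimination follows from the derivation-like axiomatisation via \cite[\S2]{LeonMohamed2024}.

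For (ii) and (iii), the point is that in a derivation-like theory the model-theoretic algebraic and definable closures are computed ``as expected'': $\dcl$ of a set $A$ in a model of $\gD$-CF is the smallest $\DD$-subfield containing $A$ — this uses that $\DD$-structures extend \emph{uniquely} to separably algebraic (here, since $\mathrm{char}\,k=0$, all algebraic) extensions, which is Lemma~\ref{extendstructure}(i) together with the fact (Theorem~\ref{commutingext}, Corollary~\ref{commuting_sep}) that the extension stays $\Gamma$-commuting; and $\acl(A)$ is the field-theoretic algebraic closure of that $\DD$-field, because a field-algebraic element has only finitely many conjugates and conversely any element in the $\DD$-field-theoretic algebraic closure is already field-algebraic over the $\DD$-field generated by $A$. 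I would simply cite the corresponding clauses of \cite[\S2]{LeonMohamed2024}, noting that the hypotheses there are exactly derivation-likeness plus existence of the companion.

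For (iv), stability and the identification of nonforking follow from the general transfer theorem in \cite{LeonMohamed2024}: since the base theory $\mathrm{ACF}_0$ is stable (indeed strongly minimal) with nonforking given by $\ind^{\mathrm{alg}}$, the companion of a derivation-like theory over it is stable, and its nonforking independence is the restriction of $\ind^{\mathrm{alg}}$ to the $\gD$-fields generated by the parameter sets. Here I would be slightly careful to phrase the statement as ``algebraic disjointness of the $\gD$-fields generated'': given $A$, $B$, $C$ in a monster model of $\gD$-CF, $A\ind_C B$ iff $\langle A\rangle_{\gD}\ind^{\mathrm{alg}}_{\langle C\rangle_{\gD}}\langle B\rangle_{\gD}$ as fields, where $\langle\cdot\rangle_{\gD}$ denotes the generated $\gD$-field. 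No separate argument is needed — everything is a direct application.

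The main obstacle is essentially bookkeeping rather than mathematics: one must verify that the precise hypotheses in the definition of ``derivation-like'' and in the transfer statements of \cite[\S2]{LeonMohamed2024} are met, in particular that the $\DD$-structure on an algebraic extension of a compositum is \emph{unique} (not merely existent) — this is where characteristic zero is genuinely used, via Lemma~\ref{extendstructure}(i), so that $\DD$-CF is the one considered by Moosa--Scanlon and the $\Gamma$-commuting refinement behaves well under Corollary~\ref{commuting_sep}. Once that uniqueness is in hand, (i)--(iv) are immediate citations, and the proof reduces to listing them with the appropriate references.
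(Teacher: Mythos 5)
Your proposal is correct and follows essentially the same route as the paper: the corollary is obtained by citing the transfer results of \cite[\S2]{LeonMohamed2024}, given that $\gD$-F is derivation-like with respect to $(\mathrm{ACF}_0,\ind^{\operatorname{alg}})$ and has the companion $\gD$-CF. The additional hypothesis-checking you describe (unique extension of $\Gamma$-commuting $\DD$-structures to algebraic extensions and to compositums) is exactly what the paper establishes in the proposition immediately preceding the corollary, so nothing is missing.
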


We now note that, more than stable, the theory is $|F|$-stable.

\begin{lemma}
	The theory $\gD$-CF is $|F|$-stable.
\end{lemma}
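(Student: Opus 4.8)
The goal is to upgrade the stability statement from Corollary~\ref{fromderivationlike}(iv) to the quantitative form: $\gD$-CF is $|F|$-stable, i.e. over any parameter set $A$ of size at most $|F|$ there are at most $|F|$ complete types. The natural strategy is to count types via the algebraic data that determines them. By quantifier elimination (Corollary~\ref{fromderivationlike}(i)) and the description of nonforking in (iv), a complete type over $A$ is controlled by the isomorphism type over the $\DD$-field $K$ generated by $A$ of the $\DD$-field generated by a realisation; and since we are in characteristic zero ($\DD_2 = k$, so we write $\gD$), a $\DD$-field generated by a finite tuple $\bar a$ over $K$ is the fraction field of $K[\bar x]_{\gD}$ modulo a prime $\DD$-ideal — equivalently, by Remark~\ref{Dpolyring}, it is a $\bDD$-specialisation of the $\gD$-polynomial ring $K[\bar x]_{\gD}$, so it corresponds to a prime ideal of the ordinary polynomial ring $K[(\dd_\xi x_j)_{\xi \in \NN^\md_{<\infty}, j \le n}]$ in countably many variables.

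\textbf{Key steps.} First, fix $A$ with $|A| \le |F|$ and let $K = \dcl(A)$ (the $\DD$-field generated by $A$, using (ii)); note $|K| \le |F| + |A| + \aleph_0 = |F|$ since $F$ is infinite (it is a field containing $k$; and $\DD$-CF models are separably closed fields, so $|F| \ge \aleph_0$). Second, invoke quantifier elimination plus the coincidence of forking with algebraic disjointness of the generated $\gD$-fields to reduce: $\tp(\bar a/A)$ is determined by the quantifier-free type of $\bar a$ over $K$, and by standard model theory of fields this quantifier-free type (for an $n$-tuple, and then one takes a union over $n \in \NN$ for the full space of finitary types, with $\aleph_0$ as a harmless extra factor) is determined by a prime ideal in $K[(\dd_\xi x_j)]$ where the variables range over $\NN^\md_{<\infty} \times \{1,\dots,n\}$, a set of cardinality $\aleph_0$. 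Third, count: the polynomial ring $K[(\dd_\xi x_j)]$ has cardinality $|K| \cdot \aleph_0 = |F|$, hence has at most $2^{|F|}$ ideals a priori — but this is too crude, so instead one uses that the theory is stable (already known), so types do not fork, and a type over $K$ is the \emph{unique} nonforking extension of its restriction to a finite subtuple's worth of data; more precisely, by stability each complete type over $K$ is definable and is determined by a finite amount of data together with the nonforking calculus, and one counts prime ideals that are \emph{finitely generated as $\DD$-ideals} (the Ritt–Raudenbush basis theorem analogue, which holds in characteristic zero for $\DD$-fields by the Noetherianity results underlying \cite{MooScan2014}), giving only $|K| \cdot \aleph_0 = |F|$ many such ideals. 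Putting these together yields at most $|F|$ types over any $K$ with $|K| \le |F|$, which is $|F|$-stability.

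\textbf{Main obstacle.} The delicate point is the third step: getting the sharp bound $|F|$ rather than $2^{|F|}$. Counting arbitrary prime ideals of a polynomial ring in $\aleph_0$ variables over $K$ gives $2^{|F|}$, which is useless. The resolution is to use that a $\DD$-ideal arising as the vanishing ideal of a point in a $\gD$-field is generated \emph{as a $\DD$-ideal} by finitely many polynomials — this is the $\gD$-analogue of the Ritt–Raudenbush basis theorem, which in characteristic zero follows from the fact that $K[\bar x]_{\gD}$ (via $K[\bar x]_{\gDD}$ of Remark~\ref{Dpolyring}) is, up to the operator structure, a polynomial ring on which the operators act as derivations, together with the classical Noetherianity arguments. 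Once every relevant prime $\DD$-ideal has a finite set of $\DD$-generators, there are only $|K[\bar x]_{\gD}| = |F|$ many choices of generating set, hence at most $|F|$ such ideals, hence at most $|F|$ quantifier-free $n$-types over $K$, and summing over $n$ and taking $\dcl$-closures introduces only factors of $\aleph_0$. Thus the crux is correctly citing/adapting the basis theorem in the present operator-theoretic setting; everything else is a routine cardinality bookkeeping that I would not grind through in detail.
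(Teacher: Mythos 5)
Your opening move — quantifier elimination reduces counting types over a $\DD$-subfield $K$ to counting prime $\DD$-ideals of the $\gD$-polynomial ring over $K$ — is exactly the paper's first step. The problem is the crux you yourself flag: why are there only $|K|$ such ideals rather than $2^{|K|}$? Your resolution is to invoke a Ritt--Raudenbush basis theorem for $\gD$-ideals, but this is a genuine gap on two counts. First, the statement you use is false as written even in classical differential algebra: prime differential ideals need \emph{not} be finitely generated as differential ideals; Ritt--Raudenbush says radical differential ideals are finitely generated as \emph{radical} differential ideals (this weaker statement would still suffice for the count, since primes are radical). Second, and more seriously, no such basis theorem is proved in the paper or follows "routinely" from the polynomial-ring structure of $K[\bar x]_{\gD}$: it is a nontrivial theorem in its own right in the setting of Lie-commuting operators with coefficients and general Jacobi-associative $\Gamma$ (the paper cites Burton's preprint precisely because even the commuting characteristic-zero case is recent work), and your one-sentence justification via "the Noetherianity results underlying \cite{MooScan2014}" does not establish it. Your parallel appeal to "stability plus definability of types" also cannot yield $|F|$-stability; stability alone only gives stability in cardinals $\lambda$ with $\lambda^{\aleph_0}=\lambda$.

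The paper closes this gap with its kernel machinery instead of a basis theorem: by Theorem~\ref{thebigone} and Lemma~\ref{uniquereal}, a prime $\DD$-ideal (equivalently, the finitely generated $\gD$-field extension it defines) is the unique principal realisation of a suitable finite-length $\gD$-kernel, and by Lemma~\ref{dick} (Dickson's lemma) the set of minimal-separable leaders is finite, so that finite-length kernel — which is just finite field-theoretic data over $K$, i.e.\ a prime ideal of a polynomial ring in finitely many variables — determines the $\DD$-ideal completely. There are only $|K|$ choices of such finite data, whence $|S_1(K)|=|K|$. If you want to salvage your route, you must either prove the radical basis theorem in this generality or replace that step with the principal-realisation/uniqueness argument.
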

\begin{proof}
	Let $(K,e)$ be a $\DD$-subfield of a model of $\gD$-CF (in particular, $F\leq K$). By quantifier elimination, there is a 1-1 correspondence between 1-types over $K$ and prime $\DD$-ideals of the $\gD$-polynomial ring $K[x]_{\gD}$ (see Remark~\ref{Dpolyring}(1) for the construction of this ring), the notion of $\DD$-ideal being the natural one (i.e., an ideal closed under the operators $\dd_i$'s). By Theorem~\ref{thebigone} and Proposition~\ref{uniquereal}, every prime $\DD$-ideal is determined by its minimal-separable leaders (in characteristic zero there are no inseparable leaders). By Lemma~\ref{dick}, the set of minimal-separable leaders is finite, and thus prime $\DD$-ideals are completely determined by this finite set of leaders. It follows that $|S_1^{\gD\text{-CF}}(K)|=|K|$.
\end{proof}

A consequence of quantifier elimination and stability is:

\begin{corollary}
	The theory $\gD$-CF eliminates imaginaries
\end{corollary}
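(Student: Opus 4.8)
The statement to prove is that $\gD$-CF eliminates imaginaries (in characteristic zero). The strategy is the classical one for fields with operators: reduce elimination of imaginaries in $\gD$-CF to that of the underlying field theory ACF$_0$, which is known to eliminate imaginaries. First I would recall that, by Corollary~\ref{fromderivationlike}(i), $\gD$-CF is a complete theory with quantifier elimination in the language $\mathcal L_{\bDD}(F)$, and that it expands ACF$_0$ (models are algebraically closed fields, since by Corollary~\ref{commuting_sep} they are separably closed and we are in characteristic zero, hence — being existentially closed — they are in fact algebraically closed as one checks the algebraic closure of a $\gD$-field carries a unique $\gD$-structure by Theorem~\ref{commutingext}). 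Fix a monster model $(\mathfrak{U},\be)\models \gD$-CF with reduct $\mathfrak{U}\models\text{ACF}_0$.

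\textbf{Key steps.} The standard criterion (see e.g.\ the argument for DCF$_0$, or Moosa--Scanlon's treatment of $\DD$-CF) is: it suffices to show that (a) every definable set in $\gD$-CF has a \emph{code} in the field sort, i.e.\ $\text{dcl}^{\text{eq}}$ of any imaginary contains a real tuple that is interdefinable with it, and for this it is enough to (b) code, for each $n$, every $\gD$-closed subset (equivalently, by quantifier elimination, every finite Boolean combination of Kolchin-closed sets) of $\mathfrak{U}^n$ by a real tuple. The route I would take: given a $\gD$-closed set $V\subseteq\mathfrak{U}^n$, consider its (radical) $\DD$-ideal $I\subseteq F[x_1,\dots,x_n]_{\gD}$ in the $\gD$-polynomial ring of Remark~\ref{Dpolyring}. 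By quantifier elimination and Corollary~\ref{fromderivationlike}, such a set corresponds to a perfect $\DD$-ideal; by the theory of kernels (Theorem~\ref{thebigone}, Lemma~\ref{uniquereal}, Lemma~\ref{dick}) and the stability analysis in the proof that $\gD$-CF is $|F|$-stable, each prime component is determined by its finite set of minimal-separable leaders together with finitely much algebraic data — in particular $V$ is cut out inside $\mathfrak{U}^n$ by \emph{finitely many} polynomial equations $\dd_{\xi_1}(p_1)=\cdots=\dd_{\xi_\ell}(p_\ell)=0$ in finitely many of the algebraic coordinates $\dd_\xi x_i$. Passing to the corresponding algebraic (Kolchin-truncated) variety $\widetilde V$ in the appropriate affine space $\mathbb{A}^N$ over the reduct $\mathfrak{U}$: the field $\mathfrak{U}$ eliminates imaginaries, so $\widetilde V$ has a canonical parameter (code) $c\in\mathfrak{U}^M$. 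One then checks $c$ is a code for $V$ in $\gD$-CF: any $\gD$-automorphism of $\mathfrak{U}$ fixing $V$ setwise lifts to a field-automorphism fixing $\widetilde V$ setwise (because the $\gD$-structure is recovered from $V$ by Zariski-closure of prolongations), hence fixes $c$; conversely a $\gD$-automorphism fixing $c$ fixes $\widetilde V$ hence its image under the projection, which is $V$. This gives that $\text{Aut}(\mathfrak{U}/\{V\})=\text{Aut}(\mathfrak{U}/c)$, i.e.\ $V$ and $c$ are interdefinable, which is exactly elimination of imaginaries (using the weak-EI-plus-finite-imaginaries-already-coded criterion, noting finite sets are coded as $\mathfrak{U}$ is algebraically closed).

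\textbf{Alternative packaging.} A cleaner way to phrase the whole thing, and the one I would actually write down, is to invoke a general transfer principle: an expansion $T$ of a theory $T_0$ that eliminates imaginaries, in which $T$ has quantifier elimination and $\text{acl}^T$ and $\text{dcl}^T$ restricted to the $T_0$-sorts behave correctly, and in which definable sets are ``controlled'' by finitely much $T_0$-data via a Noetherianity statement (here: Dickson's lemma, Lemma~\ref{dick}), itself eliminates imaginaries. This is exactly the mechanism already used for $\DD$-CF in \cite{MooScan2014} and, abstractly, it is part of what being ``derivation-like'' with a finite-leader control buys you; one could even note that \cite{LeonMohamed2024} records elimination of imaginaries among the transferred properties once one has the finiteness of minimal-separable leaders, so the proof is: combine Corollary~\ref{fromderivationlike}, the $|F|$-stability argument, Lemma~\ref{dick}, and the cited transfer result.

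\textbf{Main obstacle.} The genuinely non-formal point — everything else is bookkeeping — is verifying that the field-code $c$ of the truncated algebraic variety $\widetilde V$ really is a $\gD$-code of $V$, i.e.\ that the $\gD$-closed set and its finite algebraic avatar have the \emph{same} automorphism group over parameters. This requires knowing (i) that $\widetilde V$ determines $V$ (clear: $V$ is the projection, or the solution set of the same equations read with the real operators) and, more delicately, (ii) that $V$ determines $\widetilde V$ up to the right amount of data — which is precisely where uniqueness of principal realisations (Lemma~\ref{uniquereal}) and the fact that prime $\DD$-ideals are pinned down by their finitely many minimal-separable leaders (the $|F|$-stability proof) enter, guaranteeing we have not thrown away information when truncating. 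One must be slightly careful that the truncation level $r$ can be chosen uniformly (it depends only on the leaders, which are finitely many), so that $N$ and the ambient $\mathbb{A}^N$ are well-defined; Lemma~\ref{dick} is exactly what makes this uniform choice possible.
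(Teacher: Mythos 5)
Your argument is sound in outline, but it takes a genuinely different route from the paper. The paper's proof is three sentences: by quantifier elimination, $n$-types over a model $K$ correspond bijectively to prime $\DD$-ideals of the $\gD$-polynomial ring $K[x_1,\dots,x_n]_{\gD}$; the field of definition of such an ideal therefore furnishes a canonical base for the type consisting of real elements; and a stable theory in which types over models have real canonical bases (and in which finite sets are coded, which is automatic over an algebraically closed field) eliminates imaginaries by the Evans--Pillay--Poizat criterion. In other words, the paper never codes arbitrary definable sets: it only needs canonical bases of complete types, which come for free from the ideal-theoretic correspondence. Your proposal instead codes every $\gD$-closed set by truncating to a Kolchin-truncated algebraic variety and importing elimination of imaginaries from ACF$_0$; this is the classical Poizat-style argument for DCF$_0$ and it does work here, with the finiteness of minimal-separable leaders (Lemma~\ref{dick}) and uniqueness of principal realisations (Lemma~\ref{uniquereal}) playing exactly the role you assign them. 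The trade-off is that your route requires the extra verification you flag as the "main obstacle" (that the field code of the truncation is a $\gD$-code of the full set), whereas the paper's route sidesteps definable sets entirely at the cost of invoking the stability-theoretic criterion. Two small cautions: the claim that a $\gD$-closed set is cut out by \emph{finitely many} equations is a Ritt--Raudenbush-type statement the paper never proves and does not need — what the leader analysis actually gives is that a prime $\DD$-ideal is \emph{determined} by a finite truncation, which is weaker but sufficient for coding; and Corollary~\ref{fromderivationlike} does not list elimination of imaginaries among the properties transferred from \cite{LeonMohamed2024}, so that alternative packaging cannot simply be cited.
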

\begin{proof}
	Let $(K,e)\models \gD$-CF. By quantifier elimination, for every $n$, there is a 1-1 correspondence between $n$-types over $K$ and prime $\DD$-ideals of the $\gD$-polynomial ring $K[x_1,\dots,x_n]_{\gD}$ (see Remark~\ref{Dpolyring}(2) for the construction of this ring). This implies that the $\DD$-fields generated by the field of definition of $\DD$-ideals serve as canonical base for types. Now stability yields elimination of imaginaries by the criterion of Evans-Pillay-Poizat, see \cite[Corollary 5.9]{Messmer1996}.
\end{proof}

We now aim to prove that $\gD$-CF satisfies the (expected form of) Zilber's dichotomy for finite-dimensional types. We do this via proving the strong form of the Canonical Base Property (CBP) for such types. This will follow as a consequence of the analogous result in \cite{MooScan2014} for $\DD$-CF. To achieve this, we will use the following lemma. But first, for a $\DD$-field $(K,e)$ (not necessarily with $\Gamma$-commuting operators), recall that the field of $\DD$-constants of $K$ is
$$C_K=\{a\in K: \dd_i(a)=0 \text{ for all } 1\leq i\leq m\}.$$
Furthermore, we define the $\Gamma$-commuting subfield of $K$ as
$$K^{\Gamma}=\{a\in K:\, e \text{ commutes on } F(a)_{\DD} \text{ w.r.t. } \Gamma\}$$
where $F(a)_{\DD}$ is the $\DD$-field generated by $a$ over $F$. Note that, by Lemma~\ref{commcomp}, $K^\Gamma$ is a $\DD$-subfield of $K$, and, of course, $(K^\Gamma,e)$ is a $\gD$-field (i.e., the $\DD$-operators $\Gamma$-commute). Furthermore, $K^\Gamma$ is quantifier-free type-definable; defined by countably-many quantifier-free $\mathcal L_{\DD}(F)$-formulas. Clearly, $C_K\leq K^\Gamma$.

\begin{lemma}\label{transfer}
	Suppose $(K,e)$ is an $\aleph_1$-saturated model of $\DD$-CF. Then, $(K^\Gamma, e)$ is a model of $\gD$-CF.
\end{lemma}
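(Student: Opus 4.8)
The goal is to show that $(K^\Gamma,e)$ is a model of $\gD$-CF, given that $(K,e)$ is an $\aleph_1$-saturated model of $\DD$-CF. Since $\gD$-CF is the model companion of the theory of $\gD$-fields, it suffices to verify that $(K^\Gamma,e)$ is a $\gD$-field which is existentially closed in the class of $\gD$-fields. That $(K^\Gamma,e)$ is a $\gD$-field is already observed in the text preceding the statement (it is a $\DD$-subfield by Lemma~\ref{commcomp} and the operators $\Gamma$-commute on it by construction). So the work is in the existential closedness, and I plan to verify it through the kernel criterion of Theorem~\ref{companiontheory} (with $\DD_2=k$, so there are no inseparability issues and no minimal-separable leader condition beyond what is automatic): given a separable $\gD$-kernel $L_{2r}$ over $(K^\Gamma,e)$ whose minimal-separable leaders agree with those of $L_r$, I must find a point $(b_1,\dots,b_n)$ in $K^\Gamma$ realising the corresponding algebraic specialisation.

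\textbf{Key steps.} First, by Theorem~\ref{thebigone}, the $\gD$-kernel $L_{2r}$ over $(K^\Gamma,e)$ admits a principal realisation, i.e.\ a $\gD$-field extension $(N,e)$ of $(K^\Gamma,e)$ generated by an $n$-tuple $(a_1,\dots,a_n)$ whose prolongations match the kernel. Now $N$ is in particular a $\DD$-field extension of $(K^\Gamma,e)$, and since $(K,e)$ is a model of $\DD$-CF (a companion theory) and $K^\Gamma \le K$ is a $\DD$-subfield, I want to find a copy of $N$ inside a $\DD$-field extension of $K$ that is linearly disjoint from $K$ over $K^\Gamma$ — or more simply, amalgamate $N$ and $K$ over $K^\Gamma$ as $\DD$-fields (using \cite[Proposition 2.20]{BHKK2019} for the existence of a $\DD$-structure on the tensor product $N\otimes_{K^\Gamma}K$, after arranging $N/K^\Gamma$ regular by replacing $N$ by a suitable extension if needed). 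Since $(K,e)\models\DD$-CF is existentially closed among $\DD$-fields, the quantifier-free type of $(a_1,\dots,a_n)$ over $K^\Gamma$ (which is a quantifier-free $\mathcal L_{\DD}(F)$-type, and is finitely satisfiable modulo the finitely many minimal-separable-leader equations by Lemma~\ref{dick}) is realised in $K$ by some tuple $(b_1,\dots,b_n)$; here $\aleph_1$-saturation of $K$ handles the countably-many conditions. The crucial final point is that $(b_1,\dots,b_n)$ in fact lands in $K^\Gamma$: because the $a_i$ lie in a $\gD$-field $N$, the operators $\Gamma$-commute on $F(a_1,\dots,a_n)_\DD$, and $\Gamma$-commutativity is expressed by quantifier-free $\mathcal L_{\DD}(F)$-sentences on the $\DD$-polynomial generators; since $(b_1,\dots,b_n)$ realises the same quantifier-free $\DD$-type, the operators $\Gamma$-commute on $F(b_1,\dots,b_n)_\DD$ as well, i.e.\ each $b_i \in K^\Gamma$ (using Lemma~\ref{commcomp} so that the compositum $F(b_1,\dots,b_n)_\DD \subseteq K^\Gamma$). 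Finally, since the quantifier-free $\DD$-type was matched, the tuple $(b_1,\dots,b_n)$ realises the required algebraic specialisation of $(a_i^\xi)$, as needed.

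\textbf{Main obstacle.} The delicate part is the amalgamation/realisation step: I need the quantifier-free $\DD$-type of the principal realisation's generators over $K^\Gamma$ to be consistent with the diagram of $K$ as a $\DD$-field, so that existential closedness of $K$ applies. This requires knowing that $N$ and $K$ can be amalgamated as $\DD$-fields over $K^\Gamma$ — which is where regularity of $N/K^\Gamma$ (achievable by passing to the algebraic closure inside $N$, or by a preliminary extension) and \cite[Proposition 2.20]{BHKK2019} enter — and then invoking $\aleph_1$-saturation of $K$ to realise the resulting countable quantifier-free $\DD$-type. A secondary subtlety is making sure the realisation genuinely witnesses the kernel specialisation of $L_{2r}$ and not merely some specialisation of $L_r$; but this is exactly guaranteed by the principal realisation having the same minimal-separable leaders as $L_r$, together with Proposition~\ref{genericspecial} on uniqueness of generic prolongations. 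Once the tuple $(b_i)$ is produced in $K$ and shown to $\Gamma$-commute, the conclusion that it lies in $K^\Gamma$ and realises the specialisation is routine.
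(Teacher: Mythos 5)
Your proof is correct and its core mechanism --- amalgamating the given $\gD$-field extension with $K$ over $K^\Gamma$ as $\DD$-fields, using $\aleph_1$-saturation of $K$ to realise the countable quantifier-free type consisting of the relevant equations together with the $\Gamma$-commutativity conditions $\Lambda$, and concluding that the realisation lands in $K^\Gamma$ --- is exactly the paper's argument. The only difference is that you route the verification through the kernel axioms of Theorem~\ref{companiontheory} and a principal realisation from Theorem~\ref{thebigone}, whereas the paper checks existential closedness of $(K^\Gamma,e)$ directly against an arbitrary quantifier-free formula realised in an arbitrary $\gD$-extension, which makes the detour through the kernel machinery unnecessary.
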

\begin{proof}
	Let $\Lambda(x)$ be the collection of quantifier-free formulas defining $K^\Gamma$. Namely, $\Lambda$ consists of formulas of the form
	$$[\dd_i,\dd_j](\dd^\xi x)=c_1^{ij}\dd_1(\dd^\xi x)+\cdots + c_m^{ij}\dd_m(\dd^\xi x)$$
	where $\dim\DD=m+1$, $\xi$ varies in (finite) tuples with entries in $\{1,\dots,m\}$, and $\dd^\xi$ denotes $\dd_{\xi_1}\cdots\dd_{\xi_s}$.
	Also, let $\phi(x)$ be a quantifier-free formula over $K^\Gamma$ with a realisation $a$ in some $\gD$-field extension $(L,e)$. Since $\DD$-fields have the amalgamation property, we can find an amalgam $(E,e)$ of $K$ and $L$ over $K^\Gamma$. Then, $a$ realises $\phi(x)$ and also $\Lambda(x)$ in $E$. By saturation of $K$, there is a realisation $b$ in $K$, but then $b\in K^\Gamma$ (as it realises $\Lambda(x)$). 
\end{proof}

For the following theorem, we fix $(L,e)$ a sufficiently saturated model of $\gD$-CF and, by Lemma~\ref{transfer}, we may assume $L=\mathcal U^\Gamma$ where $\mathcal U$ is a monster for $\DD$-CF. This implies $C_L=C_{\mathcal U}$, and we denote this simply by $C$. We also let $(K,e)$ be a $\DD$-subfield of $L$. We say that a type $p\in S_n^{\gD\text{-}CF}(K)$ is finite-dimensional if for some (equivalently any) realisation $a\models p$ the $\DD$-field generated by $a$ over $K$ is of finite transcendence degree over $K$. We can now conclude with:

\begin{theorem}[The CBP and the dichotomy for finite-dimensional types]
	Suppose $p=tp(a/K)\in S_n^{\gD\text{-}CF}(K)$ is a finite-dimensional type. If $b$ is a tuple from $L$ such that 
	$$\dcl(Cb(a/K(b)_{\DD}))=\dcl(K,b),$$ 
	then $tp(b/K(a)_{\DD})\in S_n^{\gD\text{-}CF}(K(a)_{\DD})$ is internal to the $\DD$-constants $C$. As a result, if $p$ is minimal (U-rank one) and non locally modular, then $p$ is nonorthogonal to the $\DD$-constants.
\end{theorem}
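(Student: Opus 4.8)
The plan is to reduce the statement directly to the corresponding result for $\DD$-CF proved by Moosa--Scanlon in \cite{MooScan2014}, using the identification $L=\mathcal U^\Gamma$ set up just before the theorem. Recall that $\mathcal U$ is a monster model of $\DD$-CF, that $C=C_L=C_{\mathcal U}$, and that by Lemma~\ref{transfer} (together with Corollary~\ref{fromderivationlike}) $\gD$-CF is a stable theory in which nonforking independence is algebraic disjointness of the generated $\DD$-fields; in particular canonical bases make sense and, since the theory eliminates imaginaries, $\dcl$ of a canonical base is a $\DD$-field. The key point is that the $\bDD$-field operations and the $\DD$-field operations on $L$ are \emph{the same} operations (only the ambient structure shrinks), so for tuples from $L$ the quantifier-free $\mathcal L_{\DD}(F)$-type and the quantifier-free $\mathcal L_{\bDD}(F)$-type determine each other, and hence $\dcl$, $\acl$ and $Cb$ computed in $\gD$-CF agree with those computed in $\DD$-CF \emph{restricted to tuples lying in $L$}.

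First I would make this comparison precise: for $a,b$ tuples from $L=\mathcal U^\Gamma$, the $\DD$-field $K(a)_\DD$ generated inside $\mathcal U$ is again contained in $L$ (it is a $\DD$-subfield of $\mathcal U$ all of whose generators satisfy $\Lambda$, since $a$ does and $\Gamma$-commutation passes to the generated $\DD$-field by Lemma~\ref{commcomp}), so $K(a)_\DD=K(a)_{\gD}$ as sets with operators. By Corollary~\ref{fromderivationlike}(ii)--(iv) and quantifier elimination in both theories, $\dcl^{\gD\text{-}CF}$ and $\dcl^{\DD\text{-}CF}$ agree on subsets of $L$, and likewise the canonical base of $\tp^{\gD\text{-}CF}(a/K(b)_\DD)$ coincides (up to interdefinability) with the canonical base computed in $\DD$-CF of the type of $a$ over $K(b)_\DD$, because forking is algebraic disjointness of $\DD$-fields in both settings. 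Consequently the hypothesis $\dcl(Cb^{\gD\text{-}CF}(a/K(b)_\DD))=\dcl(K,b)$ transfers verbatim to the analogous hypothesis in $\DD$-CF.

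Next I would invoke the CBP for finite-dimensional types in $\DD$-CF (this is exactly \cite[the relevant theorem]{MooScan2014}; note $p$ finite-dimensional in the $\gD$ sense is the same as finite-dimensional in the $\DD$ sense, since the generated $\DD$-field and $\gD$-field coincide). That result gives that $\tp^{\DD\text{-}CF}(b/K(a)_\DD)$ is internal to the field of $\DD$-constants $C_{\mathcal U}=C$. Internality is witnessed by a finite tuple of parameters and a $\DD$-definable (hence, after the translation, $\gD$-definable) function, together with finitely many realisations of $\tp(b/\cdot)$ lying in $L$ — here one uses that $b$ and the witnessing realisations are already in $L$, and that a $\DD$-formula over parameters in $L$ defines, on $L$, the same set as an appropriate $\gD$-formula. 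Hence $\tp^{\gD\text{-}CF}(b/K(a)_\DD)$ is internal to $C$ as well. The ``as a result'' clause is then the standard Hrushovski--Zilber-type consequence: a minimal, non locally modular type satisfies the hypothesis with $b$ taken so that $\dcl(Cb(a/K(b)_\DD))=\dcl(K,b)$, and $C$-internality of a minimal type forces nonorthogonality to $C$ (the $\DD$-constants form a pure algebraically closed field, hence a non locally modular minimal type there, and nonorthogonality is the only way a minimal non locally modular type can be internal to it) — this argument is identical to the one for $\DD$-CF and I would simply cite it.

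The main obstacle I anticipate is the bookkeeping in the translation step: making sure that $C_L=C_{\mathcal U}$ really holds (this is where $L=\mathcal U^\Gamma$ and $C_{\mathcal U}\le \mathcal U^\Gamma$ is used — constants trivially $\Gamma$-commute), that all the auxiliary objects appearing in the definition of internality (the canonical parameter, the defining function, the finitely many realisations) can be chosen inside $L$ rather than merely inside $\mathcal U$, and that $C$-internality in $\DD$-CF literally gives $C$-internality in $\gD$-CF rather than internality to some larger definable set. None of these is deep, but each requires care because the two structures share a universe while having different definable-closure operators; the safe route is to phrase everything in terms of the type-definable set $\mathcal U^\Gamma$ and the fact, from Lemma~\ref{transfer} and its proof, that $\gD$-CF is precisely the induced theory on $\mathcal U^\Gamma$, so that stable embeddedness of $\mathcal U^\Gamma$ in $\mathcal U$ lets one pull back all the relevant definable data.
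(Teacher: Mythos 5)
Your proposal is correct and follows essentially the same route as the paper: identify forking in $\gD$-CF with forking in $\DD$-CF via algebraic disjointness (so canonical bases coincide), apply the CBP for $\DD$-CF from \cite{MooScan2014}, observe that the witnesses of internality (the realisations of the type and the tuple of constants) already lie in $L=\mathcal U^\Gamma$, and conclude the dichotomy by the standard argument. The paper's proof is just a more compressed version of the same reduction, without the explicit appeal to stable embeddedness.
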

\begin{proof}
	By \cite[Theorem 5.9]{MooScan2014}, we know that for $\DD$-fields forking independence in $\DD$-CF is equivalent to algebraic disjointness. Thus, for $\DD$-subfields of $L$, being forking independent in the sense of $\gD$-CF and in the sense of $\DD$-CF coincide. It follows that the notion of canonical basis (for a type in $S_n^{\gD\text{-}CF}$) also coincide. Hence we may apply the CBP for $\DD$-CF \cite[Corollary 6.18]{MooScan2014}, which yields that the type $q$ of $b$ over $K(a)_{\DD}$ in the sense of $\DD$-CF is internal to $C$. Namely, we may find a tuple $d$ of realisations of $q$ and a tuple from $C$ such that $b\in \dcl(K,a,c,d)$. But $a,c,d$ are in $L$ (the latter because any realisation of $q$ is in $L$), and thus the type is $C$-internal in the sense of $\gD$-CF as well.
	
	The claimed form of the dichotomy follows now from the CBP by a standard argument (see for instance \cite[Corollary 6.19]{MooScan2014}).
\end{proof}

\

\section{Further Remarks}\label{sec7}

In this final section we address two points:

\begin{enumerate} 
	\item There is a theory that, for large fields, axiomatises the class of those $\gDD$-fields that are existentially closed in $\gDD$-field extensions when they are existentially closed in the language of fields. When $char(k)=0$, this theory serves as a uniform companion for model-complete theories of large $\gDD$-fields. This generalises Tressl's uniform companion for several commuting derivations \cite{Tressl2005}.
	\item Point (1) suggests a natural notion of $\gDD$-largeness (similar to differential largeness \cite{LSTr2023,LSTr2024}) in arbitrary characteristic. We observe that PAC-substructures in $\gDD$-CF are precisely those $\gDD$-fields that are PAC-fields and $\gDD$-large. 
\end{enumerate} 

Unless otherwise stated the characteristic of $k$ remains arbitrary and we carry forward the notation from Section~\ref{modeltheory} (in particular, the assumptions on $\bDD$, $\Gamma$ and $(F,\be)$ remain). Furthermore, throughout this section we work under Assumption~\ref{theassumption}; recall that this assumption states that, if $char(k)=p>0$ and $\dim_k(\DD_1)>1$, then $\mm_u=\ker(\operatorname{Fr}_u)$ for all $u=1,2$.

%Also, for the language of $\bDD$-fields we will write $\mathcal L_{\bDD}$ instead of $\mathcal L_{\bDD}(F)$, since $F$ is fixed.

\

\subsection{The uniform companion UC$_{\gDD}$}

Let $(K,\be)$ be a $\gDD$-field. A $\gDD$-kernel $L_r=K(a_i^\xi: (\xi,i)\in \md^{\leq r}\times\mn)$ is said to have a \emph{smooth $K$-point} if the affine variety with coordinate ring 
$$K[a_i^\xi: (\xi,i)\in \md^{\leq r}\times\mn]$$
has a smooth $K$-rational point.

\begin{definition}
	We define the $\mathcal L_{\bDD}(F)$-theory UC$_{\gDD}$ as follows: a $\gDD$-field $(K,\be)$ is a model of UC$_{\gDD}$ if and only if, for every $r,n\in \mathbb N$, if $L_{2r}=K(a_i^\xi:(\xi,i)\in \md^{\leq 2r}\times \mn)$ is a separable $\gDD$-kernel of length $2r$ in $n$-variables over $(K,\be)$ with a smooth $K$-point such that the minimal-separable leaders of $L_{2r}$ are the same as those of $L_r$, then the tuple 
	$$(a_i^\xi:\, (\xi,i)\in \md^{\leq r}\times \mn)$$
	has an algebraic specialisation over $K$ of the form
	$$(\dd^{\xi}b_i:\, (\xi,i)\in \md^{\leq r}\times \mn)$$
	for some $n$-tuple $(b_1,\dots,b_n)$ from $K$, where 
	$\dd^{\xi}=\dd_{\xi_1}\cdots\dd_{\xi_r}$ when $\xi=(\xi_1,\dots,\xi_r)$. 
\end{definition}

Recall that a field $K$ is said to be large if every irreducible variety $V$ over $K$ with a smooth $K$-rational point has a Zariski-dense set of $K$-rational points. The latter condition is equivalent to $K$ being existentially closed in the function field $K(V)$. Large fields include local fields as well as pseudo-classically closed fields (such as PAC and PRC) and fraction fields of local Henselian domains. Examples of non large fields include number fields and algebraic function fields. We refer the reader to \cite{Pop2013} for a little survey on large fields.

The following proposition provides an \emph{existentially closed} characterisation of models of UC$_{\gDD}$ which are large as fields. 

\begin{proposition}\label{Dlargeaxioms}
	Let $(K,\be)$ be a $\gDD$-field which is large (as a field). Then, $(K,\be)\models$UC$_{\gDD}$ if and only if, for every $\gDD$-extension $(L,\be)$, if $K$ is existentially closed in $L$ as fields then it is existentially closed as $\gDD$-fields.
\end{proposition}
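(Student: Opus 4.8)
The plan is to prove both directions via the kernel machinery, with the hard work being the transfer of smoothness between the two formulations.

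\medskip

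\noindent\textbf{($\Leftarrow$)} Assume $(K,\be)$ is existentially closed (as $\gDD$-field) in every $\gDD$-extension $(L,\be)$ in which it is existentially closed as a field. I would verify the axioms of $\mathrm{UC}_{\gDD}$ directly. So let $L_{2r}=K(a_i^\xi:(\xi,i)\in\md^{\leq 2r}\times\mn)$ be a separable $\gDD$-kernel with a smooth $K$-point whose minimal-separable leaders coincide with those of $L_r$. By Theorem~\ref{thebigone}, $L_{2r}$ has a principal realisation $(E,\be)$; let $V$ be the irreducible affine variety over $K$ with coordinate ring $K[a_i^\xi:(\xi,i)\in\md^{\leq 2r}\times\mn]$, so $E\supseteq K(V)$ as fields. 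By hypothesis $V$ has a smooth $K$-rational point; since $K$ is large, $K$ is existentially closed in $K(V)$, hence in $E$ as fields. By the standing assumption on $(K,\be)$, it is then existentially closed in $(E,\be)$ as $\gDD$-fields. Now the quantifier-free formula asserting that $(x_i)_{i\le n}$ together with its iterated derivatives $(\dd^\xi x_i)$ realises the same algebraic type over $K$ as $(a_i^\xi)$ has a solution in $E$ (namely $(a_i^\emptyset)$), hence also in $K$, which is exactly the required algebraic specialisation $(\dd^\xi b_i)$.

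\medskip

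\noindent\textbf{($\Rightarrow$)} Assume $(K,\be)\models\mathrm{UC}_{\gDD}$ and $K$ is large as a field; let $(L,\be)$ be a $\gDD$-extension in which $K$ is existentially closed as a field. Take a quantifier-free $\mathcal L_{\bDD}(F)$-formula $\phi(\bar x)$ over $K$ with a solution $\bar a$ in $L$; I may assume $L$ is $\bDD$-generated by $\bar a$ over $K$. Choose $r$ large enough that all operator-compositions in $\phi$ have length $\le r-1$ and all minimal-separable leaders of $L$ (finite in number, by Lemma~\ref{dick}) have $|\xi|\le r-1$. Form the (possibly inseparable) $\gDD$-kernel $E_{2r}=K(\dd^\xi a_i:(\xi,i)\in\md^{\leq 2r}\times\mn)$, then apply Proposition~\ref{careful} to obtain a separable $\gDD$-kernel $L_{2r}$ prolonging $E_{2r-1}$ with the same minimal-separable leaders, so that those of $L_{2r}$ agree with those of $L_r$. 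The crucial extra point is that the coordinate variety of this kernel has a smooth $K$-rational point: since $K$ is existentially closed in $L$ as fields and the generic point of the kernel variety lies in $L$ (for $L_{2r}$ built from the separabilisation of $E_{2r}$ — here I must check the construction in Proposition~\ref{careful} keeps the generators in $L$, or else argue that $K$ being e.c.\ in $L$ and the variety being generically smooth over $K$ — it is, being generated by a separating transcendence basis together with separably algebraic elements — forces a smooth $K$-point). Then the axiom of $\mathrm{UC}_{\gDD}$ yields an algebraic specialisation over $K$ of the form $(\dd^\xi b_i)$, and by the choice of $r$ the tuple $(b_i)$ satisfies $\phi$ in $K$.

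\medskip

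\noindent\textbf{Main obstacle.} The delicate step is the smoothness bookkeeping in the ($\Rightarrow$) direction: I need the separable $\gDD$-kernel produced from $L$ to have its defining variety generically smooth over $K$ \emph{and} to have a $K$-point obtainable from the fact that $K$ is existentially closed in $L$. The generic smoothness should follow because, after separabilising, the kernel is generated over $K$ by a separating transcendence basis together with separably algebraic elements, so the function field is a separable (hence smooth) extension of $K$ — but one has to make sure that Proposition~\ref{careful}'s specialisation process does not destroy this, i.e.\ that the separable kernel $L_{2r}$ one extracts can be taken inside a field that embeds over $K$ into (an extension of) $L$, so that existential closedness of $K$ in $L$ as fields delivers the smooth $K$-rational point. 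Everything else — invoking Theorem~\ref{thebigone}, the finiteness of minimal-separable leaders, the standard translation between quantifier-free formulas and kernels — is routine and parallels the proof of Theorem~\ref{companiontheory}.
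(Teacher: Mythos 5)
Your proof is correct and follows essentially the same route as the paper's: principal realisation via Theorem~\ref{thebigone} plus largeness for ($\Leftarrow$), and the kernel/separabilisation argument of Theorem~\ref{companiontheory} plus the smooth-point hypothesis for ($\Rightarrow$). Two small remarks: the step ``$K$ e.c.\ in $K(V)$, hence in $E$'' is justified in the paper by noting that a principal realisation is purely transcendental over $L_{2r}$ (via Corollary~\ref{aboveseparable}(ii)), which you should say explicitly; and the ``main obstacle'' you flag in ($\Rightarrow$) is dispatched in the paper with exactly the one-line assertion you are uneasy about (that $K$ being e.c.\ in $L$ as fields makes it e.c.\ in $L_{2r}$ and hence yields a smooth $K$-point), so your hesitation identifies a point the paper also leaves implicit rather than a defect of your own argument.
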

\begin{proof}
	$(\Rightarrow)$ Assume $(K,\be)\models$ UC$_{\gDD}$ and $K$ is existentially closed in $L$ as fields. Let $\phi(x_1,\dots,x_n)$ be a quantifier-free $\mathcal L_{\bDD}$-formula over $K$ with a realisation $(a_1,\dots,a_n)$ in $(L,\be)$. Using the same argument as in the proof of Theorem~\ref{companiontheory}, for $r$ sufficiently large (in particular larger than any number of compositions of the operators appearing in $\phi$), we may assume that the $\gDD$-kernel
	$$L_{2r}=K(a_i^\xi:(\xi,i)\in \md^{\leq r}\times \mn)$$
	is separable and its minimal separable leaders agree with those of $L_r$. As $K$ is e.c. in $L$ as fields (and so also in $L_{2r}$), we have that $L_{2r}$ has a smooth $K$-point. Now the algebraic specialisation given by the UC$_{\gDD}$ axioms yields a realisation of $\phi$ in $K$.
	
	\medskip
	
	$(\Leftarrow)$ Since $K$ is large and $L_{2r}$ has a smooth $K$-point, $K$ must be existentially closed in $L_{2r}$ (as fields). Now, by Theorem \ref{thebigone}, $L_{2r}$ has a principal realisation, call it $L$. As the realisation is principal, the extension $L/L_{2r}$ is purely transcendental, and thus $K$ is also existentially closed in $L$ as fields. By assumption, the latter yields that $(K,\be)$ is e.c. in $(L,\be)$, and then the existence of the desired specialisation follows.
\end{proof}

We now observe that, when $char(k)=0$, the theory UC$_{\gDD}$ serves as a uniform model-companion for model-complete theories of large $\gDD$-fields (generalising Tressl's uniform companion in the case of several commuting derivations \cite{Tressl2005}). 

\begin{theorem}\label{uniformcompanion}
	Assume $char(k)=0$. Suppose $T$ is a model-complete theory of large fields in the language of fields. Let $T_{\gDD}$ be the $\mathcal L_{\bDD}(F)$ -theory of $\gDD$-fields that are models of $T$. Then, $T_{\gDD}^+:=T_{\gDD}\cup$UC$_{\gDD}$ is the model companion of $T_{\gDD}$. 
\end{theorem}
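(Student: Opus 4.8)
\textbf{Proof proposal for Theorem~\ref{uniformcompanion}.}
The plan is to verify the two defining properties of a model companion: (a) every model of $T_{\gDD}$ embeds into a model of $T^+_{\gDD}$, and (b) $T^+_{\gDD}$ is model complete. For (a), start with $(K,\be)\models T_{\gDD}$. Since $T$ is a model-complete theory of large fields, $K$ embeds in a model $K'\models T$ which is $|K|^+$-saturated; more importantly we want to build an increasing chain of $\gDD$-fields $K=K_0\subseteq K_1\subseteq\cdots$, each a model of $T$ as a field, such that every separable $\gDD$-kernel over $K_i$ with a smooth $K_i$-point and with minimal-separable leaders stabilising at length $r$ gets the required algebraic specialisation inside $K_{i+1}$. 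The construction of $K_{i+1}$ from $K_i$ is the heart of the argument: given such a kernel $L_{2r}$ over $K_i$, use Theorem~\ref{thebigone} to pass to a principal realisation $L$; the extension $L/L_{2r}$ is purely transcendental and $L_{2r}/K_i$ has a smooth $K_i$-point, so $K_i$ is existentially closed in $L$ \emph{as fields} (largeness of $K_i$ plus model-completeness of $T$ give that $L$ embeds over $K_i$ into an elementary extension of $K_i$ that is still a model of $T$; one has to be mildly careful to keep the $\gDD$-structure, but $\gDD$-structures extend canonically along the relevant field extensions by Lemma~\ref{extendstructure}, Theorem~\ref{commutingext} and Corollary~\ref{commuting_sep}, using $char(k)=0$). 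Amalgamating over all such kernels and all $i$ (a routine transfinite/$\omega$-step bookkeeping, as in the construction of existentially closed models), the union $K_\infty=\bigcup_i K_i$ is a $\gDD$-field, a model of $T$ as a field because $T$ is model complete (so elementary in each $K_{i+1}$, hence the chain is elementary as fields), and it satisfies the UC$_{\gDD}$ axioms by construction. Hence $K_\infty\models T^+_{\gDD}$ and $K\hookrightarrow K_\infty$.

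For (b), model completeness of $T^+_{\gDD}$, I would use the standard test: show that whenever $(K,\be)\subseteq(L,\be)$ are both models of $T^+_{\gDD}$, then $K\preceq L$. The key point is that a model of $T^+_{\gDD}$ is large as a field (it is a model of $T$), so Proposition~\ref{Dlargeaxioms} applies and gives: if $K$ is existentially closed in $L$ \emph{as fields}, then $K$ is existentially closed in $L$ \emph{as $\gDD$-fields}. Now $K$ and $L$ are both models of the model-complete field theory $T$ and $K\subseteq L$, so $K\preceq L$ as fields; in particular $K$ is existentially closed in $L$ as fields, hence as $\gDD$-fields. Being existentially closed between two models of an inductive theory is not quite elementarity, so to upgrade to $K\preceq L$ I would run the usual back-and-forth / Robinson-style argument: given $(L,\be)\models T^+_{\gDD}$ we can, by part (a) applied relative to $L$ (or by a monster-model compactness argument), find $(L^*,\be)\succeq(L,\be)$-as-fields which is a sufficiently saturated model of $T^+_{\gDD}$; then $K$ existentially closed in $L^*$ as $\gDD$-fields together with saturation of $L^*$ and quantifier-free-type determination gives $K\preceq L^*$, whence $K\preceq L$. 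Concretely, it suffices to check that any two models of $T^+_{\gDD}$ with a common $\gDD$-subfield can be amalgamated over it inside a third model of $T^+_{\gDD}$, which follows from amalgamation of $\gDD$-fields (via \cite[Proposition 2.20]{BHKK2019} and Lemma~\ref{commcomp}) combined with part (a); standard arguments then yield model completeness.

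The main obstacle I anticipate is the careful handling of the field-theoretic existential closedness in the presence of the $\gDD$-structure, i.e. making sure that when we realise a $\gDD$-kernel by a principal realisation $L$ and then embed $L$ (as a field) into a model of $T$, the $\gDD$-operators on $L$ and on the target are compatible. This is exactly where largeness is used (to get field-existential-closedness for free from the smooth point), where $char(k)=0$ is used (so $\DD_2=k$ by Remark~\ref{impliespositive}, all algebraic extensions are separable, and $\gDD$-structures extend uniquely and $\Gamma$-commutingly by Theorem~\ref{commutingext} and Corollary~\ref{commuting_sep}), and where Theorem~\ref{thebigone} is used (to guarantee the principal realisation, hence the purely transcendental extension $L/L_{2r}$ over which field-existential-closedness is automatic). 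Once these compatibilities are pinned down, everything else is the standard model-companion bookkeeping, and the statement that UC$_{\gDD}$ is a \emph{uniform} companion — i.e. independent of $T$ — is visible from the fact that its axioms mention only kernels and specialisations over the base $\gDD$-field and never refer to $T$.
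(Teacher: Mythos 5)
Your proposal is correct and follows essentially the same route as the paper: part (a) is the paper's argument verbatim in spirit (principal realisations via Theorem~\ref{thebigone}, largeness plus the smooth point and the purely transcendental extension to get field-existential-closedness, extension of the $\gDD$-structure through Lemma~\ref{extendstructure}, Theorem~\ref{commutingext} and Corollary~\ref{commuting_sep} in characteristic zero, then transfinite iteration), and part (b) is the paper's appeal to model-completeness of $T$ plus Proposition~\ref{Dlargeaxioms}, phrased as Robinson's test. The only (harmless) difference is that once you have existential closedness between every nested pair of models of $T^+_{\gDD}$, Robinson's test already yields model completeness, so the extra saturated-extension and amalgamation bookkeeping you sketch at the end is not needed.
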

\begin{proof}
	We need to show that: (1) every model of $T_{\gDD}$ embeds in a model of $T_{\gDD}^+$, and (2) each model of $T_{\gDD}^+$ is existentially closed in models of $T_{\gDD}$.
	
	(1) It suffices to show that every $\gDD$-field $(K,\be)$ which is large (as a field) has a $\gDD$-extension which is a model of UC$_{\gDD}$ and is an elementary extension in the language of fields. So let $L_{2r}=K(a_i^\xi:(\xi,i)\in \md^{\leq 2r}\times \mn)$ be a separable $\gDD$-kernel with a smooth $K$-point such that the minimal-separable leaders of $L_{2r}$ are the same as those of $L_r$. By Theorem~\ref{thebigone}, $L_{2r}$ has a principal realisation, say $L$. Recall that $L/L_{2r}$ is purely transcendental and hence, since $L_{2r}$ has a smooth $K$-point and $K$ is large, we obtain that $K$ is e.c.in $L$ as fields. Then we can find an extension $E$ of $L$ such that $E$ is an elementary extension of $K$ as fields. We now argue that there is $\gDD$-structure on $E$ extending that on $K$. Let $A$ be a transcendence basis for $E$ over $K$. By Lemma~\ref{extendstructure}(ii), we may extend the $\DD$-structure from $K$ to $K(A)$ by $\dd_i(A)=0$. By the latter choice, the operators $\Gamma$-commute on $A$. Finally, by Theorem~\ref{commutingext}, the unique extension of the $\DD$-structure from $K(A)$ to $E$ also $\Gamma$-commutes (recall that we are in characteristic zero).
	
	Repeating this argument, we can use transfinite induction to construct the desired model of UC$_{\gDD}$ which is an elementary extension of $K$.
	
	\medskip
	
	(2) Assume $(K,\be)$ is a model of $T_{\gDD}^+$ and $(L,\be)$ is an extension which is a model of $T_{\gDD}$. Since $T$ is model complete $K\preceq L$; in particular, $K$ is e.c. in $L$ as fields. Then, by Proposition~\ref{Dlargeaxioms}, $(K,\be)$ is e.c. in $(L,\be)$.
\end{proof}

\begin{remark}
	We observe that the previous corollary does not hold in characteristic $p>0$. Indeed, models of UC$_{\gDD}$ have nontrivial $\DD$-structure (and therefore cannot be perfect). Thus, UC$_{\gDD}$ cannot serve as a model-companion for $ACF_p$.
\end{remark}

\

\subsection{$\gDD$-large fields and PAC substructures of $\gDD$-CF}

In this subsection the characteristic of $k$ is arbitrary and recall that we work under Assumption~\ref{theassumption}. Based on the notion of differentially large fields \cite{LSTr2023,LSTr2024}, we define:

\begin{definition}
	A $\gDD$-field $(K,\be)$ is said to be $\gDD$-large if it is large as a field and, for every $\gDD$-extension $(L,\be)$, if $K$ is existentially closed in $L$ as fields, then it is existentially closed as $\gDD$-fields.
\end{definition}

We observe that Proposition~\ref{Dlargeaxioms} has the following immediate consequence.

\begin{corollary}
	The class of $\gDD$-large fields is elementary: a set of axioms is given by axioms for large fields together with the UC$_{\gDD}$-axioms from the previous section.
\end{corollary}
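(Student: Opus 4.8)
The plan is to show that being a $\gDD$-large field is axiomatised by the (first-order) axioms for large fields together with the UC$_{\gDD}$-axioms. One direction is essentially immediate: if $(K,\be)$ is $\gDD$-large, then it is large as a field by definition, and the ``existentially closed'' characterisation built into the definition of $\gDD$-largeness is exactly the right-hand side of the biconditional in Proposition~\ref{Dlargeaxioms}, so by that proposition $(K,\be)\models$UC$_{\gDD}$. Conversely, suppose $(K,\be)$ is a $\gDD$-field which is large as a field and satisfies UC$_{\gDD}$. Then again Proposition~\ref{Dlargeaxioms} applies (its hypothesis is precisely that $K$ is large as a field), and it yields that for every $\gDD$-extension $(L,\be)$ in which $K$ is existentially closed as fields, $K$ is existentially closed as $\gDD$-fields. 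Together with largeness of $K$ as a field, this is exactly the definition of $(K,\be)$ being $\gDD$-large.

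The only remaining point is the bookkeeping that the proposed axiom set is genuinely a set of first-order $\mathcal L_{\bDD}(F)$-sentences. Largeness of the field reduct is well known to be elementary (see \cite{Pop2013}); this is inherited verbatim in the language $\mathcal L_{\bDD}(F)$ since it only concerns the field structure. For UC$_{\gDD}$ one uses the same standard argument already invoked for $\gDD$-CF: the properties ``$L_{2r}$ is a separable $\gDD$-kernel'', ``$L_{2r}$ has a smooth $K$-point'', ``the minimal-separable leaders of $L_{2r}$ coincide with those of $L_r$'', and the existence of the algebraic specialisation are all expressible by first-order axiom schemes in $\mathcal L_{\bDD}(F)$, as explained after the statement of the corollary deriving companionability from Theorem~\ref{companiontheory} (cf. \cite[\S2]{Kow}, \cite[Proof of Corollary 4.6]{Pierce2014}). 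Hence UC$_{\gDD}$ is a first-order theory, and so is its union with the axioms for large fields.

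I do not expect any real obstacle here: the substance of the argument is entirely contained in Proposition~\ref{Dlargeaxioms}, and the corollary is just a clean restatement of that proposition once one notes that ``large as a field'' together with UC$_{\gDD}$ (as axiomatised) is equivalent to $\gDD$-largeness. The one thing to be careful about is to state explicitly that the biconditional in Proposition~\ref{Dlargeaxioms} is quantified over \emph{all} $\gDD$-extensions $(L,\be)$, matching the quantifier in the definition of $\gDD$-large, so that no strengthening or weakening is smuggled in. With that observed, the proof is a two-line deduction from Proposition~\ref{Dlargeaxioms} plus the first-order-expressibility remark.
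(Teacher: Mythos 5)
Your proposal is correct and matches the paper's approach: the corollary is stated there as an immediate consequence of Proposition~\ref{Dlargeaxioms}, which is exactly the two-directional deduction you give, and the first-order expressibility of the UC$_{\gDD}$-scheme is handled by the same standard argument already invoked for $\gDD$-CF. Nothing further is needed.
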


This recovers the corresponding result in \cite[\S4]{LSTr2023} for several commuting derivations in characteristic zero and also the corresponding result in \cite[\S2]{LSTr2024} for a single derivation in positive characteristic. The new situation covered by our result that is of interest is the case of several commuting derivations in positive characteristic; in other words, the class of differentially large fields in several commuting derivations of arbitrary characteristic is elementary. 

\

Our final result is a generalisation of the fact that being a PAC substructure in DCF$_0$ is equivalent to being a PAC-field and differentially large (see \cite[Theorem~5.18]{LSTr2023}). Recall that a field $K$ is said to be a PAC-field if every absolutely irreducible variety over $K$ has a $K$-rational point. A $\gDD$-subfield $(K,\be)$ of a model of $\gDD$-CF is said to be a PAC substructure if $K$ is perfect (as a field) and, for every $\gDD$-extension $(L,\be)$, if $K$ is algebraically closed in $L$ then $(K,\be)$ is e.c. in $(L,\be)$.

\begin{proposition}
	Let $(K,\be)$ be a $\gDD$-field with $K$ perfect (as a field). Then, $(K,\be)$ is a PAC substructure for $\gDD$-CF if and only of $K$ is a PAC-field and $(K,\be)$ is $\gDD$-large.
\end{proposition}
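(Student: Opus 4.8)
The plan is to prove both directions by comparing the defining conditions of a PAC substructure with those of $\gDD$-largeness, using the characterisation of $\gDD$-largeness provided by Proposition~\ref{Dlargeaxioms} (via UC$_{\gDD}$) as the bridge. Recall that being a PAC substructure of $\gDD$-CF means: $K$ is perfect, and for every $\gDD$-extension $(L,\be)$ in which $K$ is \emph{algebraically closed}, $(K,\be)$ is e.c.\ in $(L,\be)$ as $\gDD$-fields; while being $\gDD$-large means: $K$ is large as a field, and for every $\gDD$-extension $(L,\be)$ in which $K$ is \emph{e.c.\ as a field}, $(K,\be)$ is e.c.\ in $(L,\be)$ as $\gDD$-fields. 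The two conditions differ in the field-theoretic hypothesis on $L/K$ (``algebraically closed in'' versus ``existentially closed in''), and the extra requirement that $K$ be a PAC field (resp.\ large field). The key classical fact to invoke is that a field $K$ is PAC if and only if $K$ is existentially closed (as a field) in \emph{every} regular field extension, equivalently in every extension $L$ in which $K$ is algebraically closed; and that PAC fields are in particular large.

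First I would prove the forward direction. Assume $(K,\be)$ is a PAC substructure, so $K$ is perfect. To see $K$ is PAC: let $V$ be an absolutely irreducible variety over $K$; then $K(V)$ is a regular extension of $K$, so $K$ is algebraically closed in $K(V)$. Equip $K(V)$ with a $\gDD$-structure extending that of $K$ --- this is possible by choosing a transcendence basis $A$ of $K(V)$ over $K$, setting $\dd_{u,i}(A)=0$ (Lemma~\ref{extendstructure}(ii)), which makes the operators $\Gamma$-commute on $A$, and then extending uniquely to the separably algebraic (indeed here algebraic, by perfection of $K$ we can reduce to the separable case appropriately, or use Proposition~\ref{careful}/the positive-characteristic hypotheses) extension $K(V)$, with $\Gamma$-commutativity preserved by Theorem~\ref{commutingext}. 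Since $K$ is algebraically closed in $K(V)$, the PAC-substructure hypothesis gives that $(K,\be)$ is e.c.\ in $(K(V),\be)$ as $\gDD$-fields, hence in particular e.c.\ as fields, which says $V$ has a Zariski-dense (indeed nonempty suffices for PAC, but we get more) set of $K$-points; so $K$ is PAC, hence large. For $\gDD$-largeness, by Proposition~\ref{Dlargeaxioms} it suffices to show $(K,\be)\models$ UC$_{\gDD}$; but since a separable $\gDD$-kernel $L_{2r}$ with a smooth $K$-point has the property that $L_{2r}$ (and its principal realisation $L$, which is purely transcendental over $L_{2r}$) is an extension of $K$ in which $K$ is algebraically closed --- here I use that a variety with a smooth $K$-point whose coordinate ring we may take to be a domain with $K$ algebraically closed in its fraction field after passing to the appropriate component, and that PAC fields are algebraically closed in any regular extension --- the PAC-substructure hypothesis yields that $K$ is e.c.\ in $L$, producing the required algebraic specialisation. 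Care is needed to arrange that $K$ is genuinely algebraically closed in the relevant kernel field; this is where largeness/PAC-ness of $K$ combined with the smooth-point hypothesis does the work, possibly by first replacing $L_{2r}$ by a suitable subextension or by using that $K$ large implies $K$ e.c.\ in any field extension generated by a smooth point, together with PAC $\Rightarrow$ e.c.\ in regular extensions.

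For the converse, assume $K$ is a PAC field and $(K,\be)$ is $\gDD$-large; $K$ is perfect since PAC fields are perfect (or since this is part of the hypothesis). Let $(L,\be)$ be a $\gDD$-extension with $K$ algebraically closed in $L$. Then $L/K$ is a regular field extension, so since $K$ is PAC, $K$ is existentially closed in $L$ as a field. By $\gDD$-largeness, $K$ is then existentially closed in $L$ as $\gDD$-fields. Hence $(K,\be)$ is a PAC substructure of $\gDD$-CF. This direction is essentially immediate once the classical characterisation ``PAC $\iff$ e.c.\ in all regular extensions'' is in hand.

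The main obstacle I anticipate is in the forward direction, specifically verifying that the kernel field $L_{2r}$ (or its principal realisation) arising in the UC$_{\gDD}$ axioms can be taken to be a field extension of $K$ in which $K$ is algebraically closed, so that the PAC-substructure hypothesis applies --- the axioms only hand us a \emph{smooth $K$-point}, which guarantees $K$ is e.c.\ in $L_{2r}$ as a field (via largeness, which we have already established), but ``e.c.\ as a field'' is weaker than ``algebraically closed in.'' The resolution is to note that since $K$ is PAC (already shown), $K$ is algebraically closed in every regular extension, and an extension generated by a smooth point of an absolutely irreducible variety is regular; one must check the kernel can be arranged (by passing to an absolutely irreducible component through the smooth point) to be of this form. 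Managing this reduction cleanly, and making sure the $\gDD$-structure descends/extends compatibly along it, is the technical heart of the argument; everything else is a matter of unwinding definitions and citing Proposition~\ref{Dlargeaxioms}, Theorem~\ref{thebigone}, Theorem~\ref{commutingext}, and Lemma~\ref{extendstructure}.
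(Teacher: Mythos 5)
Your converse direction and the PAC-field part of the forward direction are essentially the paper's argument (the paper is slightly more careful about using a \emph{separating} transcendence basis of $K(V)/K$, which exists since $K$ is perfect and $K(V)$ is finitely generated, before invoking Lemma~\ref{extendstructure}(ii) and Theorem~\ref{commutingext}). The problem is in the second half of the forward direction, where you try to establish $\gDD$-largeness by verifying UC$_{\gDD}$ via Proposition~\ref{Dlargeaxioms} and then get stuck on what you call the ``technical heart'': arranging that $K$ is algebraically closed in the kernel field. This obstacle is an artifact of a reversed implication. You assert that ``e.c.\ as a field'' is \emph{weaker} than ``algebraically closed in''; it is in fact \emph{stronger}. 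If $K$ is existentially closed in a field extension $L$ and $a\in L$ is algebraic over $K$ with irreducible minimal polynomial $f\in K[x]$, then $L\models \exists x\, f(x)=0$, so $K\models \exists x\, f(x)=0$, forcing $\deg f=1$ and $a\in K$; hence $K$ is relatively algebraically closed in $L$. (The converse fails, e.g.\ $\mathbb{Q}$ in $\mathbb{Q}(t)$.)

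Once this is set right, the whole detour through UC$_{\gDD}$, kernels, smooth points and absolutely irreducible components is unnecessary: to check the defining condition of $\gDD$-largeness, take any $\gDD$-extension $(L,\be)$ in which $K$ is existentially closed as a field; by the observation above $K$ is then algebraically closed in $L$, so the PAC-substructure hypothesis applies directly and yields that $(K,\be)$ is existentially closed in $(L,\be)$ as $\gDD$-fields. Together with ``$K$ PAC $\Rightarrow$ $K$ large'' (which you already noted) this finishes the forward direction, and is exactly what the paper does. As written, your proposal leaves the key step unresolved and proposes a resolution (regularity of the kernel extension) aimed at the wrong inclusion of hypotheses, so the forward direction does not go through in the form you give it.
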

\begin{proof}
	($\Rightarrow$) Let $V$ be an absolutely irreducible variety over $K$ and $L=K(V)$ the function field of $V$. Then $K$ must be algebraically closed in $L$. As $K$ is perfect, $L/K$ is separable and thus, by the same argument as in the proof of (1) of Theorem~\ref{uniformcompanion} (now using a \emph{separating} transcendence basis of $L/K$, which possible as $L$ is finitely generated over $K$), we can equip $L$ with a $\gDD$-structure extending that on $K$. Hence, $(K,\be)$ is e.c. in $(L,\be)$; in particular $V$ has a $K$-rational point. This shows that $K$ is a PAC-field. Now assume $(L,\be)$ is a $\gDD$-field extension of $(K,\be)$ such that $K$ is e.c. in $L$ as fields. The latter implies that $K$ is algebraically closed in $L$, and hence $(K,\be)$ is e.c. in $(L,\be)$.
	
	($\Leftarrow$) Let $(L,\be)$ be a $\gDD$-field extension of $(K,\be)$ such that $K$ is algebraically closed in $L$. As $K$ is a perfect PAC-field, $K$ must be e.c. in $L$ as fields. By $\gDD$-largeness we get that $(K,\be)$ is e.c. in $(L,\be)$.
\end{proof}

A direct consequence of these results is that the class of PAC substructures of $\gDD$-CF is elementary.

\

%%%%%%%%%%%%%BIBLIOGRAPHY

\bigskip

%\bibliographystyle{plain}
%\bibliography{isolated}

\newpage

\appendix 

%\section{XXXX}

\section{Reduction of HS-iteration systems}\label{app_single_algebra}
%\section{Replacing several HS-commutation systems with a single one}\label{app_single_algebra}

We carry forward the notation used throughout the paper for local operator-systems $(\DD_u,\bar\epsilon_u)_{u=1}^n$ over the field $k$, and we assume that $F=k$. In this appendix we show (in Proposition~\ref{many_algebras}) that given finitely many homomorphisms 
$$(r_u:\DD_u\to \DD_u\otimes \DD_u)_{u=1}^n$$ 
of HS-iteration type that satisfy associativity (individually), a tuple of operators $e_1,\dots,e_n$ commuting with each other and such that each $e_u$ commutes with respect to $r_u^\iota$, corresponds to a single $\DD_1\otimes\dots\otimes \DD_n$-operator satisfying $r^\iota$-commutativity for a homomorphism $r$ of HS-iteration type that satisfies associativity and which is naturally obtained from $r_1,\dots,r_n$.

	For a homomorphism $r:\DD\to \DD\otimes \DD$ and any $0\leq i,j,\ell \leq m$ we  let $c^{ij}_\ell$ denote the coefficient of $\epsilon_i\otimes \epsilon_j$ in $r(\epsilon_\ell)$ (note that now we are allowing any of $i,j,\ell$ to be equal to zero and recall that $\epsilon_0=1$). We first observe that, as we are assuming $F=k$, the formula in the definition of associativity (see \S\ref{HStype}) holds for arbitrary $i,j,k,r\geq 0$ if we sum over all $\ell$ including $\ell=0$.
	
\begin{lemma}\label{associative_0}
	%(Assuming $F=k$) 
	Let $r:\DD\to \DD\otimes \DD$ be associative of HS-iteration type. Then, for all $i,j,k,r\geq 0$, we have
	$$\sum_{\ell\geq 0}  c_\ell^{ij} c_r^{\ell k} =\sum_{\ell\geq 0} c_\ell^{jk} c_r^{i\ell}.$$
	
\end{lemma}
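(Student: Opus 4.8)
The statement concerns the ``associativity'' identity of Definition~\ref{HScoe} extended to indices that are allowed to vanish. The plan is to recall that, since $F = k$, for an HS-iteration type $r$ with HS-coefficients $(c_\ell^{ij})_{i,j,\ell = 1}^m$ the elements $c_\ell^{ij}$ lie in $k$ and hence are $\dd_p$-constant, so the correction term $\sum_{p,q} \alpha_i^{pq} \dd_p(c_\ell^{jk}) c_r^{q\ell}$ in Definition~\ref{HScoe} vanishes, and likewise $\dd_i(c_r^{jk}) = 0$. Thus, for $i,j,k,r \geq 1$, the defining identity of associativity reduces to $\sum_{\ell \geq 1} c_\ell^{ij} c_r^{\ell k} = \sum_{\ell \geq 1} c_\ell^{jk} c_r^{i\ell}$, and the task is simply to check that including the $\ell = 0$ term on each side, and allowing $i,j,k,r$ to be zero, preserves the equality.

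First I would pin down the values of the coefficients when an index is zero. Since $r$ is a $k$-algebra homomorphism with $r(\epsilon_0) = r(1) = 1 \otimes 1 = \epsilon_0 \otimes \epsilon_0$, we get $c_0^{ij} = \delta_{i,0}\delta_{j,0}$. Moreover, for $r$ of HS-iteration type we have $r(\epsilon_\ell) = \epsilon_\ell \otimes 1 + 1 \otimes \epsilon_\ell + \sum_{i,j \geq 1} \epsilon_i \otimes \epsilon_j\, c_\ell^{ij}$ for $\ell \geq 1$, which reads off as $c_\ell^{i0} = \delta_{i,\ell}$, $c_\ell^{0j} = \delta_{j,\ell}$, and $c_\ell^{00} = 0$ for $\ell \geq 1$. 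So the ``augmented'' coefficient array is completely determined by the original HS-coefficients together with these boundary values.

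Next I would do the bookkeeping. For the left-hand sum $\sum_{\ell \geq 0} c_\ell^{ij} c_r^{\ell k}$ with $i,j,k,r \geq 1$: the $\ell = 0$ term is $c_0^{ij} c_r^{0k} = \delta_{i,0}\delta_{j,0} \cdot (\cdots)$, which is $0$ since $i \geq 1$; so including $\ell = 0$ changes nothing and the identity for $i,j,k,r \geq 1$ follows from Definition~\ref{HScoe} as noted above. The remaining work is the cases where at least one of $i,j,k,r$ is $0$: here one substitutes the boundary values and checks both sides collapse to the same expression. For instance, if $i = 0$: the left side is $\sum_{\ell \geq 0} c_\ell^{0j} c_r^{\ell k} = \sum_{\ell \geq 0} \delta_{j,\ell} c_r^{\ell k} = c_r^{jk}$ (for $j \geq 1$; the subcase $j = 0$ is handled similarly using $c_\ell^{00}$), while the right side is $\sum_{\ell \geq 0} c_\ell^{jk} c_r^{0\ell} = \sum_{\ell \geq 0} c_\ell^{jk}\delta_{r,\ell} = c_r^{jk}$, so they agree. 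The cases $j = 0$, $k = 0$, $r = 0$, and combinations thereof are each a short symmetric computation of the same flavour (in fact the identity is precisely co-associativity of the ``comultiplication'' $r$, whose counit is the residue map, so these degenerate cases are exactly the counit axioms, but I would present them as direct substitutions to keep the proof self-contained).

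The proof is essentially routine and I do not expect any genuine obstacle; the only mild care needed is organizing the case split on which indices vanish so that no subcase is missed, and correctly recording that the HS-iteration normalization $r(\epsilon_\ell) = \epsilon_\ell \otimes 1 + 1 \otimes \epsilon_\ell + (\text{higher order})$ gives $c_\ell^{i0} = c_\ell^{0i} = \delta_{i,\ell}$ rather than something involving the $\alpha$'s. I would write the argument as: ``Extend the coefficient array by the boundary values computed above; then for $i,j,k,r \geq 1$ the claim is Definition~\ref{HScoe} together with $c_\ell^{ij} \in k$; for the degenerate cases substitute directly,'' and include the $i = 0$ computation in full as a representative sample, leaving the symmetric remaining cases to the reader.
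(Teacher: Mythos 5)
Your proof is correct and follows essentially the same route as the paper's: record the boundary coefficients ($c_0^{ij}=\delta_{i0}\delta_{j0}$ and $c_\ell^{i0}=c_\ell^{0i}=\delta_{i\ell}$ for $\ell\geq 1$), note that the $\ell=0$ terms and the derivative terms in Definition~\ref{HScoe} vanish because $F=k$, and handle the degenerate indices by direct substitution. The only difference is cosmetic — the paper cases on whether $i,k\neq 0$ (after a symmetry reduction), while you case on whether all of $i,j,k,r$ are nonzero, which is if anything slightly more careful about when Definition~\ref{HScoe} literally applies.
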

\begin{proof} We may assume that one of the sides of the desired equality is not zero, so by symmetry we may assume it is the left-hand side; i.e., $\sum_{\ell\geq 0}  c_\ell^{ij} c_r^{\ell k}\neq 0$.
	If $i,k\neq 0$, then for $\ell=0$ we obtain $c^{ij}_\ell=0=c^{jk}_\ell$; hence, using associativity in the second equality below, we get \[\sum_{\ell\geq 0}  c_\ell^{ij} c_r^{\ell k} =\sum_{\ell\geq 1}  c_\ell^{ij} c_r^{\ell k}=\sum_{\ell\geq 1} c_\ell^{jk} c_r^{i\ell}=\sum_{\ell\geq 0} c_\ell^{jk} c_r^{i\ell},\] as desired.
	
	So we may assume that $i=0$ or $k=0$. Let us assume $i= 0$; the case $k=0$ can be treated similarly.
	%	If we also have $j=0$, then $\sum_{\ell\geq 0}  c_\ell^{ij} c_r^{\ell k}=c^{00}_0c^{0k}_r=c^{0k}_r=c^{0k}_kc^{0k}_r=\sum_{\ell\geq 0} c_\ell^{jk} c_r^{i\ell}$ (as all the other summands in both sums are zero).
	Then 
	$$\sum_{\ell\geq 0}  c_\ell^{ij} c_r^{\ell k}=c^{0j}_jc^{jk}_r=c^{jk}_r=c^{jk}_rc^{0r}_r=\sum_{\ell\geq 0} c_\ell^{jk} c_r^{i\ell}$$
	 as required.

\end{proof}

\begin{proposition}\label{many_algebras}
	Suppose $(\DD_u,\bar\epsilon_u)_{u=1}^n$ are local operator-systems and $r_u:\DD_u\to \DD_u\otimes \DD_u$ are associative of HS-iteration type. Put $\DD:=\DD_1\otimes \DD_2\otimes  \dots \otimes \DD_n$, and let $r:\DD\to \DD\otimes \DD$
	be obtained by composing $r_1\otimes r_2\dots \otimes r_n$ with the natural isomorphism 
	\[\DD_1\otimes\DD_1\otimes \DD_2\otimes\DD_2\otimes \dots \otimes \DD_n\otimes \DD_n\to \DD_1\otimes \DD_2\otimes \dots \otimes  \DD_n\otimes \DD_1\otimes\DD_2\otimes\dots \otimes \DD_n.\] 
	%permuting the co-ordinates. 
	%For each $1\leq u\leq n$ let $(\epsilon_{u,0}=1,\epsilon_{u,1},\dots,\epsilon_{u,m_u})$ be a $k$-linear basis of $\DD_s$. 
	%Consider the $k$-basis $(\epsilon)_{1\leq i_s\leq m_s}$
	
	Then, $\DD$ is a local operator-system with natural basis $\bar\epsilon=(\epsilon_{1,i_1}\otimes\cdots\otimes \epsilon_{n,i_n})_{i_j=0}^{m_j}$, and $r$ is associative of HS-iteration type. Furthermore, 
	\begin{itemize}
	\item [(i)] Given a $\DD$-ring $(R,e)=(R,\dd_{(i_1,\dots,i_n)})$, if we set $e_u:R\to\DD(R)$ to be
	$$e_u(r)=\sum_{i}\epsilon_{u,i}\otimes \dd_{u,i}(r)$$
	where $\dd_{u,i}:=\dd_{(0,\dots,0,i,0,\dots,0)}$ with $i$ in the $u$-th position, then each $(R,e_u)$ is a $\DD_u$-ring. In addition, $e$ commutes w.r.t. $r^\iota$ if and only 
	\begin{itemize}
	\item [(i.1)]  $e_u$ commutes with respect to $r_u^\iota$ for $u=1,\dots,n$,
		\item [(i.2)]  $\dd_{u,i}$  and $\dd_{v,j}$ commute for $u\neq v$, and
		\item [(i.3)] $\dd_{(i_1,\dots,i_n)}=\dd_{1,i_1}\cdots \dd_{n,i_n}$.
\end{itemize}
	
	\item [(ii)] Conversely, given $\DD_u$-ring structures $(R,e_u)=(R,\dd_{u,i})$ for $u=1,\dots,n$, if we set
	$e:R\to \DD(R)$ to be
	$$e(r)=\sum_{i_1,\dots,i_n}\epsilon_{1,i_1}\otimes \dots\otimes \epsilon_{n,i_n}\otimes \dd_{1,i_1}\cdots\dd_{n,i_n}(r)$$
	then $(R,e)$ is a $\DD$-ring. Moreover, $e$ commutes w.r.t. $r^\iota$ if and only if (i.1) and (i.2) hold.
	\end{itemize}
	%Moreover, if $F=k$ and all $r_i$ are associative, then $r$ is associative.
\end{proposition}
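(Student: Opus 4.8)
The plan is to prove the three structural assertions (that $\DD$ is a local operator-system with the stated basis, that $r$ is associative of HS-iteration type, and the equivalences in (i) and (ii)) in that order, reducing almost everything to the single-algebra facts already established in \S\ref{HStype} and to Lemma~\ref{associative_0}.

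First, locality of $\DD=\DD_1\otimes\cdots\otimes\DD_n$: this is Sweedler's theorem (already invoked in \S\ref{sec3} for $\DD_i\otimes_k\DD_j$) applied inductively, with maximal ideal $\tilde\mm=\sum_u \DD_1\otimes\cdots\otimes\mm_u\otimes\cdots\otimes\DD_n$ and residue field $k$ (here we use $F=k$). That $\bar\epsilon=(\epsilon_{1,i_1}\otimes\cdots\otimes\epsilon_{n,i_n})$ is a ranked basis follows from the fact that a product of the $\epsilon_{u,i_u}$ lies in $\tilde\mm^{\sigma_1(i_1)+\cdots+\sigma_n(i_n)}\setminus\tilde\mm^{(\ldots)+1}$; one picks any linear order on multi-indices refining the total-$\sigma$ grading. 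Next, that $r$ is of HS-iteration type: I would compute $r(\epsilon_{1,i_1}\otimes\cdots\otimes\epsilon_{n,i_n})$ directly from the definition as the image of $r_1(\epsilon_{1,i_1})\otimes\cdots\otimes r_n(\epsilon_{n,i_n})$ under the shuffle isomorphism; since each $r_u(\epsilon_{u,i})=\epsilon_{u,i}\otimes 1+1\otimes\epsilon_{u,i}+(\text{terms in }\mm_u\otimes\mm_u)$, expanding the tensor product and regrouping gives $\epsilon\otimes 1+1\otimes\epsilon+(\text{terms in }\tilde\mm\otimes\tilde\mm)$, which is exactly the HS-iteration shape. Associativity of $r$ is then most cleanly verified by the Hopf-theoretic characterisation: by the computation in the last Example of \S\ref{HStype}, a homomorphism of HS-iteration type is associative if and only if $(r\otimes\id)\circ r=(\id\otimes r)\circ r$ (co-associativity), and this identity for $r$ follows formally from the co-associativity of each $r_u$ (which holds by their associativity, again via that Example) together with naturality of the shuffle isomorphism. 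Alternatively, one verifies Definition~\ref{HScoe} directly by multiplying out the coefficient formula in terms of the $c^{i_1\cdots}_{\cdots}$ and using Lemma~\ref{associative_0} to handle the contributions where some index is $0$ — this is the calculational route but Lemma~\ref{associative_0} is precisely what was set up to make it go through.

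For (i): given a $\DD$-ring $(R,e)$, the operators $\dd_{u,i}$ are obtained by post-composing $e$ with the projection $\DD(R)\to\DD_u(R)$ dual to $\DD_u\hookrightarrow\DD$ (the inclusion into the $u$-th factor); since this projection is a $k$-algebra map over $R$, $e_u$ is a $\DD_u$-operator, so $(R,e_u)$ is a $\DD_u$-ring. The content is the equivalence. The key point is that $\Gamma$-type commutativity of $e$ w.r.t.\ $r^\iota$ unpacks, via Lemma~\ref{hscommuting} applied to $\DD$, into the relations $\dd_\alpha\dd_\beta=\sum_\gamma c^{\alpha\beta}_\gamma\dd_\gamma$ for all multi-indices, where $c^{\alpha\beta}_\gamma$ is the coefficient coming from $r$. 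Because $r$ is the shuffled tensor product, these structure constants factor: $c^{\alpha\beta}_\gamma=\prod_u c^{(u)\,\alpha_u\beta_u}_{\gamma_u}$ (with the convention allowing zero indices as in Lemma~\ref{associative_0}). Specialising $\alpha,\beta$ to standard basis vectors supported in a single coordinate $u$ recovers (i.1); specialising to vectors supported in two distinct coordinates $u\neq v$ and using that $c^{(w)\,0j}_j=\delta$ for the inactive coordinates recovers $\dd_{u,i}\dd_{v,j}=\dd_{v,j}\dd_{u,i}$, i.e.\ (i.2); and specialising $\alpha=(0,\ldots,0,i_u,0,\ldots)$ against a general $\beta$ and iterating gives (i.3) by induction on the number of nonzero coordinates of the multi-index. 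Conversely, assuming (i.1)--(i.3), one checks the full commutation relations for $e$ by the same factorisation, reducing each instance to the single-algebra relation (Lemma~\ref{hscommuting} for each $\DD_u$) together with the pairwise commutation — this is bookkeeping once the factorisation $c^{\alpha\beta}_\gamma=\prod_u c^{(u)}$ is in hand.

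For (ii): given the $(R,e_u)$, define $e$ by the stated formula. That $e$ is a $k$-algebra homomorphism with $\pi^R\circ e=\id_R$ (hence a $\DD$-operator) requires checking the multiplicativity $e(rs)=e(r)e(s)$; expanding the right-hand side, the coefficient of $\epsilon_{1,i_1}\otimes\cdots\otimes\epsilon_{n,i_n}$ is a sum over ways to distribute the operators, and matching it to $\dd_{1,i_1}\cdots\dd_{n,i_n}(rs)$ uses the Leibniz rule \eqref{Leibrule} for each $\DD_u$ in turn together with the pairwise commutation of operators from different algebras — so here (i.2) is needed even to make $e$ a homomorphism, which is why the ``moreover'' in (ii) is only about $r^\iota$-commutativity given that $e$ is well-defined. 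Then $e$ commutes w.r.t.\ $r^\iota$ iff (i.1) and (i.2) hold: this is immediate from (i) once one observes that the $e$ built in (ii) is exactly the one produced from its own associated $\dd_{u,i}$ via the formula in (i) with (i.3) imposed by construction.

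The main obstacle I expect is the coefficient-factorisation identity $c^{\alpha\beta}_\gamma=\prod_u c^{(u)\,\alpha_u\beta_u}_{\gamma_u}$ with the right handling of zero indices — getting the conventions exactly right so that Lemma~\ref{associative_0} applies cleanly, and so that the single-coordinate and two-coordinate specialisations really do peel off (i.1) and (i.2) without sign or index-range errors. Everything else is a reduction to \S\ref{HStype}, but this combinatorial heart of the argument is where care is required.
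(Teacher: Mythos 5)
Your argument for the main body of the proposition — locality via Sweedler, the factorisation of structure constants $c^{\alpha\beta}_\gamma=\prod_u c^{(u)\,\alpha_u\beta_u}_{\gamma_u}$, associativity of $r$ via Lemma~\ref{associative_0} to absorb the zero indices, and the unpacking of $r^\iota$-commutativity through Lemma~\ref{hscommuting} with single- and two-coordinate specialisations to extract (i.1)--(i.3) — is correct and is essentially the paper's proof (the paper reduces to $n=2$ first and writes out exactly these computations; your Hopf-theoretic detour for associativity is a legitimate repackaging, since over $F=k$ associativity in the sense of Definition~\ref{HScoe} is equivalent to co-associativity once Lemma~\ref{associative_0} is in hand).

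There is, however, one genuine error, in part (ii). You claim that the pairwise commutation (i.2) is ``needed even to make $e$ a homomorphism,'' and on that basis you reinterpret the first assertion of (ii) as conditional. This is false, and the proposition does assert unconditionally that $(R,e)$ is a $\DD$-ring. The map $e$ defined by the formula in (ii) is the composite
$$R\xrightarrow{\ e_n\ }\DD_n(R)\xrightarrow{\ \DD_n(e_{n-1})\ }\DD_n(\DD_{n-1}(R))\to\cdots\to\DD_n(\cdots(\DD_1(R))\cdots)\xrightarrow{\ \cong\ }\DD(R),$$
where the last arrow is the shuffle isomorphism; every factor is a $k$-algebra homomorphism, so $e$ is one with no commutation hypothesis. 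Equivalently, if you carry out your direct coefficient check, the coefficient of $\epsilon_{1,i_1}\otimes\cdots\otimes\epsilon_{n,i_n}$ in $e(x)e(y)$ is $\sum_{\bar\jmath,\bar k}\bigl(\prod_u\alpha_{u,i_u}^{j_uk_u}\bigr)\,\dd_{1,j_1}\cdots\dd_{n,j_n}(x)\,\dd_{1,k_1}\cdots\dd_{n,k_n}(y)$, and expanding $\dd_{1,i_1}\cdots\dd_{n,i_n}(xy)$ by applying the Leibniz rule \eqref{rulemulti} successively to $\dd_{n,i_n}$, then $\dd_{n-1,i_{n-1}}$, and so on, produces exactly this sum with the operators already in the order $1,\dots,n$ — no interchange of operators from different factors ever occurs. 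You should delete the claim that (i.2) enters here; the ``moreover'' equivalence in (ii) then follows as you say, by observing that the $e$ of (ii) satisfies (i.3) by construction and invoking part (i).
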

\begin{proof}
	By an easy induction, we may assume that $n=2$. Let us write $(\epsilon_{1,0},\dots, \epsilon_{1,m_1})=(\epsilon_0,\dots, \epsilon_m)$ and $(\epsilon_{2,0},\dots, \epsilon_{2,m_2})=(\epsilon'_0,\dots, \epsilon'_{m'})$. The fact that $\DD=\DD_1\otimes_k \DD_2$ is a local operator-system follows from a result of Sweedler \cite{Swee1975} stating that $\DD$ is local with maximal ideal
$$\mm=\mm_1\otimes_k \DD_2+\DD_1\otimes_k \mm_2.$$	
	
	For any $0\leq i,j,\ell\leq m_1$ let $c^{ij}_\ell$ be the coefficient of $\epsilon_i\otimes \epsilon_j$ in $r_1(\epsilon_\ell)$ and for any $0\leq i',j',\ell'\leq m_2$ let $c'^{i'j'}_{\ell'}$ be the coefficient of $\epsilon'_{i'}\otimes \epsilon'_{j'}$ in $r_2(\epsilon'_{\ell'})$. Then denoting by 
	$c^{(i,i'),(j,j')}_{(\ell,\ell')}$ the coefficient of  $\epsilon_{i}\otimes \epsilon'_{i'}\otimes \epsilon_j\otimes \epsilon'_{j'}$ in $r(\epsilon_\ell\otimes \epsilon'_{\ell'})$, we easily get that  \[c^{(i,i'),(j,j')}_{(\ell,\ell')}=c^{ij}_\ell c'^{i'j'}_{\ell'}\] so in particular $r$ is of HS-iteration type.
	
	Now assume $r_1$ and $r_2$ are associative. 	Then for any $0\leq i,j,k,r\leq m_1$ and $0\leq i',j',k',r'\leq m_2$, we have 
	\begin{align*}
	\sum_{\ell,\ell'\geq 0}  c_{(\ell,\ell')}^{(i,i'),(j,j')} c_{(r,r')}^{(\ell,\ell'), (k,k')} & =\sum_{\ell,\ell'\geq 0} c^{ij}_\ell c'^{i'j'}_{\ell'}c^{\ell k}_rc'^{\ell'k'}_{r'} \\
	& = (\sum_{\ell\geq 0} c^{ij}_\ell c^{\ell k}_r)( \sum_{\ell'\geq 0}c'^{i'j'}_{\ell'}c'^{\ell 'k'}_{r'}) \\
	& = (\sum_{\ell\geq 0} c^{jk}_\ell c^{i\ell }_r)( \sum_{\ell'\geq 0}c'^{j'k'}_{\ell'}c'^{i'\ell' }_{r'}) \\ 
	& =\sum_{\ell,\ell'\geq 0}  c_{(\ell,\ell')}^{(j,j'),(k,k')} c_{(r,r')}^{(i,i'),(\ell,\ell')}
	\end{align*}
	which gives associativity of $r$.
	
	We now prove (i). One readily checks that if $e$ is a $\DD$-structure on $R$ then $e_1$ is a $\DD_1$-structure and $e_2$ a $\DD_2$-structure. 
	%consider a function $e:R\to \DD(R)$ given by $e(r)=\sum_{i,i'\geq 0}\dd_{i,i'}(r)\epsilon_{i}\otimes \epsilon'_{i'}$. Clearly, $e$ is a $\DD$-operator on $R$ if and only if $e_1(r):=\sum_{i\geq 0}\dd_{i,0}(r)\epsilon_i$ defines a $\DD_1$-operator on $R$ and $e_2(r):=\sum_{i'\geq 0}\dd_{0,i'}(r)\epsilon'_{i'}$ defines a $\DD_2$-operator on $R$. So let us assume these equivalent conditions hold. 
	Now assume $e$ commutes with respect to $r^\iota$. Since $c^{(i,i'),(j,j')}_{(\ell,\ell')}=c^{ij}_\ell c'^{i'j'}_{\ell'}$, and $c_{\ell}^{ij}=1$ when $i=j=0$, condition (i.1) follows. By Lemma \ref{hscommuting}, for any $0\leq i\leq m_1$ and $0\leq j'\leq m_2$ we get 
	\[\dd_{(i,0)}\dd_{(0,j')}=\sum_{\ell,\ell'\geq 0} c^{(i,0),(0,j')}_{(\ell,\ell')}\dd_{(\ell,\ell')}=\sum_{\ell,\ell'\geq 0} c^{i0}_{\ell}c'^{0j'}_{\ell'}\dd_{(\ell,\ell')}=\dd_{i,j'},\] 
	yielding (i.2), and similarly we get $\dd_{(0,j')}\dd_{(i,0)}=\dd_{i,j'}$, hence getting (i.3).
	
	Conversely, assume conditions (i.1)-(i.3) are satisfied. Then for any $0\leq i,j\leq m_1$ and $0\leq i',j'\leq m_2$ we have 
	\begin{align*}
	\dd_{(i,i')}\dd_{(j,j')} & =\dd_{1,i}\dd_{1,j}\dd_{2,i'}\dd_{2,j'} \\
	& =\sum_{\ell}c^{ij}_\ell\dd_{1,\ell}\sum_{\ell'}c'^{i'j'}_{\ell'}\dd_{2,\ell'} \\
	& =\sum_{\ell}\sum_{\ell'} c^{ij}_\ell c^{i'j'}_{\ell'}\dd_{1,\ell}\dd_{2,\ell'} \\
	& =\sum_{\ell,\ell'}c^{(i,i'),(j,j')}_{(\ell,\ell')}\dd_{(\ell,\ell')}
	\end{align*}
	which gives that $e$ commutes with respect to $r^{\iota}$ by Lemma \ref{hscommuting}.
	
	\smallskip
	The proof of (ii) is similar. Details are left to the reader.
	
	%	Suppose first that $e$ is a an $r^\iota$-commuting $\DD$-operator. In particular, $e$ is a homor

\end{proof}

\end{document}